\documentclass[12pt,a4paper]{article}

\usepackage{latexsym}
\usepackage{a4wide}
\usepackage{amscd}
\usepackage{graphics}
\usepackage{amsmath}
\usepackage{amssymb}
\usepackage{soul,xcolor}
\usepackage{esint}
\usepackage{mathrsfs}
\usepackage{amsthm}
\usepackage{bbm}
\usepackage{color}
\usepackage{enumerate}
\usepackage{hyperref}
\usepackage[normalem]{ulem}
\input xy
\xyoption{all}

\newcommand{\N}{\mathbb{N}}                     
\newcommand{\Z}{\mathbb{Z}}                     
\newcommand{\R}{\mathbb{R}}                     
\newcommand{\C}{\mathbb{C}}                     

\newcommand{\di}{\mathrm{d}}                     
\newcommand{\Graph}{\mathrm{graph}\,}	
\newcommand{\diff}{\mathrm{d}}		

\newtheorem{mainthm}{\sc Theorem}           
\newtheorem{thm}{\sc Theorem}[section]               
\newtheorem*{thm*}{\sc Theorem}               
\newtheorem{cor}[thm]{\sc Corollary}        
\newtheorem*{cor*}{\sc Corollary}        
\newtheorem{lem}[thm]{\sc Lemma}            
\newtheorem{prop}[thm]{\sc Proposition}     
\newtheorem{add}[thm]{\sc Addendum}         
\newtheorem{rem}[thm]{\sc Remark}           

\title{Gibbs measures on Cayley trees and homoclinic orbits}

\author{Alberto Abbondandolo and Pietro Majer}

\date{October 23, 2026}

\begin{document}

\maketitle

\begin{abstract}
We study the question of existence and multiplicity of homogeneous Markov Gibbs measures on Cayley trees with spin space $\Z$. We prove that, under a natural summability assumption on the interaction function, there are always at least two homogeneous Markov Gibbs measures satisfying two natural symmetry conditions. Moreover, any two homogeneous Markov Gibbs measures satisfying a suitable non-degeneracy assumption can be superimposed and produce infinitely many other homogeneous Markov Gibbs measures. When the interaction function $Q$ has bounded support in $\Z$, there is a one-to-one correspondence between homogeneous Markov Gibbs measures and suitable homoclinic orbits of a symplectomorphism of $\R^{2n}$, where $n$ is the smallest natural number such that $Q(i)=0$ for every $i>n$. For $n=1$, results from symbolic dynamics and horseshoes imply the existence of homogeneous Markov Gibbs measures whose single-site marginal is concentrated near an arbitrary finite subset of $N\Z$, where $N$ is a sufficiently large integer. In the case of interaction functions with infinite support, there is a one-to-one correspondence between homogeneous Markov Gibbs measures and suitable homoclinic orbits of a Lagrangian correspondence on the cotangent bundle of a Banach space.
\end{abstract}

\tableofcontents

\section*{Introduction}
\addcontentsline{toc}{section}{Introduction}

\renewcommand\themainthm{\Alph{mainthm}}

\paragraph{Homogeneous Markov Gibbs measures on Cayley trees.} This paper addresses the questions of existence and multiplicity of homogeneous Markov Gibbs measures on Cayley trees, in the case of an infinite spin space. We recall that the Cayley tree of order $d\geq 2$ is the unique unrooted tree with countably many vertices, each of which is the origin of $d+1$ edges. We denote by $V$ and $E$ the sets of its vertices and edges. We choose as spin space the ring of integers $\Z$, thus considering $\Omega = \Z^V$ as  configuration space, endowed with its usual $\sigma$-algebra $\mathscr{A}$.

A probability measure on $(\Omega,\mathscr{A})$ is said to be homogeneous if it is invariant under all tree automorphisms, and a homogeneous
probability measure $\mu$ is said to be Markov if it is determined by its single-site marginal $\pi: \Z \rightarrow [0,1]$ and transition matrix $P: \Z \times \Z \rightarrow [0,1]$ by propagation along the tree, where
\[
\pi(i) := \mu( \sigma_x = i), \qquad P(i,j) := \mu ( \sigma_y=j \mid \sigma_x = i), \qquad  \forall i,j\in \Z.
\]
Here, $\sigma_x$ denotes the value of the configuration $\sigma\in \Omega$ at $x\in V$, $\{x,y\}$ is an edge, and the functions $\pi$ and $P$ do not depend on  $x$ and $\{x,y\}$ because of homogeneity.

In order to define Gibbs measures, we need an interaction function $Q: \Z \rightarrow \R$, which we assume to be nonnegative, even, not identically zero, and $\frac{d+1}{2}$-summable. Throughout the introduction, we shall also assume $Q$ to be irreducible, meaning that the abelian subgroup of $\Z$ generated by its support is the whole $\Z$. Typical and well-studied examples are the interaction functions
\begin{equation}
\label{e:examples}
Q(i) = e^{-\frac{|i|}{T}}, \quad T>0,  \qquad Q(i) = \Bigl( \frac{1}{1+|i|} \Bigr)^{\frac{1}{T}}, \quad 0 < T < \textstyle{\frac{d+1}{2}},  \qquad  \forall i\in \Z, 
\end{equation}
where in physical applications the positive parameter  $T$ is interpreted as temperature. The Gibbs property for a homogeneous Markov probability measure $\mu$ on $(\Omega,\mathscr{A})$ can be expressed in the following way: given a vertex $x\in V$ with neighbours $y_1,\dots,y_{d+1}$, we have
\[
\mu \bigl(  \sigma_x =i \mid \sigma_{y_1}=j_1, \dots, \sigma_{y_{d+1}} = j_{d+1}  \bigr) = \frac{ \prod_{h=1}^{d+1} Q(i - j_h)}{\sum_{k\in \Z} \prod_{h=1}^{d+1} Q(k-j_h) },
\]
for every $i\in \Z$ and $(j_1,\dots,j_{d+1})\in \Z^{d+1}$ for which the conditioning event has positive probability. See Section \ref{s:definitions} for more precise definitions and further discussion. See also the classical book of Georgii \cite{geo88} for the general theory of Gibbs measures on graphs and the book of Rozikov \cite{roz13}, focusing on models on Cayley trees. 

\paragraph{The convolution equation.} We denote by $\mathscr{G}(d,Q)$ the set of homogeneous Markov probability measures on the configuration space $\Omega$ associated to the Cayley tree of order $d$ which are Gibbs with respect to the interaction function $Q$. Here it is worth noticing that the Gibbs property depends only on the projective class of $Q$, meaning that if $c$ is any positive number then $\mathscr{G}(d,cQ) = \mathscr{G}(d,Q)$. Note also that $\Z$ acts by translation on the spin space and hence on $\mathscr{G}(d,Q)$. Our interest is in studying the number of elements of the quotient $\mathscr{G}(d,Q)/\Z$ by this $\Z$-action and their qualitative properties.

Due to the irreducibility of $Q$, there is a one-to-one correspondence between the Gibbs measures in $\mathscr{G}(d,Q)$ and the positive solutions $u\in \ell^p(\Z)$, $p:= \frac{d+1}{d}$, of the equation
\begin{equation}
\label{e:conv-eq}
u = (Q* u)^d,
\end{equation}
where the symbol $*$ denotes the convolution product of two functions on $\Z$ and $\ell^p(\Z)$ the space of $p$-summable bi-infinite real sequences. The single-site marginal and transition matrix of the Gibbs measure associated to the positive solution $u\in \ell^p(\Z)$ of \eqref{e:conv-eq} are given by the identities
\[
\pi(i) = \frac{u(i)^p}{\|u\|_p^p}, \qquad P(i,j) = \frac{Q(i-j)u(j)}{(Q *u)(i)},
\]
where $\|u\|_p$ denotes the $p$-norm of $u$. This fundamental fact, which was proven in a much more general form by Zachary in \cite{zac83}, is reproven for the reader's convenience in Section \ref{s:definitions}.  

We call the positive number
\begin{equation}
\label{e:weight-intro}
w:= \|Q\|_{\frac{d+1}{2}}^{\frac{d+1}{d-1}} \|u\|_p^p
\end{equation}
weight of the Gibbs measure $\mu$ associated to the solution $u$ to \eqref{e:conv-eq}. Here, the exponent in the norm of $Q$ is chosen in such a way that $w$ does not change if we replace $Q$ by $cQ$ for some $c>0$. Equation \eqref{e:conv-eq} and Young's inequality imply that $w\geq 1$. Weights play an important role in Theorem \ref{t:superpos-main} below. 

As far as we know, up to now equation \eqref{e:conv-eq} has been studied mainly in a perturbative regime, see \cite{hk21} and \cite{ahkm24}. If $Q=\mathbbm{1}_{\{0\}}$ is the characteristic function of the singleton $\{0\}$ then \eqref{e:conv-eq} reduces to the equation $u=u^d$, whose nonnegative solutions in $\ell^p(\Z)$ are all the characteristic functions of finite subsets of $\Z$. In the zero-temperature limit $T\rightarrow 0$, interaction functions $Q$ such as those defined in \eqref{e:examples} converge to $\mathbbm{1}_{\{0\}}$ and one expects to find positive solutions of \eqref{e:conv-eq} in $\ell^p(\Z)$ which are close to characteristic functions of finite subsets of $\Z$. Indeed, in \cite[Theorem 3.1]{ahkm24} it is proven that for every natural number $N$ there exists a positive threshold $\eta=\eta(d,N)$ such that if $Q$ is normalized by $Q(0)=1$ and $\|Q- \mathbbm{1}_{\{0\}} \|_{\frac{d+1}{2}} < \eta$ then $\mathscr{G}(d,Q)$ contains Gibbs measures whose single-site marginals are concentrated near arbitrary subsets of $\Z$ of cardinality at most $N$.

In this paper, we wish to derive existence and multiplicity results outside of the perturbative regime. In all our results, the starting point is equation \eqref{e:conv-eq}. Therefore, this paper can also be considered as an investigation of several techniques for studying this convolution equation. From this point of view, the assumption that $d$ is an integer is not necessary and all our results concerning solutions of \eqref{e:conv-eq} would hold with minor changes for any real number $d>1$.

\paragraph{Existence of two symmetric Gibbs measures.} Consider the involutions $\iota, \jmath : \Z \rightarrow \Z$ defined by $\iota(i) = -i$ and $\jmath(i) = 1-i$, which lift to involutions on $\Omega=\Z^V$, and by the push-forward operation on measures to involutions $\iota_*$ and $\jmath_*$ on the space of probability measures on $(\Omega,\mathscr{A})$. 
The fact that $Q$ is even implies that the set of Gibbs measures $\mathscr{G}(d,Q)$ is invariant under both $\iota_*$ and $\jmath_*$. It then makes  sense to look for Gibbs measures in $\mathscr{G}(d,Q)$ which are fixed by either $\iota_*$ or $\jmath_*$. Equivalently, we are looking for Gibbs measures $\mu$ in $\mathscr{G}(d,Q)$ whose single-site marginal $\pi$ and transition matrix $P$ satisfy either
\[
\pi(-i) = \pi(i), \qquad P(-i,-j) = P(i,j) \qquad \forall (i,j)\in \Z\times \Z,
\]
or
 \[
\pi(1-i) = \pi(i), \qquad P(1-i,1-j) = P(i,j) \qquad \forall (i,j)\in \Z\times \Z.
\]
The first new result of this paper is the following theorem, stating that Gibbs measures which are symmetric in the above two senses always exist.

\begin{mainthm}
\label{t:ex2-main}
Assume that the interaction function $Q$ is irreducible and $\frac{d+1}{2}$-summable. Then there exist Gibbs measures $\mu_1$ and $\mu_2$ in $\mathscr{G}(d,Q)$ such that  $\iota_*(\mu_1) = \mu_1$ and $\jmath_*(\mu_2) = \mu_2$.
In particular, $\mu_1$ and $\mu_2$ define two distinct elements of $\mathscr{G}(d,Q)/\Z$.
\end{mainthm}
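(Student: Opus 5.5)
By the correspondence recalled above (Zachary's theorem, reproven in Section \ref{s:definitions}), it suffices to find two positive solutions $u_1,u_2\in \ell^p(\Z)$, $p=\frac{d+1}{d}$, of the convolution equation \eqref{e:conv-eq}. We want $u_1$ even, $u_1(-i)=u_1(i)$, and $u_2$ symmetric about $\tfrac12$, $u_2(1-i)=u_2(i)$. Indeed, $\pi$ and $P$ are built from $u$ and $Q$ by the formulas above, and $Q$ is even. Hence $\iota$-invariance of $u$ gives $\pi(-i)=\pi(i)$ and $P(-i,-j)=P(i,j)$, and similarly for $\jmath$.

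I would obtain both solutions variationally. Set $v:=Q*u$, so that $u=v^d$, and look for $v\in\ell^{d+1}(\Z)$ solving $v=Q*v^d$. Equivalently, I would work with the functional
\[
\Phi(v)=\tfrac12\langle v, Q*v\rangle \quad \mbox{on the sphere } \|v\|_{p'}=1,
\]
or dually maximize $\langle u,Q*u\rangle$ subject to $\|u\|_p=1$, with $u\geq 0$. Young's inequality with $\|Q\|_{\frac{d+1}{2}}$ bounds this quadratic form on $\ell^p$, since $\frac1p+\frac1p+\frac{2}{d+1}=2$. Critical points satisfy $Q*u=\lambda u^{p-1}=\lambda u^{1/d}$, hence $u=\lambda^{-d}(Q*u)^d$. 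Rescaling $u$ (the equation is homogeneous of degree $d\neq1$) gives a solution of \eqref{e:conv-eq}. We may restrict to $u\geq0$ because replacing $u$ by $|u|$ does not decrease the form, since $Q\ge 0$.

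I would then restrict the maximization to the closed subspaces $X_\iota$ of $\iota$-symmetric sequences and $X_\jmath$ of $\jmath$-symmetric sequences. By the principle of symmetric criticality the restricted maximizers are genuine critical points, because the form is invariant under both involutions. Concretely, the Lagrange multiplier equation obtained for test directions in $X_\iota$ already shows that $Q*u-\lambda u^{1/d}$, which is itself $\iota$-symmetric, is orthogonal to $X_\iota$ and therefore vanishes.

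The main obstacle is existence of a maximizer, since on $\Z$ the translation invariance destroys compactness. The symmetry helps here: translations are no longer available inside $X_\iota$ or $X_\jmath$. Mass can still escape in two symmetric bumps moving to $\pm\infty$, however. I would use concentration-compactness, comparing the constrained supremum $S_{sym}$ with the unconstrained supremum $S$. A splitting into two far-apart symmetric bumps of $\ell^p$ masses $2^{-1/p}$ each yields value tending to $2\cdot 2^{-2/p}S=2^{1-2/p}S<S$, because $p<2$. So I must show $S_{sym}\ge 2^{1-2/p}S$ strictly. I would do this by taking near-optimizers $u$ of $S$, nonnegative, and symmetrizing as $u(\cdot-k)+u(-\cdot-k)$ (respectively about $\tfrac12$). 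This gives exactly that limit value plus a nonnegative cross term $2\langle u(\cdot-k),Q*u(-\cdot-k)\rangle$, which is strictly positive by irreducibility and positivity. The strict inequality excludes dichotomy, and vanishing is excluded since $\ell^p\to\ell^\infty$ smallness kills the form by the interpolation $\langle u,Q*u\rangle\lesssim \|u\|_\infty^{\epsilon}\|u\|_p^{2-\epsilon}$ (truncating $Q$). Compactness up to no translations follows.

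Positivity of the maximizer everywhere, rather than merely $u\geq0$, follows from $u=(Q*u)^d$ and irreducibility by iterating the convolution. Finally, $\mu_1$ and $\mu_2$ are distinct modulo $\Z$. If $u_2=u_1(\cdot-k)$, then $u_1$ would be symmetric both about $0$ and about $k+\tfrac12$, hence periodic with period $2k+1$, which is impossible for a nonzero $\ell^p$ sequence.
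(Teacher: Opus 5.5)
Your plan is the paper's: maximize $\mathscr{Q}(u)=\langle Q*u,u\rangle$ on the $\iota$- and $\jmath$-symmetric parts of the unit sphere of $\ell^p(\Z)$, use symmetric criticality, run concentration-compactness without translations, and exclude escape of all the mass to $\pm\infty$ in two mirror bumps via $S_{sym}>2^{1-2/p}S$. The gap is in your proof of this strict inequality, which is the crux.

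Take $u\geq 0$ with $\|u\|_p=1$, and set $u_k:=u(\cdot-k)$ and $w_k:=u_k+\iota^*u_k$. You correctly get $\mathscr{Q}(w_k)=2\mathscr{Q}(u)+2\langle Q*u_k,\iota^*u_k\rangle$, but you forget the constraint. Since $p>1$,
\[
\|w_k\|_p^p=\sum_{i\in\Z}\bigl(u_k(i)+u_k(-i)\bigr)^p>2
\]
as soon as the two copies overlap. For positive $u$, which is what you invoke to make the cross term positive, they overlap for every $k$. So $\mathscr{Q}(w_k)/\|w_k\|_p^2$ is not ``$2^{1-2/p}\mathscr{Q}(u)$ plus a positive term''. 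It is $2^{1-2/p}\mathscr{Q}(u)$ plus the cross-term gain \emph{minus} a normalization loss. Both come from the interaction of the two copies and both tend to $0$ as $k\to\infty$, so positivity of the cross term decides nothing. Near-optimizers add a second problem: the net gain would also have to exceed the deficit $S-\mathscr{Q}(u)$, and nothing controls this as the deficit tends to $0$.

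Two things are missing. First, you need an exact positive maximizer $u$ of $\mathscr{Q}$ on the whole sphere; the translated version of your concentration-compactness argument gives it, but you never produce it. Second, you need a genuine comparison.

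The paper's Lemma \ref{l:strict} needs no asymptotics. Let $\varrho$ be $\iota^*$ or $\jmath^*$ and $v:=\varrho u$. Since $Q\geq0$, the lattice inequality gives $\mathscr{Q}(u\vee v)+\mathscr{Q}(u\wedge v)\geq \mathscr{Q}(u)+\mathscr{Q}(v)=2S$. Combined with $\|u\vee v\|_p^p+\|u\wedge v\|_p^p=2$ and $\mu:=\|u\wedge v\|_p^p\in(0,1]$, a one-variable inequality yields $\mathscr{Q}(u\vee v)\geq S(2-\mu^{2/p})>2^{1-2/p}S\,\|u\vee v\|_p^2$. The function $u\vee v$ is $\varrho$-symmetric.

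Your two-bump construction can also be rescued, but only through the Euler--Lagrange equation $Q*u=S\,u^{p-1}$ of the exact maximizer. Put $a_i:=u_k(i)$ and $b_i:=u_k(-i)$. After factoring out $2^{1-2/p}$, the cross-term gain becomes $\frac{S}{2}\sum_i(a_i^{p-1}b_i+b_i^{p-1}a_i)$. The first-order normalization loss is $\frac{S}{p}\sum_i\bigl((a_i+b_i)^p-a_i^p-b_i^p\bigr)$. For $0<b\leq a$ one has
\[
(a+b)^p-a^p-b^p\leq p\,a^{p-1}b\leq \tfrac{p}{2}\bigl(a^{p-1}b+b^{p-1}a\bigr),
\]
with the ratio uniformly below $1$, so the gain wins for $k$ large. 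Neither ingredient is in your proposal.

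Separately, the formulation with $\frac12\langle v,Q*v\rangle$ on the unit sphere of $\ell^{d+1}(\Z)$ is not equivalent to yours. That form is unbounded there, and its critical points solve $Q*v=\lambda v^d$, not $v=Q*v^d$. The $\ell^p$ formulation you actually use is the right one.
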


The proof of this theorem is variational: we find $p$-summable positive solutions $u$ of \eqref{e:conv-eq} by maximizing the functional
\[
\mathscr{Q}(u) := \langle Q* u, u \rangle = \sum_{(i,j)\in \Z^2} Q(j-i) u(i) u(j)
\]
on the unit sphere in $\ell^p(\Z)$. The symmetry conditions are achieved by constraining this maximization to bi-infinite sequences $u$ which satisfy either $u(-i)=u(i)$ or $u(1-i)=u(i)$, for every $i\in \Z$. The difficulty here is the lack of compactness, which we overcome by a discrete version of Lions's concentration compactness principle from \cite{lio84}. 

\paragraph{Superposition of Gibbs measures.} Our second result concerns the possibility of finding new Gibbs measures in $\mathscr{G}(d,Q)$ as superpositions of two non-degenerate Gibbs measures $\mu,\mu'\in \mathscr{G}(d,Q)$. Here, a Gibbs measure $\mu\in \mathscr{G}(d,Q)$ is said to be non-degenerate if the corresponding positive solution $u\in \ell^p(\Z)$ of \eqref{e:conv-eq} is non-degenerate, meaning that the linearized equation
\[
v = d (Q*u)^{d-1} Q *v
\]
has no non-trivial solutions in $\ell^p(\Z)$. Of course, the homogeneous probability measures which are obtained as convex combinations of two measures $\mu$ and $\mu'$ in $\mathscr{G}(d,Q)$ are in general not Gibbs for $Q$, and actually not even Markov. However, if we assume that $\mu$ and $\mu'$ are non-degenerate and $n\in \Z$ has a large absolute value, then we can guarantee the existence of an element of $\mathscr{G}(d,Q)$ all of whose finite-dimensional marginals are close in total variation to the corresponding marginals of a suitable convex combination of $\mu$ and  $\tau_*^n \mu'$, where $\tau_*$ denotes push-forward with respect to the translation operator $\sigma\mapsto \sigma+1$ on $\Omega$. The precise convex combination is determined by the weights of the Gibbs measures $\mu$ and $\mu'$, as defined in \eqref{e:weight-intro}. More precisely, 
denoting by $\mu|_{\Lambda}$ the marginal of $\mu$ on $\Z^{\Lambda}$, that is, the measure
\[
\mu|_{\Lambda}(\eta) := \mu(\sigma|_{\Lambda}= \eta) \qquad \forall \eta\in \Z^{\Lambda}, 
\]
and by $\|\cdot\|_1$ the total variation norm
\[
\|\nu\|_1 := \sum_{\eta\in \Z^{\Lambda}} |\nu(\eta)|
\]
of a signed measure $\nu$ on $\Z^{\Lambda}$,  we have the following theorem, which is proven in Section \ref{s:superposition}. 

\begin{mainthm}
\label{t:superpos-main}
Assume that the interaction function $Q$ is irreducible and $\frac{d+1}{2}$-summable.
Let $\mu$ and $\mu'$ be not necessarily distinct non-degenerate elements of $\mathscr{G}(d,Q)$ with weights $w$ and $w'$.  Then there exists $N\in \N$  such that for every $n\in \Z$ with $|n|\geq N$ there is a non-degenerate Gibbs measure $\mu \#_n \mu'\in \mathscr{G}(d,Q)$ with weight $w_n$ such that 
\[
\lim_{n\rightarrow \pm\infty} w_n = w + w' \qquad \mbox{and} \qquad
\lim_{n\rightarrow \pm \infty} \bigl\| (\mu \#_n \mu')|_{\Lambda} - \bigl( \textstyle{\frac{w}{w+w'}} \, \mu|_{\Lambda} + \textstyle{\frac{w'}{w+w'}}\, (\tau_*^n \mu')|_{\Lambda} \bigr) \bigr\|_1 = 0,
\]
for every finite subset $\Lambda\subset V$.
\end{mainthm}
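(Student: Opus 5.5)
Via the correspondence with positive solutions of \eqref{e:conv-eq} in $\ell^p(\Z)$, $p=\frac{d+1}{d}$, the statement becomes a gluing theorem for the convolution equation. Let $u,u'$ be the non-degenerate positive solutions associated with $\mu,\mu'$, and let $\tau^n u'(i):=u'(i-n)$. I will look for a solution of the form $u_n=u+\tau^n u'+v_n$ with $\|v_n\|_p\to 0$. It is convenient to write \eqref{e:conv-eq} as $F(u):=u-(Q*u)^d=0$. I view $F$ as a $C^1$ map on $\ell^p$: since $Q\in\ell^{\frac{d+1}{2}}$, Young's inequality gives $Q*\cdot:\ell^p\to\ell^{d+1}$ bounded, and $s\mapsto s^d$ maps $\ell^{d+1}$ to $\ell^{p}$ in $C^1$ fashion. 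The derivative is $F'(u)v=v-d(Q*u)^{d-1}Q*v$.

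\textbf{Step 1: approximate solution.} Set $z_n=u+\tau^n u'$. Then
\[
F(z_n)=(Q*u)^d+(Q*\tau^nu')^d-(Q*u+Q*\tau^nu')^d .
\]
The elementary inequality $|(a+b)^d-a^d-b^d|\le C(a^{d-1}b+ab^{d-1})$ for $a,b\ge0$, together with the fact that products of an $\ell^{d+1}$ function and a translate by $n$ of another tend to zero in the relevant $\ell^q$ norms as $|n|\to\infty$ (approximate both by finitely supported functions), gives $\|F(z_n)\|_p\to0$.

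\textbf{Step 2: uniform invertibility of the linearization.} The operator $L_n:=F'(z_n)=I-K_n$ has $K_nv=d(Q*z_n)^{d-1}Q*v$, which is compact: $(Q*u)^{d-1}$ decays, and multiplication by a decaying $\ell^{\frac{d+1}{d-1}}$ function after $Q*$ is compact from $\ell^p$ to $\ell^p$. So $L_n$ is Fredholm of index zero. I claim $\|L_n^{-1}\|$ is bounded for $|n|$ large. Suppose not. Then there are $v_k$ with $\|v_k\|_p=1$, $L_{n_k}v_k\to0$ and $|n_k|\to\infty$. Using compactness of multiplication by $(Q*u)^{d-1}$, and translating by $n_k$ for the $u'$ term, extract weak limits $v_k\rightharpoonup a$ and $\tau^{-n_k}v_k\rightharpoonup b$. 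Cross terms of the form $(Q*u)^{d-1}$ times something near $n_k$ vanish, as in Step 1. Hence $a$ solves the linearized equation at $u$ and $b$ the one at $u'$, so non-degeneracy forces $a=b=0$. Then $K_{n_k}v_k\to0$ strongly, and so $v_k=L_{n_k}v_k+K_{n_k}v_k\to0$, contradicting $\|v_k\|_p=1$. I expect this step to be the main obstacle: I must handle the mixed weight $(Q*u+Q*\tau^nu')^{d-1}$ via $|(a+b)^{d-1}-a^{d-1}-b^{d-1}|\to 0$ in a suitable sense, and the case $d-1<1$ (non-integer analogues) needs care. I would also check that $\|F'(z)-F'(z_n)\|$ is small uniformly for $z$ near $z_n$, using uniform continuity of $F'$ on bounded sets, which follows from Hölder continuity of $s\mapsto s^{d-1}$ on $\ell^{d+1}$.

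\textbf{Step 3: fixed point and conclusions.} By a quantitative inverse function theorem (Newton–Kantorovich contraction), there is a unique zero $u_n$ of $F$ in a small ball around $z_n$, with $\|u_n-z_n\|_p\le C\|F(z_n)\|_p\to0$. Non-degeneracy of $u_n$ follows from invertibility of $F'$ near $z_n$. Positivity also holds: $u_n=(Q*u_n)^d\ge0$, and $u_n$ is not zero. Irreducibility of $Q$ then propagates positivity to all sites, as in Section \ref{s:definitions}.

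For the weights, $\|u_n\|_p^p\to\|u\|_p^p+\|u'\|_p^p$, since $\|u+\tau^nu'\|_p^p-\|u\|_p^p-\|u'\|_p^p\to0$ by asymptotic orthogonality of supports; this gives $w_n\to w+w'$. For the marginals, the finite-dimensional marginals of a homogeneous Markov measure on a finite $\Lambda$ (which may be taken connected) are
\[
\pi(i_0)\prod P(i_a,i_b)=\|u\|_p^{-p}\,u(i_0)^p\prod\frac{Q(i_b-i_a)u(i_b)}{(Q*u)(i_a)},
\]
which can be rewritten as $\|u\|_p^{-p}\prod_{\text{edges}}Q\cdot\prod_{\text{sites}}$ of powers of $u$ and $Q*u$. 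These expressions depend continuously in $\ell^1(\Z^\Lambda)$ on $u\in\ell^p$. At $u+\tau^nu'$ they split, up to cross terms tending to zero, into the marginal of $\mu$ plus that of $\tau^n_*\mu'$. The weights $\|u\|_p^p$ and $\|u'\|_p^p$ appear against the total normalization $\|u\|_p^p+\|u'\|_p^p$, which yields the coefficients $\frac{w}{w+w'}$ and $\frac{w'}{w+w'}$.
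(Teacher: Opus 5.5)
Your strategy is sound, and its core (Newton's method around $u+\tau^n u'$, with a uniform bound on the inverse of the linearization) matches the paper's. However, one step is wrong as written and another needs to be made precise.

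\textbf{Positivity for odd $d$.} In Step 3 you claim $u_n=(Q*u_n)^d\geq 0$, which holds only for even $d$. For odd $d$ your $F$ is odd, so its zeros need not be nonnegative. Moreover, $\ell^p$-closeness to $z_n$ gives no sign information in the tails, where $z_n\to 0$. The paper avoids this by using $\mathscr{F}(x)=x-Q*|x|^d$, whose zeros are automatically nonnegative. You can do the same with $\hat F(u):=u-|Q*u|^d$. Since $Q*z_n>0$, one has $\hat F(z_n)=F(z_n)$ and $\hat F'(z_n)=F'(z_n)$, and $\hat F'$ is still Lipschitz on bounded sets for $d\geq 2$. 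Alternatively, keep $F$ and observe that $w:=Q*u_n$ satisfies $w^-\leq Q*(w^-)^d$ pointwise. This gives $\|w^-\|_{d+1}\leq \|Q\|_{\frac{d+1}{2}}\|w^-\|_{d+1}^d$, while $\|w^-\|_{d+1}\leq \|w-Q*z_n\|_{d+1}\leq \|Q\|_{\frac{d+1}{2}}\|u_n-z_n\|_p\to 0$. Hence $w^-=0$ for $|n|$ large.

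\textbf{Continuity of the marginals.} Your formula for the marginals has $(Q*u)(i_a)$ in denominators, so the asserted $\ell^1(\Z^{\Lambda})$-continuity in $u$ is not evident. First eliminate the denominators using $Q*u=u^{1/d}$. With $x=u^{1/d}$ this gives
\[
\mu|_{\Lambda}(\eta)=\|x\|_{d+1}^{-(d+1)}\prod_{v\in \Lambda}x(\eta_v)^{d+1-\deg_{\Lambda}v}\prod_{\{v,v'\}\in E_{\Lambda}}Q(\eta_v-\eta_{v'}),
\]
with all exponents nonnegative. Then:
\begin{enumerate}[(a)]
\item The unnormalized part is a bounded multilinear map from $\ell^{d+1}(\Z)$ to $\ell^1(\Z^{\Lambda})$. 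Prove this by stripping leaves with H\"older and Young, as in Lemma \ref{l:tree-sum} but with $\ell^{d+1}$ and $\ell^{\frac{d+1}{2}}$ in place of $\ell^d$.
\item $u_n^{1/d}$ is $\ell^{d+1}$-close to $u^{1/d}+\tau^n (u')^{1/d}$.
\item The mixed terms of the expansion vanish once $Q$ and the two bumps are truncated to finite support and $|n|$ is large compared with the bump size plus $|\Lambda|$ times the range of the truncated $Q$. Interior vertices of full degree carry no factor of $x$, so the separation must be measured along paths, not single edges.
\end{enumerate}
With these points, your last step goes through.

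\textbf{Comparison with the paper.} Your route genuinely differs in Step 2 and in the marginal computation. The paper truncates $Q$ and the two solutions, so that for $|n|>2q+2m$ the linearization at $\tilde x_1+\tau^n\tilde x_2$ decouples exactly. Its inverse is then built by hand with norm at most $2C$, and the paper returns to $Q$ by a Neumann series; the error of the approximate solution also splits exactly. Your contradiction argument, using the weak limits $a$ of $v_k$ and $b$ of $\tau^{-n_k}v_k$, is a correct softer replacement. The obstacle you anticipate does not arise for integer $d$. With $A=Q*u$ and $A'=Q*u'$, the difference $(A+\tau^nA')^{d-1}-A^{d-1}-\tau^n\bigl((A')^{d-1}\bigr)$ tends to $0$ in $\ell^{\frac{d+1}{d-1}}$ by the decoupling of Step 1. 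The equation for $a$ then follows because $\tau^{n_k}\bigl(d(A')^{d-1}\,Q*b\bigr)\rightharpoonup 0$.

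The paper's version is constructive, gives explicit constants, and needs no weak compactness. Yours is shorter and uses non-degeneracy exactly as defined in $\ell^p$, without changing variables. For the marginals, the paper proves $\ell^1$-convergence of the single-site marginals and of $\sum\pi_n|P_n-\tilde P_n|$ for a mixed kernel $\tilde P_n$, then telescopes along the tree (Lemma \ref{l:finite-trees}). Your product formula, once written as above, is a more direct alternative.
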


Note that the superposition $\mu \#_n \mu'$ favours the measure among $\mu$ and $\mu'$ having larger weight. Denote by $\pi$, $\pi'$, and $\pi_n$ the single-site marginals and by $P$, $P'$, and $P_n$ the transition matrices of the measures $\mu$, $\mu'$, and $\mu \#_n \mu'$.
By taking $\Lambda$ in the above theorem to be either a singleton or the pair of vertices of an edge, we obtain the following asymptotics for the single-site marginal and the one-step joint law of $\mu \#_n \mu'$:
\[
\begin{split}
\lim_{n\rightarrow \pm \infty} &\sum_{i\in \Z} \left| \pi_n(i) - \bigl( {\textstyle \frac{w}{w+w'}} \,\pi (i) + {\textstyle \frac{w'}{w+w'}} \, \pi'(i-n) \bigr) \right| = 0, \\
\lim_{n\rightarrow \pm \infty} &\sum_{(i,j)\in \Z^2} \left| \pi_n(i) P_n(i,j) - \bigl( {\textstyle \frac{w}{w+w'}} \,\pi (i)P(i,j) + {\textstyle \frac{w'}{w+w'}} \, \pi'(i-n)P'(i-n,j-n) \bigr) \right| = 0. 
\end{split}
\]
By applying this theorem iteratively, we deduce that the existence of one non-degenerate Gibbs measure in $\mathscr{G}(d,Q)$ implies the existence of infinitely many non-degenerate Gibbs measures which are pairwise distinct also after projection to the quotient $\mathscr{G}(d,Q)/\Z$ and have diverging weights. It is conceivable that Theorem \ref{t:superpos-main} can be proven variationally. Here, we find it more convenient to prove it by Newton's method.

One may conjecture that the existence of at least one non-degenerate Gibbs measure in $\mathscr{G}(d,Q)$ is a generic property of $Q$, meaning that the set of $Q$'s for which it holds is a residual set in the space of all even nonnegative $\frac{d+1}{2}$-summable functions,
but we do not have a proof of this. Consequently, we do not know whether $\mathscr{G}(d,Q)/\Z$ is an infinite set for $Q$ generic.

Theorems \ref{t:ex2-main} and \ref{t:superpos-main} have natural generalizations to the case in which we replace the spin space
$\Z$ by $\Z^k$. The proofs of the next theorems, instead, strongly rely on the fact that the spin space is 1-dimensional. 

\paragraph{Gibbs measures as homoclinic orbits.} In our next result, we focus on the special class of interaction functions $Q$ which are supported in $\{-1,0,1\}$. By irreducibility, $Q(1)=Q(-1)$ is positive, and hence we can use the normalization $Q(1)=Q(-1)=1$ and get an interaction function of the form
\begin{equation}
\label{e:smallQ}
Q = q \mathbbm{1}_{\{0\}} +  \mathbbm{1}_{\{-1,1\}},
\end{equation}
where $q$ is an arbitrary nonnegative number. The Gibbs property now gives positive probability only to spin differences with values $-1$, $0$, or $1$ on every edge. The number $q$ should be interpreted as a laziness parameter: when $q\rightarrow +\infty$ the system is lazy and favours constant spin values, whereas for $q=0$ the system is maximally active and forces the spin to jump. The cases $q=0$ and $q=1$ model uniformly random graph homomorphisms and uniformly random 1-Lipschitz functions, respectively. See \cite{lt24}, \cite{bkrs26}, and \cite{hs26} for results about localization of Gibbs measures in these models.

When $Q$ has this form, equation \eqref{e:conv-eq} can be rewritten as the second order scalar recursion
\[
u(i+2) = u(i+1)^{\frac{1}{d}} - q\, u(i+1) - u(i) \qquad \forall i \in \Z,
\]
and is hence equivalent to a first order recursion on $\R^2$. From this observation, we deduce that the elements in $\mathscr{G}(d,Q)$ are in one-to-one correspondence with the orbits $(z_i=(x_i,y_i))_{i\in \Z}$ of the discrete time dynamical system
\[
\varphi: \R^2 \rightarrow \R^2, \qquad \varphi(x,y) = \bigl(y, |y|^{\frac{1}{d}-1}y - q y -x\bigr),
\]
which are contained in the first quadrant $(0,+\infty)\times (0,+\infty)$ and are homoclinic to the fixed point at the origin, meaning that $z_i= \varphi^i(z_0) \rightarrow 0$ for $i\rightarrow \pm \infty$. Therefore, the quest for Gibbs measures translates into the quest for homoclinic orbits of $\varphi$.

Notice that $\varphi$ is a homeomorphism of $\R^2$ and restricts to an analytic diffeomorphism from  the complement of the line $\{y=0\}$ to the complement of the line $\{x=0\}$, where it preserves the symplectic form $\mathrm{d} x \wedge \mathrm{d} y$. Moreover, it is reversible with respect to the involution $S(x,y) = (y,x)$, meaning that $\varphi^{-1} = S \circ \varphi \circ S$. 

Homoclinic orbits are given by the intersections of the stable and unstable sets of $0$, namely the sets
\[
\begin{split}
W^s(0) &:= \{z\in \R^2 \mid \varphi^n(z) \rightarrow 0 \mbox{ for } n\rightarrow \infty\}, \\ 
W^u(0) &:= \{z\in \R^2 \mid \varphi^{n}(z) \rightarrow 0 \mbox{ for } n\rightarrow -\infty\}.
\end{split}
\]
Even if $\varphi$ is not differentiable at the origin, this fixed point behaves as a hyperbolic fixed point of saddle type, and we will show that $W^s(0)$ and $W^u(0)$ are 1-dimensional objects. Using the reversibility of $\varphi$ and its analyticity away from the line $\{y=0\}$, in Sections \ref{s:ifss}, \ref{s:psmt}, \ref{s:duesol}, and \ref{s:homoconn} we will gather a good amount of information on $W^s(0)$ and $W^u(0)$, some of which is summarized in the following theorem.

\begin{mainthm}
\label{t:omocline-main}
For every $q\geq 0$ there is $r\in (0,1)$ and an odd $C^1$ function $f: (-r,r) \rightarrow \R$ which is analytic on $[0,r)$, positive on $(0,r)$, and such that
\[
W^s(0) = \bigcup_{n\geq 0} \varphi^{-n} ( \mathrm{graph}\, f ), \qquad W^u(0) = S (W^s(0)).
\]
Moreover, the intersection $W^s(0) \cap W^u(0)$ contains at least one point $z$ belonging to the fixed point line $\{y=x\}$ of $S$ and at least one point $z'$ such that $\varphi(z')=S(z')$, with the further property that the homoclinic orbits $(\varphi^k(z))_{k\in \Z}$ and $(\varphi^{k}(z'))_{k\in \Z}$ are fully contained in the first quadrant $(0,+\infty)\times (0,+\infty)$. If moreover $W^s(0)\neq W^u(0)$, then $W^s(0)$ and $W^u(0)$ have topologically transverse intersections at $z$ and $z'$. Finally, there are at most finitely many values of $q$, all contained in the interval $[0,\frac{d+2}{d-1})$, for which $W^s(0)=W^u(0)$.
\end{mainthm}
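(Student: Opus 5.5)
The proof splits into three parts: the local stable manifold, the symmetric homoclinic points, and the finiteness of the degenerate values of $q$.

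\textbf{Local stable manifold.} I would construct $W^s_{\mathrm{loc}}(0)$ directly from the recursion rather than by a linearization, since $\varphi$ is not differentiable at $0$. Near the origin the map $y\mapsto |y|^{1/d-1}y$ dominates $qy+x$. Orbits tending to $0$ must therefore satisfy $y_{i+1}\approx y_i^{1/d}$ backwards, that is, $x_{i+1}=y_i\approx x_i^{\,d}$ forward. So stable orbits are superexponentially contracting, with $x_{i+1}\sim x_i^d$.

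This suggests writing $y=f(x)$ and solving the invariance equation $f(f(x)) = f(x)^{1/d}-qf(x)-x$. Equivalently, with $g=f^{-1}$ near $0$ this reads $g(y)^{\,}=\dots$; I would rather work with the inverse branch. A stable orbit satisfies $x_i = u(i)$ with $u(i+1)=u(i)^{\ldots}$ decreasing superexponentially. Setting $x=h(y)$, the equation $u(i)=u(i+1)^{1/d}-qu(i+1)-u(i+2)$ gives the functional equation
\[
h(y)=y^{1/d}-qy-h^{-1}\text{-type term}.
\]
I would solve this by a contraction argument in a weighted space of functions with $h(y)=y^{1/d}(1+O(y^{1-1/d}))$, using the substitution $y=t^d$ to get analyticity on $[0,r)$. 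Here the graph of $f$ (defined as $f=h^{-1}$) has $f(x)\approx x^d$, which is $C^1$ with $f'(0)=0$. Extending $f$ oddly uses that $\varphi$ commutes with $-\mathrm{id}$.

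Next, every orbit converging to $0$ eventually enters a small neighbourhood. There a cone/trapping estimate shows it must lie on $\mathrm{graph}\,f$: off the graph the $y$-coordinate is expelled, because $|y|^{1/d}$ expands near $0$. This gives $W^s(0)=\bigcup_n\varphi^{-n}(\mathrm{graph}\,f)$. Reversibility $\varphi^{-1}=S\varphi S$ then gives $W^u(0)=S(W^s(0))$.

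\textbf{Symmetric homoclinic points.} I would take the homoclinic points from Theorem~\ref{t:ex2-main}. The $\iota$-symmetric solution $u(-i)=u(i)$ gives $z=(u(0),u(1))$ with $\varphi^{-1}(z)=(u(-1),u(0))=S(\ldots)$ and appropriate indexing. Precisely, $z_{-1}=(u(-1),u(0))=(u(1),u(0))=S z_0$, so $\varphi(z')=S(z')$ for $z'=z_{-1}$. The $\jmath$-symmetric solution $u(1-i)=u(i)$ gives $z_0=(u(0),u(1))$ on the diagonal. Positivity of $u$ puts both orbits in the open quadrant. Points on $\mathrm{Fix}(S)$ or with $\varphi z'=Sz'$ lying in $W^s$ automatically lie in $W^u=S W^s$.

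\textbf{Transversality.} Topological transversality at $z$ comes from $W^s$ and $S W^s$ being curves exchanged by a reflection through the crossing point. Near $z$, the analytic arc $W^s$ (analytic away from $\{y=0\}$, since the orbit stays in the open quadrant) either coincides with its mirror image or meets it at an isolated point. By analyticity, local coincidence propagates to $W^s=W^u$ globally. Otherwise the mirror symmetry forces an odd-order crossing, since a tangency of even order would not be preserved by reflection unless the arc is symmetric. The same argument works for $z'$ with the reversor $\varphi^{-1}\circ S$ or $S\circ\varphi$. Making this rigorous needs a careful local analysis of analytic arcs; I expect this to be a technical but standard step.

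\textbf{Finiteness of degenerate $q$.} This is the main obstacle. $W^s=W^u$ means the homoclinic orbit is degenerate everywhere and the splitting function vanishes identically. I would first show that for $q\geq \frac{d+2}{d-1}$ there is a definite sign in an explicit splitting quantity. A candidate is the first return of the symmetric branch, with monotonicity of $y^{1/d}-qy$ exploited once $q$ is large.

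For the remaining compact range I would use analyticity in $q$. The map $\varphi$, the local manifold $f=f_q$ (by analytic dependence in the contraction), and hence a splitting function $\Delta(q,s)$ along the curve are real analytic in $q$. The set of $q$ with $\Delta(q,\cdot)\equiv 0$ is then the zero set of an analytic function of $q$ on a neighbourhood of $[0,\frac{d+2}{d-1}]$. It is not identically zero, because it is nonzero at large $q$ and the domain is connected. Hence this set is finite. The delicate point is defining $\Delta$ analytically in $q$ on a connected domain including the boundary $q=0$ and beyond the threshold. I would do this via the first intersection of $\mathrm{graph}\,f_q$ with its image under $S\varphi^{-k}$ at a fixed $k$.
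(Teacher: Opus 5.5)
Your existence step for $z$ and $z'$ is valid and differs from the paper. Taking the $\jmath$- and $\iota$-symmetric positive solutions $u_2,u_1$ from Theorem~\ref{t:ex2-main} and setting $z=(u_2(0),u_2(1))$, $z'=(u_1(-1),u_1(0))$ does give homoclinic points with the required symmetries and orbits in the open quadrant. The paper instead locates them on explicit arcs built from $g(y)=y^{1/d}-qy-f(y)$: the fixed point $x_*\in(s,r)$ of $g$, and a three-case analysis of $g(r)$ versus $s$. Your contraction approach to the local stable manifold is also workable, although it is simpler to solve $f=(x+qf+f\circ f)^d$ directly with $f(x)=x^dG(x^{d-1})$ than an equation involving $h^{-1}$. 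The proposal breaks down in the last two parts.

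\emph{Transversality.} The reflection argument needs the arc to be transverse, at the crossing point, to the fixed-point set of the reversor. In coordinates where the reversor is $(a,b)\mapsto(-a,b)$, the arc must be a graph $b=v(a)$, so that $v(a)-v(-a)$ is odd and changes sign. Your claim that an even-order tangency cannot survive the reflection unless the arc is symmetric is false. For example, $\{a=b^2\}$ and its mirror image $\{a=-b^2\}$ are tangent at $0$ and do not cross. Since your $z,z'$ come from the variational theorem, you have no information on the direction of $W^s(0)$ at those points, so a symmetric homoclinic tangency is not excluded. The paper gets the needed hypothesis from its construction. The point $z=(x_*,x_*)$ has $x_*>s$, so $g'(x_*)<0$ and $\mathrm{graph}\,g\subset W^u(0)$ crosses the diagonal transversally, which is exactly what Lemma~\ref{l:transverse} requires. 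For $z'$, the intersection is genuinely transverse in Case~1 ($f'>0>g'$), and in Case~3 it comes from a sign change of the difference of two graphs over the same variable. Only Case~2 uses the reversor $S\circ\varphi$, whose fixed curve $x=\frac12(y^{1/d}-qy)$ is transverse to the common tangent $y=r$.

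\emph{Finiteness.} Your $\Delta(q,s)$ is never defined, and its analyticity on a connected $q$-domain containing $[0,\frac{d+2}{d-1}]$ is not justified: the contraction gives analyticity only locally, and the intersection geometry changes with $q$. Its nonvanishing for $q\ge\frac{d+2}{d-1}$ is only conjectured. What is missing is a reduction to a single scalar function of $q$, which the paper does in three steps.
\begin{enumerate}[(a)]
\item A topologically transverse intersection of the positive branches forces $W^s(0)\neq W^u(0)$ (Lemma~\ref{l:transverse-distinct}, via the horseshoe: the point coded by $\cdots000111\cdots$ lies in $W^u(0)\setminus W^s(0)$). Hence $W^s(0)=W^u(0)$ forces non-transversality at $(x_*,x_*)$, i.e.\ $g_q'(x_*(q))=-1$.
\item Since $x_*(q)>c_q=(q+2)^{-\frac{d}{d-1}}$ and $f_q'\ge0$, one has $g_q'(x_*(q))<\frac1d x_*(q)^{\frac1d-1}-q<\frac1d(q+2)-q\le-1$ whenever $q\ge\frac{d+2}{d-1}$.
\item The maps $q\mapsto x_*(q)$ and $q\mapsto g_q'(x_*(q))$ are real analytic near $[0,+\infty)$. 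This follows from the implicit function theorem applied to $g_q(x)=x$, which is nondegenerate since $g_q'(x_*)-1<0$. It uses joint analyticity of $(q,x)\mapsto F_q(x)$, from the bound $|a_k(q)|\le a_k(|q|)$ on the coefficients, and continuity of $q\mapsto r(q)$ (Lemma~\ref{l:r-continuous}).
\end{enumerate}
An analytic function that is not identically $-1$ takes the value $-1$ only finitely often on the compact interval, and by (b) never beyond it. This yields both the finiteness and the bound $\frac{d+2}{d-1}$.
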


\begin{figure}[h]
\centerline{\scalebox{.4}{\includegraphics{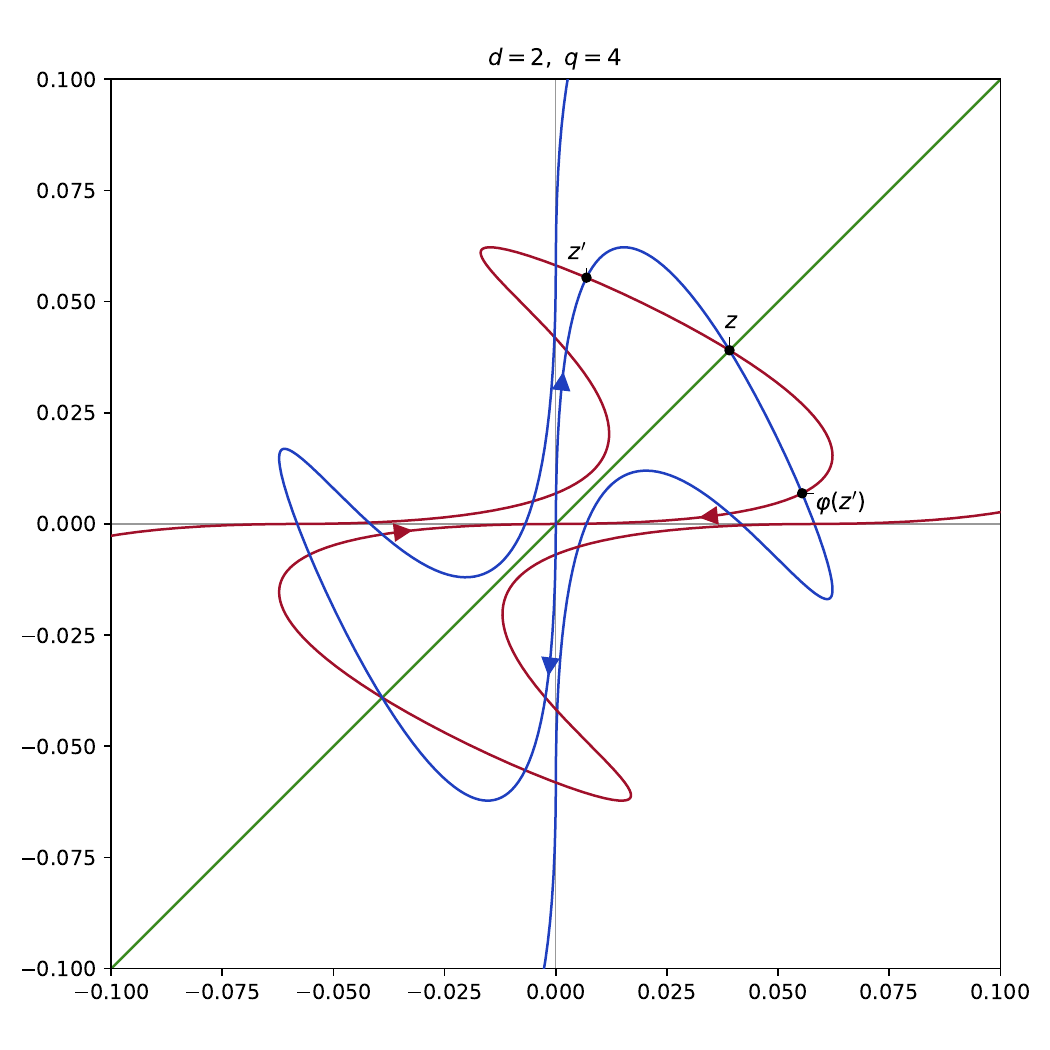} \includegraphics{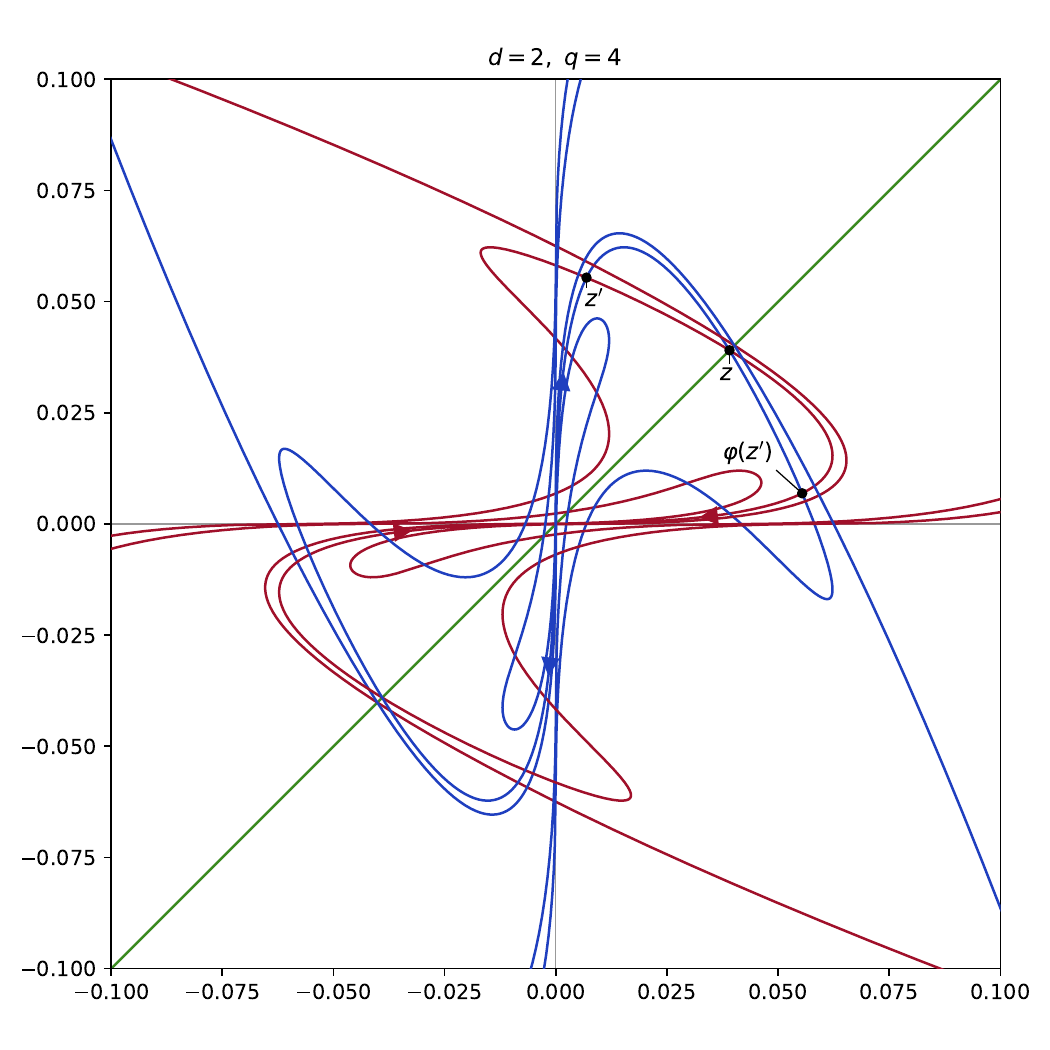}}}
\caption{\small{Subarcs of $W^s(0)$ and $W^u(0)$ and their intersection points $z$ and $z'$.}}
\label{f:figintro}
\end{figure} 

The two pictures in Figure \ref{f:figintro} represent subarcs of $W^s(0)$ and $W^u(0)$ and their intersections at $z$ and $z'$. The stable subarc is $\varphi^{-2}(\mathrm{graph}\, f)$ on the left, and $\varphi^{-3}(\mathrm{graph}\, f)$ on the right.

See also \cite{jast14}, where a planar homeomorphism with similar properties appears in the study of the discrete $p$-Schr\"odinger equation, and two symmetric homoclinic orbits are found in a similar way.

The two Gibbs measures $\mu$ and $\mu'$ which are associated to the homoclinic points $z$ and $z'$ of the above theorem satisfy $\jmath_*(\mu)=\mu$ and $\iota_*(\mu') = \mu'$, that is, the
 two symmetry properties which we saw in general in Theorem \ref{t:ex2-main}. Moreover, their single-site marginals $\pi,\pi': \Z \rightarrow [0,1]$ are unimodal functions and decay to zero with order $d$. See Corollary \ref{c:info+} for the precise statement. 

The equality $W^s(0)=W^u(0)$ would give us a homoclinic connection contained in the first quadrant, and hence a continuum of Gibbs measures.
Numerical simulations suggest that $W^s(0)=W^u(0)$ never happens (see the brief discussion at the end of Section \ref{s:homoconn}), but we do not have a proof of this.

\paragraph{Coding of Gibbs measures.} The existence of topologically transverse intersections of the stable and unstable manifold of $\varphi$ is known to imply the presence of a horseshoe. More precisely, it implies the existence of a positive integer $N$ and of compact $\varphi^N$-invariant subset $\Lambda\subset \R^2$ together with a continuous surjective map
\[
H: \Lambda \rightarrow \{0,1\}^{\Z} 
\]
such that $H\circ \varphi^N = \sigma \circ H$, where $\sigma: \{0,1\}^{\Z} \rightarrow \{0,1\}^{\Z}$ denotes the shift $(\sigma \omega)(i)=\omega(i+1)$. The standard proofs of this fact require $C^1$ regularity. In Appendix \ref{a:th}, building on an argument which is due to Burns and Weiss \cite{bw95}, we prove a general result about existence of a horseshoe for homeomorphisms of surfaces, which can be applied to the map $\varphi$. See Theorem \ref{t:horseshoe}. When applied to the topologically transverse intersection point $z' \in W^s(0)\cap W^u(0)$ from Theorem \ref{t:omocline-main}, this theorem implies that every point $w\in \Lambda$ which is mapped by $H$ to a bi-infinite sequence $\omega\in \{0,1\}^{\Z}$ such that $\omega(i)=0$ for $|i|$ large enough is a homoclinic point for $\varphi$ with orbit $\{\varphi^k(w)\}_{k\in \Z}$ fully contained in the first quadrant. 
This leads to the following coding result for Gibbs measures, which is proven in Section \ref{s:coding}.

\begin{mainthm}
\label{t:coding-main}
Let $d\geq 2$ be an integer and assume that $q\in [0,+\infty)$ does not belong to the (possibly empty) finite set of values for which $W^u(0)=W^s(0)$. Set $Q:= q\mathbbm{1}_{\{0\}} + \mathbbm{1}_{\{-1,1\}}$. Then there exists a positive integer $N$ and numbers $\theta>1$ and $\eta>0$ such that for every non-empty finite set $A\subset N\Z$ there exists a Gibbs measure $\mu_A$ in $\mathscr{G}(d,Q)$ whose single-site marginal $\pi_A$ satisfies
\begin{equation}
\label{e:favorisce-main}
\min_{i\in A} \pi_A(i) \geq \theta \, \max_{i\in \Z\setminus A} \pi_A(i),
\end{equation}
and whose weight $w_A$  satisfies $w_A \geq \eta |A|$, where $|A|$ denotes the cardinality of $A$.  If $A'$ is not a translated copy of $A$, then the Gibbs measures $\mu_A$ and $\mu_{A'}$ represent distinct elements in $\mathscr{G}(d,Q)/\Z$, which is then an infinite set.
\end{mainthm}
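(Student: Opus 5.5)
We will derive Theorem \ref{t:coding-main} from Theorem \ref{t:omocline-main} and the horseshoe result of Appendix \ref{a:th} (Theorem \ref{t:horseshoe}), applied at the topologically transverse homoclinic point $z'$. The outcome is a compact $\varphi^N$-invariant set $\Lambda$ and a semiconjugacy $H:\Lambda\to\{0,1\}^{\Z}$ with $H\circ\varphi^N=\sigma\circ H$. The set $\Lambda$ lies in a small neighbourhood of the closure of the orbit of $z'$. Concretely, it is contained in the union of two disjoint small boxes: $B_0$ around the origin and $B_1$ around a point $\varphi^{k_0}(z')$ of the orbit, where the value $u'$ is near its maximum. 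The symbol $\omega(i)$ records which box contains $\varphi^{Ni}(w)$, and in fact which of the excursions it follows during the block of $N$ steps. We fix the box $B_1$ so that the first coordinate $x$ on it is bounded below by some $c_1>0$. We also require the corresponding block of $N$ consecutive values of the solution to be a small perturbation of a fixed window of $u'$.

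For a nonempty finite $A\subset N\Z$, let $\omega_A\in\{0,1\}^{\Z}$ be defined by $\omega_A(i)=1$ iff $Ni\in A$, and pick $w_A\in H^{-1}(\omega_A)$. The paper states, as a consequence of Theorem \ref{t:horseshoe} applied at $z'$, that such a $w_A$ is homoclinic to $0$ with orbit in the open first quadrant. Setting $u_A(i):=x$-coordinate of $\varphi^i(w_A)$ gives a positive solution of \eqref{e:conv-eq} decaying at both ends. We then check that it lies in $\ell^p(\Z)$. Decay near the saddle-type origin is controlled by the local description of $W^s(0)$ as $\bigcup\varphi^{-n}(\mathrm{graph}\,f)$ and its symmetric $W^u(0)$, which gives uniform (order $d$) decay along the two tails. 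The intermediate stretches spent in $B_0$ will be handled by a uniform shadowing estimate. Through the correspondence of Section \ref{s:definitions} we obtain $\mu_A\in\mathscr{G}(d,Q)$.

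For the weight bound, each $a\in A$ contributes an excursion through $B_1$, giving $u_A(a+k_0)\ge c_1$. Since these indices are distinct, $\|u_A\|_p^p\ge |A| c_1^p$, hence $w_A\ge\eta|A|$ with $\eta=\|Q\|_{(d+1)/2}^{(d+1)/(d-1)}c_1^p$. For the separation inequality \eqref{e:favorisce-main}, we replace $A$ by $A+k_0$. This is harmless: a translate in the spin space only translates the measure, but to keep $A\subset N\Z$ we shift the coding so that the maximal point of the excursion sits at time $0$ modulo $N$. This is arranged by choosing which point of the orbit of $z'$ the box $B_1$ surrounds.

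Inside $\Lambda$, along each $1$-block the values of $u_A$ are uniformly close (say within $\epsilon$) to the values of $u'$ on a window of length $N$, and along each $0$-block they are at most $\epsilon$. Since $\pi_A$ is proportional to $u_A^p$, we need
\[
\min_{a\in A}u_A(a)\ge \theta^{1/p}\max_{i\notin A}u_A(i).
\]
This holds if $\max u'$ is attained strictly, i.e.\ $u'(0)>u'(i)$ for $i\ne 0$, and $\epsilon$ is small. Strict attainment follows from the symmetry $\iota$ and the unimodality of $\pi'$ in Corollary \ref{c:info+}; note that $z'$ corresponds to the $\iota$-symmetric measure, whose maximum is at the single site $0$. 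Distinctness modulo $\Z$ then follows from \eqref{e:favorisce-main}: the set where $\pi_A$ is large recovers $A$ up to translation.

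The main obstacle is quantitative: making the boxes small enough for the $\epsilon$-closeness while keeping the uniform $\ell^p$ control. The horseshoe is only topological ($\varphi$ is not $C^1$ at $0$), so we cannot use hyperbolic shadowing estimates directly. We would first try to obtain them from the explicit local graph structure near $0$, namely the monotone contraction along $\mathrm{graph}\,f$.
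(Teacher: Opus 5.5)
Your proposal is essentially the paper's proof. You apply Theorem \ref{t:horseshoe} at the transverse $\iota$-symmetric homoclinic point of Theorem \ref{t:omoclina2}. Its strict maximum $u(0)>u(1)$ lets you fix $A$-independent thresholds $a<b$, with $U_1\subset\{y>b\}$ and $U_0,V\subset\{y<a\}$, and \eqref{e:favorisce-main}, the weight bound and distinctness modulo $\Z$ all follow from these thresholds. For distinctness, argue with the fixed thresholds directly, since \eqref{e:favorisce-main} alone does not determine $A$ from $\pi_A$.

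The ``main obstacle'' you flag is not one. The closeness you need is exactly conclusion (iii) of Theorem \ref{t:horseshoe}, in which $U_0$, $U_1$, $V$ may be chosen freely. No uniform $\ell^p$ control or shadowing estimate is required: each $u_A$ is a single homoclinic orbit in the open quadrant, so it lies in $\ell^p(\Z)$ by Proposition \ref{p:gibbs=homo}.
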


Condition \eqref{e:favorisce-main} tells us that the Gibbs measure $\mu_A$ favours the states in $A$ over the states which are not in $A$. Note that in this setting of small support, without any perturbative assumption we obtain Gibbs measures with single-site marginals concentrated near arbitrary finite subsets of $N\Z$. This should be compared to the perturbative results from \cite{ahkm24}, where we have single-site marginals concentrated near arbitrary subsets of $\Z$ with an upper bound on the cardinality. 

\medskip

Theorems \ref{t:superpos-main} and \ref{t:coding-main} suggest that $\mathscr{G}(d,Q)/\Z$ is an infinite set for most if not all $\frac{d+1}{2}$-summable interaction functions, without any need to go to a zero-temperature limit. This is due to the fact that we are considering an infinite spin space. One may then wonder whether some version of the paradigm ``a lower temperature forces the existence of more Gibbs measures'' survives in this setting, in which an infinite set of Gibbs measures seems to be the rule. We do not have a precise answer to this question, but the mechanism behind Theorem \ref{t:coding-main} suggests that in the case of the interaction function \eqref{e:smallQ}
there are qualitative differences in the Gibbs measures we find as we vary the parameter $q$. The number $N$ from this theorem, telling us how much a finite subset of $\Z$ should be sparse in order to find Gibbs measures with single-site marginal concentrated near it, depends on the angle between the stable and unstable manifolds at the intersection point $z'$ producing the horseshoe, and $N$ gets larger if this angle is small. When $q$ is large, $W^s(0)$ and $W^u(0)$ intersect at $z'$ transversally with an angle which is bounded away from zero. Instead, for values of $q$ in the range $[0,1]$ this angle becomes small, and $W^s(0)$ and $W^u(0)$ may even become tangent at $z'$. Therefore, $N$ is large when $q$ is small, and small when $q$ is large. If we have access only to the restriction of the single-site marginals of Gibbs measures to some bounded interval in $\Z$, we would then see a larger variety of behaviours for large values of $q$, corresponding to the zero-temperature limit. It would be interesting to turn these vague considerations into some precise statement.

\paragraph{Interaction functions with larger support.} We conclude this introduction by describing the contents of the last two Sections \ref{s:ifbs} and \ref{s:holc}. In the first one, we show that the one-to-one correspondence between Gibbs measures in $\mathscr{G}(d,Q)$ and homoclinic orbits of a (non-smooth) symplectomorphism holds whenever $Q$ has bounded support. More precisely: if $Q$ is supported in $\{-n,\dots,n\}$ and $Q(n)=Q(-n)\neq 0$, then there is a one-to-one correspondence between Gibbs measures in $\mathscr{G}(d,Q)$ and the homoclinic points of the (non-smooth) symplectomorphism
\[
\varphi : \R^{2n} \rightarrow \R^{2n}
\]
which is associated to the discrete Lagrangian system which is induced by the Lagrangian
\[
L : \R^{n+1} \rightarrow \R, \qquad L(x_0,x_1,\dots,x_n) := \frac{1}{2} Q(0) x_0^2 + \sum_{i=1}^n Q(i) x_0 x_i - \frac{d}{d+1} |x_0|^{\frac{d+1}{d}}.
\]
As in the case $n=1$, this symplectomorphism is reversible with respect to an anti-symplectic involution $S: \R^{2n} \rightarrow \R^{2n}$.
See Section \ref{s:ifbs} for more about the formalism of discrete Lagrangian systems and their associated symplectomorphisms.

In Section \ref{s:holc}, we show how this bijection needs to be modified when $Q$ has unbounded support. In this case, the Lagrangian
\[
L(x):= \frac{1}{2} Q(0) x_0^2 + \sum_{i\geq 1} Q(i) x_0 x_i - \frac{d}{d+1} |x_0|^{\frac{d+1}{d}}
\]
on the Banach space $E:=\ell^p(\Z_{\geq 0})$, where as before $p:= \frac{d+1}{d}$, and the left shift operator $\sigma:E \rightarrow E$, $(\sigma x)(k) = x(k+1)$, generate the Lagrangian correspondence 
\[
\mathcal{R} :=\bigl\{ \bigl( (x,y), (x',y') \bigr) \in T^* E \times T^* E \mid x' = \sigma x, \; y = \sigma^* y' - \di L(x) \bigr\}
\]
on the cotangent bundle $T^* E = E \times E^*$ equipped with its canonical symplectic structure. The orbits of this Lagrangian correspondence, that is, the $T^* E$-valued bi-infinite sequences $(x_k,y_k)_{k\in \Z}$ such that $((x_k,y_k), (x_{k+1},y_{k+1}))\in \mathcal{R}$, with the further property of being homoclinic to the origin in a suitable weak sense and such that $x_k(i)>0$ for every $k\in \Z$ and $i\geq 0$, are in one-to-one correspondence with the Gibbs measures in $\mathscr{G}(d,Q)$.

In Sections \ref{s:ifbs} and \ref{s:holc}, we content ourselves with establishing these one-to-one correspondences, but we believe that they should be the starting point for extending the detailed analysis we developed for $Q$ supported in $\{-1,0,1\}$ to  interaction functions with arbitrary bounded support, and possibly even to arbitrary $\frac{d+1}{2}$-summable interaction functions.

\paragraph{Acknowledgments.} We wish to thank Christof K\"ulske and Florian Henning for introducing us to Gibbs measures on trees and for stressing the relevance of equation \eqref{e:main-eq} in this context. 

We were assisted by Claude Opus 5.5 and OpenAI GPT-5.4 in drafting the proofs of some known results which we wished to include for the reader's convenience (Propositions \ref{p:finite-partition-function}, \ref{p:zachary}, and \ref{p:conc-comp}), in polishing some of the combinatorial arguments  of Section \ref{s:superposition}, in making the pictures, and in the numerical simulations mentioned at the end of Section \ref{s:homoconn}. Claude was extremely helpful for grammatically and mathematically proofreading the manuscript.

\numberwithin{equation}{section}

\section{Homogeneous Markov Gibbs measures on Cayley trees}
\label{s:definitions}

In this section, we present a self-contained introduction to the special class of Gibbs measures on the special family of infinite graphs which are the subject of this paper. Definitions are adapted to the setting we are considering here but are equivalent to the standard ones, for which we refer to the classical book of Georgii \cite{geo88}. 

 Let $d\geq 2$ be an integer and denote by $\Gamma_d=(V,E)$ the \textit{Cayley tree of order} $d$: $\Gamma_d$ is the unrooted tree with countably many vertices, each of which has degree $d+1$. 
 
 As \textit{spin space} we choose the ring of integers $\Z$, thus obtaining the \textit{configuration space} $\Omega:=\Z^V$. For $\sigma\in \Omega$ and $x\in V$, we write $\sigma_x$ for the spin at $x$ of the configuration $\sigma$. We endow $\Omega$ with the $\sigma$-algebra $\mathscr{A}$ generated by cylindrical subsets of $\Omega$, namely sets of the form $\{\sigma\in \Omega \mid \sigma|_{\Lambda} = \xi\}$, where $\Lambda$ is a finite subset of $V$ and $\xi\in \Z^{\Lambda}$.
 
 Tree automorphisms of $\Gamma_d$ act on the set of probability measures on $(\Omega,\mathscr{A})$, and a probability measure on $(\Omega,\mathscr{A})$ is said to be \textit{homogeneous} if it is invariant under all tree automorphisms. 
 
Let $\mu$ be a homogeneous probability measure on $(\Omega,\mathscr{A})$. Since the automorphism group of $\Gamma_d$ acts transitively on vertices, the quantity
\[
\pi(i) := \mu ( \sigma_x = i ), \qquad\forall  i\in \Z,
\]
does not depend on the choice of the vertex $x\in V$. It defines a probability measure $\pi$ on $\Z$ which is called \textit{single-site marginal} of $\mu$. Since automorphisms of $\Gamma_d$ act transitively on oriented edges, the quantity
\[
P(i,j) := \left\{ \begin{array}{cl} \mu (\sigma_y=j \mid \sigma_x=i) & \mbox{if } \pi(i)>0, \\ \delta_{ij} & \mbox{if } \pi(i)=0, \end{array} \right. \qquad \forall (i,j)\in \Z\times \Z,
\]
does not depend on the edge $\{x,y\}\in E$ either. It is a stochastic matrix, meaning that the sum of $P(i,j)$ over all $j\in \Z$ is 1, and is called \textit{transition matrix} of the homogeneous measure $\mu$. By the definition of conditional probability, we have, for any choice of neighbouring vertices $x$ and $y$,
\begin{equation}
\label{e:step1}
\mu(\sigma_x = i, \; \sigma_y=j) = \pi(i) P(i,j), \qquad \forall (i,j)\in \Z\times \Z.
\end{equation}
By considering an automorphism of $\Gamma_d$ which interchanges $x$ and $y$, we obtain the \textit{reversibility identity}
\begin{equation}
\label{e:reversibility}
\pi(i) P(i,j) = \pi(j) P(j,i), \qquad \forall (i,j)\in \Z \times \Z.
\end{equation} 

A homogeneous probability measure $\mu$ on $(\Omega,\mathscr{A})$ is called  \textit{Markov} if it is determined by its single-site marginal and transition matrix. More precisely, fix a vertex $o\in V$. Every finite connected set $\Lambda\subset V$ containing $o$ is the set of vertices of a finite rooted tree $(\Lambda,E_{\Lambda})$ with root $o$, where $E_{\Lambda}$ is the set of edges in $E$ connecting two vertices in $\Lambda$ and oriented away from $o$. The \textit{Markov property} for the 
homogeneous probability measure $\mu$ on $(\Omega,\mathscr{A})$ is the condition
 \begin{equation}
 \label{e:markov}
 \mu\bigl( \sigma|_{\Lambda} = \eta \bigr) = \pi(\eta_o) \prod_{(x,y)\in E_{\Lambda}} P(\eta_x,\eta_y),
 \end{equation}
 for every function $\eta: \Lambda \rightarrow \Z$. This condition generalizes \eqref{e:step1}. Kolmogorov extension theorem implies that every probability measure $\pi$ on $\Z$ and stochastic matrix $P$ indexed by $\Z^2$ satisfying \eqref{e:reversibility} determine a unique homogeneous probability measure $\mu$ on $(\Omega,\mathscr{A})$ having single-site marginal $\pi$, transition matrix $P$, and satisfying the Markov property \eqref{e:markov}. 

By \textit{interaction function} we here mean a function $Q:\Z \rightarrow \R$ which is nonnegative, not identically zero, and even. 
The value $Q(k)$ should be thought of as \textit{statistical weight} of the configurations $\sigma\in \Omega$ such that $\sigma_x-\sigma_y=k$, where $\{x,y\}$ is any edge of $\Gamma_d$.  A probability measure $\mu$ on $(\Omega,\mathscr{A})$ is called a \textit{Gibbs measure} for the interaction function $Q$ if for every finite set $\Lambda\subset V$, denoting by $\partial \Lambda\subset  V$ the set of $y$ in $V\setminus \Lambda$ such that $\{x,y\}\in E$ for some $x\in \Lambda$,  we have
\begin{equation}
\label{e:gibbs-gen}
\mu(\sigma|_{\Lambda} =\xi \mid \sigma|_{\partial \Lambda} = \eta) = \frac{w_{\Lambda}(\xi, \eta)}{Z_{\Lambda}(\eta)}, 
\end{equation}
where
\[
w_{\Lambda}(\xi, \eta) := \prod_{\substack{ \{x,y\}\in E \\ \mbox{\footnotesize{with }} x,y\in \Lambda}} Q(\xi_x - \xi_y)  \prod_{\substack{x \in \Lambda, \; y\in \partial \Lambda\\ \{x,y\}\in E}} Q(\xi_x - \eta_y), \qquad Z_{\Lambda}(\eta) := \sum_{\xi \in \Z^{\Lambda}} w_{\Lambda}(\xi,\eta),
\]
for every $\xi\in \Z^{\Lambda}$ and $\eta\in \Z^{\partial \Lambda}$ such that $Z_{\Lambda}(\eta)>0$. In order for the above \textit{partition function} $Z_{\Lambda}$ to be finite, a suitable summability assumption on $Q$ is needed. Setting as usual
\[
\ell^q(\Z) := \Bigl\{ u\in \R^{\Z} \mid \|u\|_q := \Bigl( \sum_{j\in \Z} |u(j)|^q \Bigr)^{\frac{1}{q}} < \infty \Bigr\},
\]
we have the following result, which for the reader's convenience is proven in Appendix \ref{a:gibbs-measures}.

\begin{prop}
\label{p:finite-partition-function}
Assume that the interaction function $Q$ is in $\ell^d(\Z)$. Then for every finite set $\Lambda\subset V$ and every $\eta\in \Z^{\partial \Lambda}$ we have
\[
Z_{\Lambda}(\eta) \leq \|Q\|_d^{e(\Lambda)} < +\infty,
\]
where $e(\Lambda)$ denotes the number of edges of $\Gamma_d$ having at least one end-point in $\Lambda$.
\end{prop}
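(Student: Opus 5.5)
We have to bound $Z_\Lambda(\eta)=\sum_{\xi\in\Z^\Lambda}w_\Lambda(\xi,\eta)$, and the idea is to sum out the vertices of $\Lambda$ one at a time, each time paying one factor $\|Q\|_d$ per edge at that vertex. The tool is H\"older's inequality with $d+1$ factors. Since $\frac{d+1}{d}$ is not $1$, we cannot simply place all $d+1$ factors in $\ell^{d+1}$. Instead we use that each vertex has degree $d+1$, and that edges inside $\Lambda$ are counted from both endpoints.

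\textbf{Base case.} Reduce first to connected $\Lambda$: the weight factorizes over connected components, which share no edges, and $e(\Lambda)$ is additive over components. Next consider a single vertex $x$ with neighbours $y_1,\dots,y_{d+1}$, all in $\partial\Lambda$. Then
\[
Z=\sum_k\prod_{h=1}^{d+1} Q(k-\eta_{y_h}) \le \|Q\|_{d+1}^{d+1}\le \|Q\|_d^{d+1},
\]
by H\"older in $\ell^{d+1}$ and the inclusion $\ell^d\subset\ell^{d+1}$. This matches $e(\Lambda)=d+1$.

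\textbf{Induction on $|\Lambda|$.} Since $\Lambda$ induces a finite subtree, pick a leaf $x\in\Lambda$. Its unique neighbour in $\Lambda$ is $z$, and its other $d$ neighbours $y_1,\dots,y_d$ lie in $\partial\Lambda$. Summing over $\xi_x$ first gives
\[
Z_\Lambda(\eta)=\sum_{\xi'\in\Z^{\Lambda\setminus\{x\}}} w'(\xi')\,\sum_k Q(k-\xi'_z)\prod_{h=1}^d Q(k-\eta_{y_h}).
\]
Here $w'$ collects the remaining factors; these are exactly the weight for $\Lambda'=\Lambda\setminus\{x\}$ except that the edge $\{z,x\}$ is missing.

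\textbf{Main obstacle.} The natural inner bound via H\"older with $d$ factors in $\ell^d$ and $Q(k-\xi'_z)\le\|Q\|_\infty$ gives $\|Q\|_\infty\|Q\|_d^d$. The resulting factor no longer depends on $\xi'_z$, but $w'$ then lacks the $\{z,x\}$ edge factor, so it is not $w_{\Lambda'}$. I would instead keep the dependence on $\xi'_z$ and use a mixed bound: set
\[
g(j):=\sum_k Q(k-j)\prod_{h=1}^d Q(k-\eta_{y_h}).
\]
By Young and H\"older, $\|g\|_d\le\|Q\|_d\cdot\bigl\|\prod_h Q(\cdot-\eta_{y_h})\bigr\|_1\le\|Q\|_d^{d+1}$, using Young's inequality $\ell^d*\ell^1\subset\ell^d$. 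The function $g$ then plays the role of a modified interaction on the boundary of $\Lambda'$ at $z$.

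\textbf{Strengthened claim.} It is therefore cleaner to prove the stronger statement by induction: for a finite connected $\Lambda$, with arbitrary nonnegative functions $g_{x,y}\in\ell^d$ replacing $Q$ on the boundary edges,
\[
\sum_\xi \prod_{\text{inner edges}} Q(\xi_x-\xi_y)\prod_{\text{boundary edges}} g_{x,y}(\xi_x)\le \|Q\|_d^{\#\text{inner}}\prod\|g_{x,y}\|_d .
\]
Removing a leaf $x$ merges its $d$ boundary factors and the edge $\{x,z\}$ into a new boundary function at $z$. Its $\ell^d$ norm is bounded by Young and H\"older by $\|Q\|_d\prod_h\|g_{x,y_h}\|_d$, using $\sum_k\prod_{h=1}^d g_h(k)\le\prod\|g_h\|_d$. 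The new configuration has one fewer vertex and one fewer inner edge, so the induction closes. The single-vertex base case is H\"older with $d+1$ functions in $\ell^d\subset\ell^{d+1}$. Specializing to $g=Q(\cdot-\eta_y)$ gives $Z_\Lambda(\eta)\le\|Q\|_d^{e(\Lambda)}$.
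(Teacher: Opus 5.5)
Your proposal is correct and follows essentially the same route as the paper. Your strengthened claim is exactly the paper's auxiliary tree lemma: each vertex $x$ carries $d+1-k_x$ nonnegative $\ell^d$ functions, and removing a leaf produces the new boundary function $Q*a_x$ with $\|Q*a_x\|_d\le\|Q\|_d\|a_x\|_1\le\|Q\|_d\prod_h\|f_{x,h}\|_d$ by Young and H\"older. The paper also uses the same single-vertex base case (H\"older with $d+1$ factors and $\|\cdot\|_{d+1}\le\|\cdot\|_d$) and the same reduction to connected components, with $e(\Lambda)$ additive over them.
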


Therefore, when the interaction function $Q$ is $d$-summable, Gibbs measures are well-defined. Starting from the next section, we shall make the stronger assumption that $Q$ is $\frac{d+1}{2}$-summable. Note also that if $Q' = c\, Q$ for some positive number $c$, then $Q'$ and $Q$ define the same Gibbs measures. 

When a probability measure $\mu$ on $(\Omega,\mathscr{A})$ has the Markov property, the validity of \eqref{e:gibbs-gen} for arbitrary finite sets $\Lambda\subset V$ is equivalent to its validity for all singletons $\Lambda = \{x\}$, $x\in V$, see \cite[Chapter 12]{geo88}. In particular, a homogeneous Markov measure $\mu$ on $(\Omega,\mathscr{A})$ is Gibbs for $Q$ if, and only if, fixing a vertex $x\in V$ with neighbours $y_1,\dots,y_{d+1}$, for every $(j_1,\dots,j_{d+1})\in \Z^{d+1}$ such that
\[
\mu(\sigma_{y_1}=j_1, \dots, \sigma_{y_{d+1}}=j_{d+1} ) >0
\]
there exists at least one $k\in \Z$ for which $\prod_{h=1}^{d+1} Q(k-j_h)>0$ and for every $i\in \Z$  we have
\begin{equation}
\label{e:gibbs}
\mu \bigl(  \sigma_x =i \mid \sigma_{y_1}=j_1, \dots, \sigma_{y_{d+1}} = j_{d+1}  \bigr) = \frac{ \prod_{h=1}^{d+1} Q(i - j_h)}{\sum_{k\in \Z} \prod_{h=1}^{d+1} Q(k-j_h) }.
\end{equation}
Here, the H\"older inequality immediately implies the finiteness of the denominator:
\[
Z_{\{x\}}(\eta) = \sum_{k\in \Z} \prod_{h=1}^{d+1} Q(k-j_h) \leq \prod_{h=1}^{d+1}  \left( \sum_{k\in \Z} Q(k-j_h)^{d+1} \right)^{\frac{1}{d+1}} = \|Q\|_{d+1}^{d+1} \leq \|Q\|_d^{d+1} < +\infty,
\]
where $\eta\in \Z^{\partial \{x\}}$ is defined by $\eta_{y_1}=j_1, \dots, \eta_{y_{d+1}} =j_{d+1}$. 

The set of all homogeneous Markov Gibbs probability measures for $Q$ on the configuration space associated to the Cayley tree $\Gamma_d$ will be denoted by $\mathscr{G}(d,Q)$. 

We consider the action of $\Z$ by translations on $\Z$ and $\Omega=\Z^V$ which is generated by the map
\[
i \mapsto \tau(i) = i + 1, \qquad \sigma \mapsto \tau(\sigma) = \sigma + 1, \qquad \forall i\in \Z, \; \sigma\in \Omega,
\]
and, given measures $\pi$ and $\mu$ on $\Z$ and $\Omega$, the symbols $\tau_* (\pi)$ and $\tau_*(\mu)$ indicate the pushforward measures
\[
\begin{split}
\tau_*(\pi) (a) &= \pi(\tau^{-1}(a)) = \pi(a-1), \qquad \forall a\subset \Z, \\
\tau_*(\mu) (A) &= \mu(\tau^{-1}(A)) = \mu(A - 1), \qquad \forall A\in \mathscr{A}.
\end{split}
\]
Clearly, if $\mu\in \mathscr{G}(d,Q)$ has single-site marginal $\pi$ and transition matrix $P$, then $\tau_*(\mu)$ is also in $\mathscr{G}(d,Q)$ and has single-site marginal and transition matrix $\tau_*(\pi)$ and $P'$, where 
\[
P'(i,j) = (\tau\times \tau)_*(P)(i,j) = P(i-1,j-1).
\]
The action of $\Z$ by translations on $\mathscr{G}(d,Q)$ is free, and we denote by $\mathscr{G}(d,Q)/\Z$ the quotient by this action. We are interested in studying the set $\mathscr{G}(d,Q)/\Z$ of homogeneous Markov Gibbs measures modulo translations.

Another interesting group action on $\mathscr{G}(d,Q)$ is the $\Z_2$-action induced by the involution 
\begin{equation}
\label{e:defniota}
\iota: \Z \rightarrow \Z, \qquad \iota(i):= -i,
\end{equation}
which lifts to an involution $\iota(\sigma):= - \sigma$ on $\Omega$, and hence to an involution $\mu \mapsto \iota_*(\mu)$ on the space of probability measures on $(\Omega,\mathscr{A})$. The involution $\iota_*$ maps the homogeneous Markov measure with single-site marginal $\pi$ and transition matrix $P$ to the  homogeneous Markov measure with single-site marginal $\iota_*(\pi)$ and  transition matrix $(i,j) \mapsto P(-i,-j)$. Since $Q$ is even, $\mathscr{G}(d,Q)$ is invariant under $\iota_*$. 

Similarly, we have the  $\Z_2$-action induced by the involution 
\begin{equation}
\label{e:defnjmath}
\jmath:  \Z \rightarrow \Z, \qquad \jmath(i) := \tau\circ \iota (i) = 1-i,
\end{equation}
which lifts to an involution $\jmath$ on $\Omega$ and hence to an involution $\jmath_*$ on the space of probability measures on $(\Omega,\mathscr{A})$. Again, $\mathscr{G}(d,Q)$ is invariant under $\jmath_*$. 

In the trivial case in which $Q$ is supported in $\{0\}$, the elements of $\mathscr{G}(d,Q)$ are precisely the homogeneous Markov probability measures 
with arbitrary single-site marginal $\pi$ and transition matrix $P(i,j)=\delta_{i,j}$. 

The interaction function $Q$ is said to be \textit{irreducible} if the abelian subgroup of $\Z$ generated by its support is the whole $\Z$ or, equivalently, if the greatest common divisor of the set of positive integers $k$ with $Q(k)>0$ is 1. If $Q$ is not irreducible and nontrivial, then the subgroup generated by the support of $Q$ has the form $m\Z$ for some integer $m>1$, and the study of $\mathscr{G}(d,Q)$ can be reduced to the study of $\mathscr{G}(d,Q')$, where $Q'$ is the irreducible interaction function given by $Q'(i)=Q(mi)$. Indeed, the elements of $\mathscr{G}(d,Q)$ are the convex combinations 
$\sum_{j=0}^{m-1} t_j \mu_j$, where $\mu_j\in \mathscr{G}(d,Q)$ is supported in $(m\Z+j)^V$ and has the form
\[
\mu_j(A) = \mu_j'(A_j)  \;\mbox{ where} \; A_j := \Bigl\{x\mapsto \frac{\sigma(x)-j}{m} \mid \sigma\in A\Bigr\}, \quad A \in \mathscr{A}, \; A\subset   (m\Z+j)^V,
\]
with $\mu_j'\in \mathscr{G}(d,Q')$. Therefore, throughout this paper we assume $Q$ to be irreducible. 

We denote by $*$ the convolution product 
\[
(u*v)(i) := \sum_{j\in \Z} u(j) v(i-j),
\]
of real valued bi-infinite sequences $u$ and $v$ for which the family $\{u(j) v(i-j)\}_{j\in \Z}$ is summable for every $i\in \Z$. Note that by the $d$-summability assumption on $Q$, the H\"older and Young inequalityies ensure that $Q*u$ is well-defined and in $\ell^{d(d+1)}(\Z)$ whenever $u$ is in $\ell^p(\Z)$, where
\[
p:= \frac{d+1}{d} \in (1,2),
\]
is the conjugate exponent to $d+1$. Note also that the irreducibility of $Q$ implies that all nonnegative and not identically vanishing functions $u\in \ell^p(\Z)$ solving the equation
\begin{equation}
\label{e:main-eq}
u = (Q*u)^d
\end{equation}
are positive. We denote by $\mathscr{U}(d,Q)$ the set of positive solutions $u\in \ell^p(\Z)$ of \eqref{e:main-eq} and notice that this set is translation invariant, because the equation \eqref{e:main-eq} is autonomous, and invariant under the involutions $\iota^*$ and $\jmath^*$, where $\iota^*(u) := u\circ \iota$ and $\jmath^*(u) = u\circ \jmath$, because $Q$ is even.
 
\begin{prop}
\label{p:zachary}
Assume that the interaction function $Q\in \ell^d(\Z)$ is irreducible. Then there is a one-to-one correspondence between the sets $\mathscr{G}(d,Q)$ and $\mathscr{U}(d,Q)$. Indeed, every $u\in \mathscr{U}(d,Q)$ defines a unique $\mu\in \mathscr{G}(d,Q)$ whose single-site marginal and transition matrix are given by the formulas
\begin{equation}
\label{e:pi&P}
\pi(i) = \frac{u(i)^p}{\|u\|_p^p}, \qquad P(i,j) = \frac{Q(i-j)u(j)}{(Q*u)(i)},
\end{equation}
and, conversely, the single-site marginal and transition matrix of every $\mu\in \mathscr{G}(d,Q)$ have the form \eqref{e:pi&P} for a unique $u\in \mathscr{U}(d,Q)$.
Moreover, this correspondence is equivariant with respect to the actions generated by $\tau$, $\iota$, and $\jmath$ on $\mathscr{G}(d,Q)$ and on $\mathscr{U}(d,Q)$.
\end{prop}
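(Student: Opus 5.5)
We prove the two directions separately and then check equivariance.

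\emph{From $u$ to $\mu$.} Given $u\in\mathscr{U}(d,Q)$, we define $\pi$ and $P$ by \eqref{e:pi&P}. Since $u>0$, $Q*u>0$ as well. $P$ is stochastic by the definition of convolution. Writing $v:=Q*u$, so that $u=v^d$ and $u^p=v^{d+1}=u\,v$, reversibility \eqref{e:reversibility} reads
\[
\pi(i)P(i,j)=\frac{u(i)Q(i-j)u(j)}{\|u\|_p^p},
\]
which is symmetric in $i,j$ because $Q$ is even. By the Kolmogorov extension statement recalled above, this gives a unique homogeneous Markov $\mu$. To verify the single-site Gibbs condition \eqref{e:gibbs}, we apply \eqref{e:markov} to the star $\{x,y_1,\dots,y_{d+1}\}$ rooted at $x$:
\[
\mu(\sigma_x=i,\sigma_{y_h}=j_h)=\frac{u(i)^p}{\|u\|_p^p}\prod_h \frac{Q(i-j_h)u(j_h)}{v(i)}
=\frac{1}{\|u\|_p^p}\,\frac{u(i)^p}{v(i)^{d+1}}\prod_h Q(i-j_h)u(j_h).
\]
The factor $u^p/v^{d+1}=1$ disappears, so the joint law is proportional to $\prod_h Q(i-j_h)$ times a factor depending only on the $j$'s. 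Normalizing in $i$ gives \eqref{e:gibbs}, and the existence of some $k$ with $\prod_h Q(k-j_h)>0$ is automatic whenever the conditioning event has positive mass. Injectivity of $u\mapsto\mu$ follows because $\pi$ determines $u^p$ up to a scalar, and the scalar is fixed by the equation, since $u\mapsto cu$ solves \eqref{e:main-eq} only for $c=1$ ($d\neq1$).

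\emph{From $\mu$ to $u$.} This is the main step. Let $\mu\in\mathscr{G}(d,Q)$ with data $\pi,P$. First we show $\pi>0$ everywhere. The Gibbs property forces $P(i,j)>0$ whenever $\pi(i)>0$ and $Q(i-j)>0$, and irreducibility then propagates positivity to all of $\Z$. (Irreducibility suffices here since $Q(k)>0$ for $k$ in a generating set and $Q$ is even, so steps $\pm k$ are allowed.) Next we compare the star formula above with \eqref{e:gibbs}. For fixed $j_1,\dots,j_{d+1}$, the ratio
\[
\frac{\pi(i)\prod_h P(i,j_h)}{\prod_h Q(i-j_h)}
\]
must be independent of $i$ on the set where $\prod_h Q(i-j_h)>0$. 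Using reversibility, we set $g(i,j):=\pi(i)P(i,j)/Q(i-j)$ on $\{Q(i-j)>0\}$; this is symmetric. Taking all $j_h$ equal to $j$ and comparing two values $i,i'$ with $Q(i-j),Q(i'-j)>0$, we get that $\pi(i)^{-d}\prod_h g(i,j_h)$ is independent of $i$. Varying one $j_h$ at a time then gives a separation of variables $g(i,j)=a(i)a(j)$, with $a>0$ determined up to a scalar; connectivity of the support graph, which holds by irreducibility, is what makes this consistent. Once $P(i,j)=Q(i-j)a(i)a(j)/\pi(i)$ is established, stochasticity gives $\pi(i)=a(i)(Q*a)(i)$, and the Gibbs ratio yields $\pi(i)=c\,(Q*a)(i)^{d+1}$. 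Hence $a=c\,(Q*a)^d$, and rescaling $a$ to $u=\lambda a$ with $\lambda^{d-1}=c$ gives a solution of \eqref{e:main-eq}. Summability of $\pi$ means $u^p=u\,(Q*u)$ is summable, so $u\in\ell^p(\Z)$, and \eqref{e:pi&P} holds by construction. We expect the separation-of-variables step to be the delicate one: we must track which pairs have $Q(i-j)>0$ and use irreducibility to glue the local relations into a global factorization.

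\emph{Equivariance.} This is immediate. Replacing $u$ by $u\circ\tau^{-1}$, $u\circ\iota$ or $u\circ\jmath$ transforms $\pi$ and $P$ in \eqref{e:pi&P} exactly as described above for $\tau_*$, $\iota_*$ and $\jmath_*$, because $Q$ is even and convolution commutes with translations and reflections.
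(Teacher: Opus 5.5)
Your proposal is correct and is essentially the paper's proof: the forward direction is the same star computation using $u^p=(Q*u)^{d+1}$, and the converse rests on the same ingredients, namely the star with all $d+1$ leaves equal to $j$ (which also supplies the positive-mass conditioning event that your positivity claim silently needs), then reversibility and connectivity of the support graph of $Q$. The only muddled step is your separation of variables, where varying the $j_h$ one at a time is unnecessary: the all-equal star already gives $\pi(i)P(i,j)^{d+1}=K(j)\,Q(i-j)^{d+1}$ for a constant $K(j)$, i.e.\ $\pi(i)^{1/(d+1)}P(i,j)=Q(i-j)K(j)^{1/(d+1)}$, which is exactly the paper's factorization. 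The symmetry of $g$ plus connectivity then forces $K^{1/(d+1)}=c\,\pi^{d/(d+1)}$, so $a\propto\pi^{d/(d+1)}$.
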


The above proposition can be seen as a particular case of a more general result of Zachary \cite[Theorem 4.1]{zac83}. Solutions of \eqref{e:main-eq} are known under the name of \textit{boundary laws}. For the sake of completeness and in order to clarify the role of the irreducibility assumption and of the summability of the boundary law $u$, we provide a self-contained proof.

\begin{proof}
Assume first that $u$ is in $\mathscr{U}(d,Q)$ and define $\pi$ and $P$ by \eqref{e:pi&P}. Then $\pi$ is a probability measure on $\Z$, $P$ is a stochastic matrix indexed by $\Z^2$, and the reversibility identity \eqref{e:reversibility} holds. Denote by $\mu=M(u)$ the unique homogeneous Markov probability measure on $(\Omega,\mathscr{A})$ with single-site marginal $\pi$ and transition matrix $P$. We need to check that $\mu$ satisfies the Gibbs property \eqref{e:gibbs}.

Let $x\in V$, and let $y_1,\dots,y_{d+1}$ be its neighbouring vertices. We fix a tuple $(j_1,\dots,j_{d+1})$ in $\Z^{d+1}$ such that 
\[
\mu(\sigma_{y_1}=j_1, \dots, \sigma_{y_{d+1}} = j_{d+1})>0.
\]
By the Markov condition \eqref{e:markov} applied to the finite subtree determined by $\{x,y_1,\dots,y_{d+1}\}$ and rooted at $x$, we have for every $k\in \Z$
\[
\begin{split}
\mu(\sigma_x=k, \; \sigma_{y_1}=j_1, \dots, \sigma_{y_{d+1}} = j_{d+1}) &= 
\pi(k) \prod_{h=1}^{d+1} P(k,j_h) \stackrel{\eqref{e:pi&P}}{=} \frac{u(k)^p}{\|u\|_p^p} \prod_{h=1}^{d+1} \frac{Q(k-j_h) u(j_h)}{(Q*u)(k)} \\
& \stackrel{\eqref{e:main-eq}}{=} \frac{1}{\|u\|_p^p} \prod_{h=1}^{d+1} Q(k-j_h) u(j_h).
\end{split}
\] 
Adding over $k\in \Z$ we find
\[
\mu(\sigma_{y_1}=j_1, \dots, \sigma_{y_{d+1}} = j_{d+1})  = \frac{1}{\|u\|_p^p} \sum_{k\in \Z} \prod_{h=1}^{d+1} Q(k-j_h) u(j_h).
\]
This identity implies that $\prod_{h=1}^{d+1} Q(k-j_h)>0$ for at least one $k\in \Z$ and, together with the previous one, gives us
\[
\mu(\sigma_x=i \mid \sigma_{y_1}=j_1, \dots, \sigma_{y_{d+1}} = j_{d+1})  = \frac{ \prod_{h=1}^{d+1} Q(i-j_h) u(j_h) }{\sum_{k\in \Z} \prod_{h=1}^{d+1} Q(k-j_h) u(j_h)}, \qquad \forall i\in \Z,
\]
proving the Gibbs property \eqref{e:gibbs}.

\medskip

Conversely, let $\mu$ be in $\mathscr{G}(d,Q)$ and denote by $\pi$ and $P$ its single-site marginal and transition matrix. We wish to prove that $\pi$ and $P$ are determined through \eqref{e:pi&P} by some $u\in \mathscr{U}(d,Q)$. Here is where the irreducibility of $Q$ will be used.

Fix again some $x\in V$ and denote by $y_1,\dots,y_{d+1}$ its neighbouring vertices. Fix also some $(j_1,\dots,j_{d+1})\in \Z^{d+1}$ such that
\begin{equation}
\label{e:pequA}
\mu(\sigma_{y_1}=j_1, \dots, \sigma_{y_{d+1}} = j_{d+1})>0.
\end{equation}
By the Markov property we have for every $i\in \Z$ 
\[
\mu(\sigma_x = i, \; \sigma_{y_1}=j_1, \dots, \sigma_{y_{d+1}} = j_{d+1} ) = \pi(i) \prod_{h=1}^{d+1} P(i,j_h).
\]
By the Gibbs property, the above quantity is also equal to
\[
\begin{split}
& \mu(\sigma_x = i \mid  \sigma_{y_1}=j_1, \dots, \sigma_{y_{d+1}}=j_{d+1})  \mu( \sigma_{y_1}=j_1, \dots, \sigma_{y_{d+1}}=j_{d+1})  \\ &=  \mu(  \sigma_{y_1}=j_1, \dots, \sigma_{y_{d+1}}=j_{d+1}) \frac{ \prod_{h=1}^{d+1} Q(i-j_h) }{\sum_{k\in \Z} \prod_{h=1}^{d+1} Q(k-j_h) }.
\end{split}
\]
This shows that for every $(j_1,\dots,j_{d+1})\in \Z^{d+1}$ such that \eqref{e:pequA} holds and every $i\in \Z$ we have
\begin{equation}
\label{e:pequB}
\pi(i) \prod_{h=1}^{d+1} P(i,j_h) = C(j_1,\dots,j_{d+1}) \prod_{h=1}^{d+1} Q(i-j_h),
\end{equation}
for some positive function $C$.

\medskip

\noindent \textsc{Claim. } The support of $\pi$ is the whole $\Z$ and there exists a positive function $v: \Z \rightarrow (0,+\infty)$ such that 
\begin{equation}
\label{e:pequC}
\pi(i)^{\frac{1}{d+1}} P(i,j) = Q(i-j) v(j), \qquad \forall (i,j)\in \Z^2.
\end{equation}

\medskip

Let $j$ be in the support of $\pi$. Let $x$ and $y$ be neighbouring vertices. From the Markov property we find
\[
\pi(j) = \mu(\sigma_y = j) = \sum_{k\in \Z} \mu(\sigma_x = k, \; \sigma_y = j) = \sum_{k\in \Z} \pi(k) P(k,j),
\]
and hence there is some $k\in \Z$ such that $\pi(k) P(k,j)>0$. Denoting by $y_1,\dots,y_{d+1}$ the neighbours of $x$, we then have
\[
\mu(\sigma_x = k, \; \sigma_{y_1} = \dots = \sigma_{y_{d+1}} = j ) = \pi(k) P(k,j)^{d+1} >0,
\]
and we deduce that \eqref{e:pequA} holds for the tuple $(j_1,\dots,j_{d+1}) = (j,\dots,j)$. Then \eqref{e:pequB} gives us
\begin{equation}
\label{e:pequD}
\pi(i) P(i,j)^{d+1} = C(j) Q(i-j)^{d+1}, \qquad \forall i\in \Z,
\end{equation}
for a suitable positive number $C(j)$. This identity implies that $\pi(i)>0$ for all $i\in \Z$ such that $i-j$ belongs to the support of $Q$. Since this support generates $\Z$, the graph with vertex set $\Z$ and edges given by unordered pairs $\{i,j\}$ with $Q(i-j)>0$ is connected. By iterating the above argument, we conclude that the support of $\pi$ is the whole $\Z$ and \eqref{e:pequD} holds for every pair $(i,j)\in \Z^2$. Therefore, \eqref{e:pequC} holds with $v(j) := C(j)^{\frac{1}{d+1}}$ and the claim is proven.

\medskip

Since $P$ is stochastic, summing over $j\in \Z$ in \eqref{e:pequC} gives us
\begin{equation}
\label{e:pequE}
(Q*v)(i) = \pi(i)^{\frac{1}{d+1}}, \qquad \forall i\in \Z.
\end{equation}
From \eqref{e:pequC} and the reversibility condition \eqref{e:reversibility}, we obtain for every $(i,j)\in \Z^2$
\[
\pi(i)^{\frac{d}{d+1}} Q(i-j) v(j) \stackrel{\eqref{e:pequC}}{=} \pi(i) P(i,j) \stackrel{\eqref{e:reversibility}}{=} \pi(j) P(j,i) \stackrel{\eqref{e:pequC}}{=}\pi(j)^{\frac{d}{d+1}} Q(j-i) v(i).
\]
Since $Q$ is even, whenever $Q(i-j)>0$ this simplifies to
\[
\pi(j)^{-\frac{d}{d+1}} v(j) = \pi(i)^{-\frac{d}{d+1}} v(i).
\]
Using again the fact that the graph defined by the support of $Q$ is connected, we deduce that there is a positive number $c$ such that
\begin{equation}
\label{e:pequF}
v(i) = c\,  \pi(i)^{\frac{d}{d+1}}, \qquad \forall i\in \Z.
\end{equation}
Together with \eqref{e:pequE}, this implies
\begin{equation}
\label{e:pequG}
v = c (Q*v)^d.
\end{equation}
Since $\pi$ is a probability measure, \eqref{e:pequE} implies that $Q*v$ is in $\ell^{d+1}(\Z)$ and from \eqref{e:pequG} we deduce that $v$ is in $\ell^{\frac{d+1}{d}}(\Z) = \ell^p(\Z)$. Setting
\[
u:= c^{\frac{1}{d-1}} v,
\]
we obtain a positive function in $\ell^p(\Z)$ solving
\begin{equation}
\label{e:pequH}
u = (Q*u)^d.
\end{equation}
This is the element $u=U(\mu)$ of $\mathscr{U}(d,Q)$ we were looking for. From \eqref{e:pequF} we find
\begin{equation}
\label{e:pequI}
\pi(i) = \left( \frac{v(i)}{c} \right)^p = \frac{u(i)^p}{c^{\frac{d+1}{d-1}}}.
\end{equation}
Since $\pi$ is a probability measure, 
\[
c^{\frac{d+1}{d-1}} = \sum_{i\in \Z} u(i)^p = \|u\|_p^p,
\]
and the first identity in \eqref{e:pi&P} holds. Moreover
\[
P(i,j) \stackrel{\eqref{e:pequC}}{=} \frac{Q(i-j)u(j)}{c^{\frac{1}{d-1}} \pi(i)^{\frac{1}{d+1}}}  \stackrel{\eqref{e:pequI}}{=} \frac{Q(i-j)u(j)}{u(i)^{\frac{1}{d}}} 
\stackrel{\eqref{e:pequH}}{=} \frac{Q(i-j)u(j)}{(Q*u)(i)},
\]
proving the second identity in \eqref{e:pi&P}.

Summarizing: we have defined maps 
\[
M: \mathscr{U}(d,Q) \rightarrow \mathscr{G}(d,Q), \qquad U: \mathscr{G}(d,Q)\rightarrow \mathscr{U}(d,Q),
\]
such that $M(u)$ is determined by $u$ through \eqref{e:pi&P} and $U(\mu)$ satisfies \eqref{e:pi&P}. The composition $M\circ U$ is the identity on $\mathscr{G}(d,Q)$ because for every $\mu\in \mathscr{G}(d,Q)$  the Markov probability measures $\mu$ and $M(U(\mu))$ have the same single-site marginals and transition matrices, and hence coincide. Given $u\in \mathscr{U}(d,Q)$, set  $u'=U(M(u))$. By the first identity in \eqref{e:pi&P}, the single-site marginal $\pi$ of $M(u)$ has the expressions
\[
\frac{u(i)^p}{\|u\|_p^p} = \pi(i) =  \frac{u'(i)^p}{\|u'\|_p^p}, \qquad \forall i \in \Z,
\]
and hence $u'=a u$ for some $a>0$. The fact that both $u$ and $u'$ solve \eqref{e:main-eq} implies that $a=1$ and proves that the composition $U\circ M$ is the identity on $\mathscr{U}(d,Q)$. Therefore, $M$ and $U$ are inverses of each other.

The equivariance of $M$ and $U$ with respect to the $\Z$-action generated by $\tau$ and the $\Z_2$-actions generated by $\iota$ and $\jmath$
follows from \eqref{e:pi&P} by reading these actions on $\mathscr{G}(d,Q)$ in terms of the single-site marginal and the transition matrix. 
\end{proof}

\section{Existence of two Gibbs measures}
\label{s:ex2sol}

In the previous section, we assumed that the interaction function $Q$ is in $\ell^d(\Z)$. Starting with this section, we  require the stronger assumption that $Q$ is in $\ell^{\frac{d+1}{2}}(\Z)$, so Young's inequality gives us
\begin{equation}
\label{e:young}
\|Q*u\|_{d+1} \leq \|Q\|_{\frac{d+1}{2}} \|u\|_p, \qquad \forall u\in \ell^p(\Z),
\end{equation}
where as before $p=\frac{d+1}{d}$. Since $p'=d+1$ is the conjugate exponent to $p$, \eqref{e:young} implies 
\begin{equation}
\label{e:main-ineq}
\langle Q* u, v \rangle \leq \|Q\|_{\frac{d+1}{2}} \|u\|_p \|v\|_p \qquad \forall u,v \in \ell^p(\Z),
\end{equation}
where  
\[
\langle w,v \rangle := \sum_{i\in \Z} w(i) v(i)
\]
denotes the standard duality pairing of $w\in \ell^{p'}(\Z)$ and $v\in \ell^p(\Z)$. Note that the assumption that $Q$ is even implies the symmetry of the convolution by $Q$:
\begin{equation}
\label{e:symmetry}
\langle Q* u, v \rangle = \langle Q* v, u \rangle \qquad \forall u,v \in \ell^p(\Z).
\end{equation}

If $Q'=c\, Q$ with $c>0$, the Gibbs measure $\mu\in \mathscr{G}(d,Q)=\mathscr{G}(d,Q')$ corresponds, via Proposition \ref{p:zachary}, to some
$u\in \mathscr{U}(d,Q)$ and $u'\in \mathscr{U}(d,Q')$ which are related by $u':=c^{- \frac{d}{d-1}} u$. Since
\[
\|Q\|_{\frac{d+1}{2}}^{\frac{d+1}{d-1}} \|u\|_p^p = \|Q'\|_{\frac{d+1}{2}}^{\frac{d+1}{d-1}} \|u'\|_p^p,
\]
the positive quantity
\begin{equation}
\label{e:defnw}
w := \|Q\|_{\frac{d+1}{2}}^{\frac{d+1}{d-1}}  \|u\|_p^p
\end{equation}
does not depend on the normalization of $Q$. This quantity turns out to be significant in the construction of superpositions of Gibbs measures in Section \ref{s:superposition} below, and we call it the \textit{weight} of the Gibbs measure $\mu\in \mathscr{G}(d,Q)$ determined by $u\in \mathscr{U}(d,Q)$.

If $u\in \mathscr{U}(d,Q)$ is the function associated to a Gibbs measure $\mu$, then
\begin{equation}
\label{e:convo}
\|u\|_p^p = \langle u^{\frac{1}{d}}, u \rangle = \langle Q*u, u \rangle \leq \|Q\|_{\frac{d+1}{2}} \|u\|_p^2
\end{equation}
and hence
\[
\|u\|_p^p \geq  \|Q\|_{\frac{d+1}{2}}^{- \frac{d+1}{d-1}},
\]
which implies that the weight of any $\mu\in \mathscr{G}(d,Q)$ satisfies $w\geq 1$. 
Actually
\[
w>1
\]
if $Q$ is irreducible, because in this case $u$ is positive and the Young inequality \eqref{e:young}, on which the inequality in \eqref{e:convo} is based, is strict for positive functions.

After these preliminaries, we can state our first existence result for Gibbs measures, in which $\iota$ and $\jmath$ are the involutions from \eqref{e:defniota} and \eqref{e:defnjmath}.

\begin{thm}
\label{t:2gibbs}
Assume that the interaction function $Q$ is irreducible and $\frac{d+1}{2}$-summable. Then there exist Gibbs measures $\mu_1$ and $\mu_2$ in $\mathscr{G}(d,Q)$ such that  $\iota_*(\mu_1) = \mu_1$ and $\jmath_*(\mu_2) = \mu_2$.
In particular, $\mu_1$ and $\mu_2$ define two distinct elements of $\mathscr{G}(d,Q)/\Z$.
\end{thm}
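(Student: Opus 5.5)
By Proposition \ref{p:zachary}, which is equivariant with respect to $\iota$ and $\jmath$, it suffices to find $u_1,u_2\in\mathscr{U}(d,Q)$ with $u_1\circ\iota=u_1$ and $u_2\circ\jmath=u_2$. The two measures are then distinct modulo $\Z$. Indeed, if $\tau^n_*\mu_1=\mu_2$, then $u_2=u_1(\cdot-n)$ would be invariant under both $i\mapsto -i$ and $i\mapsto 1+2n-i$. Composing these gives invariance under a translation by the odd integer $2n+1\neq0$, which is impossible for a nonzero element of $\ell^p(\Z)$.

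To find $u_1$ and $u_2$, I would maximize $\mathscr{Q}(u)=\langle Q*u,u\rangle$ on the unit sphere $\{\|u\|_p=1\}$ inside the closed subspace $X_\iota$ (respectively $X_\jmath$) of $\ell^p(\Z)$ consisting of $\iota$-even (respectively $\jmath$-even) sequences. Set $m:=\sup\mathscr{Q}$, which is finite by \eqref{e:main-ineq} and positive because $Q\neq0$. Since $\mathscr{Q}(|u|)\geq\mathscr{Q}(u)$ and $|u|$ keeps the symmetry, we may take maximizing sequences $u_k\geq0$.

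Suppose a maximizer $u\ge0$ exists. By \eqref{e:symmetry}, $\mathscr{Q}$ is $C^1$ with derivative $2\langle Q*u,\cdot\rangle$, and by symmetry of $Q$ the function $Q*u$ again lies in the symmetric class. The symmetric criticality principle therefore applies: the constraint $\|v\|_p^p=1$ has derivative $p\,u^{p-1}$, which is also symmetric, so the Lagrange condition holds against all of $\ell^p$. This gives $Q*u=\lambda u^{p-1}=\lambda u^{1/d}$ with $\lambda=2m/p>0$. Hence $u=\lambda^{-d}(Q*u)^d$, and the rescaling $\tilde u=\lambda^{-d/(d-1)}u$ solves \eqref{e:main-eq}. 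Irreducibility then makes $\tilde u$ positive.

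The main obstacle is compactness. Translations are killed by the symmetry constraint, but mass can still escape symmetrically to $\pm\infty$, or split. I would apply a discrete concentration compactness argument (Lions) to the densities $u_k^p$. Vanishing, $\sup_i u_k(i)\to0$, is excluded by the interpolation estimate $\mathscr{Q}(u)\leq\|Q*u\|_\infty\cdot\|u\|_1$-type bounds, or more cleanly by truncating $Q$ to finite support and using $\mathscr{Q}(u)\lesssim\|u\|_\infty^{\epsilon}$ plus a tail term controlled by $\|Q\mathbbm{1}_{|i|>R}\|_{(d+1)/2}$.

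To exclude dichotomy and escape to infinity, I would compare with the unconstrained problem. Let $m_0$ be the supremum on the whole unit sphere, with $m_\iota,m_\jmath\le m_0$. A symmetric function whose mass sits near $\pm a$ with $a\to\infty$ decouples into two pieces of $p$-mass $1/2$ each, so asymptotically $\mathscr{Q}\le 2m_0(1/2)^{2/p}=2^{1-2/p}m_0$.

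The key inequality is then the strict bound $m_{\iota},m_\jmath>2^{1-2/p}m_0$, where the exponent $1-2/p=(1-d)/(d+1)<0$. Since $2^{1-2/p}<1$, it suffices to show that $m_\iota$ and $m_\jmath$ are not too small relative to $m_0$. A direct way is to build a symmetric test function $v\pm v\circ\iota$ from an almost unconstrained maximizer $v$ translated far away. Then $\mathscr{Q}\approx 2m_0$ and $\|\cdot\|_p^p\approx2$, so the normalized value is $\approx 2^{1-2/p}m_0$, which is only borderline. To gain strictness, I would instead use a compactly supported near-maximizer $v$ placed symmetrically around the center, together with the subadditivity $m(\alpha+\beta)<m(\alpha)+m(\beta)$. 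Here $m(\alpha)=\alpha^{2/p}m$ is strictly superadditive since $2/p>1$, and this is what makes splitting strictly worse than concentrating.

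After tightness is established, weak limits in $\ell^p$ keep their mass, $\mathscr{Q}$ is weakly continuous on tight sequences, and the limit is the desired maximizer.
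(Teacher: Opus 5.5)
Your reduction via Proposition \ref{p:zachary}, the symmetric-criticality step, the exclusion of vanishing and of splitting with $\lambda\in(0,1)$, and the bound $2^{1-2/p}m_0$ for mass escaping symmetrically to $\pm\infty$ all agree with the paper. The gap is at the step you yourself call key: the strict inequality $m_\iota,m_\jmath>2^{1-2/p}m_0$ is asserted but never proved. You note that your first competitor, two far-apart mirror copies, is only borderline. The fix you then propose does not work:
\begin{itemize}
\item A compactly supported near-maximizer $v$ is not itself symmetric. ``Placing it symmetrically around the center'' therefore requires a symmetrization whose value you never estimate.
\item The superadditivity of $\alpha\mapsto\alpha^{2/p}m_0$ gives $m(\alpha+\beta)>m(\alpha)+m(\beta)$, not the inequality you wrote. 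In any case it only says that splitting mass within one and the same problem is unfavourable.
\end{itemize}
Superadditivity says nothing about how the symmetric supremum compares with $2^{1-2/p}$ times the unconstrained one. That comparison is exactly what is needed to exclude the case $\lambda=0$ of the translation-free concentration-compactness alternative, for example a maximizing sequence splitting into two mirror-image bumps drifting to $\pm\infty$. It cannot be obtained by the argument that excludes $\lambda\in(0,1)$.

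The paper supplies this comparison in Lemma \ref{l:strict}, writing $M=m_0$ for the unconstrained supremum.
\begin{itemize}
\item First, concentration compactness \emph{with} translations shows that $\mathscr{Q}$ has a maximizer $u$ on the whole sphere. This $u$ is positive, because a positive multiple of it lies in $\mathscr{U}(d,Q)$.
\item Set $v:=\varrho u$, which is also a maximizer. Since $Q\geq0$, the lattice inequality $\mathscr{Q}(u\vee v)+\mathscr{Q}(u\wedge v)\geq\mathscr{Q}(u)+\mathscr{Q}(v)=2M$ holds.
\item Let $\mu:=\|u\wedge v\|_p^p\in(0,1]$, which is positive because $u$ is. Then $\mathscr{Q}(u\wedge v)\leq M\mu^{2/p}$ and $\|u\vee v\|_p^p=2-\mu$.
\item Combine these with the elementary inequality $2-t^\theta>2(1-t/2)^\theta$ for $t\in(0,1]$ and $\theta=2/p\in(1,2)$. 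This gives $\mathscr{Q}(u\vee v)>2^{1-2/p}M\|u\vee v\|_p^2$, and $u\vee v$ is $\varrho$-symmetric.
\end{itemize}
Your proposal is missing an argument of this kind, which uses both the existence and the positivity of an unconstrained maximizer.

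A minor slip: the coefficient in $Q*u=\lambda u^{p-1}$ is $\lambda=m$, not $2m/p$. Only $\lambda>0$ is used, so this is harmless.
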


The conclusion that $\mu_1$ and $\mu_2$ define two distinct elements of $\mathscr{G}(d,Q)/\Z$ follows from the fact that the single-site marginals $\pi_1$ and $\pi_2$ of $\mu_1$ and $\mu_2$ satisfy
\[
\pi_1(-i) = \pi_1(i), \qquad \pi_2(1-i) = \pi_2(i) \qquad \forall i\in \Z,
\]
so the identity $\mu_1 = \tau^n_* \mu_2$ for some $n\in \Z$ would imply 
\[
\pi_1(i) = \pi_1(-i) = \pi_2(-i-n)= \pi_2(i+n+1)= \pi_1(i+2n+1) \qquad \forall i\in \Z,
\] 
which is impossible because the function $\pi_1\in \ell^1(\Z)\setminus \{0\}$ cannot be invariant under the translation $\tau^{2n+1}$. 

In the Introduction, this theorem is labeled as Theorem \ref{t:ex2-main}. In the remaining part of this section, we show how it can be reduced to the maximization of a suitable functional. 

By \eqref{e:main-ineq}, the quadratic and translation-invariant functional
\[
\mathscr{Q}(u) := \langle Q* u, u \rangle = \sum_{(i,j)\in \Z^2} Q(j-i) u(i) u(j)
\]
is continuously differentiable on $\ell^p(\Z)$ with
\[
\diff \mathscr{Q}(u)[v] = 2 \langle Q* u, v \rangle \qquad \forall u,v\in \ell^p(\Z).
\]
Therefore, the translation-invariant functional
\[
\mathscr{E}(u) := \frac{1}{p} \|u\|_p^p - \frac{1}{2} \mathscr{Q}(u) = \frac{d}{d+1} \sum_{i\in \Z} | u(i) |^{\frac{d+1}{d}} - \frac{1}{2} \sum_{(i,j)\in \Z^2} Q(j-i) u(i) u(j)
\]
is continuously differentiable on $\ell^p(\Z)$ with
\[
\diff \mathscr{E}(u)[v] = \bigl\langle |u|^{\frac{1}{d}-1} u - Q* u, v \bigr\rangle\qquad \forall u,v \in \ell^p(\Z).
\]
The above identity shows that the critical points of $\mathscr{E}$ are precisely the $u\in \ell^p(\Z)$ such that 
\[
|u|^{\frac{1}{d}-1} u = Q * u.
\]
Every nonnegative solution $u\in \ell^p(\Z)$ of the above equation satisfies
\[
u = (Q*u)^d,
\]
and, by the irreducibility of $Q$, $u$ is either positive or identically zero. Therefore, the nonnegative critical points of $\mathscr{E}$ in $\ell^p(\Z)\setminus \{0\}$ are precisely the elements of $\mathscr{U}(d,Q)$.

Denote by $\mathscr{S}$ the unit sphere
\[
\mathscr{S}:= \{u\in \ell^p(\Z) \mid \|u\|_p = 1\}.
\]
By the Lagrange multiplier theorem, $u\in \mathscr{S}$ is a critical point of the restriction of $\mathscr{Q}$ to $\mathscr{S}$ if, and only if, there exists $\lambda\in \R$ such that
\[
\diff \mathscr{Q}(u)[v] = \lambda \bigl\langle |u|^{\frac{1}{d}-1} u, v \bigr\rangle \qquad \forall v\in \ell^p(\Z),
\]
which is equivalent to the equation 
\begin{equation}
\label{e:lagrange}
\lambda |u|^{\frac{1}{d}-1} u = 2 \, Q * u.
\end{equation}
Since 
\[
\langle |u|^{\frac{1}{d}-1} u,  u \rangle = \|u\|_p^p = 1, \qquad \forall u\in \mathscr{S},
\]
\eqref{e:lagrange} gives us
\[
\lambda = 2 \langle Q * u , u \rangle.
\]
Now assume that the critical point $u$ of $\mathscr{Q}|_{\mathscr{S}}$ is nonnegative. Since $Q*u$ is nonnegative and not identically zero, the number $\lambda$ in  \eqref{e:lagrange} is positive and this identity reads
\[
u = \Bigl(\frac{2}{\lambda}\Bigr)^d (Q*u)^d.
\]
Therefore,
\[
v:= \Bigl(\frac{2}{\lambda}\Bigr)^{\frac{d}{d-1}} u
\]
is a nonnegative solution of $v = (Q*v)^d$ in $\ell^p(\Z) \setminus \{0\}$, and hence an element of $\mathscr{U}(d,Q)$.

Conversely, if $u$ is an element of $\mathscr{U}(d,Q)$ then $u/\|u\|_p$ is a critical point of the restriction of $\mathscr{Q}$ to $\mathscr{S}$. Note also that the weight of the Gibbs measure $\mu$ determined by $u\in \mathscr{U}(d,Q)$ is
\begin{equation}
\label{e:weight}
w = \|Q\|_d^{\frac{d+1}{d-1}} \|u\|_p^p = \left( \frac{1}{\|Q\|_d} \mathscr{Q} \Bigl( \frac{u}{\|u\|_p} \Bigr) \right)^{-\frac{d+1}{d-1}}.
\end{equation}

Moreover, the functionals $\mathscr{Q}$, $\mathscr{E}$, and the set $\mathscr{S}$, are invariant under the involutions
\[
\iota^*, \jmath^* : \ell^p(\Z) \rightarrow \ell^p(\Z), \qquad (\iota^* u)(i) = u(-i), \qquad (\jmath^* u)(i) = u(1-i), \qquad \forall i\in \Z.
\]
By the principle of symmetric criticality, see e.g.\ \cite{pal79}, maximizers of $\mathscr{Q}$ on the submanifolds
\[
\begin{split}
\mathscr{S}_{\iota} &:=\mathscr{S} \cap \mathrm{Fix} (\iota^*) = \{ u\in \mathscr{S} \mid u(-i) = u(i)\; \forall i\in \Z\}, \\
\mathscr{S}_{\jmath} &:= \mathscr{S} \cap \mathrm{Fix} (\jmath^*) = \{ u\in \mathscr{S} \mid u(1-i) = u(i)\; \forall i\in \Z\},
\end{split}
\]
are critical points of the restriction of $\mathscr{Q}$ to $\mathscr{S}$. 

By Proposition \ref{p:zachary}, Theorem \ref{t:2gibbs}/Theorem \ref{t:ex2-main} is an immediate consequence of the following result, whose proof is discussed in the next section.

\begin{thm}
\label{t:maximizers}
The functional $\mathscr{Q}$ admits nonnegative maximizers on $\mathscr{S}$, $\mathscr{S}_{\iota}$, and $\mathscr{S}_{\jmath}$.
\end{thm}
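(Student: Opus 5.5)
We plan to apply the direct method of the calculus of variations, with compactness recovered through a discrete concentration compactness argument. Let $M := \sup_{\mathscr{S}} \mathscr{Q}$, and define $M_\iota$, $M_\jmath$ analogously on $\mathscr{S}_\iota$ and $\mathscr{S}_\jmath$. By \eqref{e:main-ineq} all three suprema are finite and bounded by $\|Q\|_{\frac{d+1}{2}}$. They are positive, as one sees by testing on $\mathbbm{1}_{\{0\}}$, on $2^{-1/p}\mathbbm{1}_{\{0,1\}}$, or on any suitable symmetric nonnegative function. Since $Q\geq 0$, we have $\mathscr{Q}(|u|)\geq \mathscr{Q}(u)$. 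The map $u\mapsto |u|$ preserves $\mathscr{S}$, $\mathscr{S}_\iota$ and $\mathscr{S}_\jmath$, so we may restrict to maximizing sequences $(u_k)$ of nonnegative functions. Any limit will then be nonnegative.

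The second step is to show $M$ is \emph{strictly superadditive}, in the homogeneous form
\[
M_{t} := \sup\{\mathscr{Q}(u) \mid \|u\|_p^p = t\} = t^{2/p} M, \qquad 2/p = \tfrac{2d}{d+1} > 1,
\]
which gives $M_{t} + M_{1-t} < M_1$ for $t\in(0,1)$. This holds because $t^{2/p}+(1-t)^{2/p}<1$. We then apply the discrete version of Lions's principle (Proposition \ref{p:conc-comp}, which we assume gives the usual trichotomy for a sequence of probability measures $|u_k|^p$ on $\Z$) and exclude two alternatives. \emph{Vanishing}, namely $\|u_k\|_\infty\to 0$, is excluded as follows. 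By interpolation, $\|u_k\|_r\to 0$ for all $r>p$. Approximating $Q$ in $\ell^{\frac{d+1}{2}}$ by a finitely supported $Q_R$ and applying Young's inequality with exponents adapted to $r$, the contribution of $Q_R$ tends to zero. The tail $Q-Q_R$ contributes at most $\|Q-Q_R\|_{\frac{d+1}{2}}$. Hence $\mathscr{Q}(u_k)\to 0 < M$. \emph{Dichotomy}, meaning splitting into two pieces of masses $t$ and $1-t$ whose supports drift apart, is excluded by strict superadditivity. The cross term $\langle Q*u^1_k,u^2_k\rangle$ tends to zero, again by cutting $Q$ into a finite part and a small $\ell^{\frac{d+1}{2}}$ tail. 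This step, controlling the cross and error terms when $Q$ has infinite support with only $\frac{d+1}{2}$-summability, is where we expect the main technical work.

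Compactness therefore holds: there are translations $n_k$ such that $u_k(\cdot+n_k)$ is tight in $\ell^p$, hence converges strongly in $\ell^p$ (pointwise convergence plus tightness) to some $u\in\mathscr{S}$. Continuity of $\mathscr{Q}$ then shows $u$ is a nonnegative maximizer on $\mathscr{S}$.

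For the constrained problems, translations are not allowed. Our plan is to show the mass cannot escape to infinity in the symmetric class, using a reflection argument. Suppose $u_k\in\mathscr{S}_\iota$ concentrates, up to small error, near $\pm n_k$ with $|n_k|\to\infty$. By symmetry it then carries mass $1/2$ near each of $n_k$ and $-n_k$. The two halves interact only through vanishing cross terms, so $\mathscr{Q}(u_k)\to 2\cdot 2^{-2/p}M' $, where $M'\le M$. Comparing with a symmetric test function would require $M_\iota > 2^{1-2/p}M_\iota$ in a suitable sense. More precisely, we apply the trichotomy within the symmetric class. Vanishing is excluded as before. If the mass splits as symmetric pairs escaping to infinity, the value is at most $2(1/2)^{2/p}M_\iota^{*}$, where $M_\iota^{*}$ is the corresponding sup. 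By strict superadditivity this is strictly less than the value attained by concentrating all the mass near the origin. To make this rigorous we can use the symmetrization $v\mapsto$ (normalized) $\max(v,\iota^*v)$-type comparisons. Alternatively, we can note that $\mathscr{Q}$ restricted to the symmetric class obeys the same homogeneity $t^{2/p}$, so any configuration with mass split between a bounded region and pairs drifting away is beaten by a single symmetric profile. Hence a subsequence converges in $\ell^p$ without translation, and the limit lies in $\mathscr{S}_\iota$. The same argument works for $\mathscr{S}_\jmath$, whose center of symmetry is $1/2$.
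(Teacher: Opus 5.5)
Your treatment of the unconstrained problem on $\mathscr{S}$ is essentially the paper's. In the symmetric class (writing $M_\varrho$ for $M_\iota$ or $M_\jmath$), you must use the translation-free alternative: compactness at the origin, or dichotomy with $\lambda\in[0,1)$. Your homogeneity argument correctly rules out $\lambda\in(0,1)$, since then the inner and outer pieces are both $\varrho$-invariant and can each be bounded via $M_\varrho$.

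The gap is the case $\lambda=0$. There all the mass leaves every bounded window, half drifting to $+\infty$ and half to $-\infty$. Write $u_k=w_k+z_k^++z_k^-$. The halves $z_k^\pm$ are \emph{not} symmetric, so the only available bound is $\mathscr{Q}(z_k^\pm)\le M\|z_k^\pm\|_p^2$ with the \emph{unconstrained} $M$. This yields only $M_\varrho\le 2^{1-2/p}M$.

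To reach a contradiction you need the strict inequality $M_\varrho>2^{1-2/p}M$, and nothing in your proposal proves it:
\begin{itemize}
\item strict superadditivity concerns $M$ alone;
\item homogeneity on the symmetric class concerns $M_\varrho$ alone, and your inequality $M_\iota>2^{1-2/p}M_\iota$ is trivial and irrelevant;
\item a symmetric profile concentrated near the origin is worth only $M_\varrho$, which a priori could be much smaller than $M$.
\end{itemize}
Moreover, $M_\varrho\ge 2^{1-2/p}M$ always holds: take two far-apart mirror copies of a near-maximizer. So excluding this scenario is exactly a strictness statement. Your sentence that such configurations are ``beaten by a single symmetric profile'' simply assumes it.

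Your passing mention of $\max(v,\iota^*v)$ points at the right object, but the mechanism that makes it work is missing. The paper's argument runs as follows.
\begin{enumerate}
\item Take a maximizer $u$ on $\mathscr{S}$ from the unconstrained step. It is positive, because a multiple of it lies in $\mathscr{U}(d,Q)$ and $Q$ is irreducible.
\item Set $v:=\varrho u$. Since $Q\ge 0$, the elementary inequality $(a\vee b)(a'\vee b')+(a\wedge b)(a'\wedge b')\ge aa'+bb'$ gives the supermodularity
$\mathscr{Q}(u\vee v)+\mathscr{Q}(u\wedge v)\ge\mathscr{Q}(u)+\mathscr{Q}(v)=2M$.
\item Let $\mu:=\|u\wedge v\|_p^p\in(0,1]$, so that $\|u\vee v\|_p^p=2-\mu$, and set $\theta=2/p$. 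Then $\mathscr{Q}(u\vee v)\ge M(2-\mu^\theta)$.
\item Because $2-t^\theta-2(1-t/2)^\theta>0$ on $(0,1]$, this gives $\mathscr{Q}(u\vee v)>2^{1-\theta}M(2-\mu)^\theta=2^{1-\theta}M\|u\vee v\|_p^2$.
\item Since $u\vee v$ is $\varrho$-invariant, $M_\varrho>2^{1-2/p}M$.
\end{enumerate}
Without supermodularity, the trivial bound $\mathscr{Q}(u\vee v)\ge\mathscr{Q}(u)=M$ only yields $M_\varrho\ge 2^{-2/p}M$, which is too weak. Positivity of $u$ is what makes $\mu>0$, and hence the inequality strict.
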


Thanks to \eqref{e:weight}, the Gibbs measures corresponding to the above maximizers minimize the weight among all elements of $\mathscr{G}(d,Q)$, in the case of free maximization on $\mathscr{S}$, or among all $\iota_*$-symmetric or $\jmath_*$-symmetric 
elements of $\mathscr{G}(d,Q)$, in the case of maximization on $\mathscr{S}_{\iota}$ and $\mathscr{S}_{\jmath}$. 

\section{Existence of maximizers}
\label{s:exmax}

The proof of Theorem \ref{t:maximizers} is an application of the concentration-compactness principle, see \cite{lio84}. The following discrete version of the concentration-compactness lemma can be deduced from the more general \cite[Lemma I.4.3]{str08}. The proof in the discrete setting is actually simpler, and we provide it for the reader's convenience.

\begin{prop}
\label{p:conc-comp}
Let $(u_n)$ be a sequence in $\ell^1(\Z)$ with $u_n\geq 0$ and $\|u_n\|_1=1$ for every $n\in \N$. Then there exists a subsequence $(u_n')$ of $(u_n)$ and a sequence $(k_n)$ in $\Z$ such that, setting
\[
v_n (i) := u_n'(i-k_n), \qquad \forall i\in \Z,
\] 
one of the following three conditions holds:
\begin{enumerate}[1.]
\item (Compactness) For every $\epsilon>0$ there exists $r\in \N$ such that
\[
\sum_{|i| \geq r} v_n(i) < \epsilon, \qquad \forall n\in \N.
\]
\item (Vanishing) ${\displaystyle \lim_{n \rightarrow \infty} \|v_n\|_{\infty} = 0}$.
\item (Dychothomy) There exists $\lambda\in (0,1)$ and positive integers $r_n<s_n$ with $s_n - r_n\rightarrow +\infty$ for $n\rightarrow \infty$ such that:
\[
\lim_{n\rightarrow \infty} \sum_{|i|\leq r_n} v_n(i) = \lambda, \qquad \lim_{n\rightarrow \infty} \sum_{|i|\geq s_n} v_n(i) = 1-\lambda.
\]
\end{enumerate}
Moreover, we can take $k_n=0$, and hence $v_n=u_n'$, provided that in condition 3 the value $\lambda=0$ is also allowed. In this case, condition 2 is not needed: either 1 holds, or 3 holds with $\lambda\in [0,1)$.
\end{prop}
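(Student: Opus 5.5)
The plan is to run Lions' argument through the concentration function. For each $n$ define
\[
F_n(r) := \sup_{k\in \Z} \sum_{|i-k|\leq r} u_n(i), \qquad r\in \N.
\]
Each $F_n$ is nondecreasing in $r$, takes values in $[0,1]$, and satisfies $F_n(r)\to 1$ as $r\to\infty$. By a diagonal argument we pass to a subsequence along which $F_n(r)\to F(r)$ for every $r\in\N$. The limit $F$ is nondecreasing, so $\lambda := \lim_{r\to\infty} F(r)\in[0,1]$ exists. The three alternatives will correspond to $\lambda=1$, $\lambda=0$ and $\lambda\in(0,1)$.

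If $\lambda=0$, then $F(0)=0$. Since $F_n(0)=\|u_n\|_\infty$, this is exactly vanishing, with $k_n=0$.

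If $\lambda=1$, fix $\epsilon>0$ and pick $r_0$ with $F(r_0)>1-\epsilon/2$. For large $n$ choose $k_n$ with $\sum_{|i-k_n|\leq r_0} u_n(i) > 1-\epsilon$. To get a single sequence $k_n$ valid for all $\epsilon$, I will use a standard trick: fix $k_n$ as a near-maximizer for a mass threshold above $1/2$, say $3/4$, attained at a fixed radius $r_1$. Any window of mass $>1-\epsilon$ (with $\epsilon<1/4$) must intersect the window around $k_n$, since their masses sum to more than $1$. Hence it lies inside $[k_n-r_1-2r_0,\,k_n+r_1+2r_0]$, which gives tightness around $k_n$ uniformly in $n$. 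The finitely many early indices are handled by enlarging $r$, because each single $u_n$ is summable.

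If $\lambda\in(0,1)$, choose $r_0$ with $F(r_0)$ close to $\lambda$. Then pick $r_n\to\infty$ slowly, with $F_n(r_n)\to\lambda$; this is possible by the pointwise convergence plus a diagonal choice. Also pick $s_n$ with $s_n-r_n\to\infty$ and $F_n(s_n)\to\lambda$, e.g. $s_n = 2r_n$ after choosing $r_n$ slowly enough that $F_n(2r_n)\to\lambda$ as well. Next choose $k_n$ with window mass at radius $r_n$ close to $F_n(r_n)$. Then the mass in $|i-k_n|\leq r_n$ tends to $\lambda$, while the mass in $|i-k_n|< s_n$ is at most $F_n(s_n)\to\lambda$. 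The remaining mass, in $|i-k_n|\geq s_n$, therefore tends to $1-\lambda$. Translating by $k_n$ gives dichotomy. I expect this simultaneous choice of $r_n$, $s_n$ and $k_n$ to be the main technical obstacle. The first thing I would try is defining $r_n$ as the largest $r\leq n$ for which $|F_n(m)-F(m)|<1/m$ holds for all $m\leq 2r$.

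For the final claim with $k_n=0$, I replace $F_n$ by the centered mass $G_n(r):=\sum_{|i|\leq r} u_n(i)$. As before, take a subsequence with $G_n(r)\to G(r)$ for every $r$, and set $\lambda:=\lim_r G(r)$. If $\lambda=1$, then for each $\epsilon$ there is $r$ with $G(r)>1-\epsilon$, so $G_n(r)>1-\epsilon$ for large $n$; enlarging $r$ covers the finitely many remaining indices, which is compactness. If $\lambda<1$, the same diagonal choice of $r_n\to\infty$ and $s_n=2r_n$ with $G_n(r_n),G_n(s_n)\to\lambda$ gives condition 3 with $\lambda\in[0,1)$. In this centered version no vanishing case is needed.
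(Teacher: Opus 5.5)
Your proposal is correct, but for the first statement it takes a different route from the paper. You use Lions' translation-invariant concentration function $F_n(r)=\sup_k\sum_{|i-k|\le r}u_n(i)$ and read the three alternatives off $\lambda=\lim_r F(r)$:
\begin{itemize}
\item $\lambda=0$ gives Vanishing, because $F_n(0)=\|u_n\|_\infty$.
\item $\lambda=1$ gives Compactness. Your overlap argument (a window of mass $>3/4$ and a window of mass $>1-\epsilon$ must intersect) is exactly what lets you fix one translation sequence independent of $\epsilon$.
\item $\lambda\in(0,1)$ gives Dichotomy, after centering at a near-maximizing window of radius $r_n$ and taking $s_n=2r_n$.
\end{itemize}

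The paper instead proves the centered statement first; your last paragraph is essentially the paper's argument, up to a slightly different diagonal choice of $r_n,s_n$. It then derives the translated version from it. If $\liminf_n\|u_n\|_\infty=0$, a subsequence vanishes. Otherwise $\delta=\inf_n\|u_n\|_\infty>0$, each $u_n$ is translated so that its maximum sits at the origin, and the centered result applies. The value at the origin then forces $\lambda\ge\delta>0$ in the dichotomy case.

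The paper's route is more economical. One diagonal argument serves both statements, and the centered one is needed anyway for the symmetric maximization. The translations are explicit and no overlap trick is needed; the key point is that on $\Z$ vanishing is governed by the single radius $r=0$. Your route is the classical direct one: all three alternatives are indexed by one quantity, and Vanishing corresponds exactly to $\lambda=0$.

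A cosmetic point: with the statement's convention $v_n(i)=u_n'(i-k_n)$, the shift you want is $-k_n$.
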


\begin{proof}
We start by proving the last statement, in which no translations are involved. For every $n\in \N$, L\'evy's concentration function
\[
F_n: \N \rightarrow [0,1], \qquad F_n(r) := \sum_{|i|\leq r} u_n(i),
\]
is non-decreasing and converges to $1$ for $r\rightarrow \infty$. By a diagonal argument, up to replacing $(u_n)$ by a subsequence we may assume that $(F_n)$ converges pointwise to some function $F: \N \rightarrow [0,1]$, which is then non-decreasing. We set
\[
\lambda := \lim_{r\rightarrow \infty} F(r) = \sup_{r\in \N} F(r) \in [0,1].
\]
Assume that $\lambda=1$ and let $\epsilon>0$. Then there exist $r_0\in \N$ such that $F(r_0)>1-\epsilon$ and $n_0\in \N$ such that $F_n(r_0)>1-\epsilon$ for every $n\geq n_0$. Since each $F_n$ converges to $1$ at infinity, we can find $r> r_0$ such that $F_n(r-1)>1-\epsilon$ also for every $n<n_0$. Since $r-1\geq r_0$ and each $F_n$ is non-decreasing, the same inequality holds for every $n\geq n_0$. Therefore,
\[
\sum_{|i|\geq r} u_n(i) = 1 - F_n(r-1) < \epsilon \qquad \forall n\in \N,
\]
and condition 1 holds with $v_n=u_n$.

Assume now that $\lambda<1$. For every positive integer $k$ we can find an integer $R_k\geq k$ such that $F(R_k)> \lambda - \frac{1}{k}$, and hence
\[
\lambda - \frac{1}{k} < F(R_k) \leq F(R_k+k) \leq \lambda.
\]
By the pointwise convergence of $(F_n)$ to $F$, we can find a strictly increasing sequence of natural numbers $(n_k)$ such that
\begin{equation}
\label{e:Fnk}
\lambda - \frac{1}{k} < F_n(R_k) \leq F_n(R_k+k) < \lambda + \frac{1}{k} \qquad \forall n\geq n_k.
\end{equation}
We set $r_n:= R_k$ and $s_n:= R_k + k+1$ for $n_k \leq n < n_{k+1}$, and $r_n:=1$, $s_n:=2$ for $n<n_1$. Then $s_n-r_n\rightarrow +\infty$ and by \eqref{e:Fnk}
\[
\lim_{n\rightarrow \infty} \sum_{|i|\leq r_n} u_n(i) = \lim_{n\rightarrow \infty} F_n(r_n) = \lambda, \qquad \lim_{n\rightarrow \infty} \sum_{|i|\geq s_n} u_n(i) = \lim_{n\rightarrow \infty} \bigl( 1 - F_n(s_n-1) \bigr) = 1-\lambda,
\]
so condition 3 holds with $v_n=u_n$ and $\lambda\in [0,1)$. This proves the last statement.

We now deduce the first statement from the last one. If the limit inferior of $\|u_n\|_{\infty}$ is zero, then condition 2 holds for a subsequence of $(u_n)$, with $k_n=0$. 
Otherwise, the number $\delta:= \inf_{n\in \N} \|u_n\|_{\infty}$ is positive,
because each $\|u_n\|_{\infty}$ is positive. Since $u_n(i)$ tends to zero for $|i|\rightarrow \infty$, there exists $j_n\in \Z$ such that $u_n(j_n) = \|u_n\|_{\infty}$. By the last statement, applied to the translated sequence $\tilde{u}_n := u_n(\cdot + j_n)$, we find a subsequence of $(\tilde{u}_n)$, which we denote by $(v_n)$, satisfying either condition 1, or condition 3 for some $\lambda\in [0,1)$. Each $v_n$ has the form $u_n'(\cdot - k_n)$, where $(u_n')$ is a subsequence of $(u_n)$ and $(k_n)$ is the corresponding subsequence of $(-j_n)$. Moreover, $v_n(0)= \|v_n\|_{\infty} \geq \delta$ for every $n$. If condition 3 holds, then
\[
\lambda = \lim_{n\rightarrow \infty} \sum_{|i|\leq r_n} v_n(i) \geq \liminf_{n\rightarrow \infty} v_n(0) \geq \delta > 0,
\]
and hence $\lambda$ belongs to $(0,1)$. Therefore, either Compactness or Dichotomy holds for $(v_n)$.
\end{proof}

We now show how the above result leads to the existence of a maximizer of $\mathscr{Q}$ on $\mathscr{S}$. Set
\[
M:= \sup_{u\in \mathscr{S}} \mathscr{Q}(u).
\]
The fact that $Q$ is non-negative and somewhere positive implies that $M>0$, and by  \eqref{e:main-ineq} we have $M \leq \|Q\|_{\frac{d+1}{2}}$. Moreover, the $2$-homogeneity of $\mathscr{Q}$ implies the inequality
\begin{equation}
\label{e:better}
\mathscr{Q}(u) \leq M \|u\|_p^2 \qquad \forall u\in \ell^p(\Z).
\end{equation}
Let $(u_n)\subset \mathscr{S}$ be a maximizing sequence for $\mathscr{Q}$:
\[
\lim_{n\rightarrow \infty} \mathscr{Q}(u_n) = M.
\]
Since
\begin{equation}
\label{e:meglioass}
\mathscr{Q}(u) \le \mathscr{Q}(|u|) \qquad \forall u\in \mathscr{S},
\end{equation}
we may assume that $u_n\geq 0$  for every $n$. We apply the above proposition to the sequence $(u_n^p)$. Up to replacing $(u_n)$ with a subsequence of translations of the original sequence, we can therefore assume that one of the following three facts holds:
\begin{enumerate}[1.]
\item (Compactness) For every $\epsilon>0$ there exists $r\in \N$ such that
\[
\sum_{|i| \geq r} u_n(i)^p < \epsilon \qquad \forall n\in \N.
\]
\item (Vanishing) ${\displaystyle \lim_{n \rightarrow \infty} \|u_n\|_{\infty} = 0}$.
\item (Dychothomy) There exists $\lambda\in (0,1)$ and positive integers $r_n<s_n$ with $s_n - r_n\rightarrow +\infty$ for $n\rightarrow \infty$ such that:
\[
\lim_{n\rightarrow \infty} \sum_{|i|\leq r_n} u_n(i)^p = \lambda, \qquad \lim_{n\rightarrow \infty} \sum_{|i|\geq s_n} u_n(i)^p = 1-\lambda.
\]
\end{enumerate}

Our aim is to prove that neither Vanishing nor Dychothomy hold. Once this is proven, the remaining alternative is Compactness, which by the Riesz-Kolmogorov characterization of compact subsets of $\ell^p(\Z)$ implies that $(u_n)$ has a subsequence which converges to some $u$ in the norm topology of $\ell^p$. Since $\mathscr{S}$ is closed and $\mathscr{Q}$ is continuous in the norm topology of $\ell^p$, $u$ is the required nonnegative maximizer of $\mathscr{Q}$ on $\mathscr{S}$.

\medskip

Assume by contradiction that Vanishing holds. Fix some $\epsilon>0$ and let $q\in \N$ be large enough, so that the truncated function $\tilde{Q}:= Q \mathbbm{1}_{[-q,q]}$ satisfies
\[
\|Q-\tilde{Q} \|_{\frac{d+1}{2}} < \epsilon.
\]
Then we find the chain of inequalities
\[
\begin{split}
\mathscr{Q}(u_n) &= \sum_{k \in \Z} Q(k) (u_n * \iota^*u_n) (k) = \sum_{|k|\leq q} Q(k) (u_n* \iota^*u_n) (k) + \sum_{k\in \Z} (Q-\tilde{Q})(k) (u_n*\iota^* u_n)  (k) \\
&\leq \|Q\|_{\infty} \sum_{|k|\leq q} (u_n*\iota^*u_n)(k) + \|Q - \tilde{Q} \|_{\frac{d+1}{2}} \|u_n\|_p^2 \leq \|Q\|_{\infty} (2q+1) \|u_n * \iota^*u_n\|_{\infty} + \epsilon \\ &\leq 
 \|Q\|_{\infty} (2q+1) \|u_n\|_p \|u_n\|_{p'} + \epsilon =  \|Q\|_{\infty} (2q+1)  \|u_n\|_{p'} + \epsilon
\end{split}
\]
where we used \eqref{e:main-ineq} and the fact that $p'=d+1$ is the conjugate exponent of $p$. The Vanishing assumption tells us that $\|u_n\|_{\infty}$ tends to zero. Together with the boundedness of $\|u_n\|_p=1$ and the strict inequality $p<p'$, interpolation implies that $\|u_n\|_{p'}$ tends to zero. Then the above inequalities and the fact that $\epsilon>0$ is arbitrary imply that that the limes superior of $\mathscr{Q}(u_n)$ is non-positive, contradicting the fact that this sequence converges to $M>0$.

\medskip

Now assume by contradiction that Dychothomy holds. Write each $u_n$ as the sum
\[
u_n = v_n + w_n + z_n
\]
of three sequences with mutually disjoint supports
\[
v_n:= u_n \mathbbm{1}_{[-r_n,r_n]}, \quad w_n:= u_n \mathbbm{1}_{[-s_n,s_n] \setminus [-r_n,r_n]}, \quad z_n:= u_n \mathbbm{1}_{\Z \setminus [-s_n,s_n] },
\]
so that
\begin{equation}
\label{e:dico}
\lim_{n\rightarrow \infty} \|v_n\|_p = \lambda^{\frac{1}{p}}, \quad \lim_{n\rightarrow \infty} \|w_n\|_p = 0, \quad \lim_{n\rightarrow \infty} \|z_n\|_p =(1-\lambda)^{\frac{1}{p}}.
\end{equation}
By the quadraticity of $\mathscr{Q}$ and \eqref{e:symmetry} we have
\begin{equation}
\label{e:ide}
\mathscr{Q}(u_n) = \mathscr{Q}(v_n) + \mathscr{Q}(w_n) + \mathscr{Q}(z_n) + 2 \langle Q * v_n, w_n \rangle + 2 \langle Q * z_n, w_n \rangle + 2 \langle Q * v_n, z_n \rangle.
\end{equation}
From \eqref{e:better} and \eqref{e:dico} we deduce
\[
\begin{split}
\limsup_{n\rightarrow \infty} \mathscr{Q}(v_n) &\leq \limsup_{n\rightarrow \infty}  M \|v_n\|_p^2 = M \lambda^{\frac{2}{p}}, \\
\limsup_{n\rightarrow \infty} \mathscr{Q}(w_n) &\leq \limsup_{n\rightarrow \infty} M \|w_n\|_p^2 =0, \\
\limsup_{n\rightarrow \infty} \mathscr{Q}(z_n) &\leq \limsup_{n\rightarrow \infty} M \|z_n\|_p^2 = M (1-\lambda)^{\frac{2}{p}} .
\end{split}
\]
Moreover, from \eqref{e:main-ineq} and \eqref{e:dico}  we find
\[
\begin{split}
\limsup_{n\rightarrow \infty} \langle Q * v_n, w_n \rangle &\leq \limsup_{n\rightarrow \infty} \|Q\|_{\frac{d+1}{2}} \|v_n\|_p \|w_n\|_p  = 0, \\
\limsup_{n\rightarrow \infty} \langle Q * z_n, w_n \rangle &\leq \limsup_{n\rightarrow \infty} \|Q\|_{\frac{d+1}{2}} \|z_n\|_p \|w_n\|_p  = 0.
\end{split}
\]
There remains to bound the last term in \eqref{e:ide}.
Fix $\epsilon>0$, and let $q\in \N$ be so large that the truncated function $\tilde{Q}:= Q \mathbbm{1}_{[-q,q]}$ satisfies
\[
\|Q-\tilde{Q} \|_{\frac{d+1}{2}} \leq \epsilon.
\]
Write the last term in \eqref{e:ide} as 
\[
\langle Q * v_n, z_n \rangle = \langle (Q-\tilde{Q}) * v_n, z_n \rangle + \langle \tilde{Q} * v_n, z_n \rangle.
\]
If $n$ is large enough, then $\tilde{Q} * v_n$ and $z_n$ have disjoint support because $\tilde{Q}$ has bounded support and $s_n-r_n\rightarrow +\infty$. Thus, and the last term vanishes for $n$ large enough. Therefore
\[
\limsup_{n\rightarrow \infty} \langle Q * v_n, z_n \rangle \leq \limsup_{n\rightarrow \infty} \|Q-\tilde{Q}\|_{\frac{d+1}{2}} \|v_n\|_p \|z_n\|_p \leq \epsilon.
\]
By plugging the above bound into \eqref{e:ide}, we find
\[
\limsup_{n\rightarrow \infty} \mathscr{Q}(u_n) \leq M \lambda^{\frac{2}{p}} + M (1-\lambda)^{\frac{2}{p}} + 2\epsilon,
\]
and since $\epsilon>0$ was arbitrary we conclude that
\[
\limsup_{n\rightarrow \infty} \mathscr{Q}(u_n) \leq M \bigl( \lambda^{\frac{2}{p}} + (1-\lambda)^{\frac{2}{p}} \bigr).
\]
Since $\frac{2}{p}>1$ and $\lambda\in (0,1)$, we have
\[
 \lambda^{\frac{2}{p}} + (1-\lambda)^{\frac{2}{p}} < 1,
\]
and hence
\[
\limsup_{n \rightarrow \infty} \mathscr{Q}(u_n) < M,
\]
contradicting the fact that $(u_n)$ is a maximizing sequence. This contradiction excludes Dychothomy and concludes the proof of the existence of a nonnegative maximizer of $\mathscr{Q}$ on $\mathscr{S}$.

\medskip

We now turn to the existence of nonnegative maximizers of $\mathscr{Q}$ on $\mathscr{S}_{\iota}$ and $\mathscr{S}_{\jmath}$. In order to deal with these two cases at the same time, in what follows $\varrho$ denotes one of the two involutions $\iota^*$ and $\jmath^*$ of $\ell^p(\Z)$, and we set
\[
\mathscr{S}_{\varrho} := \mathscr{S} \cap \mathrm{Fix}(\varrho), \qquad M_{\varrho}:= \sup_{u\in \mathscr{S}_{\varrho}} \mathscr{Q}(u) \leq M.
\]
Note that $M_{\varrho}$ is positive, because $\mathscr{Q}(\mathbbm{1}_I)>0$ for every sufficiently large finite interval $I\subset \Z$, and $I$ can be chosen so that $\mathbbm{1}_I$ is fixed by $\varrho$. The $2$-homogeneity of $\mathscr{Q}$ implies the inequality
\begin{equation}
\label{e:better-sym}
\mathscr{Q}(u) \leq M_{\varrho} \|u\|_p^2 \qquad \forall u\in \mathrm{Fix}(\varrho).
\end{equation}
Let $(u_n)\subset \mathscr{S}_{\varrho}$ be a maximizing sequence for the restriction of $\mathscr{Q}$ to $\mathscr{S}_{\varrho}$. As before, we may assume that $u_n\geq 0$ for every $n$, by \eqref{e:meglioass} and because $|u|$ belongs to $\mathscr{S}_{\varrho}$ whenever $u$ does.

Here, we cannot translate the elements of a maximizing sequence without losing their symmetry, so we use the last statement of Proposition \ref{p:conc-comp}. By applying it to the sequence $(u_n^p)$ and passing to a subsequence, we may assume that either Compactness holds, or there exist $\lambda\in [0,1)$ and positive integers $r_n<s_n$ with $s_n - r_n\rightarrow +\infty$ such that
\begin{equation}
\label{e:dico-sym}
\lim_{n\rightarrow \infty} \sum_{|i|\leq r_n} u_n(i)^p = \lambda, \qquad \lim_{n\rightarrow \infty} \sum_{|i|\geq s_n} u_n(i)^p = 1-\lambda.
\end{equation}
If Compactness holds, then $(u_n)$ has a subsequence which converges in $\ell^p(\Z)$ to a nonnegative maximizer of $\mathscr{Q}$ on $\mathscr{S}_{\varrho}$, as in the case of $\mathscr{S}$. If \eqref{e:dico-sym} holds with $\lambda\in (0,1)$, we obtain a contradiction by the same argument which excluded Dichotomy in the case of $\mathscr{S}$. Indeed, if $\varrho=\iota^*$ then the functions $v_n$, $w_n$, and $z_n$ which are defined there are fixed by $\varrho$, so we can use \eqref{e:better-sym} instead of \eqref{e:better} and obtain
\[
M_{\varrho} = \lim_{n\rightarrow \infty} \mathscr{Q}(u_n) \leq M_{\varrho} \bigl( \lambda^{\frac{2}{p}} + (1-\lambda)^{\frac{2}{p}} \bigr) < M_{\varrho}.
\]
If $\varrho=\jmath^*$, the same is true after replacing the sets $\{|i|\leq r_n\}$ and $\{|i|\geq s_n\}$ in the definition of $v_n$ and $z_n$ by the $\jmath$-invariant sets $\{|i|\leq r_n\}\setminus \{-r_n\}$ and $\{|i|\geq s_n\} \cup \{1-s_n\}$. This does not affect \eqref{e:dico}, because $u_n(-r_n) = u_n(r_n+1)$ and $u_n(1-s_n)$ are values of $u_n$ at points $i$ with $r_n<|i|<s_n$, and by \eqref{e:dico-sym} the sum of $u_n(i)^p$ over these points tends to zero.

Therefore, it is enough to exclude the case in which \eqref{e:dico-sym} holds with $\lambda=0$. Unlike Vanishing, this case cannot be excluded by the above arguments: for instance, the symmetric sequence $\frac{1}{2} ( \mathbbm{1}_{\{n\}} + \mathbbm{1}_{\{-n\}})$ satisfies \eqref{e:dico-sym} with $\lambda=0$ but does not satisfy Vanishing. What we need is the following strict inequality.

\begin{lem}
\label{l:strict}
We have
\[
M_{\varrho} > 2^{1-\frac{2}{p}} M.
\]
\end{lem}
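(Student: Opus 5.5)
The idea is to compare $M_\varrho$ with $M$ through a symmetrized test function built from a nonnegative maximizer of $\mathscr{Q}$ on $\mathscr{S}$; such a maximizer exists by the part of Theorem \ref{t:maximizers} already proven. Let $u\in\mathscr{S}$ be a nonnegative maximizer, so $\mathscr{Q}(u)=M$ and, by the discussion in Section \ref{s:ex2sol}, a rescaling of $u$ lies in $\mathscr{U}(d,Q)$. In particular $u$ is positive on all of $\Z$. For $n\in\Z$ set $u_n:=u(\cdot-n)$ and consider the $\varrho$-symmetric function
\[
f_n := u_n + \varrho u_n .
\]
Then $f_n\in \mathrm{Fix}(\varrho)$, $f_n\geq 0$, and by \eqref{e:better-sym}
\[
M_\varrho \geq \frac{\mathscr{Q}(f_n)}{\|f_n\|_p^2}.
\]

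The first step is the numerator. By symmetry \eqref{e:symmetry}, translation invariance, and invariance of $\mathscr{Q}$ under $\varrho$, we get $\mathscr{Q}(f_n)=2M+2\langle Q*u_n,\varrho u_n\rangle$. The cross term is strictly positive for every $n$, because $Q*u_n>0$ (irreducibility and positivity of $u$) and $\varrho u_n>0$.

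The second step is the denominator. Since $u_n$ and $\varrho u_n$ are positive, we have $(a+b)^p\leq$ something sharper than $2^{p-1}(a^p+b^p)$ only when $a\ne b$. The cleanest bound is to use the elementary inequality $(a+b)^p\le a^p+b^p+p\,(a^{p-1}b+ab^{p-1})$, or simply the estimate $\|f_n\|_p^p = 2 + o(1)$ as $|n|\to\infty$. The latter holds because the overlap $\sum_i u_n(i)^{p-1}\varrho u_n(i)$ tends to zero: it is a pairing of $u^{p-1}\in\ell^{p'}$ with a translate of $u\in\ell^p$ moving off to infinity. So
\[
\frac{\mathscr{Q}(f_n)}{\|f_n\|_p^2}\ \geq\ \frac{2M+2c_n}{(2+\delta_n)^{2/p}},\qquad c_n:=\langle Q*u_n,\varrho u_n\rangle,\quad \delta_n:=\|f_n\|_p^p-2,
\]
and $2M/2^{2/p}=2^{1-2/p}M$. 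Hence the lemma follows once we find $n$ with $c_n$ dominating the loss caused by $\delta_n>0$. Concretely, using $(2+\delta)^{2/p}\le 2^{2/p}(1+\tfrac{\delta}{p})$ for small $\delta$, it suffices to have $c_n > M\delta_n/p$, up to lower-order terms.

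The main obstacle is exactly this comparison of decay rates: both $c_n$ and $\delta_n$ tend to zero as $|n|\to\infty$. The cross term $c_n$ is a pairing of $Q*u$ with a distant translate of $u$. By the equation $u=(Q*u)^d$, we have $Q*u=u^{1/d}=u^{p-1}$ up to normalization, so $c_n$ is comparable to $\sum_i u_n(i)^{p-1}\varrho u_n(i)$. This is the same first-order quantity that controls $\delta_n$: indeed $(a+b)^p-a^p-b^p\le p(a^{p-1}b+ab^{p-1})$ when $1<p<2$, which gives $\delta_n\le 2p\,\sigma_n$ with $\sigma_n:=\sum_i u_n^{p-1}\varrho u_n$. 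So I would make the Lagrange multiplier explicit. Since $u\in\mathscr{S}$ is a constrained critical point, \eqref{e:lagrange} gives $2Q*u=\lambda u^{p-1}$ with $\lambda=2M$, i.e.\ $Q*u=Mu^{p-1}$. Therefore $c_n=M\sigma_n$ exactly. Plugging in, the lemma reduces to showing
\[
\frac{2M+2M\sigma_n}{(2+\delta_n)^{2/p}}>2^{1-2/p}M, \quad\text{i.e.}\quad (1+\sigma_n)^{p/2}>1+\tfrac{\delta_n}{2}.
\]
Here the strict inequality $(a+b)^p<a^p+b^p+p(a^{p-1}b+ab^{p-1})$ for $a,b>0$ is not by itself sufficient, because $(1+\sigma)^{p/2}\approx 1+\tfrac p2\sigma$ while $\tfrac{\delta_n}{2}\le p\sigma_n$. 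So the crude inequality loses a factor 2, and I would refine it. The fix I would try first is to use $(a+b)^p\le a^p+b^p+p\max(a,b)^{p-1}\min(a,b)$ pointwise. This yields $\delta_n\le p\,\sigma_n'$ with $\sigma_n'\le\sigma_n$ up to terms that are quadratic in the overlap, hence of lower order as $|n|\to\infty$, which gives $\tfrac{\delta_n}2\le \tfrac p2\sigma_n'$. To make the inequality strict, I would compare the second-order terms: expand $(1+\sigma)^{p/2}$ to second order, and use that $(a+b)^p-a^p-b^p-p\max^{p-1}\min$ is strictly negative of order $\min^2\max^{p-2}$. For $|n|$ large this settles it. If this expansion fails, the fallback is to choose $n$ small (e.g.\ $n=0$ for $\iota$) and use strict convexity directly.
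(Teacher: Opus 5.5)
\textbf{Where the proof fails.} Your reduction is sound. With $f_n=u_n+\varrho u_n$, the Lagrange multiplier identity $Q*u=Mu^{p-1}$ gives $\mathscr{Q}(f_n)=2M(1+\sigma_n)$ exactly. The lemma then follows once $(1+\sigma_n)^{p/2}>1+\delta_n/2$ holds for a single $n$. This is a different route from the paper, which does not translate and does not use the Euler--Lagrange equation. The paper instead tests $M_\varrho$ on $u\vee\varrho u$, using supermodularity $\mathscr{Q}(u\vee v)+\mathscr{Q}(u\wedge v)\geq\mathscr{Q}(u)+\mathscr{Q}(v)$ and a one-variable inequality.

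However, you never prove the decisive inequality, and the mechanism you propose for it does not work. You assert that $\sigma_n'=\sum\max^{p-1}\min$ agrees with $\sigma_n$ up to quadratic terms, so the first-order terms $\frac p2\sigma_n$ cancel. You then try to beat the concavity loss $\frac{p(2-p)}{8}\sigma_n^2$ of $(1+\sigma)^{p/2}$ with the second-order deficit $\tau_n:=\sum\min^2\max^{p-2}$. Nothing in your argument bounds $\tau_n$ below by a multiple of $\sigma_n^2$, and for general positive profiles this fails. For $u(i)=e^{-\gamma|i|}$ and $\varrho=\iota$ one finds $\sigma_n\asymp e^{-2\gamma(p-1)n}$, with the dominant term at $i=-n$. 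But $\tau_n\asymp e^{-\gamma pn}$, with the dominant term at $i=0$. So $\tau_n/\sigma_n^2\to 0$ whenever $p<4/3$, i.e.\ $d\geq 4$. Hence ``for $|n|$ large this settles it'' is unsupported, and the fallback at $n=0$ is not carried out.

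\textbf{How to repair it.} The premise you dismissed is what closes the gap. The difference $\sigma_n-\sigma_n'=\frac12\sum|a^{p-1}b-ab^{p-1}|$ is of first order; in the example above it is comparable to $\sigma_n$. It yields a uniform pointwise improvement, as follows.
\begin{enumerate}
\item Take $a\geq b\geq 0$ and set $t=b/a\in(0,1]$. Your own bound $(a+b)^p-a^p-b^p\le p\,a^{p-1}b$ can be rewritten as $(a+b)^p-a^p-b^p\le \frac{2p}{1+t^{p-2}}\cdot\frac12(a^{p-1}b+ab^{p-1})$.
\item The factor $\frac{2p}{1+t^{p-2}}$ is $<p$ for $t<1$ and tends to $0$ as $t\to0$. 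At $t=1$ the exact ratio is $2^p-2<p$.
\item The ratio $\bigl((1+t)^p-1-t^p\bigr)/\bigl(\tfrac12(t+t^{p-1})\bigr)$ is continuous on $(0,1]$, stays below $p$, and vanishes at $0^+$. Hence its supremum $k$ satisfies $k<p$, and $(a+b)^p-a^p-b^p\le\tfrac k2(a^{p-1}b+ab^{p-1})$ for all $a,b\geq0$.
\item Reindexing by the involution $\varrho$ gives $\sum a^{p-1}b=\sum ab^{p-1}=\sigma_n$. Therefore $\delta_n\le k\sigma_n$.
\item Since $\tfrac k2<\tfrac p2$, the inequality $(1+\sigma_n)^{p/2}>1+\tfrac k2\sigma_n$ holds whenever $\sigma_n>0$ is small enough.
\item Finally, $\sigma_n>0$ and $\sigma_n\to0$ as $|n|\to\infty$, so some $n$ gives $\mathscr{Q}(f_n)/\|f_n\|_p^2>2^{1-\frac2p}M$.
\end{enumerate}
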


\begin{proof}
Let $u$ be a nonnegative maximizer of $\mathscr{Q}$ on $\mathscr{S}$, whose existence has been proven above. As discussed in Section \ref{s:ex2sol}, a positive multiple of $u$ belongs to $\mathscr{U}(d,Q)$, and hence $u$ is positive. The function $v:= \varrho\, u$ is also a positive maximizer of $\mathscr{Q}$ on $\mathscr{S}$, because both $\mathscr{Q}$ and the $p$-norm are $\varrho$-invariant. We denote by $u\vee v$ and $u\wedge v$ the pointwise maximum and minimum of $u$ and $v$. These functions are fixed by $\varrho$, and since $(u\vee v)^p + (u\wedge v)^p = u^p + v^p$ they satisfy
\begin{equation}
\label{e:maxmin}
\|u\vee v\|_p^p = 2 - \mu, \qquad \mbox{where} \quad \mu:= \|u\wedge v\|_p^p \in (0,1].
\end{equation}
Here, $\mu$ is positive because both $u$ and $v$ are positive.

We claim that
\begin{equation}
\label{e:lattice}
\mathscr{Q}(u\vee v) + \mathscr{Q}(u\wedge v) \geq \mathscr{Q}(u) + \mathscr{Q}(v).
\end{equation}
Since $Q$ is nonnegative, it is enough to check that
\[
(u\vee v)(i) \, (u\vee v)(j) + (u\wedge v)(i) \, (u\wedge v)(j) \geq u(i) u(j) + v(i) v(j) \qquad \forall i,j\in \Z,
\]
which is true because of the elementary inequality
\[
(a\vee b)\, (a'\vee b') + (a\wedge b)\, (a' \wedge b') \geq aa' + bb' \qquad \forall a,b,a',b'\in \R.
\]

Set $\theta:= \frac{2}{p} \in (1,2)$. By \eqref{e:lattice}, \eqref{e:better}, and \eqref{e:maxmin} we have
\begin{equation}
\label{e:lower-vee}
\mathscr{Q}(u\vee v) \geq 2M - \mathscr{Q}(u\wedge v) \geq 2M - M \|u\wedge v\|_p^2 = M \bigl( 2 - \mu^{\theta} \bigr).
\end{equation}
The function
\[
g:[0,1] \rightarrow \R, \qquad g(t) := 2 - t^{\theta} - 2 \Bigl( 1 - \frac{t}{2} \Bigr)^{\theta},
\]
has derivative
\[
g'(t) = \theta \Bigl( \Bigl( 1 - \frac{t}{2} \Bigr)^{\theta-1} - t^{\theta-1} \Bigr),
\]
which is positive on $(0,\frac{2}{3})$ and negative on $(\frac{2}{3},1]$. Since $g(0)=0$ and $g(1) = 1 - 2^{1-\theta}>0$, we deduce that $g$ is positive on $(0,1]$. Therefore, \eqref{e:lower-vee} and \eqref{e:maxmin} imply
\[
\mathscr{Q}(u\vee v) > 2 M  \Bigl( 1 - \frac{\mu}{2} \Bigr)^{\theta} = 2^{1-\theta} M \, (2-\mu)^{\theta} =  2^{1-\theta} M \, \|a\vee b\|_p^2.
\]
Since $u\vee v$ belongs to $\mathrm{Fix}(\varrho)$, the above inequality and \eqref{e:better-sym} give us $M_{\varrho} > 2^{1-\theta} M$.
\end{proof}

We can now exclude the case in which \eqref{e:dico-sym} holds with $\lambda=0$. Assume by contradiction that this is the case, and write each $u_n$ as the sum
\[
u_n = w_n + z_n^+ + z_n^-
\]
of the three functions with mutually disjoint supports
\[
w_n := u_n \mathbbm{1}_{(-s_n,s_n)}, \qquad z_n^+ := u_n \mathbbm{1}_{[s_n,+\infty)}, \qquad z_n^- := u_n \mathbbm{1}_{(-\infty,-s_n]}.
\]
By \eqref{e:dico-sym} with $\lambda=0$, $\|w_n\|_p$ tends to zero. By the symmetry of $u_n$, the numbers $\|z_n^+\|_p^p$ and $\|z_n^-\|_p^p$ coincide if $\varrho=\iota^*$, and their difference is $u_n(1-s_n)^p\leq \|w_n\|_p^p$ if $\varrho=\jmath^*$. Therefore,
\begin{equation}
\label{e:half}
\lim_{n\rightarrow \infty} \|w_n\|_p = 0, \qquad \lim_{n\rightarrow \infty} \|z_n^+\|_p^p =  \lim_{n\rightarrow \infty} \|z_n^-\|_p^p = \frac{1}{2}.
\end{equation}
By the quadraticity of $\mathscr{Q}$ and \eqref{e:symmetry} we have
\begin{equation}
\label{e:ide-sym}
\mathscr{Q}(u_n) = \mathscr{Q}(w_n) + 2 \langle Q*w_n, z_n^+ + z_n^- \rangle + \mathscr{Q}(z_n^+) + \mathscr{Q}(z_n^-) + 2 \langle Q* z_n^+, z_n^- \rangle.
\end{equation}
The first two terms on the right-hand side are infinitesimal by \eqref{e:main-ineq} and \eqref{e:half}. The functions $z_n^{\pm}$ are not fixed by $\varrho$, so for the next two terms we use \eqref{e:better} and \eqref{e:half}, obtaining
\[
\limsup_{n\rightarrow \infty} \bigl( \mathscr{Q}(z_n^+) + \mathscr{Q}(z_n^-) \bigr) \leq \lim_{n\rightarrow \infty} M \bigl( \|z_n^+\|_p^2 + \|z_n^-\|_p^2 \bigr) = 2^{1-\frac{2}{p}} M.
\]
Finally, the supports of $z_n^+$ and $z_n^-$ have distance at least $2s_n$, which diverges, so the last term in \eqref{e:ide-sym} is infinitesimal by the argument which we used above for the term $\langle Q*v_n,z_n \rangle$. We conclude that
\[
M_{\varrho} = \lim_{n\rightarrow \infty} \mathscr{Q}(u_n) \leq 2^{1-\frac{2}{p}} M,
\]
contradicting Lemma \ref{l:strict}. This contradiction concludes the proof of the existence of nonnegative maximizers of $\mathscr{Q}$ on $\mathscr{S}_{\iota}$ and $\mathscr{S}_{\jmath}$, and hence of Theorem \ref{t:maximizers}.

\section{Superposition of non-degenerate Gibbs measures}
\label{s:superposition}

Throughout this section, $Q\in \ell^{\frac{d+1}{2}}(\Z)$ is an even irreducible interaction function. 
Let $u\in \mathscr{U}(d,Q)$. By linearizing equation \eqref{e:main-eq} we obtain the equation
\begin{equation}
\label{e:lin-eq}  
v = d (Q*u)^{d-1} Q*v
\end{equation}
for $v\in \ell^p(\Z)$. We shall say that $u\in  \mathscr{U}(d,Q)$ is \textit{non-degenerate} if the above linear equation has no non-trivial solutions in $\ell^p(\Z)$. The Gibbs measure $\mu\in \mathscr{G}(d,Q)$ is said to be \textit{non-degenerate} if the corresponding $u\in \mathscr{U}(d,Q)$ is non-degenerate.

In this section, we set up the proof of the following result, which is precisely Theorem \ref{t:superpos-main} from the Introduction.

\begin{thm}
\label{t:superpos}
Assume that the interaction function $Q$ is irreducible and $\frac{d+1}{2}$-summable.
Let $\mu$ and $\mu'$ be not necessarily distinct non-degenerate elements of $\mathscr{G}(d,Q)$ with weights $w$ and $w'$.  Then there exists $N\in \N$  such that for every $n\in \Z$ with $|n|\geq N$ there is a non-degenerate Gibbs measure $\mu \#_n \mu'\in \mathscr{G}(d,Q)$ with weight $w_n$ such that 
\[
\lim_{n\rightarrow \pm\infty} w_n = w + w' \qquad \mbox{and} \qquad
\lim_{n\rightarrow \pm \infty} \bigl\| (\mu \#_n \mu')|_{\Lambda} - \bigl( \textstyle{\frac{w}{w+w'}} \, \mu|_{\Lambda} + \textstyle{\frac{w'}{w+w'}}\, (\tau_*^n \mu')|_{\Lambda} \bigr) \bigr\|_1 = 0,
\]
for every finite subset $\Lambda\subset V$.
\end{thm}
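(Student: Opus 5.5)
By Proposition \ref{p:zachary}, the statement reduces to one about the convolution equation \eqref{e:main-eq}. Let $u,u'\in\mathscr{U}(d,Q)$ be the non-degenerate solutions attached to $\mu,\mu'$, and write $u'_n:=u'(\cdot-n)$ for the translate corresponding to $\tau^n_*\mu'$. I would find a solution $u_n\in\mathscr{U}(d,Q)$ with
\[
\|u_n-(u+u'_n)\|_p\rightarrow 0 \qquad\mbox{for } |n|\rightarrow\infty,
\]
and show it is non-degenerate. The weight statement is then immediate: $\|u_n\|_p^p\rightarrow\|u\|_p^p+\|u'\|_p^p$, since $u$ and $u'_n$ have asymptotically disjoint mass. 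The marginal statement should follow from \eqref{e:markov} and \eqref{e:pi&P}.

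To find $u_n$, I would work with $F(v):=v^{1/d}-Q*v$ on positive $v$. Equivalently, and more smoothly, I would solve for $g=Q*v$ in $\ell^{d+1}(\Z)$ the equation $G(g):=g-Q*(g_+^d)=0$. This is $C^1$ from $\ell^{d+1}$ to $\ell^{d+1}$, because $g\mapsto g_+^d$ is $C^1$ from $\ell^{d+1}$ to $\ell^p$ (as $d\cdot p=d+1$) and Young's inequality \eqref{e:young} applies. The derivative at $g$ is $h\mapsto h-Q*(d g^{d-1}h)$, and non-degeneracy of $u$ translates into injectivity of $DG(g_0)$, where $g_0=Q*u$. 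Since $Q*(dg_0^{d-1}\,\cdot)$ is a compact perturbation of the identity, I would show it is compact by truncating $Q$ and using the decay of $g_0$; this makes $DG(g_0)$ injective Fredholm of index zero, hence invertible.

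Newton's method then runs as follows. Take the approximate solution $\tilde g_n:=g_0+g_0(\cdot-n)$. The residual $G(\tilde g_n)$ equals $Q*\bigl(g_0^d+g_0(\cdot-n)^d-(g_0+g_0(\cdot-n))^d\bigr)$. Its $\ell^{d+1}$ norm tends to zero because the cross terms involve products of functions concentrated far apart, again estimated by truncating $Q$ and the tails. The key step is a uniform bound on $DG(\tilde g_n)^{-1}$ for $|n|$ large, and I expect this to be the main obstacle. I would argue by contradiction. Suppose there are $h_n$ with $\|h_n\|=1$ and $DG(\tilde g_n)h_n\rightarrow 0$. Writing $h_n=Q*(d\tilde g_n^{d-1}h_n)+o(1)$, compactness localizes $h_n$ near $0$ and near $n$. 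Weak limits of $h_n$ and $h_n(\cdot+n)$ then solve the linearized equations at $g_0$ and at $Q*u'$, so both vanish by non-degeneracy. This contradicts the fact that $d\tilde g_n^{d-1}h_n$ cannot vanish while $\|h_n\|=1$. Making this rigorous requires splitting $\Z$ into a neighbourhood of $0$, a neighbourhood of $n$, and the rest, where $\tilde g_n$ is small. Once the uniform bound holds, together with uniform continuity of $DG$ near $\tilde g_n$, the Newton–Kantorovich theorem gives a zero $g_n$ with $\|g_n-\tilde g_n\|\rightarrow0$. The same perturbation argument gives invertibility of $DG(g_n)$, which is non-degeneracy.

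It remains to check positivity and to pass to the measures. Set $u_n:=(g_n)_+^d$. Then $g_n=Q*u_n\geq 0$ and $u_n\neq 0$, so irreducibility makes $u_n$ positive and $u_n\in\mathscr{U}(d,Q)$. Moreover $u_n\rightarrow u+u'_n$ in $\ell^p$, in the sense that the difference tends to zero. For the marginals, by \eqref{e:markov} and \eqref{e:pi&P},
\[
\mu_n(\sigma|_\Lambda=\eta)=\|u_n\|_p^{-p}\,\prod_{x}u_n(\eta_x)^{\,\cdot}\prod_{\mathrm{edges}}Q(\eta_x-\eta_y),
\]
with suitable exponents obtained by using $(Q*u)(i)=u(i)^{1/d}$. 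This is a multilinear expression in powers of $u_n$ evaluated at the configuration. I would take connected $\Lambda$ first, which suffices after marginalizing. Then $\ell^p$-convergence, together with the separation of supports and the fact that mixed configurations carry negligible mass, gives the convex combination with coefficients $\|u\|_p^p/(\|u\|_p^p+\|u'\|_p^p)=w/(w+w')$ in total variation.
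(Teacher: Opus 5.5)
Your proposal is correct. Its skeleton matches the paper's: work with $g=u^{1/d}=Q*u$ in $\ell^{d+1}(\Z)$, note that the linearization is a compact perturbation of the identity, and apply Newton--Kantorovich at $g_0+g_0'(\cdot-n)$. (You wrote $g_0(\cdot-n)$; it must be the translate of $g_0'=Q*u'$.) Your map $G$ is equivalent to the paper's $\mathscr{F}$. The differences lie in the two substantive steps.

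\emph{Uniform bound on $DG(\tilde g_n)^{-1}$.} The paper truncates $Q$ to $[-q,q]$ and $x_1,x_2$ to $[-m,m]$. For $|n|>2q+2m$ the truncated linearized equation then splits exactly into the two single-bump problems plus the identity on the gap. This gives an explicit inverse of norm at most $2C$, and a Neumann series restores $Q$.

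Your contradiction argument also works. The functions $\tilde g_n^{d-1}h_n$ are bounded in $\ell^p$ and converge pointwise, so weak limits of $h_n$ and $h_n(\cdot+n)$ solve the two linearized equations and hence vanish. Multiplication by $g_0^{d-1}$ or $g_0'^{d-1}$ is compact from $\ell^{d+1}$ to $\ell^p$, so this forces $\tilde g_n^{d-1}h_n\to 0$ in $\ell^p$, and therefore $h_n\to 0$, a contradiction. Your argument is shorter and gives invertibility and the bound at once. It is, however, non-quantitative, whereas the paper's construction ties $N$ explicitly to $q$, $m$, and $C$.

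\emph{Passage to the marginals.} The paper goes through Proposition \ref{p:superpos}, which gives asymptotics of $\pi_n$ and of $P_n$ against the site-dependent mixture $\tilde P_n$. It then applies the telescoping Lemma \ref{l:finite-trees}. You instead expand cylinder probabilities directly. That is fine, but ``mixed configurations carry negligible mass'' is exactly where an estimate is needed. For connected $\Lambda$,
\[
\mu(\sigma|_\Lambda=\eta)=\|g\|_{d+1}^{-(d+1)}\prod_{x\in\Lambda}g(\eta_x)^{d+1-\deg_\Lambda x}\prod_{\{x,y\}\in E_\Lambda}Q(\eta_x-\eta_y),
\]
so interior vertices carry no weight at all. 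To make $g\mapsto\mu|_\Lambda$ continuous into $\ell^1(\Z^\Lambda)$ you need the leaf-peeling H\"older--Young bound
\[
\sum_\eta\prod_x|a_x(\eta_x)|\prod_{\{x,y\}}Q(\eta_x-\eta_y)\le\|Q\|_{\frac{d+1}{2}}^{|\Lambda|-1}\prod_x\|a_x\|_{\frac{d+1}{d+1-\deg_\Lambda x}},
\]
reading the norm as $\ell^\infty$ when $\deg_\Lambda x=d+1$. After that, truncating $Q$, $g_0$, $g_0'$ kills the cross terms.

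With this filled in, your route is more direct. The paper's route yields the transition-matrix asymptotics \eqref{e:asy2} as a by-product.
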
 

In other words, all finite-dimensional marginals of $\mu\#_n \mu'$ are close in total variation to those of a convex combination of $\mu$ and of the translated measure $\tau_*^n \mu'$, with coefficients given by the weights. 

\begin{rem}
{\rm By applying the above theorem iteratively, we deduce that  the existence of one non-degenerate element in $\mathscr{G}(d,Q)$ implies the existence of infinitely many non-degenerate elements in $\mathscr{G}(d,Q)$ which also after projection to the quotient space $\mathscr{G}(d,Q)/\Z$ are pairwise distinct and have diverging weights.}
\end{rem}

Theorem \ref{t:superpos} will be deduced from the following more precise proposition.

\begin{prop}
\label{p:superpos}
Under the assumptions of Theorem \ref{t:superpos}, let $\pi$ and $\pi'$ be the single-site marginals, $P$ and $P'$ the transition matrices, and $w$ and $w'$ the weights of $\mu$ and $\mu'$. 
Then there exists $N\in \N$  such that for every $n\in \Z$ with $|n|\geq N$ there exists a non-degenerate Gibbs measure $\mu \#_n \mu'\in \mathscr{G}(d,Q)$ with  single-site marginal $\pi_n$, transition matrix $P_n$, and weight $w_n$  such that
\begin{eqnarray}
\label{e:asy0}
& {\displaystyle \lim_{n\rightarrow \pm \infty} w_n = w + w',} & \\
\label{e:asy1}
& {\displaystyle \lim_{n\rightarrow \pm \infty} \sum_{i\in \Z} \left| \pi_n(i) - \bigl( {\textstyle \frac{w}{w+w'}} \,\pi (i) + {\textstyle \frac{w'}{w+w'}} \, \pi'(i-n) \bigr) \right| = 0,} &\\
\label{e:asy2}
& {\displaystyle \lim_{n\rightarrow \pm\infty} \sum_{(i,j)\in \Z^2} \pi_n(i) \left| P_n(i,j) - \tilde{P}_n(i,j) \right| = 0,} &
\end{eqnarray}
where
\begin{equation}
\label{e:P'}
\tilde{P}_n(i,j) := \lambda_n(i) P(i,j) + (1-\lambda_n(i)) P'(i-n,j-n),
\end{equation}
with
\begin{equation}
\label{e:lambda}
\lambda_n(i) := \frac{(w \pi(i))^{\frac{1}{d+1}}}{(w \pi(i))^{\frac{1}{d+1}} + (w' \pi'(i-n))^{\frac{1}{d+1}}}\in (0,1).
\end{equation}
\end{prop}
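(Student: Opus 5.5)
Let $u,u'\in\mathscr{U}(d,Q)$ correspond to $\mu,\mu'$ via Proposition \ref{p:zachary}, and write $u'_n:=u'(\cdot-n)$. The plan is to find the solution $u_n\in\mathscr{U}(d,Q)$ of \eqref{e:main-eq} by Newton's method (a quantitative implicit function theorem) near the approximate solution $a_n:=u+u'_n$, and then to read off \eqref{e:asy0}--\eqref{e:asy2} from the explicit formulas \eqref{e:pi&P}.

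\textbf{Step 1: $a_n$ is an approximate solution.} Consider $F(v):=v-(Q*v)^d$ on positive $v\in\ell^p(\Z)$. Since $F(u)=F(u'_n)=0$,
\[
F(a_n)=(Q*u)^d+(Q*u'_n)^d-(Q*u+Q*u'_n)^d .
\]
Every term of the binomial expansion of the right-hand side is a product containing both $Q*u$ and $Q*u'_n$. Because $Q*u,\,Q*u'\in\ell^{d+1}$ and their translates decouple as $|n|\to\infty$ (truncate $Q$ as in Section \ref{s:exmax}), we get $\|F(a_n)\|_p\to 0$. The same decoupling gives $\langle u,u'_n\rangle$-type cross terms tending to $0$.

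\textbf{Step 2: uniform invertibility of the linearization.} Set $L_n:=dF(a_n)=I-d(Q*a_n)^{d-1}(Q*\cdot)$. The operator $K_u:=d(Q*u)^{d-1}Q*$ is compact on $\ell^p$: it is multiplication by the decaying weight $(Q*u)^{d-1}$, which lies in $\ell^{(d+1)/(d-1)}$, composed with a bounded operator. Hence non-degeneracy of $u$ makes $I-K_u$ invertible, and likewise $I-K_{u'_n}$ is invertible with the same norm of the inverse, by translation invariance.

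I would prove $\|L_n^{-1}\|\le C$ uniformly for $|n|\ge N$ by contradiction. Suppose $\|v_n\|_p=1$ and $L_nv_n\to0$. Split $v_n$ with cutoffs near $0$ and near $n$. The error $(Q*a_n)^{d-1}-(Q*u)^{d-1}-(Q*u'_n)^{d-1}$ is small in $\ell^{(d+1)/(d-1)}$; this is fine since $d\ge2$, where $t\mapsto t^{d-1}$ behaves subadditively up to small cross terms. Compactness of $K_u$ then forces $v_n$, up to subsequences, to converge near $0$ to a solution of \eqref{e:lin-eq} for $u$, which must vanish, and similarly near $n$. Far from both centres the weights are small, so $v_n\approx L_nv_n\to0$ there. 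This contradicts $\|v_n\|_p=1$. I expect this uniform estimate to be the main technical obstacle. The delicate parts are handling $\ell^p$ mass escaping to infinity and the non-smoothness of $t^{1/d}$-type terms; for the latter, note that $F$ only involves $(Q*v)^d$, which is smooth in $v$ near positive $v$.

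\textbf{Step 3: Newton iteration and non-degeneracy.} Since $F$ is $C^1$ with locally Lipschitz derivative on bounded sets, Newton--Kantorovich gives a unique zero $u_n$ with $\|u_n-a_n\|_p\le 2C\|F(a_n)\|_p\to0$. Positivity of $u_n$ is the next issue. By the triangle inequality $u_n\ge0$ need not hold automatically, so I would apply the argument to $v\mapsto v-(Q*|v|)^d$ and then note $u_n=(Q*|u_n|)^d\ge0$, nonzero, hence positive by irreducibility. Non-degeneracy of $u_n$ follows from Step 2, since $dF(u_n)$ is a small perturbation of $L_n$.

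\textbf{Step 4: asymptotics.} From $\|u_n-u-u'_n\|_p\to0$ and the disjointness of the asymptotic supports, $\|u_n\|_p^p\to\|u\|_p^p+\|u'\|_p^p$. This gives \eqref{e:asy0} by \eqref{e:defnw}. It also gives $u_n^p\approx u^p+(u'_n)^p$ in $\ell^1$, which yields \eqref{e:asy1}, using $w\pi\propto u^p$ with the common constant $\|Q\|^{(d+1)/(d-1)}$.

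For \eqref{e:asy2}, write
\[
\pi_n(i)P_n(i,j)=\frac{Q(i-j)\,u_n(j)\,u_n(i)^{1/d}}{\|u_n\|_p^p}.
\]
Comparing with $\tilde P_n$ uses the identity $\lambda_n(i)=u(i)^{1/d}/(u(i)^{1/d}+u'_n(i)^{1/d})$, which follows from $(w\pi)^{1/(d+1)}\propto u^{1/d}$. After summation, the errors reduce to $\ell^1$ bounds on $\sum_j Q(i-j)|u_n(j)-u(j)-u'_n(j)|\,u_n(i)^{1/d}$, which are controlled by \eqref{e:main-ineq}, together with cross terms that vanish by decoupling.

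Theorem \ref{t:superpos} for a general finite $\Lambda$ then follows by chaining \eqref{e:markov} along a rooted tree. The required estimate is that the $\pi_n$-weighted total variation of $P_n-\tilde P_n$ is small. In addition, mixed transitions between the two components have vanishing total mass, so $\tilde P_n$ Markov chains started from the mixture stay, up to small error, within one component.
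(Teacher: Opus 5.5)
Your strategy is sound and parallels the paper's. Both apply Newton's method (Proposition \ref{p:newton}) around the superposition $u+u'_n$ and then read off the asymptotics from \eqref{e:pi&P}. The implementation differs in two ways.

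First, the variables. The paper works with $x=u^{1/d}\in\ell^{d+1}(\Z)$ and $\mathscr{F}(x)=x-Q*|x|^d$, so every zero automatically satisfies $x=Q*|x|^d\ge0$.

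Second, the uniform bound on the inverse. The paper obtains it constructively: it truncates $Q$ to $\tilde Q$ and $x_1,x_2$ to finite support, so that for $|n|>2q+2m$ the linearization at $\tilde x_1+\tau_*^n\tilde x_2$ splits exactly. This gives an inverse of norm at most $2C$, then $4C$ after a Neumann series, and yields an explicit $N$. Your Step 2 replaces this with a soft compactness argument, which works and is simpler than your sketch suggests. Suppose $\|v_n\|_p=1$ and $L_nv_n\to0$; then $v_n-K_uv_n-K_{u'_n}v_n\to0$. Weak limits of $v_n$ and of $v_n(\cdot+n)$ solve the linearized equations of $u$ and $u'$, so they vanish by non-degeneracy. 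Compactness then gives $K_uv_n\to0$ and $K_{u'_n}v_n\to0$ in norm, hence $v_n\to0$, a contradiction. Mass escaping to infinity needs no separate treatment.

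Two steps need repair.

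\textbf{Step 3 (positivity).} Newton's method cannot be applied to $v\mapsto v-(Q*|v|)^d$. Every $\ell^p$-ball around $a_n$ contains $v=a_n-a_n(i)\mathbbm{1}_{\{i\}}$ for $|i|$ large. At such $v$, in the direction $\mathbbm{1}_{\{i\}}$ the map has a $|t|$-kink with the nonzero coefficient $d(Q*|v|)^{d-1}Q(\cdot-i)$, so it is not differentiable there. For odd $d$ the positivity issue is real, since a zero of $v-(Q*v)^d$ is not a priori nonnegative. Use $G(v):=v-|Q*v|^d$ instead:
\begin{enumerate}[(1)]
\item Since $t\mapsto|t|^d$ is $C^2$ for $d\ge2$, $G$ is $C^1$ with derivative Lipschitz on bounded sets.
\item $\mathrm{d}G(a_n)=L_n$, because $Q*a_n>0$.
\item Zeros satisfy $v=|Q*v|^d\ge0$.
\end{enumerate}
This is exactly what the paper's change of variables $x=u^{1/d}$ buys.

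\textbf{Step 4 (the formula for $\pi_nP_n$).} Your formula is wrong. Since $(Q*u_n)(i)=u_n(i)^{1/d}$ and $p-\frac1d=1$, the correct expression is $\pi_n(i)P_n(i,j)=Q(i-j)u_n(i)u_n(j)/\|u_n\|_p^p$, which is symmetric in $i,j$ as reversibility requires. With your weight $u_n(i)^{1/d}\in\ell^{d+1}$, the resulting sum would not be controlled by \eqref{e:main-ineq}.

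With the correct formula, the denominator of $\tilde P_n$ is handled by an exact identity. Since $Q*u=u^{1/d}$ and $Q*u'_n=(u'_n)^{1/d}$, one has $\tilde P_n(i,j)=Q(i-j)a_n(j)/(Q*a_n)(i)$. Then the sum in \eqref{e:asy2} is bounded by $2\langle u_n,Q*|u_n-a_n|\rangle/\|u_n\|_p^p$. This tends to zero by \eqref{e:main-ineq}, and no decoupling of cross terms is needed there.
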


This proposition  will be proven by showing how from two non-degenerate elements $u_1$ and $u_2$ of $\mathscr{U}(d,Q)$ we can construct a new element of this set which is suitably close to $u_1 + \tau^n_* (u_2)$. Instead of working in the variational setting on $\ell^p(\Z)$ as in the previous two sections, here it is convenient to identify elements of $\mathscr{U}(d,Q)$ with zeroes of a suitable map on $\ell^{d+1}(\Z)$. More precisely, we observe that $u$ is in $\mathscr{U}(d,Q)$ if, and only if, the function $x:=u^{\frac{1}{d}}\in \ell^{d+1}(\Z)$ is a not identically vanishing zero of the map
\[
\mathscr{F}= \mathscr{F}_Q: \ell^{d+1}(\Z) \rightarrow \ell^{d+1}(\Z), \quad \mathscr{F}(x) := x - \mathscr{K}(x), \qquad  \mathscr{K}(x) = \mathscr{K}_Q(x):= Q * |x|^d.
\]
The map $\mathscr{K}$ is easily seen to be of class $C^2$ with 
\begin{eqnarray}
\label{e:diff1}
& \diff \mathscr{K}(x)[y] = d\, Q * (|x|^{d-2} x y),&  \qquad \forall x,y\in \ell^{d+1}(\Z), \\
\nonumber
& \diff^2 \mathscr{K}(x)[y,z] = d(d-1) Q * (|x|^{d-2} yz), & \qquad \forall x,y,z\in \ell^{d+1}(\Z),
\end{eqnarray}
and the Young and H\"older inequalities give us the bounds
\begin{eqnarray}
\label{e:bd1}
\|\diff \mathscr{K}(x)[y]\|_{d+1} &\leq& d \|Q\|_{\frac{d+1}{2}} \|x\|_{d+1}^{d-1} \|y\|_{d+1}, \\
\label{e:bd2}
\|\diff^2 \mathscr{K}(x)[y,z]\|_{d+1} &\leq& d(d-1)  \|Q\|_{\frac{d+1}{2}} \|x\|_{d+1}^{d-2} \|y\|_{d+1} \|z\|_{d+1},
\end{eqnarray}
for all $x,y,z\in \ell^{d+1}(\Z)$.

\begin{lem}
For every $x\in \ell^{d+1}(\Z)$, the linear operator $\diff \mathscr{K}(x): \ell^{d+1}(\Z) \rightarrow \ell^{d+1}(\Z)$ is compact.
\end{lem}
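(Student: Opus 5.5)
We approximate $\diff \mathscr{K}(x)$ in operator norm by finite-rank operators; since the compact operators form a closed subspace of the bounded operators on $\ell^{d+1}(\Z)$, this suffices. Two separate truncations are needed. The first localizes the multiplier $|x|^{d-2}x$ in space. The second truncates the kernel $Q$ to bounded support.

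For $R\in \N$ set $x_R := x \mathbbm{1}_{[-R,R]}$ and $\tilde Q_R := Q\mathbbm{1}_{[-R,R]}$. Define
\[
T_R y := d\, \tilde Q_R * \bigl(|x_R|^{d-2} x_R\, y\bigr).
\]
The function $|x_R|^{d-2}x_R\,y$ is supported in $[-R,R]$, so it lies in a $(2R+1)$-dimensional space and depends linearly on the finitely many values $y(-R),\dots,y(R)$. Its convolution with the finitely supported $\tilde Q_R$ is supported in $[-2R,2R]$. Hence $T_R$ has finite rank and is bounded.

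Next we estimate $\diff \mathscr{K}(x) - T_R$ by splitting it as
\[
d\, (Q-\tilde Q_R) * \bigl(|x|^{d-2}x\,y\bigr) + d\, \tilde Q_R * \bigl((|x|^{d-2}x - |x_R|^{d-2}x_R)\, y\bigr).
\]
For the first term, we use Young's inequality with exponents $\frac{d+1}{2}$ and $\frac{d+1}{d}$ together with H\"older's inequality, exactly as in \eqref{e:bd1}. This bounds its operator norm by $d\,\|Q-\tilde Q_R\|_{\frac{d+1}{2}}\|x\|_{d+1}^{d-1}$, which tends to $0$.

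For the second term, note that $|x|^{d-2}x - |x_R|^{d-2}x_R = |x|^{d-2}x\,\mathbbm{1}_{\Z\setminus[-R,R]}$. Young's inequality with $\|\tilde Q_R\|_{\frac{d+1}{2}}\le \|Q\|_{\frac{d+1}{2}}$ and H\"older's inequality then bound its operator norm by
\[
d\,\|Q\|_{\frac{d+1}{2}}\, \bigl\| |x|^{d-1}\mathbbm{1}_{\Z\setminus[-R,R]}\bigr\|_{\frac{d+1}{d-1}} = d\,\|Q\|_{\frac{d+1}{2}}\,\|x\mathbbm{1}_{\Z\setminus[-R,R]}\|_{d+1}^{d-1}.
\]
This tends to $0$ because $x\in \ell^{d+1}(\Z)$.

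Together, these estimates give $T_R\to \diff\mathscr{K}(x)$ in operator norm, so $\diff \mathscr{K}(x)$ is compact. The only step needing care is the exponent bookkeeping in the H\"older step: we need $\frac{d-1}{d+1}+\frac{1}{d+1}=\frac{d}{d+1}$, so that $|x|^{d-1}y\in\ell^{p}(\Z)$ with $p=\frac{d+1}{d}$. The case $d=2$, where $|x|^{d-2}x=x$, is included without change.
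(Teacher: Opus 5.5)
Your proof is correct and follows essentially the paper's route. You approximate $\diff\mathscr{K}(x)$ in operator norm by the finite-rank operator obtained from the finitely supported truncation $x_R$ and then use that compact operators form a closed set; the paper reaches the same approximation by citing continuity of $\diff\mathscr{K}$, where you estimate the difference by hand. Your truncation of $Q$ is unnecessary: the range of $y\mapsto d\,Q*(|x_R|^{d-2}x_R\,y)$ is already spanned by the finitely many translates $Q(\cdot-j)$ with $|j|\le R$, so this operator has finite rank even when $Q$ has infinite support.
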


\begin{proof}
Since $\diff \mathscr{K}$ is continuous from $\ell^{d+1}(\Z)$ to the space of bounded operators on $\ell^{d+1}(\Z)$, $\diff \mathscr{K}(x)$ is close to $\diff \mathscr{K}(x')$ in the operator norm topology when $\|x'-x\|_{d+1}$ small. Since the space of compact operators is closed and bi-infinite sequences with compact support are dense in $\ell^{d+1}(\Z)$, we may assume that $x$ has finite support. In this case, the operator $\diff \mathscr{K}(x)$ has finite rank and hence is compact.
\end{proof}

From the above lemma, we deduce that for every $x\in \ell^{d+1}(\Z)$ the operator
\[
\diff \mathscr{F}(x) = I - \diff \mathscr{K}(x) : \ell^{d+1}(\Z) \rightarrow \ell^{d+1}(\Z)
\]
is Fredholm of index zero. In particular, it is invertible if, and only if, it is injective. Its kernel is given by the elements $y\in \ell^{d+1}(\Z)$ such that
\[
y = d  \, Q* (|x|^{d-2} xy).
\]
If now $x=u^{\frac{1}{d}}$ with $u \in \mathscr{U}(d,Q)$, a simple computation and Young's inequality show that $y\in \ell^{d+1}(\Z)$ solves the above equation if, and only if, the function $v= x^{d-1} y\in \ell^{p}(\Z)$ solves \eqref{e:lin-eq}. We can summarize the above facts into the following lemma.
\begin{lem}
The nonnegative function $u\in \ell^p(\Z)$ is in $\mathscr{U}(d,Q)$ if, and only if, $x= u^{\frac{1}{d}}\in \ell^{d+1}(\Z)$ is a not identically vanishing zero of the map $\mathscr{F}$. Moreover, $u\in \mathscr{U}(d,Q)$ is non-degenerate if, and only if, $x$ is a non-degenerate zero of $\mathscr{F}$, meaning that the operator  $\diff \mathscr{F}(x)$ is invertible.
\end{lem}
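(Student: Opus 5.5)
The plan is to prove the two assertions of the lemma separately, using only the computations already made above.

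For the first assertion, let $u\geq 0$ be in $\ell^p(\Z)$ and set $x:=u^{\frac1d}$. Since $\sum |x(i)|^{d+1} = \sum u(i)^p$, $x$ lies in $\ell^{d+1}(\Z)$, and $|x|^d=u$. Hence $\mathscr{F}(x)=0$ reads $u^{\frac1d}=Q*u$, which is equivalent to $u=(Q*u)^d$ because $Q*u\geq 0$. Conversely, take a zero $x\not\equiv 0$ of $\mathscr{F}$. Then $x=Q*|x|^d\geq 0$, so $u:=x^d$ is nonnegative, lies in $\ell^p(\Z)$, is not identically zero, and solves \eqref{e:main-eq}. By irreducibility $u$ is positive, so $u\in\mathscr{U}(d,Q)$. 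One small point needs care: the statement allows an arbitrary zero of $\mathscr{F}$, but every zero is automatically nonnegative, so the correspondence $u\leftrightarrow u^{\frac1d}$ is a bijection onto the nonzero zeros.

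For the second assertion, fix $u\in\mathscr{U}(d,Q)$ and $x=u^{\frac1d}>0$. By the compactness lemma, $\diff\mathscr{F}(x)$ is Fredholm of index zero, so it is invertible if and only if its kernel is trivial. It therefore suffices to produce a linear bijection between $\ker \diff\mathscr{F}(x)$ and the $\ell^p$-solutions of \eqref{e:lin-eq}.

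\emph{Forward direction.} Let $y\in\ell^{d+1}$ satisfy $y=d\,Q*(x^{d-1}y)$ and set $v:=x^{d-1}y$. Hölder's inequality with exponents $\frac{d+1}{d-1}$ and $d+1$ gives $\|v\|_p\leq \|x\|_{d+1}^{d-1}\|y\|_{d+1}$, so $v\in\ell^p$. Using $x=Q*u$, i.e.\ $x^{d-1}=(Q*u)^{d-1}$, we get
\[
v=x^{d-1}y=d\,(Q*u)^{d-1}\,Q*v,
\]
which is \eqref{e:lin-eq}.

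\emph{Reverse direction.} Let $v\in\ell^p$ solve \eqref{e:lin-eq} and set $y:=d\,Q*v$. Young's inequality \eqref{e:young} puts $y$ in $\ell^{d+1}$. Equation \eqref{e:lin-eq} says exactly $v=x^{d-1}y$, hence $y=d\,Q*(x^{d-1}y)$.

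The two maps are mutually inverse. Starting from $y$, we return to $d\,Q*v=y$. Starting from $v$, we return to $x^{d-1}y=v$. Positivity of $x$ is what makes these recoveries valid. Hence one kernel is trivial if and only if the other is, which proves the lemma.

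The only step needing attention is checking the integrability exponents in the Hölder and Young estimates; the rest is bookkeeping.
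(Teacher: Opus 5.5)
Your proof is correct and follows essentially the same route as the paper. Fredholm index zero reduces invertibility of $\diff\mathscr{F}(x)$ to injectivity, and the kernel is then identified with the $\ell^p$-solutions of \eqref{e:lin-eq} via $v=x^{d-1}y$, using H\"older and Young. Your explicit inverse map $v\mapsto d\,Q*v$ simply writes out the ``simple computation'' that the paper leaves to the reader.
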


Proposition \ref{p:superpos} will be deduced from the next result about the existence of \textit{multibump solutions} of equation $\mathscr{F}(x)=0$, once two non-degenerate solutions are given. Here, the $\Z$-action on $\ell^{d+1}(\Z)$ is given by $\tau^n_*(x) (i) = x(i-n)$.

\begin{thm}
\label{t:multibump}
Let $x_1$ and $x_2$ be not necessarily distinct zeroes of $\mathscr{F}$ in $\ell^{d+1}(\Z) \setminus \{0\}$. If $x_1$ and $x_2$ are non-degenerate, then there exists $N\in \N$ such that for every $n\in \Z$ with $|n|\geq N$ there exists a non-degenerate zero $z_n \in \ell^{d+1}(\Z) \setminus \{0\}$ 
of $\mathscr{F}$ such that
\[
\lim_{n \rightarrow \pm \infty} \| z_n - (x_1 + \tau_*^n x_2) \|_{d+1} = 0.
\]
\end{thm}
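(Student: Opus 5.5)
The plan is a Newton (quantitative implicit function) argument around the approximate zero $y_n := x_1 + \tau_*^n x_2$. Concretely, I would show that $\|\mathscr{F}(y_n)\|_{d+1}\to 0$, that $\diff\mathscr{F}(y_n)$ is invertible with $\sup_{|n|\geq N}\|\diff\mathscr{F}(y_n)^{-1}\|<\infty$, and then use the uniform second derivative bound \eqref{e:bd2}. The last bound holds on bounded sets for $d\geq 2$; for $d=2$ the factor $\|x\|^{d-2}=1$ is harmless. With these three ingredients, the Newton–Kantorovich theorem (or the contraction mapping principle applied to $x\mapsto x-\diff\mathscr{F}(y_n)^{-1}\mathscr{F}(x)$ on a ball of radius $2C\|\mathscr{F}(y_n)\|$ around $y_n$) yields a unique zero $z_n$ with $\|z_n-y_n\|_{d+1}\leq 2C\|\mathscr{F}(y_n)\|_{d+1}\to 0$. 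Since $\|y_n\|_{d+1}$ stays bounded away from zero, $z_n\neq 0$. Since invertibility is an open condition and $\diff\mathscr{F}$ is locally Lipschitz, $\diff\mathscr{F}(z_n)$ is invertible, so $z_n$ is non-degenerate.

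\textbf{Step 1: the residual.} Using $\mathscr{F}(x_1)=\mathscr{F}(\tau^n_*x_2)=0$ we get
\[
\mathscr{F}(y_n) = Q*\bigl(|x_1|^d+|\tau_*^nx_2|^d-|x_1+\tau_*^nx_2|^d\bigr).
\]
By Young's inequality it suffices to show that the bracket tends to $0$ in $\ell^p$. Pointwise, $\bigl||a+b|^d-|a|^d-|b|^d\bigr|\leq C(|a|^{d-1}|b|+|a||b|^{d-1})$. The $\ell^p$ norm of $|x_1|^{d-1}|\tau^n_*x_2|$ tends to zero. To see this, approximate $x_1,x_2$ in $\ell^{d+1}$ by finitely supported functions, whose products vanish once $|n|$ is large, and control the remainder by H\"older with exponents $\frac{d+1}{d-1}$ and $d+1$.

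\textbf{Step 2: uniform invertibility.} This is the main obstacle. Write
\[
\diff\mathscr{F}(y_n)=I-\diff\mathscr{K}(x_1)-\diff\mathscr{K}(\tau^n_*x_2)+R_n .
\]
As in Step 1, $\|R_n\|\to 0$, since $|y_n|^{d-2}y_n-|x_1|^{d-2}x_1-|\tau^n_*x_2|^{d-2}\tau^n_*x_2\to 0$ in $\ell^{\frac{d+1}{d-1}}$. Set $A=\diff\mathscr{K}(x_1)$ and $B_n=\tau^n_*\diff\mathscr{K}(x_2)\tau^{-n}_*$; both are compact, and $I-A$ and $I-B_n$ are invertible with $\|(I-B_n)^{-1}\|$ independent of $n$. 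The key asymptotic facts are $\|AB_n\|\to0$ and $\|B_nA\|\to 0$. To prove them, approximate $x_1,x_2$ by finitely supported functions and $Q$ by $\tilde Q=Q\mathbbm{1}_{[-q,q]}$ as in Section \ref{s:exmax}. The operators then become localized near $0$ and near $n$ respectively, so their products vanish for $|n|$ large, with error controlled by \eqref{e:bd1}. Given these facts, I would take the parametrix
\[
T_n:=(I-A)^{-1}+(I-B_n)^{-1}-I .
\]
A direct computation gives
\[
(I-A-B_n)T_n=I-A(I-B_n)^{-1}B_n'-\dots ,
\]
more precisely $(I-A-B_n)T_n=I+E_n$, where $E_n$ is a sum of terms each containing a factor $AB_n$ or $B_nA$ (after using $(I-B_n)^{-1}-I=B_n(I-B_n)^{-1}$ and similarly for $A$). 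Hence $\|E_n\|\to0$. The same holds on the left. A Neumann series then gives the uniform bound on $\diff\mathscr{F}(y_n)^{-1}$ for $|n|\geq N$.

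\textbf{Step 3: conclusion.} With $C:=\sup_{|n|\ge N}\|\diff\mathscr{F}(y_n)^{-1}\|$ and the Lipschitz constant $L$ of $\diff\mathscr{F}$ on a fixed ball (from \eqref{e:bd2}), the Kantorovich condition $CL\cdot C\|\mathscr{F}(y_n)\|<\tfrac12$ holds for $|n|$ large. This gives $z_n$ and the stated limit. The delicate point is therefore the commutation/localization estimate $\|AB_n\|+\|B_nA\|\to0$. It must be done carefully because the operators act on the non-reflexive-issue-free but non-compact space $\ell^{d+1}(\Z)$, and truncation errors have to be uniform in $n$. I would handle it by the $\epsilon$-approximation of both $Q$ and $x_i$ described above, using \eqref{e:bd1} for each error term.
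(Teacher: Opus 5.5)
Your proposal is correct. It shares the paper's overall strategy: Newton's method around an approximate two-bump solution, with a uniform bound on the inverse of the linearization. However, the base point and the key invertibility lemma are genuinely different.

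The paper first truncates. It replaces $Q$ by $\tilde Q=Q\mathbbm{1}_{[-q,q]}$ and $x_j$ by $\tilde x_j=x_j\mathbbm{1}_{[-m,m]}$, and runs Newton at $\tilde x_1+\tau_*^n\tilde x_2$. With bounded supports, the residual decouples exactly: $\mathscr{F}_Q(\tilde x_1+\tau_*^n\tilde x_2)=\mathscr{F}_Q(\tilde x_1)+\tau_*^n\mathscr{F}_Q(\tilde x_2)$. The operator $\diff\mathscr{F}_{\tilde Q}(\tilde x_1+\tau_*^n\tilde x_2)$ is then inverted by hand. The right-hand side is split into three pieces with disjoint supports, and each is solved separately, using that $\tilde Q$ spreads supports by at most $q$; this gives the explicit bound $2C$. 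A Neumann series then passes from $\tilde Q$ to $Q$ (bound $4C$). Finally $\epsilon\to0$ is taken with $|n|>2q+2m(\epsilon)$.

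You instead work at the untruncated point $y_n=x_1+\tau_*^nx_2$. You estimate the residual by the pointwise inequality and H\"older, and invert $I-A-B_n$ with the parametrix $T_n$. One checks that $(I-A-B_n)T_n=I-AB_n(I-B_n)^{-1}-B_nA(I-A)^{-1}$ and $T_n(I-A-B_n)=I-(I-A)^{-1}AB_n-(I-B_n)^{-1}B_nA$. Here $\|(I-B_n)^{-1}\|$ is independent of $n$ because $\tau_*^n$ is an isometry. So everything rests on $\|AB_n\|+\|B_nA\|\to0$, which your double truncation does prove, with errors uniform in $n$ by translation invariance. It also follows softly: $B_n\to0$ strongly because $\tau_*^{-n}y\rightharpoonup0$ and $\diff\mathscr{K}(x_2)$ is compact, so $\|B_nA\|\to0$ by compactness of $A$, and dually for $AB_n$.

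The paper's route buys complete explicitness: concrete constants, an explicit threshold $N$, and an exactly decoupled residual, all through elementary support bookkeeping. Your route buys the direct estimate $\|z_n-y_n\|_{d+1}\le 2C\|\mathscr{F}(y_n)\|_{d+1}$ without the $\epsilon$--$m(\epsilon)$ juggling. It also gives an abstract gluing principle for asymptotically orthogonal compact perturbations of the identity, which extends verbatim to any finite number of bumps.
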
 

The above theorem is proven in the next section. Proposition \ref{p:superpos} and Theorem \ref{t:superpos} can be deduced from it by using the identities \eqref{e:pi&P} and \eqref{e:defnw} relating the single-site marginal, transition matrix, and weight of a Gibbs measure in $\mathscr{G}(d,Q)$ to the corresponding function $u=x^d\in \mathscr{U}(d,Q)$, where $x\in \ell^{d+1}(\Z) \setminus \{0\}$ is the corresponding zero of $\mathscr{F}$. For the reader's convenience, we conclude this section by presenting proofs of these derivations.

\begin{proof}[Proof of Proposition \ref{p:superpos}]
Denote by $u_1=x^d_1$ and $u_2=x^d_2$ the elements of $\mathscr{U}(d,Q)$ which correspond to the Gibbs measures $\mu$ and $\mu'$ in the sense of Proposition \ref{p:zachary}. Here, $x_1$ and $x_2$ are positive solutions of the equation
\begin{equation}
\label{e:laeq}
x = Q*x^d 
\end{equation}
in $\ell^{d+1}(\Z)$ and from  \eqref{e:pi&P} and \eqref{e:defnw} we find, setting $q:= \|Q\|_{\frac{d+1}{2}}^{\frac{d+1}{d-1}}$,
\[
\begin{split}
& w = q \|x_1\|_{d+1}^{d+1}, \quad w' = q \|x_2\|_{d+1}^{d+1},\quad 
\pi(i) = q \frac{x_1(i)^{d+1}}{w}, \quad  \pi'(i) = q \frac{x_2(i)^{d+1}}{w'},  \\
& P(i,j) = \frac{Q(i-j) x_1(j)^d}{x_1(i)}, \qquad P'(i,j) = \frac{Q(i-j) x_2(j)^d}{x_2(i)}. 
\end{split}
\]
Then $x_1$ and $x_2$ are non-degenerate zeroes of $\mathscr{F}$ in $\ell^{d+1}(\Z)\setminus \{0\}$ and Theorem \ref{t:multibump} gives us a number $N\in \N$ such that for every $n\in \N$ with $|n|\geq N$ the map $\mathscr{F}$ has a non-degenerate zero $z_n \in \ell^{d+1}(\Z) \setminus \{0\}$  such that
\begin{equation}
\label{e:conve}
\lim_{n \rightarrow \pm \infty} \| z_n - (x_1 + \tau_*^n x_2) \|_{d+1} = 0.
\end{equation}
For $|n|\geq N$, we denote by $\mu\#_n \mu'$ the element of $\mathscr{G}(d,Q)$ corresponding to $z_n$ and by $\pi_n$, $P_n$, and $w_n$ its single-site marginal, transition matrix, and weight:
\[
w_n = q \|z_n\|_{d+1}^{d+1}, \qquad \pi_n(i) = q \frac{z_n(i)^{d+1}}{w_n}, \qquad P_n(i,j) = \frac{Q(i-j) z_n(j)^d}{z_n(i)}.
\]
Since the map from $\ell^{d+1}(\Z)$ to $\ell^1(\Z)$ mapping $x$ to $x^{d+1}$ is Lipschitz continuous on bounded sets, \eqref{e:conve} implies
\begin{equation}
\label{e:conve1}
\lim_{n\rightarrow \pm \infty} \| z_n^{d+1} - (x_1 + \tau_*^n x_2)^{d+1} \|_1 = 0.
\end{equation}
By approximating $x_1$ and $x_2$ with functions with bounded support, we find
\[
\lim_{n\rightarrow \pm \infty} \| (x_1 + \tau_*^n x_2)^{d+1} - (x_1^{d+1} + ( \tau_*^n x_2)^{d+1}) \|_1=0,
\]
and hence \eqref{e:conve1} implies
\begin{equation}
\label{e:conve2}
\lim_{n\rightarrow \pm \infty} \| z_n^{d+1} - (x_1^{d+1} + (\tau_*^n x_2)^{d+1}) \|_1 = 0.
\end{equation}
Using also the inequality
\[
\bigl| \|z_n^{d+1}\|_1 - \|x_1^{d+1} + (\tau_*^n x_2)^{d+1}\|_1 \bigr| \leq \| z_n^{d+1} - (x_1^{d+1} + (\tau_*^n x_2)^{d+1}) \|_1,
\]
and 
\[
\begin{split}
\lim_{n\rightarrow \pm \infty} \|x_1^{d+1} + (\tau_*^n x_2)^{d+1}\|_1 &= \lim_{n\rightarrow \infty}  ( \|x_1^{d+1}\|_1 + \|(\tau_*^n x_2)^{d+1}\|_1 ) \\ &=   \|x_1^{d+1}\|_1 + \|x_2^{d+1}\|_1 = \frac{w + w'}{q},
\end{split}
\]
we deduce that
\begin{equation}
\label{e:convew}
\lim_{n\rightarrow \pm \infty} w_n =\lim_{n\rightarrow  \pm \infty} q  \|z_n^{d+1}\|_1 = w + w',
\end{equation}
proving \eqref{e:asy0}. By dividing \eqref{e:conve2} by $w+w'$, we find
\[
\lim_{n\rightarrow \pm \infty} \Bigl\| \frac{w_n}{w+w'} \frac{z_n^{d+1}}{w_n} - \Bigl( \frac{w}{w+w'}\frac{x_1^{d+1}}{w} + \frac{w'}{w+w'}  \frac{(\tau_*^n x_2)^{d+1}}{w'}\Bigr) \Bigr\|_1 = 0,
\]
which, together with \eqref{e:convew}, implies
\[
\lim_{n\rightarrow \pm \infty} \Bigl\| \pi_n -  \Bigl( \frac{w}{w+w'}\, \pi + \frac{w'}{w+w'} \, \tau_*^n \pi' \Bigr) \Bigr\|_1 = 0,
\]
proving \eqref{e:asy1}. 

There remains to prove the statement about the asymptotics of $P_n$ in \eqref{e:asy2}. Note that the coefficients $\lambda_n(i)$ defined in \eqref{e:lambda} have the form
\[
\lambda_n(i) = \frac{x_1(i)}{x_1(i) + \tau_*^n x_2 (i)},
\]
and hence the transition matrix $\tilde{P}_n$ defined in \eqref{e:P'} takes the form
\[
\tilde{P}_n(i,j) = \frac{Q(i-j) \bigl( x_1(j)^d + \tau_*^n x_2(j)^d\bigr)}{y_n (i)}, \qquad \mbox{where} \quad y_n(i) := x_1(i) + \tau_*^n x_2 (i).
\] 
From the identities
\[
\begin{split}
\pi_n(i) P_n(i,j) &= \frac{q}{w_n} z_n(i)^d Q(i-j) z_n(j)^d, \\ \pi_n(i) \tilde{P}_n(i,j) &=  \frac{q}{w_n} \frac{z_n(i)^{d+1}}{y_n(i)} Q(i-j)  \bigl( x_1(j)^d + \tau_*^n x_2(j)^d\bigr),
\end{split}
\]
we find
\[
\begin{split}
\sum_{(i,j)\in \Z^2} & \pi_n(i) | P_n(i,j) - \tilde{P}_n(i,j) | = \\
& \frac{q}{w_n} \sum_{(i,j)\in \Z^2} z_n(i)^d Q(i-j) \Bigl| z_n(j)^d -  \frac{z_n(i)}{y_n(i)} (x_1(j)^d + \tau_*^n x_2(j)^d) \Bigr| \leq \frac{q}{w_n} ( A_n  + B_n  ),
\end{split}
\]
where
\[
\begin{split}
A_n &:=  \sum_{(i,j)\in \Z^2} z_n(i)^d Q(i-j) \Bigl| z_n(j)^d -  (x_1(j)^d + \tau_*^n x_2(j)^d) \Bigr|, \\
B_n &:= \sum_{(i,j)\in \Z^2} z_n(i)^d Q(i-j) \Bigl| x_1(j)^d + \tau_*^n x_2(j)^d-  \frac{z_n(i)}{y_n(i)} (x_1(j)^d + \tau_*^n x_2(j)^d) \Bigr|,
\end{split}
\]
and it is enough to prove that both sequences $(A_n)$ and $(B_n)$ are infinitesimal. The first one can be rewritten as 
\[
A_n = \sum_{i\in \Z} z_n(i)^d (Q* v_n ) (i), \qquad \mbox{where} \quad v_n := |z_n^d - (x_1^d + (\tau_*^n x_2)^d)|,
\]
and by the H\"older and Young inequalities we find
\begin{equation}
\label{e:A}
A_n \leq \|z_n\|_{d+1}^d \, \|Q * v_n\|_{d+1} \leq \|z_n\|_{d+1}^d\, \|Q\|_{\frac{d+1}{2}} \,\|v_n\|_{\frac{d+1}{d}}
\end{equation}
By \eqref{e:conve} and the fact that the map from $\ell^{d+1}(\Z)$ to $\ell^{\frac{d+1}{d}}(\Z)$ mapping $x$ to $x^d$ is Lipschitz continuous on bounded sets we deduce that
\[
\lim_{n\rightarrow \pm \infty} \| z_n^d - (x_1 + \tau_*^n x_2)^d \|_{\frac{d+1}{d}} = 0.
\]
Together with
\[
\lim_{n\rightarrow \pm \infty} \|x_1^d + (\tau_*^n x_2)^d - (x_1 + \tau_*^n x_2)^d \|_{\frac{d+1}{d}} = 0,
\]
this implies that $(v_n)$ tends to zero in $\ell^{\frac{d+1}{d}}$, and hence \eqref{e:A} implies that $(A_n)$ is infinitesimal. The sequence $(B_n)$ can be rewritten as
\[
\begin{split}
B_n &= \sum_{i\in \Z} z_n(i)^d \Bigl| 1 - \frac{z_n(i)}{y_n(i)} \Bigr| (Q * (x_1^d + (\tau_*^n x_2)^d))(i) \\ &= \sum_{i\in \Z} z_n(i)^d \Bigl| 1 - \frac{z_n(i)}{y_n(i)} \Bigr|( x_1(i) + \tau_*^n x_2 (i)) = \sum_{i\in \Z} z_n(i)^d |y_n(i) - z_n(i)| ,
\end{split}
\]
where we used the fact that $x_1$ and $\tau_*^n x_2$ solve \eqref{e:laeq}. By H\"older's inequality
\[
B_n \leq \|z_n\|_{d+1}^d \|y_n - z_n\|_{d+1},
\] 
and from \eqref{e:conve} we conclude that $(B_n)$ is infinitesimal. This concludes the proof of Proposition \ref{p:superpos}.
\end{proof}

We now deduce Theorem \ref{t:superpos} from Proposition \ref{p:superpos}. We first need a lemma about Markov measures on finite trees. Let $(\Lambda,E_{\Lambda})$ be a finite directed tree with root $o$ and edges oriented away from the root. A probability measure $\rho$ on $\Z$ and a stochastic matrix $K$ indexed on $\Z^2$ determine the Markov measure $\nu_{\rho,K}$ on $\Z^{\Lambda}$ defined by
\begin{equation}
\label{e:defnu}
\nu_{\rho,K}(\eta) := \rho(\eta_o) \prod_{(x,y)\in E_{\Lambda}} K(\eta_x,\eta_y) \qquad \forall \eta\in \Z^{\Lambda}.
\end{equation}
If $\mu$ is a homogeneous Markov measure on $(\Omega,\mathscr{A})$ with single-site marginal $\pi$ and transition matrix $P$,
and $\Lambda$ is a connected finite subset of $V$, then $\nu_{\pi,P}$ is precisely the marginal $\mu|_{\Lambda}$. As usual, $\rho$ is said to be $K$-invariant if
\[
\sum_{i\in \Z} \rho(i) K(i,j) = \rho(j) \qquad \forall j\in \Z.
\]
The reversibility condition
\[
\rho(i) K(i,j) = \rho(j) K(j,i) \qquad \forall (i,j) \in \Z^2,
\]
implies that $\rho$ is $K$-invariant.

\begin{lem}
\label{l:finite-trees}
Let $(\Lambda,E_{\Lambda})$ be a finite directed tree with root $o$ and edges oriented away from the root. Let $\rho$ and $\rho'$ be probability measures on $\Z$, $K$ and $K'$ stochastic matrices indexed on $\Z^2$. Then
\begin{eqnarray}
\label{e:fitre1}
& \displaystyle{\| \nu_{\rho,K} - \nu_{\rho',K} \|_1 = \|\rho - \rho' \|_1.} &
\end{eqnarray}
Moreover, if $\rho$ is $K$-invariant then
 \begin{eqnarray}
\label{e:fitre2}
 & \displaystyle{\|\nu_{\rho,K} - \nu_{\rho,K'} \|_1 \leq (|\Lambda |-1) \sum_{(i,j)\in \Z^2} \rho(i) | K(i,j) - K'(i,j) |.} &
\end{eqnarray}
\end{lem}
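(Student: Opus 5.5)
For \eqref{e:fitre1}, we sum over the root spin last. Write
\[
\nu_{\rho,K}(\eta) - \nu_{\rho',K}(\eta) = (\rho(\eta_o)-\rho'(\eta_o)) \prod_{(x,y)\in E_\Lambda} K(\eta_x,\eta_y).
\]
Take absolute values and sum over $\eta$. Summing iteratively over the spins at the leaves, each factor $K(\eta_x,\eta_y)$ with $y$ a leaf sums to $1$ because $K$ is stochastic. Removing leaves one at a time in this way, the product disappears entirely, and what remains is $\sum_i |\rho(i)-\rho'(i)|$. This is an equality, as claimed.

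For \eqref{e:fitre2} we argue by induction on $|\Lambda|$, using a telescoping decomposition. The case $|\Lambda|=1$ is trivial, since there are no edges. Otherwise, choose a leaf $y\neq o$ with parent $x$, set $\Lambda_0 := \Lambda\setminus\{y\}$, and write $\eta=(\eta_0,\eta_y)$. Then
\[
\nu_{\rho,K}(\eta) - \nu_{\rho,K'}(\eta) = \bigl(\nu^{\Lambda_0}_{\rho,K}(\eta_0) - \nu^{\Lambda_0}_{\rho,K'}(\eta_0)\bigr) K(\eta_x,\eta_y) + \nu^{\Lambda_0}_{\rho,K'}(\eta_0)\bigl(K(\eta_x,\eta_y)-K'(\eta_x,\eta_y)\bigr).
\]
Summing absolute values, the first term contributes $\|\nu^{\Lambda_0}_{\rho,K}-\nu^{\Lambda_0}_{\rho,K'}\|_1$, again because $K$ is stochastic. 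The second term contributes
\[
\sum_{i,j} \nu^{\Lambda_0}_{\rho,K'}(\eta_x=i)\, |K(i,j)-K'(i,j)|.
\]

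Here lies the main obstacle: we need to know the marginal at $x$. Under $\nu_{\rho,K'}$ the law of $\eta_x$ is $\rho K'^{m}$, where $m$ is the depth of $x$, and this equals $\rho$ only if $\rho$ is $K'$-invariant. We only assume that $\rho$ is $K$-invariant. To fix this, swap the roles in the telescoping. Take the second term with $\nu^{\Lambda_0}_{\rho,K}$, and put the error in the first term with the factor $K'(\eta_x,\eta_y)$ instead:
\[
\nu_{\rho,K} - \nu_{\rho,K'} = \nu^{\Lambda_0}_{\rho,K}\,(K-K') + \bigl(\nu^{\Lambda_0}_{\rho,K}-\nu^{\Lambda_0}_{\rho,K'}\bigr) K'.
\]
Summing over $\eta_y$ in the second term still gives exactly $\|\nu^{\Lambda_0}_{\rho,K}-\nu^{\Lambda_0}_{\rho,K'}\|_1$, because $K'$ is also stochastic. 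In the first term, the marginal of $\eta_x$ under $\nu^{\Lambda_0}_{\rho,K}$ is $\rho K^m=\rho$ by $K$-invariance. This follows by summing out everything except the path from $o$ to $x$ and all leaves hanging off it. Hence the first term contributes exactly $\sum_{i,j}\rho(i)|K(i,j)-K'(i,j)|$.

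Induction now gives the bound with $|E_\Lambda| = |\Lambda|-1$ copies of this quantity, which proves \eqref{e:fitre2}.
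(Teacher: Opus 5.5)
Your proof is correct and is essentially the paper's argument. The paper telescopes through hybrid measures that use $K$ on the edges nearer the root and $K'$ on the remaining ones. Your leaf-by-leaf induction, once you make the swap you identified, unrolls to exactly that telescoping sum, with $K$-invariance giving law $\rho$ at the tail of the edge being switched and stochasticity of $K'$ removing the edges further from the root.
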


\begin{proof}
Since \eqref{e:defnu} is linear in $\rho$, we have
\[
\| \nu_{\rho,K} - \nu_{\rho',K} \|_1 = \sum_{i\in \Z} |\rho(i) - \rho'(i)| \sum_{\substack{\eta\in \Z^{\Lambda} \\ \eta_o = i}}  \prod_{(x,y)\in E_{\Lambda}} K(\eta_x,\eta_y).
\]
Since $K$ is stochastic, summing successively over the spins at the leaves of the tree $(\Lambda,E_{\Lambda})$ and canceling them from it shows that the above rightmost  sum has the value 1, proving \eqref{e:fitre1}.

Recall that the tail and head of an edge $(x,y)\in E_{\Lambda}$  are the vertices $x$ and $y$, respectively. Label the edges in $E_{\Lambda}$ as $e_1,\dots,e_m$, where $m=|\Lambda|-1$, ordering them in such a way that the distance of the head of $e_j$ from $o$ is non-decreasing in $j$. For $h$ in $\{0,1,\dots,m\}$ let $\nu_h$ be the measure on $\Z^{\Lambda}$ defined as in \eqref{e:defnu} but with the term $K$ replaced by $K'$ on the edges $e_{h+1},\dots,e_m$, so that $\nu_m=\nu_{\rho,K}$ and $\nu_0=\nu_{\rho,K'}$. Fix some $h$ in $\{1,\dots,m\}$. The measures $\nu_h$ and $\nu_{h-1}$ differ only in the factor corresponding to the edge $e_h=(x,y)$. Therefore
\[
\bigl|\nu_h(\eta) - \nu_{h-1}(\eta)\bigr| = \rho(\eta_o) A(\eta) \bigl| K(\eta_x,\eta_y) -  K'(\eta_x,\eta_y) \bigr| B(\eta),
\]
where $A(\eta)$ is the product of the factors $K$ over $e_1,\dots,e_{h-1}$ and $B(\eta)$ is the product of the factors $K'$ over $e_{h+1},\dots,e_m$. Since the head of every $e_j$ is not the tail of any edge $e_k$ with $k\leq j$, summing iteratively over the values of the spin at the head of $e_m,\dots,e_{h+1}$ and using the fact that $K'$ is stochastic, the terms $B(\eta)$ add up to 1. After this, $\eta_y$ appears only in the term $|K(\eta_x,\eta_y) - K'(\eta_x,\eta_y)|$, and summing over  $\eta_y$ gives
\[
\rho(\eta_o) A(\eta) \sum_{j\in \Z} \bigl| K(\eta_x,j) - K'(\eta_x,j) \bigr|.
\]
Denoting by $(\Lambda_h,E_{\Lambda_h})$ the subtree of $(\Lambda,E_{\Lambda})$ whose set of vertices consists of $o$ and the heads of the edges $e_1,\dots,e_{h-1}$, we see that $\rho(\eta_o) A(\eta)=\nu_{\rho,K}|_{\Lambda_h}(\eta|_{\Lambda_h})$. Therefore,
\[
\|\nu_h - \nu_{h-1} \|_1 = \sum_{(i,j)\in \Z^2} f_h(i) \bigl| K(i,j) - K'(i,j) \bigr|,
\]
where 
\[
f_h(i) := \sum_{\substack{\eta\in \Z^{\Lambda_h} \\ \eta_x = i}} \nu_{\rho,K}|_{\Lambda_h} (\eta)
\]
is the law of $\eta\mapsto \eta_x$ under $\nu_{\rho,K}|_{\Lambda_h}$. Since $\rho$ is $K$-invariant, an induction argument on $h$ shows that $f_h=\rho$. Therefore,
\[
\|\nu_h - \nu_{h-1} \|_1 = \sum_{(i,j)\in \Z^2} \rho(i) \bigl| K(i,j) - K'(i,j) \bigr|,
\]
and the triangle inequality yields \eqref{e:fitre2}.
\end{proof}

\begin{proof}[Proof of Theorem \ref{t:superpos}] Let $\mu\#_n \mu'$ for $|n|\geq N$ be the Gibbs measure given by Proposition \ref{p:superpos}, and denote by $w_n$, $\pi_n$, and $P_n$ its weight, single-site marginal, and transition matrix.  The convergence of $(w_n)$ to $w+w'$ is given by \eqref{e:asy0}. We must show that \eqref{e:asy1} and \eqref{e:asy2} imply  
\begin{equation}
\label{e:totvar}
\lim_{n\rightarrow \pm \infty} \bigl\| (\mu \#_n \mu')|_{\Lambda} - \bigl( \textstyle{\frac{w}{w+w'}} \, \mu|_{\Lambda} + \textstyle{\frac{w'}{w+w'}}\, \tau_*^n \mu'|_{\Lambda} \bigr) \bigr\|_1 = 0,
\end{equation}
for every finite subset $\Lambda\subset V$. Up to replacing $\Lambda$ by a larger finite subset, we may assume that it is connected. Choose a vertex $o$ in $\Lambda$ and denote by $(\Lambda,E_{\Lambda})$ the directed tree with root $o$ and edges oriented away from $o$ which is induced by $(V,E)$. Set
\[
\alpha:= \frac{w}{w+w'}, \qquad \pi_n'(i) := \pi'(i-n), \qquad P_n'(i,j) := P'(i-n,j-n),
\]
so that, with the notation introduced in \eqref{e:defnu} and relative to the directed tree $(\Lambda,E_{\Lambda})$, we have
\[
\mu|_{\Lambda} = \nu_{\pi,P}, \qquad (\tau_*^n \mu')|_{\Lambda} = \nu_{\pi_n',P_n'}, \qquad (\mu \#_n \mu')|_{\Lambda} = \nu_{\pi_n,P_n}.
\]
By the reversibility identity \eqref{e:reversibility}, $\pi$, $\pi_n'$, and $\pi_n$ are invariant for $P$, $P_n'$, and $P_n$, respectively. Using the fact that $\nu_{\rho,K}$ is linear in $\rho$, we can bound the quantity in \eqref{e:totvar} by
\[
\scalebox{.94}{$
\| \nu_{\pi_n,P_n} - \nu_{\pi_n,\tilde{P}_n} \|_1 + \| \nu_{\pi_n,\tilde{P}_n} - \nu_{\alpha \pi + (1-\alpha) \pi_n',\tilde{P}_n} \|_1 
+ \alpha\, \| \nu_{\pi,\tilde{P}_n} - \nu_{\pi,P} \|_1 + (1-\alpha)\, \| \nu_{\pi_n',\tilde{P}_n} - \nu_{\pi_n',P_n'} \|_1 .$}
\]
By Lemma \ref{l:finite-trees}, the first term tends to $0$ by \eqref{e:asy2}, and the second one by \eqref{e:asy1}. By \eqref{e:P'} and Lemma \ref{l:finite-trees}, the third term does not exceed
\[
\alpha (|\Lambda|-1) \sum_{(i,j)\in \Z^2} \pi(i) (1-\lambda_n(i)) |P(i,j)-P_n'(i,j)| \leq 2 (|\Lambda|-1) \sum_{i\in \Z} \pi(i) (1-\lambda_n(i)),
\]
and the last sum tends to $0$ by dominated convergence, because 
\[
0\leq 1-\lambda_n(i) \leq \Bigl( \frac{w'\pi'(i-n)}{ w\,\pi(i)} \Bigr)^{\frac{1}{d+1}} \rightarrow 0 \qquad \forall i\in \Z.
\]
Similarly, the fourth term does not exceed 
\[
2(|\Lambda|-1) \sum_{i\in \Z} \pi'(i) \lambda_n(i+n),
\]
which tends to $0$ because 
\[
0\leq \lambda_n(i+n) \leq \Bigl( \frac{w\, \pi(i+n)}{w' \pi'(i)} \Bigr)^{\frac{1}{d+1}}.
\]
This concludes the proof of \eqref{e:totvar}.
\end{proof}

\section{Existence of multibump solutions}
\label{s:embs}

In this section, we prove Theorem \ref{t:multibump}. The proof is an application of Newton's method, which we use in the following form, see \cite[Theorem 15.6]{dei85}. Here, $\mathrm{L}(X,Y)$ denotes the space of bounded linear operators from the Banach space $X$ to the Banach space $Y$, and $\|\cdot\|_{\mathrm{op}}$ denotes the operator norm.

\begin{prop}
\label{p:newton}
Let $X$ and $Y$ be Banach spaces, let $B_r(x_0)\subset X$ be the open ball of radius $r>0$ centered at $x_0\in X$ and let
\[
F: B_r(x_0) \rightarrow Y
\]
be a continuously differentiable map. Assume that there are constants $\alpha,\beta,\kappa>0$ satisfying:
\begin{enumerate}[(i)]
\item $\diff F(x_0)$ is an isomorphism, $\|\diff F(x_0)^{-1} F(x_0)\|_X \leq \alpha$ and $\|\diff F(x_0)^{-1}\|_{\mathrm{op}} \leq \beta$;
\item $\diff F: B_r(x_0) \rightarrow \mathrm{L}(X,Y)$ is $\kappa$-Lipschitz continuous;
\item $2\kappa \alpha \beta<1$ and $2\alpha<r$.
\end{enumerate}
Then $F$ has a unique zero $\overline{x}$ in $B_{\min\{\frac{1}{\kappa \beta},r\}}(x_0)$. Moreover, this zero is non-degenerate and
\[
\|\overline{x}-x_0\|_X \leq 2\alpha.
\]
\end{prop}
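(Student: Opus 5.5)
The plan is the classical simplified Newton iteration: freeze the derivative at the base point and show that the resulting map is a contraction. Set $A:=\diff F(x_0)$, which is an isomorphism by (i), and define
\[
T: B_r(x_0) \rightarrow X, \qquad T(x) := x - A^{-1} F(x).
\]
Zeros of $F$ in $B_r(x_0)$ are exactly the fixed points of $T$. The map $T$ is $C^1$ with $\diff T(x) = A^{-1}\bigl(A - \diff F(x)\bigr)$. By (i) and (ii) this gives the key estimate
\[
\|\diff T(x)\|_{\mathrm{op}} \leq \beta \kappa \|x-x_0\|_X \qquad \forall x\in B_r(x_0).
\]

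Set $\rho := 2\alpha$. By (iii) we have $\rho<r$, so the closed ball $\overline{B}_\rho(x_0)$ lies in the domain and is a complete metric space. I will first check that $T$ maps this ball into itself. For $x\in \overline{B}_\rho(x_0)$, the fundamental theorem of calculus along the segment from $x_0$ to $x$ gives
\[
\|T(x)-T(x_0)\|_X \leq \int_0^1 \beta\kappa\, t\,\|x-x_0\|_X^2 \, \diff t \leq \tfrac12 \beta\kappa\rho^2 = 2\beta\kappa\alpha^2 < \alpha .
\]
The last inequality uses $2\kappa\alpha\beta<1$. Since $\|T(x_0)-x_0\|_X = \|A^{-1}F(x_0)\|_X \leq \alpha$, we get $\|T(x)-x_0\|_X < 2\alpha = \rho$.

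Next, $T$ is a contraction on $\overline{B}_\rho(x_0)$. This ball is convex, and $\|\diff T\|_{\mathrm{op}} \leq \beta\kappa\rho = 2\kappa\alpha\beta<1$ on it, so the mean value inequality gives a Lipschitz constant at most $2\kappa\alpha\beta$. Banach's fixed point theorem then yields a zero $\overline{x}$ of $F$ with $\|\overline{x}-x_0\|_X\leq 2\alpha$.

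For uniqueness in $B_{\min\{\frac{1}{\kappa\beta},r\}}(x_0)$, suppose $x,y$ are two zeros there. The segment between them lies in this convex ball. Hence
\[
\|x-y\|_X = \|T(x)-T(y)\|_X \leq \beta\kappa \max\{\|x-x_0\|_X,\|y-x_0\|_X\}\,\|x-y\|_X .
\]
The factor in front of $\|x-y\|_X$ is strictly less than $1$, so $x=y$. Since $2\alpha < 1/(\kappa\beta)$, the zero $\overline{x}$ found above lies in this ball.

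Non-degeneracy also follows from the key estimate. Write $\diff F(\overline{x}) = A\bigl(I - A^{-1}(A-\diff F(\overline{x}))\bigr)$. The perturbation has norm at most $\beta\kappa\|\overline{x}-x_0\|_X \leq 2\kappa\alpha\beta<1$, so the Neumann series shows that $\diff F(\overline{x})$ is an isomorphism.

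The only delicate point is the self-mapping step. A crude bound $\|T(x)-T(x_0)\|_X\leq \beta\kappa\rho^2$ would require $4\kappa\alpha\beta\leq1$. The factor $t$ inside the integral, coming from $\|x_0+t(x-x_0)-x_0\|_X = t\|x-x_0\|_X$, gains exactly the factor $\frac12$ needed to work with the hypothesis $2\kappa\alpha\beta<1$.
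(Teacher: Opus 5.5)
Your proof is correct. The derivative $\diff T(x)=A^{-1}(A-\diff F(x))$ gives $\|\diff T(x)\|_{\mathrm{op}}\le\beta\kappa\|x-x_0\|_X$. The factor $t$ in the integral gives $\|T(x)-T(x_0)\|_X\le 2\kappa\alpha^2\beta<\alpha$, so $T$ maps $\overline{B}_{2\alpha}(x_0)$ into itself. The remaining steps also check out:
\begin{itemize}
\item the contraction constant on that ball is $2\kappa\alpha\beta$;
\item uniqueness follows by the mean value inequality on the convex ball $B_{\min\{1/(\kappa\beta),r\}}(x_0)$, since $2\alpha<1/(\kappa\beta)$ puts $\overline{x}$ inside it;
\item the Neumann series bound $\|A^{-1}(A-\diff F(\overline{x}))\|_{\mathrm{op}}\le 2\kappa\alpha\beta<1$ gives non-degeneracy.
\end{itemize}

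The paper gives no argument of its own here. It quotes the statement from Deimling's book (Theorem 15.6 there) as a form of Newton's method. The classical Newton--Kantorovich treatment runs the genuine Newton iteration $x_{k+1}=x_k-\diff F(x_k)^{-1}F(x_k)$. It re-inverts the derivative at every step by a Neumann-series perturbation argument and controls the increments by a scalar majorant recursion. That extra machinery buys two things: quadratic convergence of the iterates, and, through the majorant recursion, the borderline case $2\kappa\alpha\beta=1$, where your contraction constant would equal $1$.

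Your chord-method route freezes the derivative at $x_0$. It converges only linearly, with rate $2\kappa\alpha\beta$, but it reduces everything to the Banach fixed point theorem and the mean value inequality, and it is short and self-contained. It also delivers exactly what the paper uses in the proof of Theorem~\ref{t:multibump}: a non-degenerate zero within distance $2\alpha$ of $x_0$. Uniqueness is not even needed there.
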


We shall also use the Neumann series criterion for invertibility in the following form: if the bounded linear operator $T$ on a Banach space $X$ is invertible with $\|T^{-1}\|_{\mathrm{op}} \leq c$, then any bounded linear operator $S$ on $X$ with $\|S-T\|_{\mathrm{op}} \leq \frac{1}{2c}$ is invertible with $\|S^{-1}\|_{\mathrm{op}} \leq 2c$. 

\medskip

We start the proof of Theorem  \ref{t:multibump} by collecting some useful bounds.
From \eqref{e:bd1} we obtain that the map $\diff \mathscr{K}_Q$ is bounded on bounded sets with
\begin{equation}
\label{e:bd-K}
\|\diff \mathscr{K}_Q(x) \|_{\mathrm{op}} \leq d \|Q\|_{\frac{d+1}{2}} r^{d-1}, \qquad \mbox{if } \|x\|_{d+1} \leq r.
\end{equation}
By the mean value theorem, the above inequality and \eqref{e:bd2} imply that $\mathscr{F}_Q$ and $\diff \mathscr{F}_Q$ are Lipschitz-continuous on bounded sets with
\begin{eqnarray}
\label{e:Flip}
\|\mathscr{F}_Q(x) -  \mathscr{F}_Q(y)\|_{d+1} \leq \bigl( 1 + d \|Q\|_{\frac{d+1}{2}} r^{d-1} \bigr) \|x-y\|_{d+1}, \\
\label{e:dFlip}
\|\diff \mathscr{F}_Q(x) - \diff \mathscr{F}_Q(y) \|_{\mathrm{op}} \leq d \bigl(d-1) \|Q\|_{\frac{d+1}{2}} r^{d-2} \bigr) \|x-y\|_{d+1},
\end{eqnarray}
for all $x,y\in \ell^{d+1}(\Z)$ with $\|x\|_{d+1} \leq r$, $\|y\|_{d+1}\leq r$. 

Let $x_1$ and $x_2$ be non-degenerate zeroes of $\mathscr{F}_Q$ in $\ell^{d+1}(\Z)\setminus \{0\}$. Recall that every zero of $\mathscr{F}_Q$ is nonnegative, because $Q\geq 0$. Fix numbers $R>0$ and $C\geq \frac{1}{2}$ such that
\[
\|x_1\|_{d+1} \leq R, \qquad \|x_2\|_{d+1} \leq R, \qquad \| \diff \mathscr{F}_Q(x_1)^{-1} \|_{\mathrm{op}} \leq C, \qquad \| \diff \mathscr{F}_Q(x_2)^{-1} \|_{\mathrm{op}} \leq C.
\]
Let $q\in \N$ be so large that the function $\tilde{Q}:= Q \mathbbm{1}_{[-q,q]}$ satisfies 
\begin{equation}
\label{e:bdQ}
\|Q - \tilde{Q} \|_{\frac{d+1}{2}} \leq \delta := \frac{1}{2^{d+1} d C R^{d-1}}.
\end{equation}
Let $\epsilon$ be a positive number. Let $m=m(\epsilon)\in \N$ be such that, setting $\tilde{x}_1:= x_1 \mathbbm{1}_{[-m,m]}$ and $\tilde{x}_2:= x_2 \mathbbm{1}_{[-m,m]}$, we have
\begin{equation}
\label{e:bdx12}
\|x_1 - \tilde{x}_1 \|_{d+1} \leq \epsilon, \qquad \|x_2 - \tilde{x}_2 \|_{d+1} \leq \epsilon.
\end{equation}
By \eqref{e:bd-K} and \eqref{e:dFlip} we obtain for $j\in \{1,2\}$
\[
\begin{split}
\| \diff \mathscr{F}_{\tilde{Q}} (\tilde{x}_j) - \diff \mathscr{F}_Q(x_j) \|_{\mathrm{op}} &\leq \| \diff \mathscr{F}_{\tilde{Q}} (\tilde{x}_j) - \diff \mathscr{F}_Q (\tilde{x}_j) \|_{\mathrm{op}} +  \| \diff \mathscr{F}_{Q} (\tilde{x}_j) - \diff \mathscr{F}_Q (x_j) \|_{\mathrm{op}} \\ 
&= \| \diff \mathscr{K}_{Q-\tilde{Q}} (\tilde{x}_j) 
\|_{\mathrm{op}} + \| \diff \mathscr{F}_{Q} (\tilde{x}_j) - \diff \mathscr{F}_Q(x_j) \|_{\mathrm{op}} \\
&\leq d \|Q- \tilde{Q}\|_{\frac{d+1}{2}} R^{d-1} + d(d-1) \| Q\|_{\frac{d+1}{2}} R^{d-2} \|x_j - \tilde{x}_j \|_{d+1} \\ 
&\leq d \delta R^{d-1} + d (d-1) \|Q\|_{\frac{d+1}{2}} R^{d-2} \epsilon \\ 
&= \frac{1}{2^{d+1} C} + d (d-1) \|Q\|_{\frac{d+1}{2}} R^{d-2} \epsilon,
\end{split}
\]
where we also used \eqref{e:bdQ}, \eqref{e:bdx12}, and the definition of $\delta$. Assuming that
\begin{equation}
\label{e:epsilon}
\epsilon \leq \frac{2^d-1}{2^{d+1} d (d-1) C \|Q\|_{\frac{d+1}{2}} R^{d-2}},
\end{equation}
we then have
\[
\| \diff \mathscr{F}_{\tilde{Q}} (\tilde{x}_j) - \diff \mathscr{F}_Q(x_j) \|_{\mathrm{op}} \leq \frac{1}{2C},
\]
and by the Neumann series criterion for invertibility we deduce that the operators $\diff \mathscr{F}_{\tilde{Q}}(\tilde{x}_1)$ and $\diff \mathscr{F}_{\tilde{Q}}(\tilde{x}_2)$ are invertible with
\begin{equation}
\label{e:inversa}
\| \diff \mathscr{F}_{\tilde{Q}}(\tilde{x}_1)^{-1} \|_{\mathrm{op}} \leq 2C, \qquad \| \diff \mathscr{F}_{\tilde{Q}}(\tilde{x}_2)^{-1} \|_{\mathrm{op}} \leq 2C.
\end{equation}
The next lemma builds on \eqref{e:inversa}, together with the fact that $\tilde{Q}$ is supported in $[-q,q]$ while $\tilde{x}_1$ and $\tilde{x}_2$ are supported in $[-m,m]$.

\begin{lem}
If $|n|> 2q + 2m$ then the operator $\diff \mathscr{F}_{\tilde{Q}} (\tilde{x}_1 + \tau_*^n \tilde{x}_2)$ is invertible with
\begin{equation}
\label{e:new-inve}
\| \diff \mathscr{F}_{\tilde{Q}} (\tilde{x}_1 + \tau_*^n \tilde{x}_2)^{-1} \|_{\mathrm{op}} \leq 2C.
\end{equation}
\end{lem}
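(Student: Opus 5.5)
Write $A:=\diff \mathscr{F}_{\tilde Q}(\tilde x_1+\tau_*^n\tilde x_2)$ and $T_j:=\diff\mathscr{F}_{\tilde Q}(\tilde x_j)$. The plan is to show that, on the two far-apart regions where $\tilde x_1$ and $\tau_*^n\tilde x_2$ live, $A$ decouples into copies of $T_1$ and of the translate of $T_2$. Everything is local because $\tilde Q$ is supported in $[-q,q]$. The inverse is then assembled from $T_1^{-1}$ and $\tau_*^nT_2^{-1}\tau_*^{-n}$.

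First I will describe $A$ explicitly. By \eqref{e:diff1} (the case $d=2$ is the same computation),
\[
Ay=y-d\,\tilde Q*\bigl((\tilde x_1+\tau_*^n\tilde x_2)^{d-1}y\bigr).
\]
Since $|n|>2m$, the supports $I_1=[-m,m]$ and $I_2=[n-m,n+m]$ are disjoint. Hence $(\tilde x_1+\tau_*^n\tilde x_2)^{d-1}=\tilde x_1^{d-1}+(\tau_*^n\tilde x_2)^{d-1}$, and
\[
A=I-K_1-\tau_*^nK_2\tau_*^{-n},\qquad K_j:=d\,\tilde Q*(\tilde x_j^{d-1}\,\cdot\,).
\]
The range of $K_1$ consists of sequences supported in $J_1:=[-m-q,m+q]$, and $K_1$ depends only on the restriction of its argument to $I_1$. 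The same holds for $\tau_*^nK_2\tau_*^{-n}$, with $J_2:=I_2+[-q,q]$. The condition $|n|>2q+2m$ is exactly what makes $J_1\cap J_2=\emptyset$.

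Next I will invert $A$ by solving $Ay=f$. Let $S:=\tau_*^nK_2\tau_*^{-n}$, which is conjugate to $K_2$. I write $y=y_1+y_2$ with
\[
y_1:=T_1^{-1}(f\mathbbm 1_{J_1}),\qquad y_2:=\tau_*^nT_2^{-1}\tau_*^{-n}(f\mathbbm 1_{\Z\setminus J_1}).
\]
From $y_1=f\mathbbm 1_{J_1}+K_1y_1$ we see that $y_1$ is supported in $J_1$, so $Sy_1=0$ because $J_1\cap I_2=\emptyset$. Similarly $y_2-f\mathbbm 1_{\Z\setminus J_1}=Sy_2$ is supported in $J_2\subset\Z\setminus J_1$, so $y_2$ vanishes on $J_1\supset I_1$ and $K_1y_2=0$. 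Therefore
\[
Ay=(I-K_1)y_1+(I-S)y_2=f\mathbbm 1_{J_1}+f\mathbbm 1_{\Z\setminus J_1}=f,
\]
so $A$ is surjective. Injectivity follows in the same way. If $Ay=0$, then $y=K_1y+Sy$ is supported in $J_1\cup J_2$. On $J_1$ we have $y=K_1y$, and $K_1y$ depends only on $y|_{I_1}$, so the component $y\mathbbm 1_{J_1}$ lies in $\ker T_1=0$. The same argument on $J_2$ gives $y\mathbbm 1_{J_2}=0$. Alternatively, injectivity follows from the Fredholm index zero property established earlier.

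The main obstacle is the norm bound $\|A^{-1}\|_{\mathrm{op}}\le 2C$: the naive triangle inequality only gives $4C$. The fix is to exploit the disjointness of the supports through the $\ell^{d+1}$ norm. Since $y_1$ and $y_2$ are disjointly supported, and so are $f\mathbbm 1_{J_1}$ and $f\mathbbm 1_{\Z\setminus J_1}$, using \eqref{e:inversa} and the fact that translation is an isometry,
\[
\|y\|_{d+1}^{d+1}=\|y_1\|_{d+1}^{d+1}+\|y_2\|_{d+1}^{d+1}\le(2C)^{d+1}\bigl(\|f\mathbbm 1_{J_1}\|_{d+1}^{d+1}+\|f\mathbbm 1_{\Z\setminus J_1}\|_{d+1}^{d+1}\bigr)=(2C)^{d+1}\|f\|_{d+1}^{d+1}.
\]
This yields \eqref{e:new-inve}.
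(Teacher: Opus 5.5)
Your proof is correct and follows essentially the same route as the paper: you split the right-hand side according to the two far-apart regions, solve each piece with $\diff\mathscr{F}_{\tilde Q}(\tilde x_1)^{-1}$ or with the $\tau_*^n$-conjugate of $\diff\mathscr{F}_{\tilde Q}(\tilde x_2)^{-1}$, glue the two solutions, and use disjointness of supports in $\ell^{d+1}$ to get the sharp constant $2C$ instead of $4C$. The differences are minor. The paper uses a three-piece split in which the middle piece $z_0$ is passed through unchanged, which is where it needs $C\geq \frac{1}{2}$, and it gets invertibility from surjectivity plus Fredholm index zero. Your two-piece split instead routes the middle part through the translated inverse of $\diff\mathscr{F}_{\tilde Q}(\tilde x_2)$, so you do not need $C\geq\frac{1}{2}$, and you also prove injectivity directly.
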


\begin{proof}
We deal with the case $n>2q +2m$, the case of $n<-2q-2m$ being analogous. Fix some $z\in \ell^{d+1}(\Z)$. We wish to find $y\in \ell^{d+1}(\Z)$ such that $\diff \mathscr{F}_{\tilde{Q}} ( \tilde{x}_1 + \tau_*^n \tilde{x}_2 )[y] = z$. By our assumption on $n$, the functions 
\[
z_1:= z \mathbbm{1}_{(-\infty,q+m]}, \qquad z_2:= z \mathbbm{1}_{[n-q-m,+\infty)},
\]
have disjoint support and we can write
\[
z = z_0 + z_1 + z_2,
\]
where $z_0$ is supported in $[q+m+1,n-q-m-1]$. From \eqref{e:diff1} we find the expression
\[
\di \mathscr{F}_{\tilde{Q}} (x)[y] = y - d \, \tilde{Q} * (x^{d-1} y), \qquad  \forall y\in \ell^{d+1}(\Z),
\]
for every nonnegative $x\in \ell^{d+1}(\Z)$. Since $\diff \mathscr{F}_{\tilde{Q}}(\tilde{x}_1)$ is invertible and its inverse has the bound \eqref{e:inversa}, we can find $y_1\in \ell^{d+1}(\Z)$ such that
\[
y_1 - d\, \tilde{Q} * ( \tilde{x}_1^{d-1} y_1) = z_1, \qquad \|y_1\|_{d+1} \leq 2C\, \|z_1\|_{d+1}.
\]
Since the supports of $\tilde{Q} *( \tilde{x}_1^{d-1} y_1)$ and $z_1$ are contained in $(-\infty,q+m]$, the same is true for the support of $y_1$.

From \eqref{e:inversa} and the $\Z$-equivariance of $\mathscr{F}_{\tilde{Q}}$, we deduce that $\diff \mathscr{F}_{\tilde{Q}} (\tau_*^n \tilde{x}_2)$ is invertible and the operator norm of its inverse does not exceed $2C$. Therefore, we can find $y_2\in \ell^{d+1}(\Z)$ such that
\[
y_2 - d\, \tilde{Q} * \bigl( (\tau_*^n \tilde{x}_2)^{d-1} y_2 \bigr) = z_2, \qquad \|y_2\|_{d+1} \leq 2C\, \|z_2\|_{d+1}.
\]
Since the supports of $\tilde{Q} * ( (\tau_*^n \tilde{x}_2)^{d-1} y_2)$ and $z_2$ are contained in $[n-q-m,+\infty)$, the same is true for the support of $y_2$. Now set
\[
y:= y_1 + y_2 + z_0.
\]
The fact that $\tilde{x}_1$ and $\tau_*^n \tilde{x}_2$ have disjoint supports implies the identity
\[
(\tilde{x}_1+ \tau_*^n \tilde{x}_2)^{d-1} = \tilde{x}_1^{d-1} + (\tau_*^n \tilde{x}_2)^{d-1}.
\]
Since $n>2q+2m$, the support of $y_2+z_0$ is contained in $[q+m+1,\infty)$ and is then disjoint from the support of 
$\tilde{x}_1$. Similarly, the support of $y_1+z_0$ is disjoint from the support of $\tau_*^n\tilde{x}_2$. Therefore, we have the identities
\[
\tilde{x}_1^{d-1} y = \tilde{x}_1^{d-1} y_1, \qquad  (\tau_*^n \tilde{x}_2)^{d-1} y =  (\tau_*^n \tilde{x}_2)^{d-1} y_2.
\]
The above three identities imply
\[
\begin{split}
\diff \mathscr{F}_{\tilde{Q}} ( \tilde{x}_1 + \tau_*^n \tilde{x}_2 ) [y] &= y - d\, \tilde{Q}* \bigl( ( \tilde{x}_1 + \tau_*^n \tilde{x}_2 )^{d-1} y \bigr) \\ &= y_1 + y_2 + z_0 - d \, \tilde{Q}* ( \tilde{x}_1^{d-1} y_1) - d \, \tilde{Q}* \bigl( (\tau_*^n \tilde{x}_2)^{d-1} y_2\bigr) \\ &= z_1 + z_2 + z_0 = z.
\end{split}
\]
This shows that $\diff \mathscr{F}_{\tilde{Q}} ( \tilde{x}_1 + \tau_*^n \tilde{x}_2 )$ is surjective and hence invertible, being Fredholm of index zero. Since the supports of $y_1$, $y_2$, and $z_0$ are pairwise disjoint, we have
\[
\begin{split}
\|y\|_{d+1}^{d+1} &= \|y_1\|_{d+1}^{d+1} + \|y_2\|_{d+1}^{d+1} + \|z_0\|_{d+1}^{d+1} \leq (2C)^{d+1} \|z_1\|_{d+1}^{d+1} + (2C)^{d+1} \|z_2\|_{d+1}^{d+1} + \|z_0\|_{d+1}^{d+1}\\ &\leq (2C)^{d+1} \|z\|_{d+1}^{d+1},
\end{split}
\]
where we have also used the fact that the supports of $z_0$, $z_1$, and $z_2$ are pairwise disjoint and the assumption $C\geq \frac{1}{2}$. This proves the bound
\[
\| \diff \mathscr{F}_{\tilde{Q}} ( \tilde{x}_1 + \tau_*^n \tilde{x}_2 )^{-1} \|_{\mathrm{op}} \leq 2C.
\]
\end{proof}

Assume that $|n|> 2q + 2m$, so that the conclusion of the above lemma holds. Since $\| \tilde{x}_1 + \tau_*^n \tilde{x}_2\|_{d+1} \leq 2R$, the bounds \eqref{e:bd-K}, \eqref{e:bdQ}, and our definition of $\delta$ imply
\[
\begin{split}
\| \diff \mathscr{F}_Q (\tilde{x}_1 + \tau_*^n \tilde{x}_2) - \diff \mathscr{F}_{\tilde{Q}} (\tilde{x}_1 + \tau_*^n \tilde{x}_2) \|_{\mathrm{op}} &= \| \diff \mathscr{K}_{Q-\tilde{Q}}  (\tilde{x}_1 + \tau_*^n \tilde{x}_2) \|_{\mathrm{op}} \\ &\leq d \|Q-\tilde{Q}\|_{\frac{d+1}{2}} (2R)^{d-1} \leq  d \delta (2R)^{d-1} = \frac{1}{4C}.
\end{split}
\]
By \eqref{e:new-inve}, the Neumann series criterion for invertibility implies that $\diff \mathscr{F}_Q (\tilde{x}_1 + \tau_*^n \tilde{x}_2)$ is invertible with
\begin{equation}
\label{e:last-inve}
\| \diff \mathscr{F}_Q (\tilde{x}_1 + \tau_*^n \tilde{x}_2)^{-1} \|_{\mathrm{op}} \leq 4C.
\end{equation}

Since $\tilde{x}_1$ and $\tau_*^n \tilde{x}_2$ are nonnegative and have disjoint supports, we have the identity
\[
\begin{split}
\mathscr{F}_Q(\tilde{x}_1 + \tau_*^n \tilde{x}_2) &= \tilde{x}_1 + \tau_*^n \tilde{x}_2 - Q * (\tilde{x}_1 + \tau_*^n \tilde{x}_2)^d =  \tilde{x}_1 + \tau_*^n \tilde{x}_2 - Q * \tilde{x}_1^d  - Q* ( \tau_*^n \tilde{x}_2)^d \\ &= \mathscr{F}_Q(\tilde{x}_1) +  \mathscr{F}_Q(\tau_*^n \tilde{x}_2) =  \mathscr{F}_Q(\tilde{x}_1) +  \tau_*^n \mathscr{F}_Q(\tilde{x}_2) \\ &=  \mathscr{F}_Q(\tilde{x}_1) -  \mathscr{F}_Q(x_1) + \tau_*^n \bigl( \mathscr{F}_Q(\tilde{x}_2) -  \mathscr{F}_Q(x_2)\bigr),
\end{split}
\]
where we used the $\Z$-equivariance of $\mathscr{F}_Q$ and the fact that $x_1$ and $x_2$ are zeroes of this map. By \eqref{e:Flip} and \eqref{e:bdx12} we then have
\[
\begin{split}
\| \mathscr{F}_Q(\tilde{x}_1 + \tau_*^n \tilde{x}_2) \|_{d+1} &\leq \bigl(1+ d \|Q\|_{\frac{d+1}{2}} R^{d-1} \bigr) \bigl( \|x_1-\tilde{x}_1\|_{d+1} + \|x_2-\tilde{x}_2\|_{d+1} \bigr) \\ &\leq 2 \bigl(1+ d \|Q\|_{\frac{d+1}{2}} R^{d-1} \bigr) \epsilon,
\end{split}
\]
and hence by \eqref{e:last-inve}
\begin{equation}
\label{e:alphabd}
\| \diff \mathscr{F}_Q (\tilde{x}_1 + \tau_*^n \tilde{x}_2)^{-1}  \mathscr{F}_Q(\tilde{x}_1 + \tau_*^n \tilde{x}_2) \|_{d+1} \leq 8C\bigl(1+ d \|Q\|_{\frac{d+1}{2}} R^{d-1} \bigr) \epsilon.
\end{equation}

We now wish to check the assumptions of Proposition \ref{p:newton} with $X=Y=\ell^{d+1}(\Z)$, $F=\mathscr{F}_Q$,
$x_0:= \tilde{x}_1 + \tau_*^n \tilde{x}_2$, and $r:=1$. By \eqref{e:last-inve} and \eqref{e:alphabd}, assumption (i) holds with
\[
\alpha:=  \gamma \epsilon, \qquad \beta := 4C, \qquad \mbox{where} \quad \gamma:= 8C \bigl(1+ d \|Q\|_{\frac{d+1}{2}} R^{d-1} \bigr).
\]
By \eqref{e:dFlip}, assumption (ii) holds with
\[
\kappa := d (d-1) \|Q\|_{\frac{d+1}{2}} (2R+1)^{d-2}.
\]
The above expressions for $\alpha$, $\beta$, and $\kappa$ show that assumption (iii) holds provided that $\epsilon$ is small enough, and more precisely if
\begin{equation}
\label{e:epsilon2} 
\begin{split}
16 C( 1 + d \|Q\|_{\frac{d+1}{2}} R^{d-1})& \epsilon < 1, \\
64 C^2 d(d-1) \|Q\|_{\frac{d+1}{2}} (2R+1)^{d-2} ( 1 + d \|Q\|_{\frac{d+1}{2}} R^{d-1})&  \epsilon < 1.
\end{split}
\end{equation}
From Proposition \ref{p:newton} we conclude that if $\epsilon>0$ is so small that \eqref{e:epsilon} and \eqref{e:epsilon2} hold and $m=m(\epsilon)$ is such that \eqref{e:bdx12} holds, then for every $n\in \Z$ with $|n|>  2q + 2m$ the map $\mathscr{F}_Q$ has a non-degenerate zero $z_n \in \ell^{d+1}(\Z)$ such that
\[
\|z_n - (\tilde{x}_1 + \tau_*^n \tilde{x}_2) \|_{d+1} \leq 2 \gamma \epsilon,
\]
and hence, by the triangle inequality and \eqref{e:bdx12},
\[
\|z_n - (x_1 + \tau_*^n x_2) \|_{d+1} \leq 2 (\gamma +1) \epsilon.
\]
After possibly reducing $\epsilon$ even more, the above inequality implies that $z_n \neq 0$. We conclude that for $|n|$ large enough
$z_n$ is a non-degenerate zero of $\mathscr{F}_Q$ in $\ell^{d+1}(\Z) \setminus \{0\}$, and the fact that $\epsilon$ can be chosen to be arbitrarily small implies that
\[
\lim_{n\rightarrow \pm \infty} \|z_n  - (x_1 + \tau_*^n x_2) \|_{d+1} = 0.
\]
This concludes the proof of Theorem \ref{t:multibump}.

\section{Interaction functions with small support}
\label{s:ifss}

In this and in the next five sections, we consider interaction functions $Q$ supported in $\{-1,0,1\}$. In this case, the irreducibility condition forces $Q(1)>0$ and, up to multiplying $Q$ by a positive constant, an operation which does not affect the set of Gibbs measures, we may assume that $Q(1)=Q(-1)=1$. Therefore, $Q$ takes the form
\begin{equation}
\label{e:simpleQ}
Q = q \mathbbm{1}_{\{0\}} + \mathbbm{1}_{\{-1,1\}},
\end{equation}
and we have just one free parameter $q\in [0,+\infty)$. Equation \eqref{e:main-eq} now reads
\begin{equation}
\label{e:2ndrec}
u(i)  = \bigl( u(i-1) + q \,u(i) + u(i+1)\bigr)^d, \qquad \forall i\in \Z.
\end{equation}
Solving for $u(i+1)$ and setting $(x_i,y_i) = (u(i-1),u(i))$, the second order recursion \eqref{e:2ndrec} reduces to the first order recursion
\[
(x_{i+1},y_{i+1}) = \varphi(x_i,y_i), \qquad \forall i\in \Z,
\]
where $\varphi$ is the map
\[
\varphi (x,y) := (y,y^{\frac{1}{d}}-qy -x), \qquad (x,y)\in [0,+\infty)^2.
\]
It is useful to extend this map to an odd map on the whole $\R^2$ by setting
\begin{equation}
\label{e:mapphi}
\varphi (x,y) := (y,|y|^{\frac{1}{d}-1} y - q y - x).
\end{equation}
This map is a homeomorphism of the plane to itself with inverse
\begin{equation}
\label{e:inve}
\varphi^{-1}(x,y) = (|x|^{\frac{1}{d}-1} x - q x - y,x).
\end{equation}
The  restriction of $\varphi$ to the complement of the $x$-axis is an analytic diffeomorphism onto the complement of the $y$-axis. The origin is a fixed point of $\varphi$, and we define its \textit{stable} and \textit{unstable set} as
\[
\begin{split}
W^s(0) &:= \{ z\in \R^2 \mid \varphi^n(z) \rightarrow 0 \mbox{ for } n\rightarrow +\infty\}, \\
W^u(0) &:= \{ z\in \R^2 \mid \varphi^n(z) \rightarrow 0 \mbox{ for } n\rightarrow -\infty\}.
\end{split}
\]
Gibbs measures for $Q$ correspond to positive solutions $u\in \ell^{p}(\Z)$ of \eqref{e:2ndrec}. In particular, $u(i) \rightarrow 0$ for $i\rightarrow \pm \infty$, and hence the point $(u(0),u(1))$ belongs to the intersection $(W^s(0)\cap W^u(0)) \setminus \{0\}$. As usual, such an intersection point $z$ is called a \textit{homoclinic point} to the fixed point $0$ of $\varphi$, and the corresponding orbit $\{\varphi^n(z)\}_{n\in \Z}$ is called a \textit{homoclinic orbit}.  We will systematically omit to specify ``to the fixed point $0$'' and just talk about homoclinic points and homoclinic orbits of $\varphi$. Conversely, our analysis of the stable and unstable sets will show that all homoclinic orbits converge to the origin with order at least $d$, meaning that
\begin{equation}
\label{e:order-d}
|\varphi^n(z)| \leq C |\varphi^{n-1}(z)|^d, \qquad |\varphi^{-n}(z)| \leq C |\varphi^{-n+1}(z)|^d, \qquad \forall n\geq 1,
\end{equation}
for some $C$, and in particular their components define elements in $\ell^p(\Z)$, because convergence to the origin and \eqref{e:order-d} imply doubly exponential decay: there exist $\theta\in (0,1)$ and $C'>0$ such that
\[
|\varphi^n(z)| \leq C' \theta^{d^{|n|}} \qquad \forall n\in \Z.
\]
We can then summarize the above discussion into the following proposition.

\begin{prop}
\label{p:gibbs=homo}
Let $Q$ be given by \eqref{e:simpleQ}. Then
there is a one-to-one correspondence between $\mathscr{G}(d,Q)$ and the set consisting of the homoclinic points in $W^s(0)\cap W^u(0)$ whose orbit is fully contained in the positive quadrant $(0,+\infty) \times (0,+\infty)$. This descends to a one-to-one correspondence between $\mathscr{G}(d,Q)/\Z$ and the set of homoclinic orbits of $\varphi$ in $(0,+\infty) \times (0,+\infty)$.
\end{prop}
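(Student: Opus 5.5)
By Proposition \ref{p:zachary}, it suffices to build a bijection between $\mathscr{U}(d,Q)$ and the set $\mathscr{H}$ of homoclinic points whose whole orbit lies in the open first quadrant. The bijection will be $\Phi(u) := (u(0),u(1))$. Since $\Phi$ intertwines translations of $u$ with the action of $\varphi$ on points, the statement about $\mathscr{G}(d,Q)/\Z$ follows at once: translation classes of $u$ correspond to orbits.

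\emph{Step 1: $\Phi$ is well defined.} Let $u\in\mathscr{U}(d,Q)$. Irreducibility gives $u>0$, and $u\in\ell^p(\Z)$ gives $u(i)\to0$ as $|i|\to\infty$. Equation \eqref{e:2ndrec} is equivalent to $\varphi(u(i-1),u(i))=(u(i),u(i+1))$, using $u(i)^{1/d}=u(i-1)+qu(i)+u(i+1)$. Hence $\varphi^k(\Phi(u))=(u(k),u(k+1))$ for all $k\in\Z$. This orbit lies in $(0,+\infty)^2$ and tends to $0$ in both directions, so $\Phi(u)\in W^s(0)\cap W^u(0)$.

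\emph{Step 2: $\Phi$ is injective.} A solution of the second order recursion is determined by two consecutive values, so $u$ is recovered from $\Phi(u)$.

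\emph{Step 3: $\Phi$ is surjective.} Take $z\in\mathscr{H}$ and define $u(k)$ to be the second component of $\varphi^{k-1}(z)$. Then $u>0$, $u$ solves \eqref{e:2ndrec}, and $u(k)\to0$ as $|k|\to\pm\infty$. The only remaining issue is to show $u\in\ell^p(\Z)$. This is the main obstacle, because convergence to $0$ alone does not give summability.

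I would prove this directly from the recursion rather than rely on the later analysis of $W^s(0)$. For $k$ large enough, $u(k+1)=u(k)^{1/d}-qu(k)-u(k-1)\ge0$. Hence
\[
u(k-1)\le u(k)^{1/d}, \quad\text{that is,}\quad u(k)\ge u(k-1)^d .
\]
This lower bound is not what we need. The useful direction comes from $u(k)^{1/d}=u(k-1)+qu(k)+u(k+1)$. Since $qu(k)=o(u(k)^{1/d})$ as $u(k)\to0$, we get
\[
u(k-1)+u(k+1)\sim u(k)^{1/d}.
\]
Write $M_k=\max(u(k-1),u(k))$. At a local maximum of $u$ with a small value, the right-hand side $u(k)^{1/d}$ is much larger than $u(k-1)+u(k+1)\le 2u(k)$, which is impossible. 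So for large $k$ the sequence has no local maxima and is eventually monotone decreasing, as it must be since it tends to $0$.

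Then $u(k+1)\le u(k)$ gives $u(k)^{1/d}\le u(k-1)+(q+1)u(k)$, hence $u(k)^{1/d}\le 2u(k-1)$ for $k$ large. This yields $u(k)\le 2^d u(k-1)^d$, which is exactly the order-$d$ estimate \eqref{e:order-d}. Iterating it gives doubly exponential decay once $u(k_0)$ is small enough, and therefore $p$-summability as $k\to+\infty$.

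For $k\to-\infty$, I would use the reversibility $\varphi^{-1}=S\circ\varphi\circ S$. The reversed sequence $i\mapsto u(-i)$ is also a positive solution of \eqref{e:2ndrec}, so the same argument applies to it. Therefore $u\in\ell^p(\Z)$, $u\in\mathscr{U}(d,Q)$, and $\Phi(u)=z$, which completes the proof of surjectivity.
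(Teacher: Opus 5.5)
Your proof is correct, but it reaches the key point, the $\ell^p$-summability of a positive homoclinic orbit, by a different route from the paper. The paper does not argue from the recursion directly. It uses the stable manifold Theorem~\ref{t:stable}: by part (iii), once the forward orbit stays in $[-\rho,\rho]^2$ it lies on the graph of $f$, so $\varphi^{N+n}(z)=(f^n(x),f^{n+1}(x))$, and $f(x)=O(x^d)$ gives convergence of order $d$. The backward direction then comes from $W^u(0)=S(W^s(0))$, which plays the role of your reversal $i\mapsto u(-i)$.

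You replace this machinery with two elementary observations. First, a weak local maximum of a positive solution satisfies $u(k)^{1/d}\le (2+q)u(k)$, hence $u(k)\ge c$. So the tail is eventually strictly decreasing, since if it ever fails to decrease it must increase forever, contradicting $u\to 0$. Second, $u(k)^{1/d}\le u(k-1)+(q+1)u(k)$ then yields $u(k)\le 2^d u(k-1)^d$.

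What each route buys: your argument is self-contained and avoids the analytic continuation and contraction arguments behind Theorem~\ref{t:stable}, but it uses positivity essentially. The paper's route gives order-$d$ convergence for every homoclinic orbit, sign-changing ones included, as a by-product of the structure of $W^s(0)$, which it needs later anyway.

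Your monotonicity step can also be skipped. With $m(k):=\sup_{j\ge k}u(j)$, the recursion gives directly $m(k)\le \bigl((2+q)\,m(k-1)\bigr)^d$. This is the argument the paper itself uses in Section~\ref{s:ifbs} for interaction functions with arbitrary bounded support.
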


Before analysing the stable and unstable sets $W^s(0)$ and $W^u(0)$, we list some useful properties of the homeomorphism $\varphi$.

\begin{enumerate}[(a)]
\item $\varphi$ is equivariant with respect to the involution $z\mapsto -z$, i.e., $\varphi(-z) = - \varphi(z)$ for every $z\in \R^2$.
\item $\varphi$ is conjugate to $\varphi^{-1}$ by the involution 
\[
S(x,y) := (y,x),
\]
i.e., $\varphi^{-1} = S \circ \varphi \circ S$.
\item $\varphi$ is orientation and area preserving.
\end{enumerate}

Indeed, the differential of $\varphi$ at any $(x,y)$ with $y\neq 0$ has the form
\[
\diff \varphi(x,y) = \left( \begin{array}{cc} 0 & 1 \\ -1 & \frac{1}{d} |y|^{\frac{1}{d}-1} - q \end{array} \right).
\]

\begin{enumerate}[(a)]
\setcounter{enumi}{3}
\item The fixed points of $\varphi$ are $(0,0)$, $(c,c)$ and $(-c,-c)$, with
\begin{equation}
\label{c}
c:= (2+q)^{- \frac{d}{d-1}}< 1.
\end{equation}
\item The homeomorphism $\varphi$ is not differentiable at $(0,0)$, but for $(x,y)$ close to $(0,0)$ with $y\neq 0$ the linear map $\diff \varphi(x,y)$ has a very large positive trace and determinant 1. It has therefore real eigenvalues $\epsilon,\epsilon^{-1}$ with $\epsilon>0$ small, and corresponding eigendirections
\[
\R \left( \begin{array}{c} 1 \\ \epsilon \end{array} \right) \qquad \mbox{and} \qquad \R \left( \begin{array}{c} \epsilon \\ 1 \end{array} \right),
\]
which for $(x,y)\rightarrow (0,0)$ converge to the $x$-axis and $y$-axis, respectively. The origin should therefore be considered as a hyperbolic fixed point with stable direction given by the $x$-axis and unstable direction given by the $y$-axis. 
\end{enumerate}

\begin{rem}
{\rm The stability properties of the fixed points $(c,c)$ and $(-c,-c)$ depend on the value of $q$. Indeed, a simple computation shows the following. These fixed points are elliptic (eigenvalues of the differential of $\varphi$ at them with absolute value 1) for $0 \leq q\leq 2 \frac{d+1}{d-1}$. They are negatively hyperbolic (real negative eigenvalues $-\lambda,-\lambda^{-1}$ with $0<\lambda<1$) for $q> 2 \frac{d+1}{d-1}$.}
\end{rem}

The equivariance of $\varphi$ with respect to the involution $z\mapsto -z$ determines the symmetries
\[
W^s(0) = - W^s(0), \qquad W^u(0) = - W^u(0).
\]
The fact that $\varphi$ is conjugate to its inverse by the reflection $S$ implies the identity
\begin{equation}
\label{e:stab-unstab}
W^u(0)= S (W^s(0)), 
\end{equation}
so it is enough to study the stable set. Since $\varphi$ is not differentiable at the origin, the regularity properties of $W^s(0)$ do not follow from standard results.  In the next section, we investigate these properties and give a precise description of the local stable manifold of $0$ as graph of a function. More precisely, we shall prove the following result, where $c$ is the number defined in \eqref{c}.

\begin{thm}
\label{t:stable}
There exist numbers $r\in (c,1)$, $s\in (0,r)$, and a continuous odd function $f:[-r,r] \rightarrow [-s,s]$ such that:
\begin{enumerate}[(i)]
\item There exists a power series $F(x) = x^d + o(x^d)$ with nonnegative coefficients and radius of convergence $r$ such that $f|_{(-r,r)} = F$ if $d$ is odd and $f|_{[0,r)} = F$ if $d$ is even, and  
\[
f(\pm r) = \pm s, \qquad \lim_{x\rightarrow \pm r} f'(x)=+\infty, \qquad 0< f(x) < x \quad \forall x\in (0,r].
\]
\item The graph of $f$ is postively invariant with respect to $\varphi$. More precisely, 
\[
\varphi(x,f(x)) = (f(x),f(f(x))) \quad \forall x\in [-r,r] \quad \mbox{and} \quad \varphi(\Graph f) = \Graph f|_{[-s,s]}.
\]
In particular, $\Graph f\subset W^s(0)$.
\item If $\rho\in (0,c)$ is small enough, then 
\[
W^s_{\mathrm{loc},\rho}(0):= \{(x,y)\in \R^2 \mid \varphi^n(x,y)\in [-\rho,\rho]^2 \; \forall n\geq 0\} = \Graph f|_{[-\rho,\rho]},
\]
and 
\[
W^s(0) = \bigcup_{n\in \N} \varphi^{-n} \bigl( W^s_{\mathrm{loc},\rho}(0) \bigr) = \bigcup_{n\in \N} \varphi^{-n} \bigl( \mathrm{graph}\, f \bigr).
\]
\end{enumerate}
\end{thm}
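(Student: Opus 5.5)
The natural route is to look for the stable curve as a graph $y=f(x)$ that is invariant under $\varphi$. The condition $\varphi(x,f(x))\in\Graph f$ reads $f(f(x)) = |f(x)|^{\frac1d-1}f(x) - q f(x) - x$. Setting $g=f$ and $h=g^{-1}$ near $0$, with $y=f(x)$, this is equivalent to the conjugated equation
\[
x = |y|^{\frac1d-1}y - qy - f(y),
\]
that is, $h(y) = |y|^{\frac1d-1}y - q y - f(y)$. Equivalently,
\[
x = h(y) \quad\text{with}\quad h = \mathrm{sgn}\cdot|\cdot|^{1/d} - q\,\mathrm{id} - f, \qquad f = h^{-1}.
\]
Writing $y=t^d$ for $t\ge 0$, this becomes $x = t - q t^d - F(t^d)$. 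So $F$ is characterized by the functional equation
\[
F\bigl(t - q t^d - F(t^d)\bigr) = t^d .
\]

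I would solve this as a fixed point for the inverse function. Put $G(t) := t - q t^d - F(t^d)$, so that $F(G(t)) = t^d$, i.e.\ $F = (\cdot)^d\circ G^{-1}$. Since $G(t) = t + O(t^d)$, Lagrange inversion (or a contraction in a space of power series with nonnegative coefficients, using the majorant $t + qt^d + F(t^d)$ for $G^{-1}$) gives a unique formal power series $F(x) = x^d + \dots$. Its coefficients are nonnegative, because $G^{-1}(x) = x + qG^{-1}(x)^d + F(G^{-1}(x)^d)$ has only positive contributions. One then checks that these coefficients are determined recursively and inductively positive. The radius of convergence $r$ is positive by a majorant comparison.

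Pringsheim's theorem then gives a singularity at $x=r$. I would identify it as the point where the inverse-function construction breaks down, namely where $G'(t) = 0$. This yields $f'(x)\to+\infty$ as $x\to r$, with $f(r)=s$ finite, since $f$ is increasing and bounded by $x$. The bound $0<f(x)<x$ and the estimate $r<1$ follow from the equation. The bound $r>c$ needs a comparison showing that the fixed point $(c,c)$ is not reached before the singularity, i.e.\ $F(c)<c$; this is part of the hardest step. Oddness comes from defining $f$ on $[-r,0]$ by $f(-x)=-f(x)$, which is consistent with the equivariance (a). For $d$ odd, $x^d$ is odd and the series itself is odd, since the equation involves only odd powers; this explains the parity dichotomy in (i). Statement (ii) is then the invariance identity, and $\varphi(\Graph f)=\Graph f|_{[-s,s]}$ holds because $\varphi(x,f(x)) = (f(x), f(f(x)))$ and $f$ maps $[-r,r]$ onto $[-s,s]$.

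For (iii), I would use the cone structure (e) near $0$: for $|y|$ small, $\varphi$ expands vertically by a huge factor $\frac1d|y|^{\frac1d-1}$. Take two points in $[-\rho,\rho]^2$ whose forward orbits stay in the box and lie on a common vertical line, or more generally one on $\Graph f$ and one off it. Then their vertical separation grows under iteration by a factor bounded below by $\frac1d\rho^{\frac1d-1}-q-1>1$, which is a contradiction. Making this rigorous requires a monotonicity argument, since $\varphi$ is not differentiable at $y=0$. I would use the explicit formula: $y\mapsto |y|^{\frac1d-1}y - qy$ is increasing with slope $\ge \frac1d\rho^{\frac1d-1}-q$ on $[-\rho,\rho]$, and its difference quotients are controlled even across $0$. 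Together with $f(x)\to 0$ as $x\to 0$, this shows that $W^s_{\mathrm{loc},\rho}(0)=\Graph f|_{[-\rho,\rho]}$. Finally, any $z\in W^s(0)$ has an iterate that stays in $[-\rho,\rho]^2$ forever, which gives the union formula.

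The main obstacle is the precise analysis at the radius of convergence: showing $r\in(c,1)$, that the series converges at $r$ to a finite value $s$, and that $f'\to\infty$ there. For this I would study the real inverse $G$ on $[0,t_*]$, where $t_*$ is the first zero of $G'$. I would show that $F$ extends analytically exactly as long as $G$ is a diffeomorphism and that $G(t_*)=r$, using a square-root singularity of $G^{-1}$.
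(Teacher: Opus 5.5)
There is a genuine gap, and it sits exactly where you write ``this is part of the hardest step'': nothing in your plan keeps the invariant graph below the diagonal. The functional equation alone only says that if $F(t)=t$ for some $t\in(0,r)$, then $t=(2+q)^dt^d$, i.e.\ $t=c$. It does not rule out that the graph of $F$ runs into the fixed point $(c,c)$.

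Several of your steps silently depend on excluding this:
\begin{itemize}
\item the claim that $0<f(x)<x$ ``follows from the equation'';
\item the finiteness of $s$ (``$f$ is bounded by $x$'');
\item the use of $f\circ f$ on all of $[0,r)$;
\item above all, the existence of a zero $t_*$ of $G'$ in $(0,r^{1/d})$.
\end{itemize}
For the last point, grant even $F<\mathrm{id}$ on $(0,r)$ and suppose $G'>0$ on $(0,r^{1/d})$. Your continuation argument gives only $G<r$ there. Letting $t\to r^{1/d}$ in $F(G(t))=t^d$ then yields $\lim G=r=s$, and passing to the limit in the equation forces $r=c$. This configuration (no critical point of $G$, $s=r=c$, the closure of the graph containing $(c,c)$) is compatible with everything you establish. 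So the Pringsheim singularity cannot be identified with a critical point of $G$ without a quantitative bound at $c$, and $r>c$ is left open for the same reason. Likewise, ``$r>0$ by a majorant comparison'' is not an argument for an equation containing $F\circ F$.

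The missing ingredient is the paper's supersolution on $[0,c)$ (Lemma~\ref{l:it-fun}). Let $\eta\in(0,1)$ solve $(1+q\eta+\eta^2)^d=\eta(2+q)^d$. Since $x^{d-1}(2+q)^d\le 1$ for $x\le c$, the operator $f\mapsto(x+qf+f\circ f)^d$ maps $\{f:[0,c)\to\R \mid 0\le f\le\eta x\}$ into itself, and it is monotone on nondecreasing functions. Iterating from $f_0=0$ produces the polynomials $F_{n+1}=(x+qF_n+F_n\circ F_n)^d$, whose nonnegative coefficients increase to those of $F$. Hence $F$ converges on $[0,c)$ with $F\le\eta x$ there. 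This gives $r\ge c$ and $F(c)\le\eta c<c$ (respectively $s\le\eta c<c$ if $r=c$), hence $F<\mathrm{id}$ on $(0,r)$.

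With this input your inverse-function continuation does go through; it is essentially Lemma~\ref{l:vert} in other clothing:
\begin{itemize}
\item $t_*$ exists by the argument above.
\item $G(t_*)\le r$ by Pringsheim, and $G(t_*)<r$ is impossible, since differentiating $F(G(t))=t^d$ at $t_*$ would give $0=dt_*^{d-1}$.
\item Hence $f'(x)\to+\infty$ and $s=t_*^d<r$.
\item Finally $r>c$ follows from $g(r)\le g(s)=r$, i.e.\ $r^{1/d}\le(q+1)r+s<(q+2)r$.
\end{itemize}
No square-root singularity is needed, and your argument does not justify one; $G'(t_*)=0$ suffices.

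Your cone argument for (iii) is a legitimate alternative to the paper's contraction in $\ell^\infty(\N)$ (solving the recursion for $x_n$ in terms of $x_{n-1}+qx_n+x_{n+1}$). Apply it to two points on the same vertical line, and take $\rho$ so small that $\frac1d\rho^{\frac1d-1}-q>2$. This makes the cone $|\Delta y|\ge|\Delta x|$ invariant, with expansion factor $\frac1d\rho^{\frac1d-1}-q-1>1$.
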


The set $W^s_{\mathrm{loc},\rho}(0)$ appearing in (iii) is called \textit{local stable manifold} of size $\rho$ of the fixed point $0$. The proof of the above theorem is discussed in the next section. Here, we see how it implies that every homoclinic orbit converges to the origin with order at least $d$, thus concluding the proof of Proposition \ref{p:gibbs=homo}. Let $z\in W^s(0)\cap W^u(0)$ be a homoclinic point of $\varphi$.  Since $\varphi^n(z)$ converges to the origin for $n\rightarrow + \infty$, for every $\rho>0$ there exists $N>0$ such that $\varphi^n(z)\in [-\rho,\rho]^2$ for every $n\geq N$. By choosing $\rho$ small enough as in Theorem \ref{t:stable} (iii), we deduce that $\varphi^n(z)$ belongs to the graph of $f$ for every $n\geq N$. In particular, $\varphi^N(z) = (x,f(x))$ for some $x\in [-r,r]$, and from Theorem \ref{t:stable} (ii) we deduce that
\[
\varphi^{N+n}(z) = (f^n(x), f^{n+1}(x)) \qquad \forall n\geq 0.
\]
By Theorem \ref{t:stable} (i) we have $f(x) = O(x^d)$ for $x\rightarrow 0$, and the above identity implies that $\varphi^n(z)$ converges to the origin with order at least $d$ for $n\rightarrow +\infty$. By \eqref{e:stab-unstab}, the same holds true for the convergence for $n\rightarrow -\infty$.

While proving the above theorem, we shall also establish the following additional facts, which will be useful in Section \ref{s:duesol}.

\begin{add}
\label{a:more}
The image by $\varphi^{-1}$ of the graph of $f|_{[0,r]}$ is the set
\[
\varphi^{-1} ( \Graph f|_{[0,r]} ) = S( \mathrm{graph}\, g),
\]
where $g: [0,r] \rightarrow \R$ is the strictly concave continuous function
\[
g(y):=  y^{\frac{1}{d}} - q y - f(y),
\]
which satisfies $\varphi(g(x),x) = (x,f(x))$ for every $x\in [0,r]$. Moreover, $g|_{[0,s]}$ is the inverse of the increasing homeomorphism $f|_{[0,r]} : [0,r] \rightarrow [0,s]$, $g$ achieves its maximum at $s$ and is strictly decreasing on $[s,r]$, and $g(c)>c$.
\end{add}

\begin{figure}[h]
\centerline{\scalebox{.4}{\includegraphics{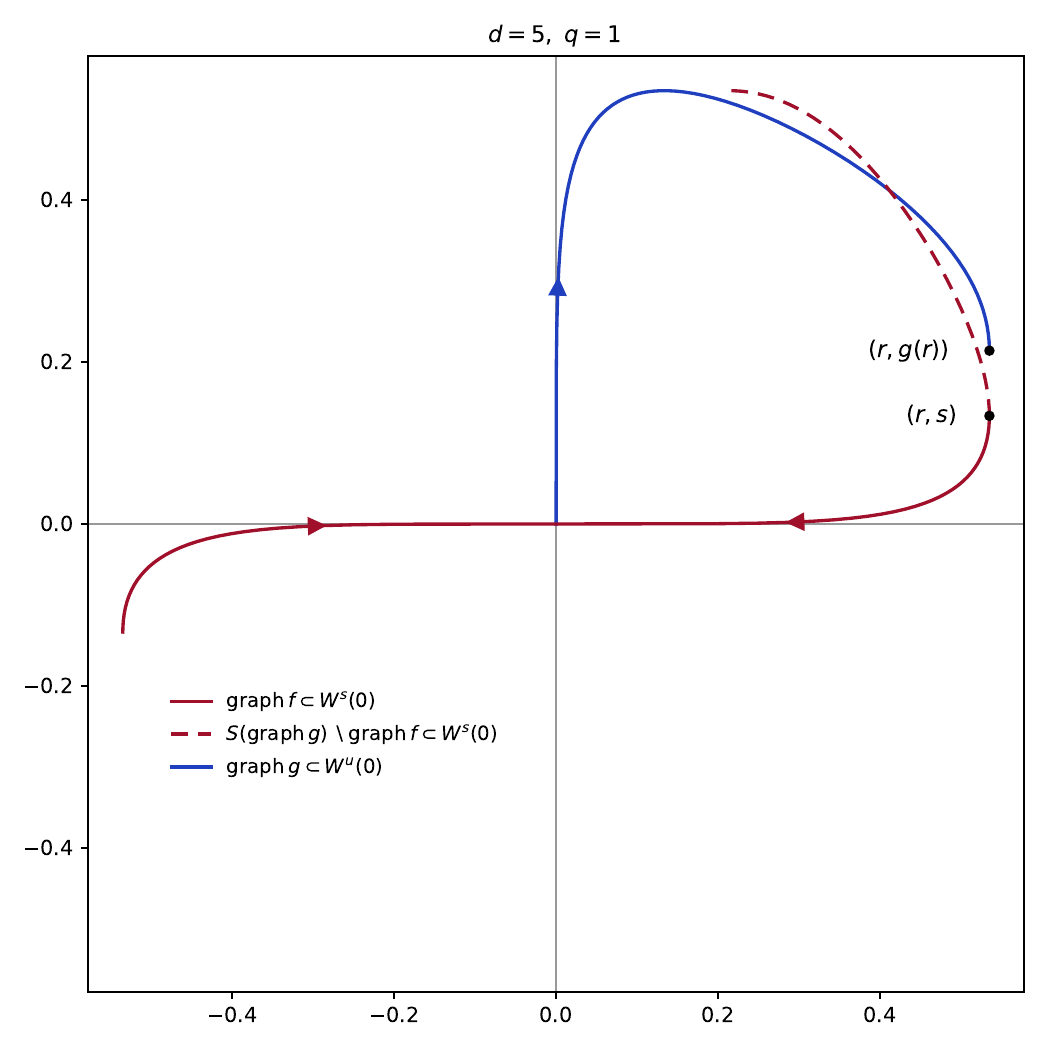} \includegraphics{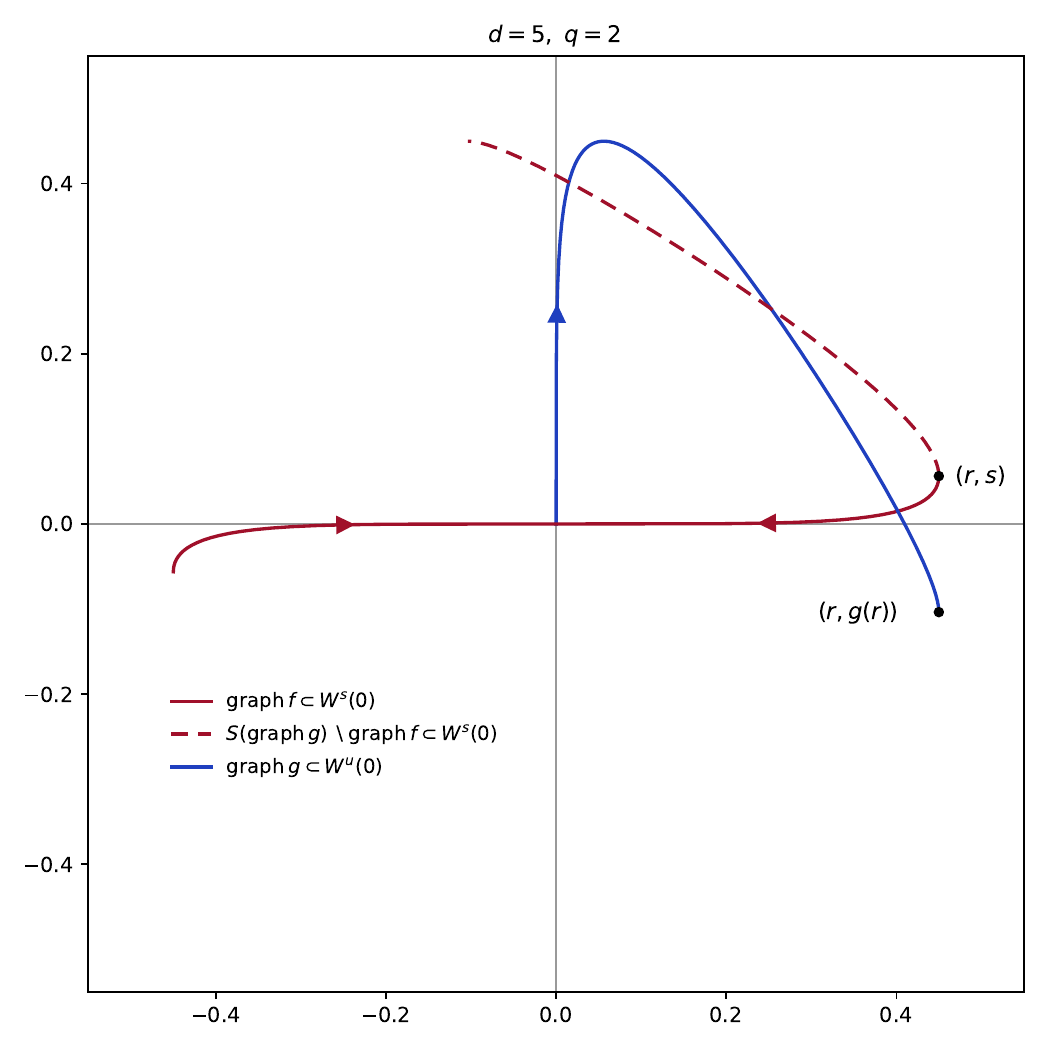}}}
\caption{\small{The graphs of $f$ and $g$ for $d=5$ and $q=1$ (left), and $q=2$ (right).}}
\label{f:grafici}
\end{figure} 

\section{Proof of the stable manifold theorem}
\label{s:psmt}

The aim of this section is to prove Theorem \ref{t:stable} and Addendum \ref{a:more}.

\paragraph{Invariant graphs.} Fix some $r>0$. We start by looking for a function $f: [0,r] \rightarrow [0,+\infty)$ whose graph is mapped to itself by $\varphi$. The identity in the condition
\[
\forall x\in [0,r] \;\exists x'\in [0,r] \mbox{ such that } \varphi(x,f(x)) = (x',f(x'))
\]
 is equivalent to
 \[
 f(x) = x', \quad f(x)^{\frac{1}{d}} - q f(x) - x = f(x'),
 \]
 and hence to
 \[
 f(x)^{\frac{1}{d}} - q f(x) - x = f(f(x)),
 \]
 i.e.\
 \[
 f(x) = \bigl( x + q f(x) + f(f(x)) \bigr)^d.
 \]
We conclude that the graph of $f: [0,r] \rightarrow [0,+\infty)$ is $\varphi$-invariant if and only if 
\[
f(x) \leq r \quad \mbox{and} \quad f(x) = \bigl( x + q f(x) + f(f(x)) \bigr)^d, \qquad \forall x\in [0,r].
\]
The next paragraphs are devoted to the study of the above equation.

\paragraph{Monotone iteration of formal power series.} Before studying the above equation in some space of functions, we study it in the algebra $\R[[x]]$ of formal power series with real coefficients. We recall that the composition $F\circ G$ is a well-defined element of $\R[[x]]$ for every $F\in \R[[x]]$ and $G$ in the ideal $\mathfrak{m} := x\cdot \R[[x]]$. Therefore, the operator
\[
\mathscr{H}: \mathfrak{m} \rightarrow \mathfrak{m}, \qquad \mathscr{H}(F) := \bigl( x + q F + F\circ F \bigr)^d,
\]
is well defined. On $\R[[x]]$, we consider the standard partial order ($F\leq G$ means that the $k$-th coefficient of $F$ is not larger than the $k$-th coefficient of $G$, for every $k\in \N\cup \{0\}$) and the standard topology given by convergence of all coefficients.

\begin{lem}
\label{l:formal}
Set $F_0=0$ and define $F_n\in \mathfrak{m}$ iteratively by $F_{n+1}=\mathscr{H}(F_n)$. Then $(F_n)$ is non-decreasing and converges to the unique fixed point of $\mathscr{H}$, that is, to the unique solution $F\in \mathfrak{m}$ of the equation
\begin{equation}
\label{e:formal}
F = \bigl( x + q F + F\circ F \bigr)^d.
\end{equation}
This solution has the form
\[
F(x) = x^d G(x^{d-1}),
\]
where $G\in \R[[y]]$ is a power series with non-negative coefficients and coefficient of $y^0$ equal to $1$. In the case $q=0$, the solution has the form
\[
F(x) = x^d H(x^{d^2-1}),
\]
where $H\in \R[[z]]$ is a power series with non-negative integer coefficients and coefficient of $z^0$ equal to $1$.
\end{lem}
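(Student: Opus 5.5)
The plan is to prove uniqueness and existence together by showing that the coefficients of any solution of \eqref{e:formal} are determined recursively by a triangular system with nonnegative data. Write $F=\sum_{k\geq 1} a_k x^k$. The key observation is that for $F\in\mathfrak{m}$ the series $x+qF+F\circ F$ starts with $x$, since $F\circ F\in\mathfrak{m}$. Its $d$-th power therefore has lowest term $x^d$. The coefficient of $x^k$ in $(x+qF+F\circ F)^d$ depends only on the coefficients of $x+qF+F\circ F$ of order at most $k-d+1$. Those in turn depend only on $a_1,\dots,a_{k-d+1}$, because $a_j$ enters $F\circ F$ only in degrees $\geq j$. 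Since $d\geq 2$, we have $k-d+1<k$.

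Hence the $k$-th coefficient of $\mathscr{H}(F)$ is a polynomial $P_k(a_1,\dots,a_{k-1})$ with nonnegative coefficients. Moreover $P_k=0$ for $k<d$, and $P_d=1$. Nonnegativity follows because $q\geq 0$, and composition and multiplication of series only add and multiply coefficients.

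From this the statements follow one at a time.

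\begin{enumerate}[(1)]
\item \emph{Uniqueness and existence of the fixed point.} A fixed point must satisfy $a_k=P_k(a_1,\dots,a_{k-1})$. This determines the $a_k$ uniquely by induction, and the resulting series is indeed a fixed point.
\item \emph{Convergence of the iteration.} By induction on $n$, the coefficients of $F_n$ of order at most $n$ already agree with those of $F$. Indeed, the $k$-th coefficient of $F_{n+1}=\mathscr{H}(F_n)$ is $P_k$ evaluated at the first $k-1$ coefficients of $F_n$, and these are correct when $k-1\leq n$. Hence $F_n\to F$ coefficientwise.
\item \emph{Monotonicity.} $\mathscr{H}$ is order preserving on nonnegative series: if $0\leq F\leq G$ then $F\circ F\leq G\circ G$ and powers preserve order. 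Since $F_0=0\leq F_1$, induction gives $F_n\leq F_{n+1}$, and all $F_n$ and $F$ are nonnegative.
\end{enumerate}

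For the form $F(x)=x^dG(x^{d-1})$, I would show by the same recursion that $a_k\neq 0$ only when $k\equiv d \pmod{d-1}$, equivalently $k\equiv 1\pmod{d-1}$. The class of series supported on exponents $\equiv 1\pmod{d-1}$ is stable under the relevant operations:
\begin{itemize}
\item it is closed under composition, since $x^{1+a(d-1)}$ composed with a series in $x\cdot\R[[x^{d-1}]]$ stays in that class;
\item it is closed under sums and nonnegative scalar multiples;
\item its $d$-th powers have exponents $\equiv d\equiv 1 \pmod{d-1}$.
\end{itemize}
The iterates $F_n$ therefore stay in this class, and so does the limit. Since $a_d=1$, the coefficient of $y^0$ in $G$ is $1$, and the nonnegativity of $G$ is inherited from that of $F$.

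For $q=0$, $\mathscr{H}(F)=(x+F\circ F)^d$. Here I would show that the iterates lie in $x^d\,\Z_{\geq 0}[[x^{d^2-1}]]$. Suppose $F\in x^d\Z_{\geq0}[[x^{d^2-1}]]$. Then $F\circ F$ has exponents $d\cdot d+(d^2-1)m$, which are $\equiv 1\pmod{d^2-1}$, since $d^2\equiv 1$. So $x+F\circ F\in x\,\Z_{\geq0}[[x^{d^2-1}]]$, and its $d$-th power lies in $x^d\Z_{\geq0}[[x^{d^2-1}]]$ with integer coefficients. This class is again closed under coefficientwise limits, which gives the form $F(x)=x^dH(x^{d^2-1})$ with $H$ having nonnegative integer coefficients and constant term $1$.

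The main obstacle is the triangularity claim, namely that the $k$-th coefficient of $\mathscr{H}(F)$ involves only $a_j$ with $j\leq k-d+1$. This needs a careful count of the lowest degree at which $a_j$ can appear in $(x+qF+F\circ F)^d$. Every other factor in the power contributes degree at least $1$, so $a_j$ first appears in degree at least $j+(d-1)$. I would verify this point first.
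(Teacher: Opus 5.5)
Your proof is correct. For existence, uniqueness, monotonicity and convergence it matches the paper. The paper uses the same triangularity observation and the same order-preservation of $\mathscr{H}$ on nonnegative series. It gets stabilization of coefficients by noting that the order of $F_{n+1}-F_n$ grows by at least $d-1$ at each step, which is equivalent to your claim that $F_n$ already has its first $n$ coefficients right.

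You genuinely differ on the structural part. The paper substitutes $F(x)=x^dG(x^{d-1})$ (resp.\ $x^dH(x^{d^2-1})$) and derives the functional equations $G(y)=\bigl(1+qyG(y)+y^{d+1}G(y)^dG(y^dG(y)^{d-1})\bigr)^d$ and $H(z)=\bigl(1+zH(z)^dH(z^dH(z)^{d^2-1})\bigr)^d$. It then asserts that these have unique solutions with nonnegative (integer) coefficients and constant term $1$, and concludes by uniqueness of $F$. You instead show that $\mathscr{H}$ preserves the class $x\,\R[[x^{d-1}]]$ (resp.\ $x^d\,\Z_{\geq 0}[[x^{d^2-1}]]$), and that this class is closed under coefficientwise limits, so the limit of the iterates lies in it.

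Your route avoids computing the transformed equations and the triangularity argument for them, which the paper leaves implicit. It also makes integrality for $q=0$ immediate, since a convergent sequence of integers is eventually constant. The paper's route gives explicit recursions for the coefficients of $G$ and $H$, which is what one uses to compute them, as in the subsequent remark.

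One small imprecision: for a general $F\in\mathfrak{m}$, the linear coefficient of $x+qF+F\circ F$ is $1+qa_1+a_1^2$, not $1$. So $P_d=(1+qa_1+a_1^2)^d$ rather than $1$. This is harmless, because $a_1=P_1=0$ for the fixed point and for all iterates, and your degree count only uses that each factor of the $d$-th power has order at least $1$.
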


\begin{proof}
The existence and uniqueness of the fixed point of $\mathscr{H}$ follow from the fact that for every integer $j\geq 1$ the coefficient of $x^j$ in the power series on the right-hand side of (\ref{e:formal}) depends only on the coefficients of $F$ of degree strictly less than $j$. 

Note that if $0\leq F \leq G$, then $0\leq \mathscr{H}(F) \leq \mathscr{H}(G)$. Since $F_1=x^d\geq 0=F_0$, an induction argument shows that $F_n\geq 0$ and $(F_n)$ is non-decreasing.

For each $k\in \N$, the sequence consisting of the $k$-th coefficients of $F_n$ is constant for $n$ large enough. Indeed, denoting by $k_n$ the degree of the first non-vanishing monomial in $F_{n+1}-F_n$, we easily see that $k_n \geq k_{n-1} + d -1$ for every $n\in \N$, and hence $(k_n)$ diverges.

In particular, $(F_n)$ converges to some $F$ in $\mathfrak{m}$ and by taking the limit in $F_{n+1}=\mathscr{H}(F_n)$ we deduce that $F$ is the fixed point of $\mathscr{H}$.

A simple computation involving the change of variable $y=x^{d-1}$ shows that a power series of the form $F(x)=x^d G(x^{d-1})$ satisfies (\ref{e:formal}) if,  and only if, $G$ satisfies
\[
G(y) = \bigl(1+ q y G(y) + y^{d+1} G(y)^d G(y^d G(y)^{d-1}) \bigr)^d.
\]
This equation has a unique solution $G\in \R[[y]]$, its coefficients are non-negative, and the coefficient of $y^0$ is 1. By uniqueness, the solution $F$ of (\ref{e:formal}) has the stated form. In the case $q=0$, we find that  a power series of the form $F(x)=x^d H(x^{d^2-1})$ satisfies (\ref{e:formal}) if, and only if, $H$ satisfies
\[
H(z) = \bigl(1+z H(z)^d H(z^d H(z)^{d^2-1}) \bigr)^d.
\]
This equation has a unique solution $H\in \R[[z]]$, its coefficients are non-negative integers, and the coefficient of $z^0$ is 1. We can then conclude as before.
\end{proof}

\begin{rem}
{\rm In the case $d=2$, $q=0$, the solution $F$ has the form $F(x)=x^2 H(x^3)$, where the first coefficients $h_0,h_1,h_2,\ldots$ of the power series
\[
H(x) = \sum_{n=0}^{\infty} h_n x^n
\]
are the integers
\[
1, 2, 9, 56, 400, 3096, 25256, 213832, 1861272, 16552320, 149737632, 1373597892,  \ldots.
\]
This sequence appears as A212392 in the ``On-Line Encyclopedia of Integer Sequences'' \cite{oeis}. In the case $d=3$, $q=0$, the solution $f$ has the form $F(x)=x^3 H(x^8)$, where the first coefficients of the power series $H$ are the integers
\[
1, 3, 30, 406, 6336, 107415, 1922310, 35739990, 683593902, 13364444808, 265869803598,
\ldots.
\]
This sequence appears as A212425 in \cite{oeis}.}
\end{rem}

\paragraph{Monotone iteration of functions.} The convex function 
\[
\phi:[0,+\infty) \rightarrow \R, \qquad \phi(x) := \frac{1+qx+x^2}{2+q} - x^{\frac{1}{d}}
\]
satisfies
\[
\phi(0)=\frac{1}{2+q} >0, \quad \phi(1)=0, \quad \phi'(1) = 1- \frac{1}{d} >0, 
\]
and hence has a unique zero $\eta$ in the open interval $(0,1)$, which satisfies
\begin{equation}
\label{e:eta}
(1+q\eta+\eta^2)^d = \eta (2+q)^d.
\end{equation}
Let $c=(2+q)^{-\frac{d}{d-1}}$ be the number in $(0,1)$ defined in \eqref{c} and consider the space of functions
\[
\mathscr{F} := \{ f : [0,c) \rightarrow \R \mid 0 \leq f(x) \leq \eta x \; \forall x\in [0,c) \},
\]
and the operator $\mathscr{H}$ which to every $f\in \mathscr{F}$ associates the function
\[
\mathscr{H}(f)(x) :=  \bigl( x + q f(x) + f(f(x)) \bigr)^d.
\]

\begin{lem}
\label{l:it-fun}
The operator $\mathscr{H}$ maps $\mathscr{F}$ to $\mathscr{F}$.  Set $f_0=0$ and define $f_n\in \mathscr{F}$ iteratively by $f_{n+1}=\mathscr{H}(f_n)$. Then $(f_n)$ is non-decreasing and converges pointwise to a function $f\in \mathscr{F}$ such that
\begin{equation}
\label{e:u-fixed-point}
f(x) = \bigl( x + q f(x) + f(f(x)) \bigr)^d \qquad \forall x\in [0,c).
\end{equation}
\end{lem}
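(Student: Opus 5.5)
The plan is to prove three things in order: $\mathscr{H}$ maps $\mathscr{F}$ into itself, $\mathscr{H}$ is monotone on $\mathscr{F}$, and the pointwise limit of the iterates is a fixed point.

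\textbf{Invariance.} Let $f\in\mathscr{F}$ and $x\in[0,c)$. Since $0\le f(x)\le \eta x<c$, the value $f(f(x))$ is defined, and $0\le f(f(x))\le \eta^2 x$. Hence
\[
0\le \mathscr{H}(f)(x)\le \bigl(x+q\eta x+\eta^2 x\bigr)^d=(1+q\eta+\eta^2)^d x^d .
\]
By \eqref{e:eta} the right-hand side equals $\eta(2+q)^d x^{d}=\eta\,(2+q)^d x^{d-1}\, x$. For $x<c$ we have $x^{d-1}<c^{d-1}=(2+q)^{-d}$. Therefore the bound is at most $\eta x$, so $\mathscr{H}(f)\in\mathscr{F}$.

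\textbf{Monotonicity.} Take $f\le g$ pointwise in $\mathscr{F}$. We need $f(f(x))\le g(g(x))$, and for this we need one of the two functions to be non-decreasing. So we first show by induction that each $f_n$ is non-decreasing. The case $f_0=0$ is trivial. If $f_n$ is non-decreasing and nonnegative, then $x\mapsto x+qf_n(x)+f_n(f_n(x))$ is non-decreasing and nonnegative, and so is its $d$-th power. With this in hand, the order of the sequence follows by induction as well. First, $f_1=x^d\ge 0=f_0$. Next, assume $f_{n-1}\le f_n$ with $f_n$ non-decreasing. Then
\[
f_{n-1}(f_{n-1}(x))\le f_n(f_{n-1}(x))\le f_n(f_n(x)),
\]
where the first inequality uses $f_{n-1}\le f_n$ pointwise and the second uses that $f_n$ is monotone. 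It follows that $f_n=\mathscr{H}(f_{n-1})\le \mathscr{H}(f_n)=f_{n+1}$.

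\textbf{Passage to the limit.} The sequence $(f_n(x))$ is non-decreasing and bounded by $\eta x$, so it converges to some $f(x)\in[0,\eta x]$. Thus $f\in\mathscr{F}$, and $f$ is non-decreasing as a pointwise limit of non-decreasing functions. The main point is the convergence $f_n(f_n(x))\to f(f(x))$, which needs care because $f$ need not be continuous. For the upper bound, $f_n(f_n(x))\le f_n(f(x))\le f(f(x))$, using $f_n(x)\le f(x)$, monotonicity of $f_n$, and $f_n\le f$. For the lower bound, fix $m$. For $n\ge m$ we have $f_n(f_n(x))\ge f_m(f_n(x))$. The function $f_m$ is continuous, being a finite composition of continuous operations: this holds inductively, since $f_n$ maps $[0,c)$ into $[0,c)$ and $f_{n-1}$ is continuous there. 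Letting $n\to\infty$ gives $\liminf_n f_n(f_n(x))\ge f_m(f(x))$, and letting $m\to\infty$ gives $\ge f(f(x))$. Finally, we pass to the limit in $f_{n+1}=\mathscr{H}(f_n)$, using continuity of $t\mapsto t^d$, and obtain \eqref{e:u-fixed-point}.
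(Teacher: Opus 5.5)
Your proposal is correct and takes essentially the same approach as the paper. It uses the same invariance estimate via the identity defining $\eta$, the same induction giving monotone iterates and a non-decreasing sequence, and the same squeeze $f_m\circ f_n\le f_n\circ f_n\le f_n\circ f\le f\circ f$ to pass to the limit. The differences are cosmetic: in the comparison step you use monotonicity of the larger iterate $f_n$ where the paper uses the smaller one, and you prove continuity of $f_m$ by induction where the paper notes that the $f_m$ are polynomials.
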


\begin{proof}
For every $f\in \mathscr{F}$ and $x\in [0,c)$ we have by \eqref{e:eta}
\[
0\leq (\mathscr{H}(f))(x) \leq (x + q \eta x + \eta^2x)^d = x^d (1+ q\eta+ \eta^2)^d \leq c^{d-1} (2+q)^d \eta x = \eta x,
\]
and hence $\mathscr{H}(f)$ is in $\mathscr{F}$.

Now notice that if $f\in \mathscr{F}$ is non-decreasing then $\mathscr{H}(f)$ is non-decreasing as well. Indeed, if  $0\leq x \leq y < c$ then
\[
\mathscr{H}(f)(x) = (x+qf(x)+ f(f(x)))^d \leq (y+qf(y) + f(f(y)))^d = \mathscr{H}(f)(y).
\]

Next observe that if $f,g\in \mathscr{F}$ satisfy $f\leq g$ and $f$ is non-decreasing, then $\mathscr{H}(f) \leq \mathscr{H}(g)$. Indeed, for every $x\in [0,c)$ we have
\[
\begin{split}
\mathscr{H}(f)(x) &= (x+q f(x) + f(f(x)))^d \leq (x+ q g(x) + f(g(x)))^d \\ &\leq (x+ q g(x) + g(g(x)))^d = \mathscr{H}(g)(x).
\end{split}
\]

Since $f_1=x^d \geq 0 =f_0$, an induction argument shows that every function $f_n$ is non-decreasing and $\mathscr{H}(f_n)\geq f_n$. Moreover, the functions $f_n$ are restrictions of polynomials. Being bounded from above by the identity function, the non-decreasing sequence $(f_n)$ converges pointwise to a function $f:[0,c)\rightarrow \R$. The function $f$ belongs to $\mathscr{F}$, because this set is closed under pointwise convergence. 

In order to take the limit in the identity $f_{n+1}=\mathscr{H}(f_n)$, we must
check that $f_n\circ f_n$ converges to $f\circ f$ pointwise. Since
$f_n$ is non-decreasing and $f_n\leq f$, we have
\[
f_n\circ f_n \leq f_n \circ f \leq f\circ f.
\]
On the other hand, for every $m\leq n$ we have
$f_n \circ f_n \geq f_m\circ f_n$, and the latter sequence of functions converges to
$f_m\circ f$ pointwise for $n\rightarrow \infty$, because $f_m$ is continuous. Therefore,
\[
f_m\circ f \leq \liminf_{n\rightarrow \infty} f_n\circ f_n \leq
\limsup_{n\rightarrow \infty} f_n\circ f_n \leq f\circ f \qquad \forall m\in \N,
\]
and the claim follows by taking the limit for $m\rightarrow \infty$. We conclude
that $f$ is a fixed point of $\mathscr{H}$, meaning that
\eqref{e:u-fixed-point} holds.
\end{proof}

From now on, let $F\in \mathfrak{m} \subset \R[[x]]$ be the solution of \eqref{e:formal} and 
$f\in \mathscr{F}$ be the solution of \eqref{e:u-fixed-point} as in the above lemma. 

\begin{lem}
\label{l:u-versus-U}
The power series $F$ has convergence radius $r\geq c$ and
\[
F(x) = f(x), \qquad \forall x\in [0,c).
\]
\end{lem}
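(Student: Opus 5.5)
The plan is to compare the polynomial iterates $f_n$ of Lemma \ref{l:it-fun} with the truncations $F_n$ of the formal iteration of Lemma \ref{l:formal}. The two iterations start from the same datum $F_0=0=f_0$ and apply the same rule $G\mapsto (x+qG+G\circ G)^d$. Since $F_n$ is a polynomial in $x$ (by induction: sums, products and compositions of polynomials without constant term), an induction shows that $f_n$ is exactly the restriction of the polynomial $F_n$ to $[0,c)$. The only point to check is that composing $F_n\circ F_n$ as formal power series agrees with composing as functions, which is automatic for polynomials vanishing at $0$.

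The next step uses positivity of coefficients. By Lemma \ref{l:formal}, the sequence $(F_n)$ is non-decreasing coefficientwise, has nonnegative coefficients, and its $k$-th coefficient stabilizes to that of $F$ for $n$ large. Fix $x\in[0,c)$ and $K\in\N$. For $n$ large enough that the coefficients of degree $\le K$ of $F_n$ coincide with those of $F$, nonnegativity gives
\[
\sum_{k\le K} F^{(k)} x^k=\sum_{k\le K} F_n^{(k)}x^k\le F_n(x)=f_n(x)\le f(x)\le \eta x .
\]
Here $F^{(k)}$ denotes the $k$-th coefficient. So the partial sums of $F(x)$, which have nonnegative terms, are bounded uniformly in $K$. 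Hence $F(x)$ converges for every $x\in[0,c)$, the radius of convergence is at least $c$, and $F(x)\le f(x)$.

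For the reverse inequality, use $F_n\le F$ coefficientwise together with $x\ge0$ to get $f_n(x)=F_n(x)\le F(x)$ for every $n$. Letting $n\to\infty$ and using the pointwise convergence $f_n\to f$ yields $f(x)\le F(x)$, hence equality on $[0,c)$.

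The only mildly delicate step is justifying the identification $f_n=F_n|_{[0,c)}$, and in particular that the polynomials $F_n$ are genuinely finite. This follows because $F_{n+1}=(x+qF_n+F_n\circ F_n)^d$ is polynomial whenever $F_n$ is. The rest is a monotone-convergence argument for series with nonnegative terms.
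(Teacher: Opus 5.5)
Your proof is correct and follows essentially the same route as the paper. You identify $f_n$ with the polynomials $F_n$ and bound the partial sums of $F(x)$ by $f_n(x)\le f(x)$ using nonnegativity of the coefficients. The only cosmetic difference is that you obtain $f\le F$ from $F_n\le F$ coefficientwise, whereas the paper shows $F_n(x)\to F(x)$ by dominated convergence; these amount to the same thing.
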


\begin{proof}
This is a standard consequence of monotone convergence. We include a proof for the reader's convenience. By construction, $f_n(x) = F_n(x)$ are polynomials with non-negative coefficients:
\[
f_n(x) = F_n(x) = \sum_{k \geq 0} a_{n,k} x^k,
\]
where $a_{n,k} \geq 0$ and for every $n$ the numbers $a_{n,k}$ are zero for $k$ large enough. Moreover, the sequences $(a_{n,k})_n$ are non-decreasing. The power series $F$ has the form
\[
F(x) = \sum_{k\geq 0} a_k x^k,
\]
where  
\[
a_k = \lim_{n\rightarrow \infty} a_{n,k}.
\]
Fix $x\in [0,c)$. For every $m$ and $n$ we have
\[
\sum_{k=0}^m a_{n,k} x^k \leq f_n(x) \leq f(x),
\]
and by taking a limit for $n\rightarrow \infty$ we find
\[
\sum_{k=0}^m a_k x^k \leq f(x).
\]
On the left, we see a partial sum of the series $F(x)$, which has non-negative terms, and the above uniform bound implies that this series converges. Therefore, the convergence radius $r$ of the power series $F$ is at least $x$. Since $x\in [0,c)$ was arbitrary, $r$ is not smaller than $c$.

Since $a_{n,k} x^k\leq a_k x^k$ and the latter sequence is summable for $x\in [0,c)$, the dominated convergence theorem implies that for such $x$ the sequence
\[
u_n(x) = \sum_{k \geq 0} a_{n,k} x^k
\]
converges to $F(x)$. Since the same sequence also converges to $f(x)$, we conclude that $F(x)=f(x)$ for every $x\in [0,c)$.
\end{proof}

\paragraph{Analytic continuation.} The power series $F$ with convergence radius $r$ defines a real analytic function $\tilde{f}: J \rightarrow \R$
on some maximal open interval $J\subset \R$ containing $(-r,r)$. By Lemma \ref{l:u-versus-U}, $r\geq c$ and the restriction of $\tilde{f}$ to $[0,c)$ is $f$.
We keep the same symbol $f$ to denote the restriction of $\tilde{f}$ to $J \cap [0,+\infty)$.

Since the coefficients of the power series $F$ are non-negative, the Vivanti--Pringsheim theorem implies that
\[
\sup J = r.
\]
Therefore, $f$ is an analytic, strictly increasing, and strictly convex function on $[0,r)$, and by Lemma \ref{l:formal} it satisfies 
\begin{equation}
\label{e:for-u}
f(x) = F(x) =  x^d + O(x^{2d-1}) \qquad \forall x\in [0,r),
\end{equation}
and in particular $f(0)=f'(0)=0$.

\begin{lem}
\label{l:u-glob}
The convergence radius $r$ of $F$ is strictly smaller than 1. For every $x\in (0,r)$ we have $f(x) < x$, and
\[
s:=\lim_{x\uparrow r} f(x) < r.
\]
Moreover,
\[
f(x) = (x + q f(x) + f(f(x)))^d, \qquad \forall x\in [0,r).
\]
\end{lem}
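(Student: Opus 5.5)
The plan has three steps: extend the functional equation to all of $[0,r)$, use it to get $f(x)<x$, and then compare $f$ with the solutions of the equation to show $r<1$ and $s<r$. The first step is analytic continuation. On $[0,c)$ we have $f=F$ and \eqref{e:u-fixed-point}. The function $f$ is analytic, increasing and convex on $[0,r)$, and $f(x)\le \eta x<x$ near $0$. So for $x$ in a neighbourhood of $[0,x_0]$ with $f([0,x_0])\subset[0,r)$, both sides of $f(x)=(x+qf(x)+f(f(x)))^d$ are analytic and they agree on $[0,c)$. By the identity theorem they agree wherever $f(f(x))$ makes sense. Let $x^*:=\sup\{x\in[0,r): f(y)<y \ \forall y\in(0,x]\}$. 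On $[0,x^*)$ we have $f(x)<x<r$, so the equation holds on all of $[0,x^*)$.

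The second step shows $x^*=r$. If $x^*<r$, then $f(x^*)=x^*$, and letting $x\uparrow x^*$ in the equation gives $x^*=(x^*+qx^*+x^*)^d$. Hence $x^*=c$, the only positive fixed point of $\varphi$ on the diagonal. But $f\le \eta x<x$ on $[0,c)$ and $f$ is continuous, so $f(c)\le \eta c<c$. This is a contradiction, so $f(x)<x$ on $(0,r)$ and the functional equation holds on all of $[0,r)$.

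The third step proves $r<1$. The equation gives $x\le f(x)^{1/d}$, so $f(x)\ge x^d$, and also $f(x)\ge (x+qf(x))^d$ with $f(x)<x$. If $r\ge1$, then $f$ would be defined on $[0,1)$ with $x^d\le f(x)<x$. Since $f$ is convex with $f(0)=0$ and $f(x)<x$ throughout, $f(x)/x$ is increasing and stays below $1$. Put $\ell:=\lim_{x\uparrow 1} f(x)\le 1$. If $\ell<1$, the equation at $x\uparrow1$ gives $\ell=(1+q\ell+f(\ell))^d\ge 1$, a contradiction. If $\ell=1$, then $1=(1+q+1)^d$, which is impossible. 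So $r<1$.

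The main obstacle is $s<r$, where $s:=\lim_{x\uparrow r}f(x)\le r$ exists by monotonicity. Suppose $s=r$. Then $f(f(x))\to s$ as $x\uparrow r$, since $f(x)\uparrow r$. Passing to the limit in the equation gives $s=(r+qs+s)^d$, so $r=(2+q)^d r^d$, i.e.\ $r=c$. The remaining task is to exclude $r=c$, which needs $f(c^-)<c$. This follows from $f\le\eta x$ on $[0,c)$ with $\eta<1$: we get $s\le \eta c<c=r$, contradicting $s=r$. Hence $s<r$, and with $s<r$ the limit equation is consistent. I expect the delicate point to be the analytic-continuation bookkeeping in the first step: we must ensure $f(x)<r$ before composing, and this is exactly why the bootstrap runs through $x^*$.
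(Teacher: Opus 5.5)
Your proof is correct and follows essentially the same route as the paper. You take the supremum of the set where $f(x)<x$ and propagate the functional equation there by analytic continuation; you rule out a fixed point $f(t)=t$ with $t<r$ because it forces $t=c$, which contradicts $f(c)\le \eta c<c$; and you rule out $s=r$ by letting $x\uparrow r$, which gives $r=c$. The differences are cosmetic: you fold the case $r=c$ into the supremum argument instead of treating it separately, and you prove $r<1$ by letting $x\uparrow 1$ in the equation to get $1=(2+q)^d$, rather than using the paper's explicit bound $f(x)\ge x^d+d\,x^{d^2+d-1}$.
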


\begin{proof}
If $r=c$ then there is nothing to prove, because $c<1$, $f(x) \leq \eta x$ for every $x\in [0,c)$, with $\eta\in (0,1)$, and $f$ satisfies \eqref{e:u-fixed-point}. 

Therefore, we can assume that $r>c$, so that $c$ belongs to the domain of analyticity of $f$. Since $f(c) \leq \eta c < c$, the number
\[
t:= \sup\{ x\in (0,r) \mid f(x)<x \} 
\]
is strictly larger than $c$. Since $f(0)=0$ and $x\mapsto \frac{f(x)}{x}$ is non-decreasing, by convexity, on the  interval $[0,t)$ we have $0\leq f(x)< x< r$. Therefore, the composition $f(f(x))$ is well defined. The functions on the left- and right-hand sides of \eqref{e:u-fixed-point} are analytic on $(0,t)$ and coincide on $[0,c)$. Therefore, the identity
\begin{equation}
\label{e:equ}
f(x) = (x + q f(x) + f(f(x)))^d
\end{equation}
holds for every $x\in [0,t)$.

If by contradiction $t<r$, then $f(t)=t$ and by continuity \eqref{e:equ} holds also for $x=t$. This implies
\[
t = (t + q t + t)^d = t^d (2+q)^d = t^d c^{1-d},
\]
from which we deduce that $t=c$, contradicting the fact that $t>c$. This proves that $f(x)<x$ for every $x\in (0,r)$ and that \eqref{e:equ} holds for every $x\in [0,r)$.

For every $x\in [0,r)$ identity \eqref{e:equ} implies the lower bounds
\[
f(x) \geq x^d \quad \Rightarrow \quad f(x) \geq (x + f(f(x)))^d \geq  (x + x^{d^2})^d \geq x^d + d x^{d^2+d-1},
\]
which together with the upper bound $f(x)\leq x$ imply that $r<1$. Moreover,
\[
\lim_{x\uparrow r} f(x) \leq r,
\]
and if by contradiction the above inequality is not strict, then passing to the limit for $x\uparrow r$ we obtain
\[
r = (r + qr + r)^d = r^d (2+q)^d,
\]
which implies that $r=c$, contradicting our assumption $r>c$.
\end{proof}

\paragraph{Dynamical consequences.} We now go back to the dynamics of the homeomorphism $\varphi: \R^2 \rightarrow \R^2$.

\begin{lem}
\label{l:stab}
For every $x\in [0,r)$ we have
\begin{equation}
\label{e:inv-graph}
\varphi(x,f(x)) = \bigl(f(x),f(f(x))\bigr).
\end{equation}
Moreover, the graph of $f$ -- which by the above identity is positively invariant by $\varphi$ -- is contained in the stable set $W^s(0)$. 
\end{lem}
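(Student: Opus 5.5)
The identity \eqref{e:inv-graph} should follow directly from the functional equation in Lemma \ref{l:u-glob}. Fix $x\in[0,r)$. By Lemma \ref{l:u-glob} we have $0\le f(x)<r$ (indeed $f(x)\le x<r$ for $x>0$, and $f(0)=0$). Hence $f(f(x))$ is defined, and the equation $f(x)=(x+qf(x)+f(f(x)))^d$ holds. Since all three terms in the bracket are nonnegative, taking $d$-th roots gives
\[
f(x)^{\frac1d}-qf(x)-x=f(f(x)).
\]
For $f(x)\ge0$ the formula \eqref{e:mapphi} reads $\varphi(x,f(x))=(f(x),f(x)^{\frac1d}-qf(x)-x)$, so \eqref{e:inv-graph} follows at once. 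In particular $\varphi$ maps the graph of $f|_{[0,r)}$ into itself, sending the point with abscissa $x$ to the point with abscissa $f(x)$.

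For the stable set inclusion, iterating \eqref{e:inv-graph} by induction gives
\[
\varphi^n(x,f(x))=(f^n(x),f^{n+1}(x)),
\]
so it suffices to show that $f^n(x)\to0$ for each $x\in[0,r)$. The sequence $x_n:=f^n(x)$ lies in $[0,r)$ and is non-increasing, because $f(y)<y$ on $(0,r)$. It therefore converges to some limit $\ell\in[0,x]\subset[0,r)$. By continuity of $f$ on $[0,r)$ we get $f(\ell)=\ell$, and since $f(y)<y$ for $y\in(0,r)$, this forces $\ell=0$. Then $f^{n+1}(x)\to f(0)=0$ as well, so $\varphi^n(x,f(x))\to0$ and the point $(x,f(x))$ lies in $W^s(0)$.

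The odd extension to negative $x$ is not needed for this lemma as stated on $[0,r)$. If one wants it, it follows from the equivariance property (a). For even $d$ there is a subtlety that belongs to a later stage rather than to this lemma. The only step that requires some care is keeping the arguments of $f$ inside $[0,r)$, and this is exactly what the strict inequality $f(x)<x$ from Lemma \ref{l:u-glob} provides.
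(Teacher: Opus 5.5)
Your proof is correct and follows the paper's argument: the identity comes from rewriting the functional equation of Lemma \ref{l:u-glob} as $f(x)^{1/d}-qf(x)-x=f(f(x))$, and membership in $W^s(0)$ comes from iterating to get $\varphi^n(x,f(x))=(f^n(x),f^{n+1}(x))$ with $f^n(x)\to 0$. You are slightly more explicit than the paper in two places: you justify taking the $d$-th root by the nonnegativity of the bracket, and you identify the limit of $f^n(x)$ as a fixed point of $f$, which must be $0$ since $f(y)<y$ on $(0,r)$.
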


\begin{proof}
For every $x\in [0,r)$ we have by Lemma \ref{l:u-glob}
\[
\begin{split}
\varphi(x,f(x)) &= (f(x), f(x)^{\frac{1}{d}} - q f(x) - x) = (f(x), x + q f(x) + f(f(x)) - q f(x) - x) \\ &= (f(x),f(f(x))),
\end{split}
\]
proving \eqref{e:inv-graph}. By iterating this identity we find
\[
\varphi^n(x,f(x)) = (f^n(x),f^{n+1}(x)) \qquad \forall n\in \N.
\]
Now assume that $x\in (0,r)$. Since $f(0)=0$ and $0<f(x)<x$ for every $x\in (0,r)$, the strictly decreasing sequence $f^n(x)$ converges to 0, and hence $\varphi^n(x,f(x))$ converges to the origin.
\end{proof}

Recall that in Lemma \ref{l:u-glob} we defined $s\in (0,r)$ to be the number
\[
s:= \lim_{x\uparrow r} f(x).
\]
Extend $f$ to a continuous function on $[0,r]$ by setting $f(r):=s$. Then $f$ is an increasing homeomorphism from $[0,r]$ to $[0,s]$. By continuity, \eqref{e:inv-graph} remains true for every $x\in [0,r]$.

\begin{lem}
\label{l:vert}
The graph of $f$ has a vertical tangency at $r$, meaning that
\[
\lim_{x\uparrow r} f'(x) = +\infty.
\]
\end{lem}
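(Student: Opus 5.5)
We argue by contradiction, differentiating the functional equation. On $[0,r)$ the function $f$ is analytic, strictly increasing and strictly convex, so $f'$ is increasing and $L:=\lim_{x\uparrow r} f'(x)\in(0,+\infty]$ exists. Suppose $L<+\infty$. Since $f(x)<x<r$ for $x\in(0,r)$, the composition $f\circ f$ is analytic on $[0,r)$. Differentiating the identity $f(x)^{1/d} = x + q f(x) + f(f(x))$ from Lemma \ref{l:u-glob} gives
\[
f'(x)\Bigl( \tfrac1d f(x)^{\frac1d-1} - q - f'(f(x)) \Bigr) = 1, \qquad x\in(0,r).
\]
This identity shows that $f'(x)\neq 0$ and that $\frac1d f(x)^{\frac1d-1} - q - f'(f(x))>0$ on $(0,r)$. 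Letting $x\uparrow r$ and using $f(x)\to s\in(0,r)$ together with continuity of $f'$ at $s$, we obtain
\[
L\Bigl(\tfrac1d s^{\frac1d-1}-q-f'(s)\Bigr)=1 .
\]
In particular, the factor $\frac1d s^{\frac1d-1}-q-f'(s)$ is strictly positive.

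The main step is then to continue $f$ analytically beyond $r$. This contradicts the Vivanti--Pringsheim conclusion $\sup J=r$. Set
\[
\Phi(x,y):= y^{1/d}-qy-f(y)-x .
\]
This is analytic near $(r,s)$, since $s\in(0,r)$ lies in the domain of analyticity of $f$ and $y^{1/d}$ is analytic near $s>0$. The graph of $f$ near $r$ is the zero set $\{\Phi=0\}$, and
\[
\partial_y\Phi(r,s)=\tfrac1d s^{\frac1d-1}-q-f'(s)>0 .
\]
By the analytic implicit function theorem, there is an analytic function $h$ on a neighbourhood $(r-\delta,r+\delta)$ with $h(r)=s$ and $\Phi(x,h(x))=0$. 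By uniqueness in the implicit function theorem, $h=f$ on $(r-\delta,r)$. We could also bypass the derivative limit and argue directly: if $\partial_y\Phi(r,s)\neq0$ then $f$ extends, so necessarily $\partial_y\Phi(r,s)=0$. By the displayed identity this forces $f'(x)\to+\infty$, which is the claim.

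Thus $\tilde f$ extends analytically past $r$. This is the delicate point. The power series $F$ has nonnegative coefficients, so by Pringsheim's theorem $x=r$ is a singular point of the sum, meaning no analytic continuation of $F$ from $(-r,r)$ into a real neighbourhood of $r$ exists. Since $h$ agrees with $F$ on $(r-\delta,r)$, $h$ is precisely such a continuation, which is a contradiction. One should check that the germ continued along the real segment is the same object to which Pringsheim applies. This holds because Pringsheim asserts non-analyticity of the sum function at $r$ along the real axis. Hence $L=+\infty$.

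The expected obstacle is justifying the passage to the limit, namely that $f'(f(x))\to f'(s)$ and that the factor stays positive. Both follow from $s<r$, which Lemma \ref{l:u-glob} provides; this is exactly why that strict inequality was proved.
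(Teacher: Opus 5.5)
Your proof is correct and is essentially the paper's argument. The zero set of your $\Phi(x,y)=g(y)-x$ near $(r,s)$ is exactly the analytic curve $\varphi^{-1}\circ\gamma$, $y\mapsto (g(y),y)$, that the paper uses, and your condition $\partial_y\Phi(r,s)\neq 0$ (obtained from $L<\infty$ via the differentiated identity $f'(x)\,g'(f(x))=1$) is the paper's non-verticality of the tangent line at $(r,f(r))$; both proofs then conclude with the implicit function theorem and the Vivanti--Pringsheim fact $\sup J=r$.
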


\begin{proof}
Since $f'>0$ and $f''>0$ on $(0,r)$, $f'(x)$ has a limit $L\in (0,+\infty]$ for $x\uparrow r$, and hence the graph of $f$ has a well-defined tangent line at $(r,f(r))$. We must prove that $L=+\infty$ or, equivalently, that this tangent line is vertical.

The curve
\[
\gamma: (0,r) \rightarrow \R^2, \qquad \gamma(x) := (x,f(x)),
\]
is an analytic embedding onto the graph of $f|_{(0,r)}$. Since this graph is contained in the domain in which $\varphi^{-1}$ is an analytic diffeomorphism, the curve 
\[
\varphi^{-1}\circ \gamma: (0,r) \rightarrow \R^2
\]
is an analytic embedding as well. By the positive $\varphi$-invariance of the graph of $f$, $\gamma((0,r))$ is an open one-dimensional submanifold of $\varphi^{-1}\circ \gamma((0,r))$.

Since 
\[
\varphi(r,f(r)) = (f(r),f(f(r))) = (s,f(s)),
\]
the embedding $\varphi^{-1} \circ \gamma$ maps the interior point $s\in (0,r)$ to $(r,f(r))$. The image of its differential is the tangent line at the graph of $f$ at $(r,f(r))$. If by contradiction this tangent line is not vertical, then the implicit function theorem implies that $\varphi^{-1}\circ \gamma((s-\epsilon,s+\epsilon))$ is the graph of an analytic function $x \mapsto g(x)$ defined on an interval of the form $(r-\delta_1,r+\delta_2)$. Here, $\epsilon$, $\delta_1$, and $\delta_2$ are small positive numbers. Since 
\[
\varphi^{-1}\circ \gamma((s-\epsilon,s)) \subset \{ (x,f(x)) \mid x\in (0,r) \},
\]
we must have $g=f$ on $(r-\delta_1,r)$. Therefore, $f$ can be extended to an analytic function on $(0,r+\delta_2)$, contradicting the fact that $r$ is the supremum of the interval of analyticity of $f$.
\end{proof}

\paragraph{The function $g$.} We now consider the function
\[
g: [0,r] \rightarrow \R, \qquad g(y) := y^{\frac{1}{d}} - q y - f(y),
\]
which is continuous and strictly concave on $[0,r]$ and analytic on $(0,r)$. The relevance of this function is explained by the next lemma.

\begin{lem}
\label{l:v}
The function $g|_{(0,r)}$ is the maximal analytic extension of the inverse of $f|_{(0,r)}$. More precisely:
\begin{enumerate}[(i)]
\item The restriction of $g$ to $[0,s]$ is an increasing homeomorphism onto $[0,r]$, with 
\[
g|_{[0,s]} = f^{-1}: [0,s] \rightarrow [0,r].
\]
\item $g$ achieves its maximum at $s$ and is strictly decreasing on $[s,r]$.
\item The image by $\varphi^{-1}$ of the set $W:= \{(x,f(x)) \mid x\in [0,r]\}$ is
\[
\varphi^{-1}(W) = \{ (g(y),y) \mid y\in [0,r]\}.
\]
\item $\varphi(g(x),x) = (x,f(x))$ for every $x\in [0,r]$.
\item $c<g(c) \leq r$.
\end{enumerate}
\end{lem}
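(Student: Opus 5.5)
I would derive everything from the functional equation in Lemma \ref{l:u-glob}, extended by continuity to $[0,r]$ (so that $f(f(r))=f(s)$ is meaningful since $s<r$). Writing $y=f(x)$, that identity reads
\[
y^{\frac1d}=x+qy+f(y),
\]
so $x=g(y)$ for every $y=f(x)$ with $x\in[0,r]$. Since $f$ is an increasing homeomorphism $[0,r]\to[0,s]$, this immediately gives $g|_{[0,s]}=f^{-1}$, proving (i). Item (iv) is a direct computation from \eqref{e:mapphi}: $\varphi(g(x),x)=(x,x^{\frac1d}-qx-g(x))=(x,f(x))$ by the definition of $g$. For (iii), I apply \eqref{e:inve} to $(x,f(x))$ and get $\varphi^{-1}(x,f(x))=(x^{\frac1d}-qx-f(x),x)=(g(x),x)$. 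Letting $x$ range over $[0,r]$ yields exactly $\{(g(y),y)\mid y\in[0,r]\}$.

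For (ii), $g$ is strictly concave as a sum of the strictly concave $y^{1/d}$, the linear term $-qy$, and the concave $-f$ (recall $f$ is convex). On $(0,s)$ we have $g=f^{-1}$, so $g'(y)=1/f'(g(y))$, and as $y\uparrow s$ we get $g(y)\to r$. By Lemma \ref{l:vert}, $f'(g(y))\to+\infty$, hence $g'(y)\to0$ as $y\uparrow s$. Since $g$ is analytic on $(0,r)\ni s$ (because $s<r$), we get $g'(s)=0$. Strict concavity then forces $s$ to be the unique maximum point and $g$ to be strictly decreasing on $[s,r]$. The maximum value is $g(s)=r$. The main subtlety here is that $g'(s)=0$ genuinely uses the vertical tangency of Lemma \ref{l:vert}, which was the delicate analytic input. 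The phrase "maximal analytic extension" would then follow from the fact that $g$ is analytic on $(0,r)$ and agrees with $f^{-1}$ on $(0,s)$.

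For (v), the bound $g(c)\le r$ is immediate from (ii), since $\max g=r$. For $c<g(c)$, I split into cases. If $c\le s$, then $g(c)=f^{-1}(c)>c$, because $f(x)<x$ on $(0,r]$ forces $f^{-1}(y)>y$. If $c>s$, I use the identity $c^{\frac1d}=(2+q)c$, which follows from \eqref{c}. This gives
\[
g(c)=c^{\frac1d}-qc-f(c)=2c-f(c)>c,
\]
because $f(c)<c$ by Lemma \ref{l:u-glob} (note $c\le r$ always). In fact this second computation works in both cases, so I would simply present it uniformly and drop the case split.
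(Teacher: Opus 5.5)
Your proof is correct and follows the paper's argument essentially step by step: $g=f^{-1}$ on $[0,s]$ is read off from the continuously extended functional equation, and $g'(s)=0$ comes from the vertical tangency of Lemma~\ref{l:vert} together with strict concavity; (iii)--(iv) are direct computations with \eqref{e:mapphi} and \eqref{e:inve}, and (v) is $g(c)=2c-f(c)>c$, with your formula $g'=1/(f'\circ g)$ simply spelling out what the paper states in one line. The one loose remark concerns maximality: agreeing with $f^{-1}$ on $(0,s)$ only makes $g|_{(0,r)}$ \emph{an} analytic extension, and maximality needs two further facts, namely that $f$, and hence $g=y^{1/d}-qy-f$, cannot be continued analytically past $r$ (Vivanti--Pringsheim), and that $g'(y)\to+\infty$ as $y\downarrow 0$ because of the term $y^{1/d}$.
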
  

\begin{proof}
The function $f$ is a homeomorphism from $[0,r]$ to $[0,s]$. Let $y\in [0,s]$. By applying \eqref{e:inv-graph} to $x=f^{-1}(y)$, we find
\[
\varphi(f^{-1}(y),y) = (y,f(y)).
\]
On the other hand, the definition of $\varphi$ gives us the identity
\[
\varphi(f^{-1}(y),y) = (y,y^{\frac{1}{d}} - q y - f^{-1}(y)).
\]
By comparing the second components of these vectors, we obtain
\[
f^{-1}(y) =  y^{\frac{1}{d}} - q y - f(y) = g(y),
\]
proving (i). From $f'(r)=+\infty$ we deduce $g'(s)=0$. Then (ii) follows from the fact that $g$ is strictly concave. From the expression \eqref{e:inve} for the inverse of $\varphi$ we find
\[
\varphi^{-1}(x,f(x)) = (g(x),x), \qquad \forall x\in [0,r],
\]
which implies (iii) and (iv). From the definition of $c$ we obtain
\[
q = c^{\frac{1}{d}-1} - 2,
\]
and hence 
\[
g(c) = c^{\frac{1}{d}} - qc - f(c) = 2c - f(c) = c + c - f(c) > c,
\]
where the inequality follows from Lemma \ref{l:u-glob}. Together with $\max g = g(s) =r$ by (i) and (ii), this proves (v).
\end{proof}

\paragraph{Proof of Theorem \ref{t:stable} and Addendum \ref{a:more}.} We can finally prove the stable manifold Theorem \ref{t:stable} and its Addendum \ref{a:more}. The numbers $r$ and $s$ satisfy the required conditions $r\in(c,1)$ and $s\in (0,r)$ by Lemma \ref{l:u-glob} and Lemma \ref{l:v} (v).

We extend $f: [0,r) \rightarrow \R$ to an odd function on $[-r,r]$, keeping the same name $f$ for this extension. Then $f$ maps $[-r,r]$ homeomorphically onto $[-s,s]$ and satisfies the properties listed in (i) because of \eqref{e:for-u} and Lemmata \ref{l:formal}, \ref{l:u-glob}, and \ref{l:vert}. Statement (ii) follows from Lemma \ref{l:stab}. Addendum \ref{a:more} follows from Lemma \ref{l:v}. Finally, statement (iii) of Theorem \ref{t:stable}  follows from the next lemma.

\begin{lem}
Assume that $\rho>0$ satisfies
\begin{equation}
\label{e:rho}
d (q+2)^d \rho^{d-1} <1.
\end{equation}
If $\varphi^n(x,y)\in [-\rho,\rho] \times [-\rho,\rho]$ for every $n\geq 0$, then $y=f(x)$.
\end{lem}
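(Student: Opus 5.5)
The plan is to compare the orbit of $(x,y)$ with the orbit of the point $(x,f(x))$ on the invariant graph, and to show that the vertical discrepancy is amplified along the orbit unless it vanishes. First note that $\rho<c<r$, so that $f$ is defined on $[-\rho,\rho]$. Moreover $|f(x)|\leq|x|\leq\rho$ there, and by Theorem \ref{t:stable} (ii) (more precisely Lemma \ref{l:stab} together with oddness) the orbit $\varphi^n(x,f(x)) = (f^n(x),f^{n+1}(x))$ stays in $[-\rho,\rho]^2$.

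Write $\varphi^n(x,y)=(x_n,y_n)$ and $\varphi^n(x,f(x))=(x_n',y_n')$, so $x_{n+1}=y_n$ and $x'_{n+1}=y'_n$. Set $e_n:=y_n-y_n'$ and suppose $e_0\neq 0$. The second-order recursion gives
\[
y_{n+1}-y'_{n+1} = \bigl(h(y_n)-h(y_n')\bigr) - q(y_n-y_n') - (x_n-x_n'), \qquad h(t):=|t|^{\frac1d-1}t .
\]
The key estimate is a lower bound for $|h(a)-h(b)|$ when $a,b\in[-\rho,\rho]$. The function $h$ is odd and increasing, with $h'(t)=\frac1d|t|^{\frac1d-1}\geq \frac1d\rho^{\frac1d-1}$ on $[-\rho,\rho]\setminus\{0\}$. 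Hence $|h(a)-h(b)|\geq \frac1d\rho^{\frac1d-1}|a-b|=:\Lambda|a-b|$, and the same bound holds when $a$ and $b$ have opposite signs, by monotonicity and oddness. This gives
\[
|e_{n+1}|\geq (\Lambda-q)|e_n|-|e_{n-1}|,
\]
with $e_{-1}:=x_0-x_0'=0$.

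Condition \eqref{e:rho} is designed to make $\Lambda$ large. Indeed, $d(q+2)^d\rho^{d-1}<1$ is equivalent to $\frac1d\rho^{\frac1d-1}> (q+2)\,(d^{-1/d}\cdots)$; I will check that it yields $\Lambda-q\geq 2+\delta$ for some $\delta>0$. If the constant does not come out exactly, I would instead bound the $y$-component directly using $|y_n|\leq\rho$ and adjust. With $\Lambda-q\geq 2+\delta$, an induction shows $|e_{n+1}|\geq(1+\delta)|e_n|$: the base case is $|e_1|\geq(2+\delta)|e_0|$, and inductively $|e_{n+1}|\geq(2+\delta)|e_n|-|e_n|/(1+\delta)\geq(1+\delta)|e_n|$. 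So $|e_n|\to\infty$. But $|e_n|\leq 2\rho$, since both orbits stay in $[-\rho,\rho]^2$, which is a contradiction. Hence $e_0=0$, that is, $y=f(x)$.

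The main obstacle is the precise arithmetic linking \eqref{e:rho} to the expansion constant. A cleaner alternative is to work with $u=h(y)$ scaled variables, since $|h(a)-h(b)|$ can be compared via $h^{-1}(s)=|s|^{d-1}s$, whose derivative on $|s|\leq \rho^{1/d}$ is bounded by $d\rho^{\frac{d-1}{d}}$. I would try whichever normalization makes $d(q+2)^d\rho^{d-1}<1$ appear directly, possibly by considering the ratio $|e_n|/|e_{n-1}|$ and showing it stays above $1$ by induction.
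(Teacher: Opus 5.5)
Your overall strategy is sound: compare the orbit of $(x,y)$ with that of $(x,f(x))$, show that the vertical discrepancy $e_n=y_n-y_n'$ grows, and contradict $|e_n|\le 2\rho$. But the step you flagged as the main obstacle actually fails, and neither fallback repairs it. From the a priori bound $|y_n|\le\rho$ you get the expansion constant $\Lambda=\frac1d\rho^{\frac1d-1}$. Condition \eqref{e:rho} only gives $\rho^{-\frac{d-1}{d}}>d^{\frac1d}(q+2)$, that is, $\Lambda>d^{\frac1d-1}(q+2)$, and $d^{\frac1d-1}<1$. This does not give $\Lambda-q>2$, nor even $\Lambda>q$.

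For instance, $d=2$, $q=10$, $\rho=1/289$ satisfies \eqref{e:rho}, since $288/289<1$. Yet $\Lambda=\frac{17}{2}<q$, so the sign argument behind $|e_{n+1}|\ge(\Lambda-q)|e_n|-|e_{n-1}|$ collapses. Your alternative via $(h^{-1})'(s)=d|s|^{d-1}\le d\rho^{\frac{d-1}{d}}$ on $|s|\le\rho^{\frac1d}$ produces exactly the same constant, so it is no improvement.

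The missing idea is to use the recursion itself to confine the orbit to a much smaller box than $[-\rho,\rho]^2$. With your $h(t)=|t|^{\frac1d-1}t$ you have $h(y_n)=y_{n+1}+qy_n+x_n$, and all three terms lie in $[-\rho,\rho]$. Hence $|y_n|^{\frac1d}\le(q+2)\rho$, that is, $|y_n|\le((q+2)\rho)^d$ for every $n\ge0$; the same holds for $y_n'=f^{n+1}(x)$. On $[-((q+2)\rho)^d,((q+2)\rho)^d]$ one has $h'\ge\frac1d((q+2)\rho)^{1-d}$, and this is $>q+2$ exactly by \eqref{e:rho}. With this constant in place of $\Lambda$, your induction goes through unchanged. (Shrinking $\rho$ further would also rescue your crude constant and would still suffice for Theorem \ref{t:stable}(iii), but it would not prove the lemma as stated.)

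Once repaired, your argument is a genuinely different route from the paper's. The paper does not compare two orbits. It observes that $(x_n)_{n\ge1}$ and $(f^n(x))_{n\ge1}$ are both fixed points, in the closed $\rho$-ball of $\ell^\infty(\N)$, of the map $u\mapsto \bigl(|s_n|^{d-1}s_n\bigr)_{n\ge1}$, where $s_n=u(n-1)+qu(n)+u(n+1)$ and $u(0)=x$. This map is a contraction there with constant $d(q+2)^d\rho^{d-1}$, so the two fixed points coincide and $y=x_1=f(x)$. Yours is an expansion argument in the unstable direction rather than uniqueness of a fixed point along the stable direction. Both rest on the same estimate: the Lipschitz constant of $t\mapsto|t|^{d-1}t$ on $[-(q+2)\rho,(q+2)\rho]$, which you use in inverse form.
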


\begin{proof}
First notice that
\[
\rho < d^{-\frac{1}{d-1}} (q+2)^{-\frac{d}{d-1}} < c <r,
\]
and hence $[-\rho,\rho]$ is contained in the domain of $f$. Setting $(x_n,y_n) = \varphi^n(x,y)$, we have
\[
x_n = h ( x_{n-1} + q x_n + x_{n+1}),
\]
where $h(s):= |s|^{d-1}s$. In particular, the sequence $(x_n)_{n\geq 1}$ is a fixed point of the map 
\[
\mathscr{H}_x : \ell^{\infty}(\N) \rightarrow  \ell^{\infty}(\N), \qquad (\mathscr{H}_x(u))(n) := h ( u(n-1) + q u(n) + u(n+1)), \quad \forall n\geq 1,
\]
where we are setting $u(0):= x$. The restriction of $h$ to $[-s,s]$ has Lipschitz constant $d s^{d-1}$. For $u$ and $v$ in $\ell^{\infty}(\N)$ with $\|u\|_{\infty}$, $\|v\|_{\infty}\leq \rho$, we then find for every $n\geq 1$, setting $u(0)=v(0)=x$, 
\[
\begin{split}
|\mathscr{H}_x(u)(n) &- \mathscr{H}_x(v)(n)| \\ 
&= \bigl|  h ( u(n-1) + q u(n) + u(n+1)) -  h ( v(n-1) + q v(n) + v(n+1)) \bigr| \\ &\leq d ((q+2)\rho)^{d-1} \bigl| u(n-1) + q u(n) + u(n+1) -  v(n-1) - q v(n) - v(n+1) | \\ &\leq d ((q+2)\rho)^{d-1} (q+2) \|u-v\|_{\infty} = d (q+2)^d \rho^{d-1}  \|u-v\|_{\infty}.
\end{split}
\]
By \eqref{e:rho}, $\mathscr{H}_x$ has Lipschitz constant less than 1 on the closed ball of radius $\rho$ in $\ell^{\infty}(\N)$. Therefore, $(x_n)_{n\geq 1}$ is the unique fixed point of $\mathscr{H}_x$ in this ball. On the other hand, the sequence $(f^n(x))_{n\geq 1}$ is contained in this ball and is also a fixed point of $\mathscr{H}_x$. We then obtain that $x_n = f^n(x)$ for every $n\geq 1$, and in particular $x_1=f(x)$. Since
\[
(x_1,y_1) = \varphi(x,y) = (y,y^{\frac{1}{d}}-qy-x),
\]
we conclude that $f(x)=y$.
\end{proof}

\section{Existence of two homoclinic orbits}
\label{s:duesol}

In Theorem \ref{t:2gibbs}, we proved by variational methods the existence of a Gibbs measure which is fixed by the involution $\iota_*$ induced by $\iota(i)=-i$ and of a second one which is fixed by the involution $\jmath_*$ induced by $\jmath(i)=1-i$. Now we wish to show how, in the special case of the interaction function $Q$ given by \eqref{e:simpleQ}, two Gibbs measures with the same symmetry can be found by proving the existence of two suitably symmetric homoclinic orbits of the map $\varphi$.  

On the one hand, the dynamical approach gives more precise information on the positive solutions $u\in \ell^p(\Z)$ of \eqref{e:2ndrec}, and hence on the Gibbs measures. On the other hand, in the next sections we shall see how tools from hyperbolic dynamics allow us to use one of the two homoclinic orbits we find here to construct many more homoclinic orbits, and hence many more Gibbs measures.

We need to fix some terminology. By an \textit{arc} $\alpha$ in a surface $M$ (here, $M=\R^2$) we mean the homeomorphic image of an interval $J\subset \R$ with positive length. By $\mathrm{int}(\alpha)$ we mean the image of the interior of $J$ and by the end-points of $\alpha$ the image of the end-points of $J$, when the interval $J$ is compact. Two arcs $\alpha$ and $\beta$ in $M$ are said to have a \textit{topologically transverse} intersection at $p\in \mathrm{int}(\alpha) \cap \mathrm{int}(\beta)$ if there exists a neighborhood $U\subset M$ of $p$ and a homeomorphism $h: U \rightarrow (-1,1)\times (-1,1)$ such that $h(p)=(0,0)$, $h(\alpha \cap U) = (-1,1)\times \{0\}$, and $h(\beta \cap U) = \{0\} \times (-1,1)$. When $\alpha$ and $\beta$ are images of $C^1$-embeddings, topological transversality is implied by \textit{transversality}, that is, the condition that $\alpha$ and $\beta$ have distinct tangent lines at $p$.

Although in general not arcs, the sets $W^s(0)$ and $W^u(0)$ are increasing unions of compact arcs, as in the identities
\[
W^s(0) = \bigcup_{n\in \N} \varphi^{-n}( \mathrm{graph}\, f), \qquad W^u(0) = \bigcup_{n\in \N} \varphi^{n}( S(\mathrm{graph}\, f)).
\] 
We say that $W^s(0)$ and $W^u(0)$ have a topologically transverse intersection at some $z\in W^s(0)\cap W^u(0)$ if, choosing $n$ so large that $z$ belongs to the interior of both $\varphi^{-n}( \mathrm{graph}\, f)$ and  $\varphi^{n}( S(\mathrm{graph}\, f))$, the latter two arcs have a topologically transverse intersection at $z$.

The next theorem gives us the existence of a homoclinic orbit passing through the line of fixed points of the involution $S$. This will give us a Gibbs measure which is fixed by the involution $\jmath_*$.

\begin{thm}
\label{t:omoclina1}
For every real number $q\geq 0$ and integer $d\geq 2$ the map $\varphi$ defined in \eqref{e:mapphi} admits at least one homoclinic orbit $\{z_i\}_{i\in \Z}$ in the quadrant $(0,+\infty)^2$ satisfying the symmetry condition
\begin{equation}
\label{e:1sym}
z_{-i} = S (z_i), \qquad \forall i\in \Z.
\end{equation}
Moreover, writing $z_i=(u(i),u(i+1))$, the positive solution $u\in \ell^p(\Z)$ of \eqref{e:2ndrec}  satisfies 
\[
u(1)=u(0) > u(2)=u(-1) > u(3)=u(-2) > \dots > u(i)=u(1-i) > \dots
\]
and
\begin{equation}
\label{e:orderd}
u(i)^d \leq u(i+1) \leq C \, u(i)^d \qquad \forall i\geq 1,
\end{equation}
for some constant $C$. Furthermore, if $W^s(0)$ does not coincide with $W^u(0)$ then they have a topologically transverse intersection at $z_i$ for every $i\in \Z$.
\end{thm}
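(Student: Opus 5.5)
The plan is to produce a point of $W^s(0)$ on the diagonal $\{y=x\}$ and then use reversibility. If $z=(a,a)$ with $a>0$ lies in $W^s(0)$, then $S(z)=z$. Since $W^u(0)=S(W^s(0))$, we get $z\in W^s(0)\cap W^u(0)$. Setting $z_i=\varphi^i(z)$, the identity $\varphi^{-1}=S\varphi S$ gives $z_{-i}=\varphi^{-i}(z)=S\varphi^i S z=S(z_i)$, which is \eqref{e:1sym}. In terms of $u$ this reads $u(1-i)=u(i)$. So everything reduces to finding a diagonal point of $W^s(0)$ whose forward orbit stays in the open first quadrant. That will also give positivity for negative times, by symmetry.

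To find the point, I would work with the arc $\varphi^{-1}(\mathrm{graph}\, f|_{[0,r]})=S(\mathrm{graph}\, g)=\{(g(y),y)\mid y\in[0,r]\}$ from Addendum \ref{a:more}, together with $\mathrm{graph}\, f$ itself. Consider the continuous function $h(y)=g(y)-y$ on $[0,r]$. Near $0$ we have $g(y)\sim y^{1/d}$, so $h>0$ for small $y>0$; also $h(c)=g(c)-c>0$ by the Addendum. At $y=r$ we get $h(r)=g(r)-r<0$, because $g$ is strictly decreasing on $[s,r]$ with maximum value $g(s)=r$, and $r>s$. Hence there is a largest zero $a\in(c,r)$ of $h$ (in fact, by strict concavity of $g$, there is exactly one zero in $(s,r)$, and $g>\mathrm{id}$ on $(0,a)$). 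Thus $z=(a,a)=(g(a),a)\in\varphi^{-1}(\mathrm{graph} f)\subset W^s(0)$. Its forward orbit is $\varphi(z)=(a,f(a))$ followed by $(f^n(a),f^{n+1}(a))$, all in the open quadrant since $0<f(x)<x$ on $(0,r]$.

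The orbit gives $u(0)=u(1)=a$ and $u(i+1)=f(u(i))$ for $i\geq1$, so strict decrease for $i\ge 1$ follows from $f(x)<x$. The bound \eqref{e:orderd} follows from $f(x)=x^d+\dots$ with nonnegative coefficients: $f(x)\ge x^d$, and $f(x)\le Cx^d$ on $[0,a]$ because $a<r$ and $f(x)/x^d$ is continuous there. The mirror relations come from \eqref{e:1sym}. One small point needs care: $z_1=(a,f(a))$ requires $a\le r$ so that $f(a)$ is defined by the series, and this holds.

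The main obstacle is topological transversality. Near $z$, the stable arc is $S(\mathrm{graph}\, g)$ (a graph over $y$) and the unstable arc is $\mathrm{graph}\, g$ (a graph over $x$); they are mirror images under $S$ and cross the diagonal at $z$. I would show that $\mathrm{graph}\, g$ crosses $S(\mathrm{graph}\, g)$ near $(a,a)$, i.e.\ that the function $g\circ g-\mathrm{id}$ changes sign at $a$ (or equivalently that the two analytic arcs, analytic since $a\in(0,r)$ and away from the axes, are not locally equal). By analyticity, two analytic arcs through a common point either coincide locally or meet in an isolated point. If they coincide locally, analytic continuation along the invariant manifolds (by $\varphi$-iteration and density of the orbit pieces) forces $W^s(0)=W^u(0)$; this step needs care but is the intended use of the hypothesis. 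If they meet in an isolated point, they cross transversally in the topological sense: one arc is $S$ of the other, the diagonal separates a neighbourhood, and reflection symmetry implies that an isolated tangency without crossing is impossible (the difference function $g(g(y))-y$ at $a$ is odd to leading order under the reflection). Finally, transversality at $z$ propagates to every $z_i$ because $\varphi$ is a homeomorphism preserving both $W^s(0)$ and $W^u(0)$.
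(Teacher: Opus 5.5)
Your proposal is correct and follows the paper's proof essentially step by step. It uses the same ingredients: the unique fixed point of $g$ in $(s,r)$, the reversibility identity giving \eqref{e:1sym}, the forward orbit along $\mathrm{graph}\, f$ giving monotonicity and \eqref{e:orderd}, and transversality from analyticity plus the $S$-symmetry of the arcs $\mathrm{graph}\, g\subset W^u(0)$ and $S(\mathrm{graph}\, g)\subset W^s(0)$. The only difference is cosmetic: in the last step you show that $g\circ g-\mathrm{id}$ has a zero of odd order at $a$, whereas the paper conjugates $S$ to a reflection (Lemma \ref{l:transverse}) and uses that an odd analytic difference function changes sign. Your version relies on $g'(a)<0$, i.e.\ $a>s$, which is the transversality-to-the-diagonal hypothesis of Lemma \ref{l:transverse}, so you should state it explicitly.
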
 

\begin{proof}
We use the notation and results from Section \ref{s:ifss}.
By Addendum \ref{a:more}, we have $g(s)=r>s$, $g(c)>c$, and $g(r)<r$. Therefore, we find $x_* \in (\max\{s,c\},r)$ such that $g(x_*)=x_*$. Since $g$ is decreasing on $[s,r]$, such $x_*$ is unique, and as $g(x)=f^{-1}(x)>x$ on $(0,s]$, $x_*$ is the unique fixed point of $g$ in $(0,r]$. Since the graph of $g$ is contained in $W^u(0)$, the point $z_0:=(x_*,x_*)$ belongs to the intersection of $W^u(0)$ with the line $y=x$ of fixed points of the involution $S$. Therefore, $z_0$ is also in $S(W^u(0))=W^s(0)$ and is hence a homoclinic point of $\varphi$. Since the involution $S$ conjugates $\varphi$ and $\varphi^{-1}$, the  corresponding homoclinic orbit
$z_i := \varphi^i(z_0)$ satisfies
\[
z_{-i} = \varphi^{-i}(z_0) =  \varphi^{-i}\circ S (z_0) = S \circ \varphi^i(z_0) = S(z_i) \qquad \forall i\in \Z,
\]
proving \eqref{e:1sym}. Moreover, 
\[
\varphi(z_0) = (x_*,x_*^{\frac{1}{d}} - q x_* - x_*) =  (x_*,x_*^{\frac{1}{d}} - q x_* - g(x_*)) = (x_*,f(x_*)),
\]
and from the positive invariance of the graph of $f$ we find
\[
z_i = (f^{i-1}(x_*),f^i(x_*)) \qquad \forall i\geq 1,
\]
which together with \eqref{e:1sym} implies that $(z_i)_{i\in \Z}$ is contained in the quadrant $(0,+\infty)^2$. Writing $z_i=(u(i),u(i+1))$ with 
\[
u(1-i) = u(i) = f^{i-1}(x_*) \quad \forall i\geq 1,
\]
we deduce that $u\in \ell^p(\Z)$ is a positive solution of \eqref{e:2ndrec} satisfying 
\[
u(1)=u(0) > u(2)=u(-1) > u(3)=u(-2) > \dots > u(i)=u(1-i) > \dots
\]
Moreover, the identity $u(i+1) = f(u(i))$, holding for every $i\geq 1$, and the bounds 
\[
x^d \leq f(x) \leq C \, x^d \qquad \forall x\in [0,r],
\]
where $C$ is a suitably large number, imply \eqref{e:orderd}. 

There remains to prove that if $W^s(0) \neq W^u(0)$ then $W^s(0)$ and $W^u(0)$ have a topologically transverse intersection at $z_0$, as this implies that their intersection stays topologically transverse along the whole orbit of $z_0$. The analytic arcs $\alpha:= \mathrm{graph}\, g|_{(0,r)}$ and $\beta:= S(\alpha)$ are contained in $W^u(0)$ and $W^s(0)$, respectively, and meet at $z_0$, which is a fixed point of the linear involution $S$. Moreover, both arcs are transverse to the line of fixed points $\{y=x\}$ of $S$ because $g'(x_*)<0$, as $x_*>s$. By analyticity, the assumption $W^s(0) \neq W^u(0)$ implies that the arcs $\alpha$ and $\beta$ do not have common subarcs. These properties imply that $\alpha$ and $\beta$ have a topologically transverse intersection at $z_0$, as shown by the next slightly more general lemma. This concludes the proof of Theorem \ref{t:omoclina1}.
\end{proof}

\begin{lem}
\label{l:transverse}
Let $\psi: \R^2 \rightarrow \R^2$ be an involution fixing a point $z_0\in \R^2$. Assume that $\psi$ is analytic in a neighborhood of $z_0$ and $\mathrm{d}\psi(z_0)$ has eigenvalues $1$ and $-1$. Let $\alpha$ and $\beta=\psi(\alpha)$ be analytic arcs in $\R^2$ with $z_0\in \mathrm{int}(\alpha) \cap \mathrm{int}(\beta)$. Assume that both $\alpha$ and $\beta$ are transverse at $z_0$ to the fixed point set of $\psi$, which by the above assumptions is an analytic arc near $z_0$. If $\alpha$ and $\beta$ do not have a common subarc, then they have a topologically transverse intersection at $z_0$.
\end{lem}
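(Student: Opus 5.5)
We first straighten the involution and then reduce the claim to a sign condition on a single analytic function. By the linearization theorem for analytic involutions (Bochner, or directly: take $h := \tfrac12(\mathrm{id} + \mathrm{d}\psi(z_0)^{-1}\circ\psi)$), there is an analytic diffeomorphism $h$ near $z_0$ with $h(z_0)=0$ and $h\circ\psi\circ h^{-1} = R$. Here $R(\xi,\zeta) := (\xi,-\zeta)$ is the linear reflection, because $\mathrm{d}\psi(z_0)$ is conjugate to $\mathrm{diag}(1,-1)$. In these coordinates the fixed point set of $\psi$ is the $\xi$-axis. Topological transversality is invariant under homeomorphisms, so we may assume $\psi=R$ and $z_0=0$.

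Next we write the arcs as graphs. Since $\alpha$ is transverse at $0$ to the $\xi$-axis, its tangent line there is not horizontal. By the implicit function theorem, near $0$ the arc $\alpha$ is the graph $\{(a(\zeta),\zeta) : |\zeta|<\epsilon\}$ of an analytic function $a$ with $a(0)=0$. Then $\beta=R(\alpha)$ is near $0$ the graph $\{(a(-\zeta),\zeta)\}$. We consider the analytic function $\delta(\zeta):=a(\zeta)-a(-\zeta)$. It is odd, and it is not identically zero, since otherwise $\alpha$ and $\beta$ would share the subarc near $0$. Hence $\delta(\zeta)=c\,\zeta^{k}+O(\zeta^{k+1})$ with $c\neq0$ and $k$ odd. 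After shrinking $\epsilon$, $\delta$ therefore has the sign of $c\,\zeta$ for $\zeta\neq0$. In other words, $\alpha$ lies strictly to one side of $\beta$ for $\zeta>0$ and strictly to the other side for $\zeta<0$. This parity argument is the key point; everything else is bookkeeping.

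Finally we build the chart $h$ required by the definition. Work in the box $U := \{(\xi,\zeta) : |\zeta|<\epsilon,\ |\xi - a(-\zeta)| < \eta\}$. Assume $c>0$; the other case is symmetric. Define
\[
h(\xi,\zeta) := \Bigl(\xi - a(-\zeta),\ \zeta\Bigr) \quad\mbox{followed by}\quad (t,\zeta)\mapsto \Bigl(t,\ \zeta - \lambda(t,\zeta)\Bigr).
\]
The first map sends $\beta$ to the vertical axis $\{t=0\}$ and sends $\alpha$ to the graph $t=\delta(\zeta)$. Since $\delta$ is a strictly increasing homeomorphism near $0$ (odd, with sign of $c\zeta$; if necessary restrict to where $\delta$ is monotone, which holds since $\delta'(\zeta)=kc\zeta^{k-1}+\dots>0$ for $\zeta\neq0$), this graph can be written as $\zeta=\delta^{-1}(t)$. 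Now apply the shear $(t,\zeta)\mapsto(t,\zeta-\delta^{-1}(t))$. It is a homeomorphism, and it fixes the line $\{t=0\}$ because $\delta^{-1}(0)=0$. It sends the image of $\alpha$ to the horizontal axis $\{\zeta=0\}$. After rescaling to $(-1,1)^2$ and restricting to a small square neighbourhood, we get the homeomorphism required in the definition, with the roles of the two coordinates swapped if needed. The only genuine obstacle is the one already handled: proving that $\delta$ changes sign, which follows from oddness together with non-vanishing. The rest is a routine continuity check of the shear on a suitably small neighbourhood.
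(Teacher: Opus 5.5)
Your proposal is correct and follows the paper's proof. You use the same averaging map $\tfrac12(\mathrm{id}+\mathrm{d}\psi(z_0)\circ\psi)$ (recall $\mathrm{d}\psi(z_0)^{-1}=\mathrm{d}\psi(z_0)$) to linearize $\psi$ to a reflection, write the two arcs as graphs $a(\zeta)$ and $a(-\zeta)$ over the direction transverse to the fixed line, and use that the odd analytic difference $\delta$ has a nonzero leading term of odd degree. Beyond this you spell out the chart explicitly (a translation followed by the shear $(t,\zeta)\mapsto(t,\zeta-\delta^{-1}(t))$, well defined because the odd leading term also makes $\delta$ strictly monotone near $0$), a step the paper leaves to the reader once it has observed that $\delta$ changes sign.
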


\begin{proof}
The map 
\[
h:= \frac{1}{2} ( \mathrm{id} + \mathrm{d} \psi(z_0) \circ \psi ),
\]
which is an analytic diffeomorphism near $z_0$, conjugates $\psi$ to the linear map $\mathrm{d} \psi(z_0)$. Being a linear involution with eigenvalues $1$ and $-1$, $\mathrm{d}\psi(z_0)$ is a reflection across a line. By a further affine conjugacy, we can then conjugate $\psi$ analytically near $z_0$ to the reflection $(x,y)\mapsto (-x,y)$ across the vertical axis $x=0$. This conjugacy maps $z_0$ to $(0,0)$ and brings the germ at $z_0$ of the fixed point set of $\psi$ to the vertical axis $x=0$, showing that this fixed point set is an analytic arc near $z_0$. Since $\alpha$ and $\beta$ are transverse to the fixed point set of $\psi$ at $z_0$, the conjugacy brings their germs at $z_0$ to graphs of analytic functions of $x$. Using also the fact that $\psi(\alpha)=\beta$, we deduce that the germ of $\alpha$ is mapped to the graph of an analytic function $x \mapsto v(x)$, while the germ of $\beta$ is mapped to the graph of $x \mapsto v(-x)$.

Since the arcs $\alpha$ and $\beta$ do not have a common subarc, the function $w(x) := v(x) - v(-x)$ does not vanish on an interval of positive length. Since $w$ is odd and analytic, we have $w(x) = c \, x^{2k+1} + O(x^{2k+2})$ for some $c\neq 0$ and non-negative integer $k$. Then $w$ changes sign at $0$, so the graphs of $v$ and $x \mapsto v(-x)$ have a topologically transverse intersection at $(0,0)$. The same is then true for $\alpha$ and $\beta$ at $z_0$.
\end{proof}
 
Now we wish to establish the existence of a second homoclinic orbit $\{z_i\}_{i\in \Z}$ in $(0,+\infty)^2$ satisfying the symmetry condition $z_{1-i}= S(z_i)$ for every $i\in \Z$. This will correspond to a Gibbs measure which is fixed by the involution $\iota_*$.

\begin{thm}
\label{t:omoclina2}
For every real number $q\geq 0$ and integer $d\geq 2$ the map $\varphi$ defined in \eqref{e:mapphi} admits at least one homoclinic orbit $\{z_i\}_{i\in \Z}$ in the quadrant $(0,+\infty)^2$ satisfying the symmetry condition
\begin{equation}
\label{e:2sym}
z_{1-i} = S (z_i), \qquad \forall i\in \Z.
\end{equation}
Moreover, writing $z_i=(u(i-1),u(i))$ for every $i\in \Z$, the positive solution $u\in \ell^p(\Z)$ of \eqref{e:2ndrec}  satisfies 
\begin{equation}
\label{e:dove}
u(0) > u(1)=u(-1) > u(2)=u(-2) > \dots > u(i)=u(-i) > \dots
\end{equation}
and
\[
u(i)^d \leq u(i+1) \leq C u(i)^d \qquad \forall i\geq 1,
\]
for some constant $C$. 
Furthermore, if $W^s(0)$ does not coincide with $W^u(0)$ then they have a topologically transverse intersection at $z_i$ for every $i\in \Z$.
\end{thm}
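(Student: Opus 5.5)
We look for the homoclinic point $z_0=(u(-1),u(0))$ satisfying $\varphi(z_0)=S(z_0)$. If $z_0=(b,a)$, then $\varphi(z_0)=(a,a^{\frac1d}-qa-b)$, so the condition $\varphi(z_0)=S(z_0)=(a,b)$ becomes $2b=a^{\frac1d}-qa$. We will place $z_1=S(z_0)=(a,b)$ on the graph of $f$, that is, take $b=f(a)$. Using the function $g(y)=y^{\frac1d}-qy-f(y)$ of Addendum \ref{a:more}, the condition then reads
\[
f(a)=g(a), \qquad a\in(0,r].
\]
So the first step is to find such an $a$ by the intermediate value theorem applied to $g-f$. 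This is where the properties collected in Addendum \ref{a:more} and Lemma \ref{l:v} come in. Near $0$ we have $g(a)\sim a^{\frac1d}>a^d\sim f(a)$. At $a=s$ we have $g(s)=r>s>f(s)$. So it suffices to show $g(r)<f(r)=s$.

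I expect this sign at the endpoint to be the main obstacle. The first attempt would be to exploit the vertical tangency $f'(r)=+\infty$ from Lemma \ref{l:vert} together with the strict concavity of $g$ and the identity $\varphi(g(r),r)=(r,s)$. If $g(r)\ge s$, I would argue as follows. The graph of $g$ lies in $W^u(0)$, and its reflection $S(\mathrm{graph}\, g)=\varphi^{-1}(\mathrm{graph}\, f)$ lies in $W^s(0)$. One then continues the stable arc past the point $(s,r)$ by applying $\varphi^{-1}$ once more, to $S(\mathrm{graph}\, g)$. On this longer arc one looks for a point $z$ with $\varphi(z)=S(z)$. Such fixed points of the involution $\psi:=S\circ\varphi$ form a curve through the first quadrant; note $\psi^2=S\varphi S\varphi=\varphi^{-1}\varphi=\mathrm{id}$. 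A connectedness argument should then give a crossing: the stable arc starts below this curve near the origin and must end on the other side of it.

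Once $a$ is found, set $z_1=(a,f(a))$ and $z_0=S(z_1)$. The positive invariance of the graph of $f$ (Theorem \ref{t:stable} (ii)) gives $z_i=(f^{i-1}(a),f^i(a))$ for $i\geq1$. This lies in the open quadrant and tends to $0$ with $u(i+1)=f(u(i))$, so the bounds $x^d\le f(x)\le Cx^d$ give the order-$d$ estimates. The relation $\varphi(z_0)=S(z_0)$, combined with $\varphi^{-1}=S\varphi S$, gives $\varphi^{-i}(z_0)=S\varphi^{i+1}(z_0)$, which is exactly \eqref{e:2sym}. Hence $u(-i)=u(i)$, the backward orbit is positive as well, and the point is homoclinic. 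The chain \eqref{e:dove} follows from $f(x)<x$ and from $u(0)=a>f(a)=u(1)$.

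For transversality, apply Lemma \ref{l:transverse} at $z_0$ with the involution $\psi=S\circ\varphi$. It is analytic near $z_0$, since both coordinates of $z_0$ are positive. Its differential at a fixed point is an anti-symplectic involution, hence has eigenvalues $\pm1$. For the arcs take $\beta$ to be a stable subarc through $z_0$, for instance $\varphi^{-1}(\mathrm{graph}\, f)$, and $\alpha=\psi(\beta)$. The involution $\psi$ maps $W^s(0)$ to $W^u(0)$, so $\alpha\subset W^u(0)$. By analyticity, $W^s(0)\neq W^u(0)$ rules out common subarcs. It remains to check that the arcs are transverse to $\mathrm{Fix}(\psi)$. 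Here I would verify this from the sign change of $g-f$ at $a$, arguing that a tangency would force an even-order zero. If needed, one chooses $a$ to be a crossing point of odd multiplicity, which the intermediate value argument can provide.
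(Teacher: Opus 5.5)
Your reduction to a point $a\in(0,r]$ with $f(a)=g(a)$ works only when $g(r)\le s$. The complementary case genuinely occurs (see the right picture in Figure~\ref{f:intersezioni}). If $g(r)>s$, then $g>f$ on all of $(0,r]$: on $(0,s]$ because $g=f^{-1}>\mathrm{id}>f$, and on $[s,r]$ because $g\ge g(r)>s\ge f$. So no such $a$ exists, and the symmetric homoclinic point is not of the form $S(a,f(a))$. Your sentence ``the stable arc \dots\ must end on the other side'' is exactly the missing step: you never exhibit a point of the extended arc on the far side of $\mathrm{Fix}(\psi)$.

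The idea you need is the fixed point $x_*\in(s,r)$ of $g$ from Theorem~\ref{t:omoclina1}. Write $\zeta(t)=t^{1/d}-qt=g(t)+f(t)$ and $\gamma(y)=\varphi^{-1}(g(y),y)=(\zeta(g(y))-y,g(y))$. Along $\gamma$, the quantity $2X-\zeta(Y)$ equals $\zeta(g(y))-2y$. This is $g(r)-s>0$ at $y=s$ and $f(x_*)-x_*<0$ at $y=x_*$. This is the paper's Case 3, phrased there as a sign change of $h-\tilde f$ on $[x_*,r]$, with $\tilde f=(g|_{[s,r]})^{-1}$ and $h=\zeta-\tilde f$.

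Locating the zero $y_0$ in $(s,x_*)$ matters. It gives $u(0)=g(y_0)>x_*>y_0=u(1)$, whereas a zero in $(x_*,r)$ would give a symmetric orbit with $u(0)<u(1)$, violating \eqref{e:dove}. Your later derivation of positivity and of \eqref{e:dove} also assumes $z_1\in\mathrm{graph}\,f$, which is false here. Only $z_2=(y_0,f(y_0))$ lies on $\mathrm{graph}\,f$, by the identity $\varphi(g(x),x)=(x,f(x))$.

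The transversality step is also flawed as written. A tangency of the stable arc with $\mathrm{Fix}(\psi)$ at $z_0$ means that $2X-\zeta(Y)$ vanishes to order at least two along the arc. That order may be odd, so choosing a crossing of odd multiplicity does not give the hypothesis of Lemma~\ref{l:transverse}.

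When $g(r)<s$ you do not need this. Your $a$ necessarily lies in $(s,r)$, where $f'>0>g'$, so $\mathrm{graph}\,f$ and $\mathrm{graph}\,g$ already meet transversally at $z_1$. When $g(r)>s$ the paper avoids Lemma~\ref{l:transverse} altogether: an isolated sign change of the analytic function $h-\tilde f$ directly gives a topologically transverse crossing of the two graphs.

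The lemma is used only in the borderline case $g(r)=s$. There $a=r$, and $z_0=(s,r)$ is an endpoint of your suggested arc $\varphi^{-1}(\mathrm{graph}\,f)$, so it cannot serve as $\beta$. One must instead take $\varphi^{-2}(\mathrm{graph}\,f)$ and $\mathrm{graph}\,g$. These are tangent to each other (both to the line $y=r$) but transverse to $\mathrm{Fix}(\psi)$.
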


\begin{figure}[h]
\centerline{\scalebox{.4}{\includegraphics{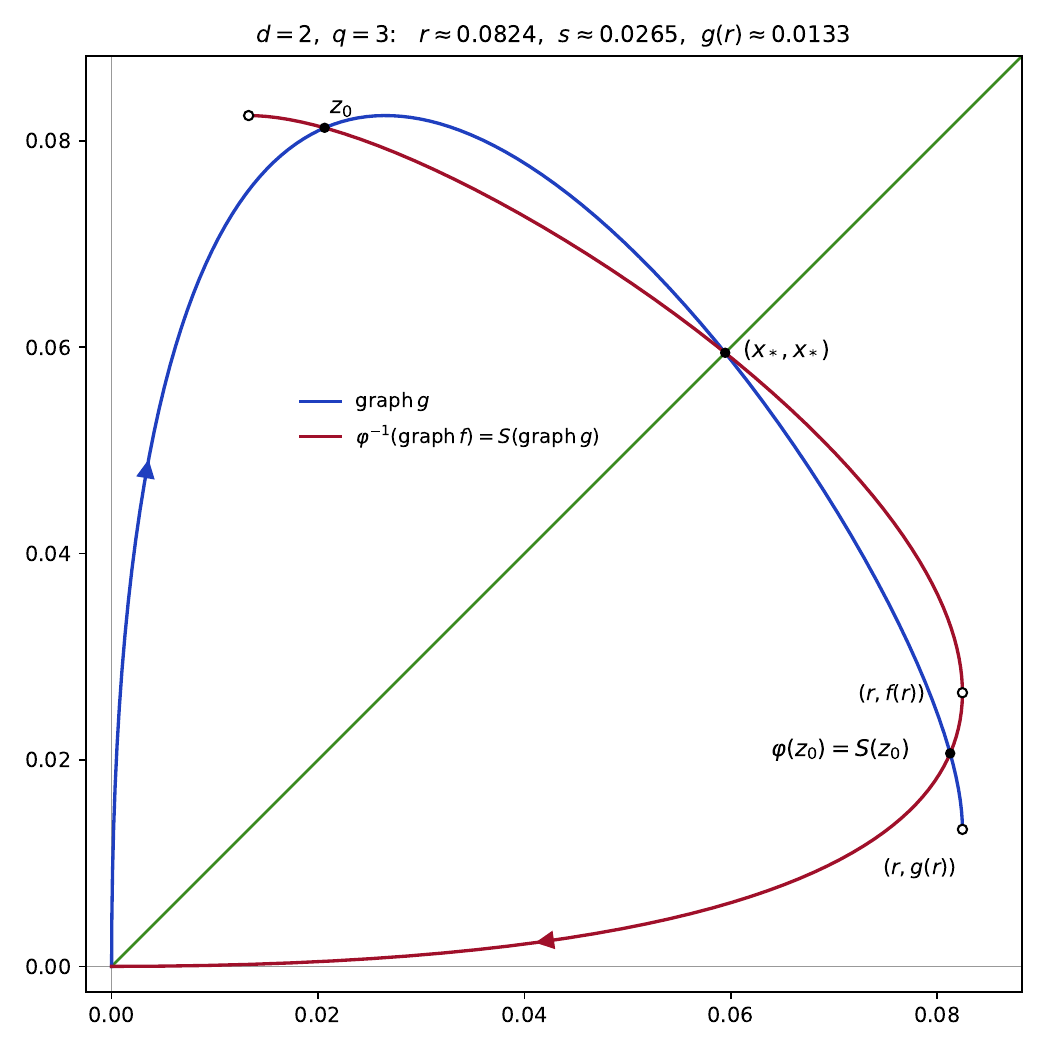} \includegraphics{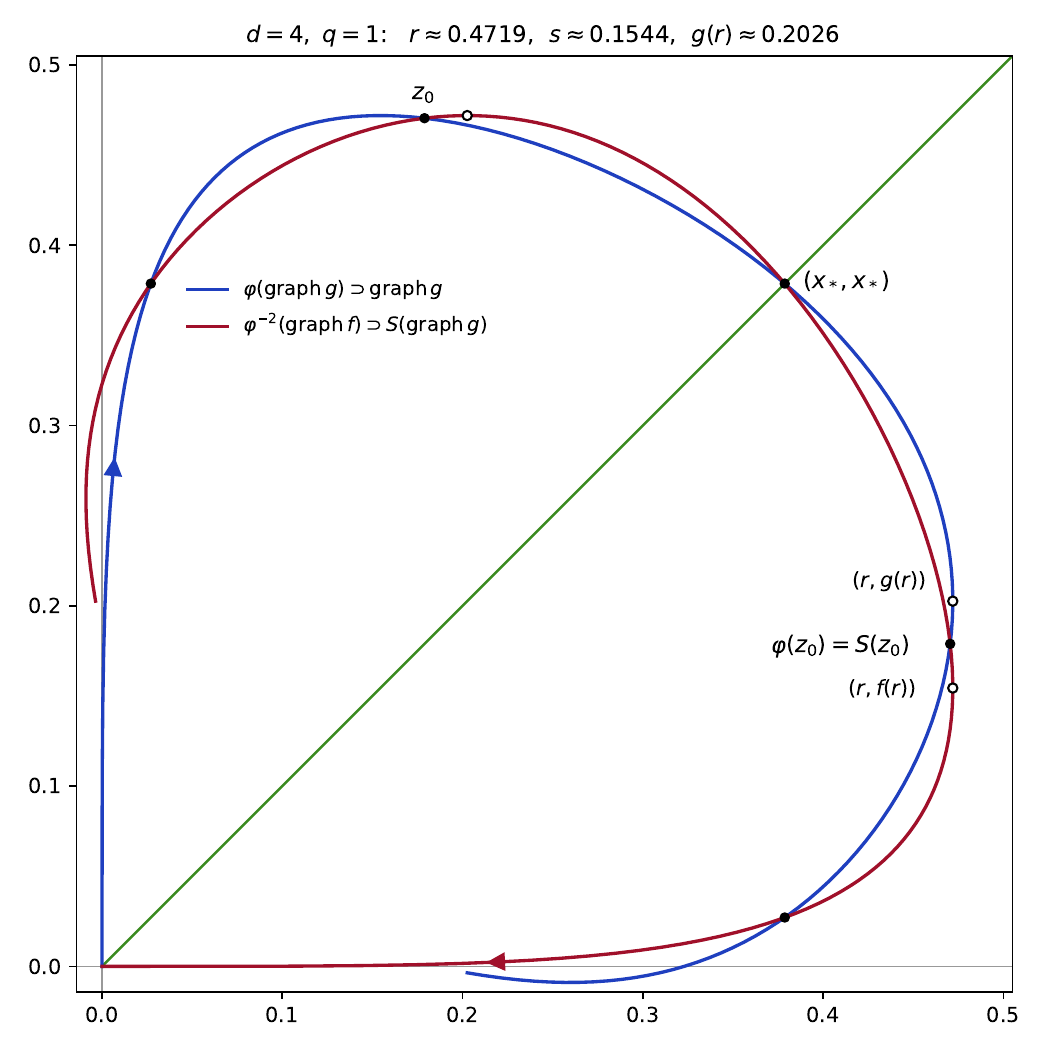}}}
\caption{\footnotesize{Subarcs of $W^s(0)$ and $W^u(0)$ in the case $g(r)<s$ (left) and $g(r)\in (s,r)$ (right).}}
\label{f:intersezioni}
\end{figure} 

\begin{proof}
Since $g(s)=r$ and $g$ decreases on $[s,r]$, we have $g(r)<r$. We distinguish three cases.

\paragraph{Case 1: $g(r)<s$.} This case is exemplified by the picture on the left in Figure \ref{f:intersezioni}. Since $f(s)<r=g(s)$ and $f(r) = s > g(r)$, the compact arcs
\[
\mathrm{graph}\, f|_{[0,r]}  \subset W^s(0) \qquad \mbox{and} \qquad \mathrm{graph}\, g  \subset W^u(0)
\]
have a common interior point $z_1$ of the form $z_1=(x_1,f(x_1))=(x_1,g(x_1))$ with $s<x_1<r$. This intersection is transverse, and a fortiori topologically transverse, because $f'>0$ and $g'<0$ on $(s,r)$. Therefore, $W^s(0)$ and $W^u(0)$ have a topologically transverse intersection at $z_1$ and, by applying the involution $S$ swapping $W^s(0)$ and $W^u(0)$, also at the homoclinic point $z_0 := S(z_1)$. From the identity
\[
\varphi(z_0) = \varphi(f(x_1),x_1) = (x_1, x_1^{\frac{1}{d}} - q x_1 - f(x_1)) = (x_1,g(x_1)) = z_1 = S(z_0),
\]
we deduce that the orbit $z_i:= \varphi^i(z_0)$ of $z_0$ satisfies \eqref{e:2sym}. Indeed, the fact that the involution $S$ conjugates $\varphi$ and $\varphi^{-1}$ implies
\[
z_{1-i} = \varphi^{-i} \circ \varphi(z_0) = \varphi^{-i} \circ S(z_0) = S \circ \varphi^i (z_0) = S(z_i) \qquad \forall i\in \Z.
\]
Writing $z_i=(u(i-1),u(i))$, this symmetry condition translates into $u(-i)=u(i)$ for every $i\in \Z$. Moreover, the identity $z_1=(x_1,f(x_1))$ and the positive invariance of the graph of $f$ imply that $z_{i+1}= (f^i(x_1),f^{i+1}(x_1))$ for every $i\geq 0$, which translates into $u(i)=f^i(x_1)$ for every $i\geq 0$. The fact that $0<f(x)<x$ on $(0,r]$ implies that the orbit of $z_0$ is contained in $(0,+\infty)^2$ and that
\eqref{e:dove} holds.

\paragraph{Case 2: $g(r)=s$.} Since $g(s)=\max g = r$, the point $z_0:=(s,r)$ belongs to the interior of the compact arc
\[
\beta:= \mathrm{graph}\, g \subset W^u(0).
\]
The assumption $g(r)=s$ implies that 
\[
\varphi(z_0)=(r,r^{\frac{1}{d}} - q r - s) = (r,r^{\frac{1}{d}} - q r - g(r)) = (r,f(r)) = (r,s) = S(z_0).
\]
Since $S(z_0)=(r,s)$ belongs to the interior of the compact arc $\varphi^{-1} (  \mathrm{graph}\, f|_{[0,r]} )$, the point $z_0$ belongs to the interior of the compact arc 
\[
\alpha := \varphi^{-2} (  \mathrm{graph}\, f|_{[0,r]} ) \subset W^s(0).
\]
Therefore, $z_0$ is a homoclinic point. Arguing as in Case 1, we easily see that its orbit $z_i = \varphi^i(z_0) = (u(i-1),u(i))$ satisfies \eqref{e:2sym} and \eqref{e:dove}. There remains to prove that the arcs $\alpha$ and $\beta$ have a topologically transverse intersection at $z_0$, provided that $W^s(0)\neq W^u(0)$. Note that $\alpha$ and $\beta$ are both tangent to the line $y=r$ at $z_0$, so this intersection is not transverse.

A first observation is that both arcs $\mathrm{int}(\alpha)$ and $\mathrm{int}(\beta)$ are analytic. For $\mathrm{int}(\beta)$ this is clear, because the function $g$ is analytic on $(0,r)$. The analyticity of $\mathrm{int}(\alpha)$ follows from the analyticity of the arc $\varphi^{-1} (  \mathrm{graph}\, f|_{(0,r)} ) = S (\mathrm{graph}\, g|_{(0,r)})$ and from the fact that this arc lies in the region $x>0$, where $\varphi^{-1}$ is analytic, because in this case $g$ is positive on $(0,r]$.

If $\alpha$ and $\beta$ have a subarc in common, then by analyticity $\alpha\setminus \mathrm{graph}\, f$ is contained in $\beta$ and hence in one branch of $W^u(0)$. Since the latter set is also contained in one branch of $W^s(0)$, we deduce that these two branches coincide. Since the other branches are obtained by applying the map $(x,y) \mapsto (-x,-y)$, we conclude that $W^s(0)=W^u(0)$, which is a contradiction. This shows that $\alpha$ and $\beta$ do not have a common subarc.

The homeomorphism
\[
\psi := S\circ \varphi: \R^2 \rightarrow \R^2, \qquad \psi(x,y) = \bigl(|y|^{\frac{1}{d}-1} y - q y - x, y\bigr),
\]
is an involution. It fixes $z_0$ because $r^{\frac{1}{d}}- q r - s = r^{\frac{1}{d}}- q r - g(r) = f(r) = s$. The map $\psi$ is analytic near $z_0$, and the differential 
\[
\mathrm{d} \psi(z_0) = \left( \begin{array}{cc} -1 & \frac{1}{d} r^{\frac{1}{d}-1} - q \\ 0 & 1 \end{array} \right)
\]
is a linear involution with eigenvalues $\pm 1$. Since
\[
\begin{split}
\psi(\beta) &= S \circ \varphi ( \mathrm{graph} \, g) = S \circ \varphi  ( \varphi\circ S (\mathrm{graph} \, f) ) \\ &= (S \circ \varphi \circ S)^2  (\mathrm{graph} \, f)  = \varphi^{-2}  (\mathrm{graph} \, f)  = \alpha,
\end{split}
\]
the arcs $\alpha$ and $\beta$ are mirror images of each other under $\psi$. Together with the fact that these arcs are both transverse at $z_0$ to the curve of fixed points of $\psi$, namely the set
\[
\Bigl\{(x,y)\in \R^2 \mid x = \frac{1}{2} |y|^{\frac{1}{d}-1} y - \frac{q}{2} y\Bigr\},
\]
this implies that they have a  topologically transverse intersection at $z_0$, thanks to Lemma \ref{l:transverse}. 

\paragraph{Case 3: $g(r)\in (s,r)$.} This case is exemplified by the picture on the right in Figure \ref{f:intersezioni}.  Denote by
\[
\tilde{f} : [g(r),r] \rightarrow [s,r]
\]
the inverse of the strictly decreasing function $g|_{[s,r]}$. Then
\[
\varphi\bigl( \Graph g|_{[s,r]} \bigr) = \{ (g(x), g(x)^{\frac{1}{d}} - q g(x) - x) \mid x\in [s,r]\} = \Graph h,
\]
where $h: [g(r),r] \rightarrow \R$ is the function
\[
h(x) := x^{\frac{1}{d}} - q x - \tilde{f}(x).
\]
Note that 
\begin{equation}
\label{e:archi}
\mathrm{graph}\, \tilde{f} \subset W^s(0), \qquad
\mathrm{graph}\, h \subset W^u(0).
\end{equation}
As in the proof of Theorem \ref{t:omoclina1}, we denote by $x_* \in (s,r)$ the unique fixed point of $g$ in $(0,r]$. Since $g(r)<g(x_*)=x_*$, we have $x_* \in (g(r),r)$ and $\tilde{f}(x_*)=x_*$. The function $h-\tilde{f}$ is continuous on $[x_*,r]$ and analytic on $(x_*,r)$. By the identity $x^{\frac{1}{d}} - q x = g(x) + f(x)$ and the fact that $\tilde{f}(r)=s=f(r)$, we have
\[
\begin{split}
(h-\tilde{f})(x_*) &= g(x_*) + f(x_*) - 2 x_* = f(x_*) - x_* < 0, \\
(h-\tilde{f})(r) &= g(r) + f(r) - 2 s = g(r) - s > 0.
\end{split}
\]
Therefore, $h-\tilde{f}$ changes sign at some isolated zero $x_1\in (x_*,r)$. By \eqref{e:archi}, the point $z_1:= (x_1,h(x_1)) = (x_1,\tilde{f}(x_1))$ is a topologically transverse intersection point of $W^s(0)$ and $W^u(0)$, and the same is then true for $z_0:= \varphi^{-1}(z_1)$. Since
\[
z_0 = \varphi^{-1}(x_1,\tilde{f}(x_1)) = (x_1^{\frac{1}{d}} - q x_1 - \tilde{f}(x_1), x_1) = (h(x_1),x_1) = (\tilde{f}(x_1),x_1) = S(z_1),
\]
we have $\varphi(z_0) = z_1 = S(z_0)$. As in Case 1, this implies that the orbit $z_i:=\varphi^i(z_0)$ satisfies the symmetry condition \eqref{e:2sym}. Writing $z_i = (u(i-1),u(i))$ for every $i\in \Z$, we deduce that $u(-i)=u(i)$ for every $i\in \Z$, where $u(0)=x_1$ and $u(1)=\tilde{f}(x_1)$. Since $\tilde{f}$ is strictly decreasing and fixes $x_*$, we have
\[
s < \tilde{f}(x_1) < x_* < x_1.
\]
By Addendum \ref{a:more}, $\varphi(g(x),x) = (x,f(x))$ for every $x\in [0,r]$. Applying this identity to $x=\tilde{f}(x_1)$, for which $g(x)=x_1$, we find that
\[
z_2 = \varphi(z_1) = \bigl( \tilde{f}(x_1), f(\tilde{f}(x_1)) \bigr)
\]
belongs to the graph of $f$. The positive invariance of this graph then implies that $z_{i+1} = \bigl( f^{i-1}(\tilde{f}(x_1)), f^i(\tilde{f}(x_1)) \bigr)$ for every $i\geq 1$, that is, $u(i) = f^{i-1}(\tilde{f}(x_1))$ for every $i\geq 1$. Since $0<f(x)<x$ on $(0,r]$, we conclude that the orbit of $z_0$ is contained in $(0,+\infty)^2$ and that
\[
u(0) = x_1 > u(1) = \tilde{f}(x_1) > u(2) = f(\tilde{f}(x_1)) > \dots > u(i) = f^{i-1} (\tilde{f}(x_1)) > \dots
\]
Therefore, \eqref{e:dove} holds.

\medskip

In all the three cases, we have $u(i+1)=f(u(i))$ for $i\geq 1$ and hence 
\[
u(i)^d \leq u(i+1) \leq C u(i)^d \qquad \forall i\geq 1
\]
for a suitable positive number $C$. 
\end{proof}

Theorem \ref{t:omocline-main} from the Introduction summarizes the contents of Theorems \ref{t:stable}, \ref{t:omoclina1}, \ref{t:omoclina2} 
and is then completely proven, except for the last assertion about the values of the parameter $q$ such that $W^s(0)=W^u(0)$, which will be proven in Section \ref{s:homoconn}. 

Together with Proposition \ref{p:zachary}, Theorems \ref{t:omoclina1} and \ref{t:omoclina2}
have the following corollary, giving us Gibbs measures with the same symmetries as in Theorem \ref{t:2gibbs}/Theorem \ref{t:ex2-main} and more precise information. 

\begin{cor}
\label{c:info+}
If $Q=q\mathbbm{1}_{\{0\}} + \mathbbm{1}_{\{-1,1\}}$, then there exist Gibbs measures $\mu_1$ and $\mu_2$ in $\mathscr{G}(d,q\mathbbm{1}_{\{0\}} + \mathbbm{1}_{\{-1,1\}})$ such that $\iota_*(\mu_1)=\mu_1$, $\jmath_*(\mu_2)=\mu_2$ with single-site marginals $\pi_1$ and $\pi_2$ which satisfy
\[
\begin{split}
& \pi_1(0) > \pi_1(1)=\pi_1(-1) > \pi_1(2)=\pi_1(-2) > \dots > \pi_1(i)=\pi_1(-i) > \dots \\
& \pi_2(1)=\pi_2(0) > \pi_2(2)=\pi_2(-1) > \pi_2(3)=\pi_2(-2) > \dots > \pi_2(i)=\pi_2(1-i) > \dots
\end{split}
\]
and are infinitesimal of order $d$ and hence decay doubly exponentially for $i\rightarrow \pm \infty$, with
\[
\bigl(2+q^{\frac{d+1}{2}}\bigr)^{-2} \, w^{d-1}_j \, \pi_j(i)^d \leq \pi_j(i+1) \leq C \, \pi_j(i)^d \qquad \forall i\geq 1, \qquad j=1,2,
\]
where $w_j$ is the weight of $\mu_j$ and $C$ is a positive number. 
\end{cor}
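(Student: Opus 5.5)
The plan is to apply Proposition \ref{p:zachary} to the two positive solutions $u\in\ell^p(\Z)$ of \eqref{e:2ndrec} given by Theorems \ref{t:omoclina2} and \ref{t:omoclina1}. Then translate the monotonicity and order-$d$ information on $u$ into statements about $\pi=u^p/\|u\|_p^p$.

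\textbf{Assigning the symmetries.} Let $u_1$ be the solution from Theorem \ref{t:omoclina2}. It satisfies $u_1(-i)=u_1(i)$, so it is fixed by $\iota^*$. Let $u_2$ be the solution from Theorem \ref{t:omoclina1}. It satisfies $u_2(1-i)=u_2(i)$, so it is fixed by $\jmath^*$. By the equivariance part of Proposition \ref{p:zachary}, the associated Gibbs measures $\mu_1,\mu_2\in\mathscr{G}(d,Q)$ satisfy $\iota_*(\mu_1)=\mu_1$ and $\jmath_*(\mu_2)=\mu_2$.

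\textbf{Monotonicity of the marginals.} By \eqref{e:pi&P}, $\pi_j(i)=u_j(i)^p/S_j$ with $S_j:=\|u_j\|_p^p$. The map $t\mapsto t^p/S_j$ is strictly increasing on $(0,+\infty)$. Hence the chains of strict inequalities and equalities in \eqref{e:dove} and in Theorem \ref{t:omoclina1} carry over verbatim to $\pi_1$ and $\pi_2$.

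\textbf{Order-$d$ bounds.} Both theorems give $u_j(i)^d\le u_j(i+1)\le C u_j(i)^d$ for $i\ge 1$. Raising to the power $p$ and dividing by $S_j$ yields
\[
S_j^{d-1}\pi_j(i)^d\le \pi_j(i+1)\le C^pS_j^{d-1}\pi_j(i)^d ,
\]
since $u_j(i)^{dp}/S_j=S_j^{d-1}\pi_j(i)^d$. The upper bound is the claimed one, after renaming the constant.

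For the lower bound, insert the weight \eqref{e:defnw}, which gives $w_j^{d-1}=\|Q\|_{\frac{d+1}{2}}^{d+1}S_j^{d-1}$. For $Q=q\mathbbm{1}_{\{0\}}+\mathbbm{1}_{\{-1,1\}}$ we have
\[
\|Q\|_{\frac{d+1}{2}}^{\frac{d+1}{2}}=2+q^{\frac{d+1}{2}}, \qquad\text{so}\qquad \|Q\|_{\frac{d+1}{2}}^{d+1}=\bigl(2+q^{\frac{d+1}{2}}\bigr)^2 .
\]
Therefore $S_j^{d-1}=\bigl(2+q^{\frac{d+1}{2}}\bigr)^{-2}w_j^{d-1}$ exactly, which is the stated lower constant.

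\textbf{Doubly exponential decay.} This follows from the upper bound together with $\pi_j(i)\to0$, exactly as in the argument after \eqref{e:order-d}. Once $C'\pi_j(i_0)<1$ for a suitable constant $C'$, iteration gives $\pi_j(i)\le C''\theta^{d^{i-i_0}}$ with $\theta<1$. The decay as $i\to-\infty$ follows by the symmetries.

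No step is a real obstacle. The only point needing care is the bookkeeping: correctly matching $\iota$ with Theorem \ref{t:omoclina2} and $\jmath$ with Theorem \ref{t:omoclina1}, and computing the norm of $Q$.
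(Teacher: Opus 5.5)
Your proposal is correct and follows essentially the paper's route: the paper derives the corollary directly by passing the solutions of Theorems \ref{t:omoclina2} (for $\iota$) and \ref{t:omoclina1} (for $\jmath$) through Proposition \ref{p:zachary}. Your computation with $\pi_j=u_j^p/\|u_j\|_p^p$, $w_j^{d-1}=\|Q\|_{\frac{d+1}{2}}^{d+1}\|u_j\|_p^{p(d-1)}$ and $\|Q\|_{\frac{d+1}{2}}^{d+1}=\bigl(2+q^{\frac{d+1}{2}}\bigr)^2$ is exactly what yields the stated constant, and your symmetry matching and decay argument are also right.
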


\section{Coding of Gibbs measures}
\label{s:coding}

As mentioned in Section \ref{s:ifss}, even if $\varphi$ is not differentiable at the origin, this fixed point should be considered as a hyperbolic fixed point. Indeed, we have the following linearization result, which is proven in the next section.

\begin{thm}
\label{t:linearization}
There is a compact neighborhood $K$ of $0\in \R^2$ and an orientation preserving and $S$-equivariant homeomorphism $h: [-1,1]^2 \rightarrow K$ such that $h(0,0)=(0,0)$ and
\begin{equation}
\label{e:congiu}
\varphi \circ h (x,y) = h \Bigl( \frac{x}{2} , 2y\Bigr) \qquad \forall (x,y) \in [-1,1] \times \Bigl[ - \frac{1}{2}, \frac{1}{2} \Bigr]. 
\end{equation}
Moreover, $h([0,1]^2) \subset \{0\} \cup (0,+\infty)^2$ and
\begin{equation}
\label{e:more}
\begin{split}
h((0,1]\times \{0\} ) &\subset \Graph ( f|_{(0,r]} ), \\
h(\{0\} \times (0,1] ) &\subset S( \Graph ( f|_{(0,r]} )).
\end{split}
\end{equation}
\end{thm}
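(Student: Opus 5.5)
We will build $h$ by the classical fundamental-domain construction for a hyperbolic saddle, replacing differentiability at the origin with the explicit local description of the invariant curves from Theorem \ref{t:stable}. In the model map $L(x,y)=(x/2,2y)$ the axes are invariant, and the quadrant $[0,1]^2$ is foliated by the invariant hyperbolas $xy=\mathrm{const}$. The task is to find the corresponding structure for $\varphi$ and to match the two structures by a conjugacy.

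\textbf{Step 1: the axes.} On the stable side, for small $x_0>0$ the map $x\mapsto f(x)$ is a contraction with $f(x)<x$ and $f(x)\sim x^d$. Fix $\xi_0:=x_0$ and choose any orientation preserving homeomorphism $a:[1/2,1]\to[f(x_0),x_0]$ with $a(1)=x_0$ and $a(1/2)=f(x_0)$. Extend it by $a(t/2)=f(a(t))$, and set $a(0)=0$. This gives a homeomorphism $a:[0,1]\to[0,x_0]$ satisfying $a(t/2)=f(a(t))$, because the iterates $f^n(x_0)$ decrease to $0$. The curve $t\mapsto (a(t),f(a(t)))$ parametrizes $\Graph f|_{[0,x_0]}$ and conjugates $\varphi$ to $t\mapsto t/2$ there, by Theorem \ref{t:stable}(ii). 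On the unstable axis we define $h(0,t):=S(h(t,0))$. Since $\varphi^{-1}=S\varphi S$, this gives $\varphi h(0,t)=h(0,2t)$.

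\textbf{Step 2: a fundamental domain in the open quadrant.} Pick a small point on each local branch, for instance $p=(x_0,f(x_0))$ and $S(p)$. Choose a compact arc $\gamma$ in $(0,\infty)^2$ joining $S(p)$ to $p$, symmetric under $S$ and crossing the diagonal once. We need $\varphi(\gamma)$ to lie strictly ``outside'' $\gamma$ in the appropriate sense, meaning that $\gamma$ and $\varphi(\gamma)$ bound a topological quadrilateral $D$ together with the segments of the invariant curves between $p,\varphi(p)$ and $S(p),\varphi(S(p))$. The natural candidate for $\gamma$ is a level curve of an approximate first integral, for example $xy=\mathrm{const}$, or its image under the coordinates in which $d\varphi$ is nearly $\mathrm{diag}(\epsilon,\epsilon^{-1})$ (property (e)). On the model side the corresponding domain $D_0$ is bounded by $xy=1/2$, the segment $x=1$ and the segment $y=1$, and so on. We take any homeomorphism $D_0\to D$ that matches the boundary data already fixed on the axes pieces, is $S$-equivariant, and is compatible with $\varphi$ on the edge $\gamma\leftrightarrow\varphi(\gamma)$. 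We then propagate it by $h=\varphi^n h L^{-n}$ to the whole quadrant $[0,1]^2$, and extend to the other quadrants by oddness, using property (a) and $h(-z)=-h(z)$.

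\textbf{Step 3: continuity at the axes and at the origin.} This is the main obstacle. We must show that points of $D$ iterated many times converge to the stable or unstable branch, uniformly, and that the image $K$ is a neighborhood of $0$. We plan to use the cone-field hyperbolicity from (e): near $0$ the map $\varphi$ is expanding in $y$ and contracting in $x$ with rates that go to infinity and to zero, respectively. Combined with the local stable manifold characterization in Theorem \ref{t:stable}(iii), which says that only points on $\Graph f$ stay in $[-\rho,\rho]^2$ forever, this forces every orbit in the quadrant near $0$ to follow $\Graph f$ in and then $S(\Graph f)$ out. That behaviour yields continuity. Positivity $h([0,1]^2)\subset\{0\}\cup(0,\infty)^2$ follows because $\varphi$ maps the open quadrant near $0$ (where $x,y>0$ and $y^{1/d}-qy-x>0$) to itself. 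Finally, \eqref{e:more} holds by construction, and orientation preservation follows from property (c) together with the choice of $a$.
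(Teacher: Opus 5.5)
Your route is different from the paper's: a fundamental-domain construction at the saddle rather than a global conjugacy. Step 1 is fine, that is, the parametrization $a(t/2)=f(a(t))$ of $\Graph f$ together with $h(0,t):=S(h(t,0))$. Step 2, however, cannot work as stated, and Step 3, which carries the real content, is only announced.

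\textbf{Step 2.} The quadrilateral $D$ does not exist. The arc $\varphi(\gamma)$ starts at $\varphi(p)=(f(x_0),f(f(x_0)))$, which lies on the stable arc strictly between $0$ and $p$. It ends at $\varphi(S(p))=(x_0,g(x_0))$, which lies on the unstable branch beyond $S(p)$, because $g(x_0)=f^{-1}(x_0)>x_0$. So $\varphi(\gamma)$ runs from inside the region bounded by $\gamma$ and the two branch arcs through $0$ to outside it, and must meet $\gamma$. This already happens in the model: the segment $x+2y=1$ and its image $2x+y=1$ under $(x,y)\mapsto(\frac{x}{2},2y)$ meet at $(\frac13,\frac13)$. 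What you describe is the fundamental annulus of a sink or source, not of a saddle sector. Two further problems with Step 2:
\begin{itemize}
\item Level curves of an approximate first integral are nearly invariant, which is the opposite of what the boundary of a fundamental domain must be.
\item Your $D_0=\{x\le 1,\ y\le 1,\ xy\ge\frac12\}$ matches neither $D$ nor the axes, since its iterates stay in $\{xy\ge\frac12\}$.
\end{itemize}
In a saddle sector, fundamental domains must be built from an entrance side transverse to $\Graph f$ and an exit side transverse to $S(\Graph f)$.

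\textbf{Step 3.} With the correct fundamental domains, Step 3 becomes a genuine $C^0$ inclination lemma at a point where $\varphi$ is not differentiable. You must show that points close to $\Graph f$ but off it leave close to $S(\Graph f)$, uniformly. Theorem \ref{t:stable}(iii) only says that such points leave $[-\rho,\rho]^2$, not where they leave. The cone-field picture of (e) also breaks down on $\{y=0\}$: $\diff\varphi$ does not exist there, and the orbits in the sectors bounded by branches of opposite sign do cross this line.

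\textbf{Further omissions.}
\begin{itemize}
\item Oddness only transports the first-quadrant sector to the third. The sector between the positive stable branch and the negative unstable branch must be constructed separately. Since $S$ and $-\mathrm{id}$ both exchange it with its image under $-\mathrm{id}$, the construction there must be $S\circ(-\mathrm{id})$-equivariant.
\item Your positivity argument does not work. The open quadrant near $0$ is not $\varphi$-invariant: points with $0<y<x^d$, which lie below $\Graph f$, are sent into $\{y<0\}$, because $y^{1/d}-qy-x<0$ there. What holds, and what the paper proves by a connectedness argument, is that $h((0,1]^2)$ lies in $\{x>f(y),\ y>f(x)\}\subset(0,+\infty)^2$.
\end{itemize}

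\textbf{The paper's route.} The paper avoids all local analysis at the non-smooth origin.
\begin{enumerate}
\item It replaces $\varphi$ outside $\R\times[-y_0,y_0]$ by $\psi(x,y)=(y,\tau(y)-x)$, with $\tau^{-1}$ being $\frac25$-Lipschitz.
\item It obtains a global conjugacy with the linear map $(x,y)\mapsto(y,\frac52y-x)$ as the fixed point of a contraction on $C_b(\R^2)$. Uniqueness gives invertibility and $S$-equivariance for free.
\item It then diagonalizes, identifies the positive branches, and restricts to a small box.
\end{enumerate}
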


We denote by $K_+$ the positive sector in $K$, namely the set $K_+:= h([0,1]\times [0,1])$. Assuming that they do not coincide, the stable and unstable sets of the origin have a topologically transverse intersection at $z_0$ which is given by Theorem \ref{t:omoclina2}. This fact and Theorem \ref{t:linearization} will allow us to apply the general Theorem \ref{t:horseshoe} from Appendix \ref{a:th} on the existence of a topological horseshoe. This theorem, which is a refinement of a result of Burns and Weiss \cite{bw95}, gives us a positive integer $N$, disjoint compact sets $Q_0\subset K_+$ and $Q_1\subset (0,+\infty)^2$ with $z_0\in Q_1$ such that the compact $\varphi^N$-invariant set
\[
\Lambda := \bigcap_{j\in \Z} \varphi^{Nj}( Q_0 \cup Q_1) 
\]
is non-empty, and for every bi-infinite sequence $\omega\in \{0,1\}^{\Z}$ there exists at least one point $z\in \Lambda$ such that $\varphi^{Nj}(z)$ belongs to $Q_{\omega(j)}$ for every $j\in \Z$. Moreover, Theorem \ref{t:horseshoe} will also guarantee that, assuming that $\omega$ is not identically zero, the orbit of $z$ by $\varphi$ is contained in $(0,+\infty)\times (0,+\infty)$, and this is a homoclinic orbit whenever $\omega(j)=0$ for all but finitely many $j$. This will lead to the proof of the next result, which is precisely Theorem \ref{t:coding-main} from the Introduction.

\begin{thm}
\label{t:coding}
Let $d\geq 2$ be an integer and assume that $q\in [0,+\infty)$ does not belong to the (possibly empty) finite set of values for which $W^u(0)=W^s(0)$ (see 
Proposition \ref{p:homoconn}). Set $Q:= q\mathbbm{1}_{\{0\}} + \mathbbm{1}_{\{-1,1\}}$. Then there exists a positive integer $N$ and numbers $\theta>1$ and $\eta>0$ such that for every non-empty finite set $A\subset N\Z$ there exists a Gibbs measure $\mu_A$ in $\mathscr{G}(d,Q)$ whose single-site marginal $\pi_A$ satisfies
\begin{equation}
\label{e:favorisce}
\min_{i\in A} \pi_A(i) \geq \theta \, \max_{i\in \Z\setminus A} \pi_A(i),
\end{equation}
and whose weight $w_A$  satisfies $w_A \geq \eta |A|$, where $|A|$ denotes the cardinality of $A$.  If $A'$ is not a translated copy of $A$, then the Gibbs measures $\mu_A$ and $\mu_{A'}$ represent distinct elements in $\mathscr{G}(d,Q)/\Z$, which is then an infinite set.
\end{thm}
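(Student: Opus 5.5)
We will deduce the statement from the horseshoe Theorem \ref{t:horseshoe} of Appendix \ref{a:th}, applied to the topologically transverse homoclinic point $z_0$ from Theorem \ref{t:omoclina2}; the local model near the origin comes from Theorem \ref{t:linearization}. Since $q$ is not an exceptional value, $W^s(0)\neq W^u(0)$, so $z_0$ is a topologically transverse intersection point. The theorem then yields $N$, disjoint compact sets $Q_0\subset K_+$ and $Q_1\subset(0,+\infty)^2$ with $z_0\in Q_1$, and the coding property. Given a non-empty finite $A\subset N\Z$, define $\omega_A\in\{0,1\}^{\Z}$ by $\omega_A(j)=1$ if and only if $Nj\in A$. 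We take a point $z_A\in\Lambda$ with $\varphi^{Nj}(z_A)\in Q_{\omega_A(j)}$ for all $j$. Since $\omega_A$ is not identically zero and vanishes for $|j|$ large, Theorem \ref{t:horseshoe} gives a homoclinic orbit contained in $(0,+\infty)^2$. Proposition \ref{p:gibbs=homo} then produces $u_A\in\mathscr{U}(d,Q)$ with $(u_A(i-1),u_A(i))=\varphi^i(z_A)$, hence a Gibbs measure $\mu_A$ with $\pi_A=u_A^p/\|u_A\|_p^p$.

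For the inequality \eqref{e:favorisce}, it is enough to compare the values of $u_A$, since $\pi_A$ is an increasing function of $u_A$ with a common normalization. For $i=Nj\in A$ the point $\varphi^{i}(z_A)$ lies in $Q_1$, and its second coordinate is $u_A(i)$. So we need a constant $a>0$ with $y\geq a$ on $Q_1$ (the second coordinate); this holds because $Q_1$ is a compact subset of the open quadrant. For $i\notin A$ we need $u_A(i)\leq b<a$ uniformly. This requires arranging the construction, possibly after enlarging $N$ and shrinking $K$, so that every orbit point other than those landing in $Q_1$ at times in $A$ has second coordinate at most $b$. Such points are either in $K_+$, which we may take inside $[0,b]^2$ by shrinking $K$ (the linearization \eqref{e:congiu} allows this), or on the intermediate iterates $\varphi^k(Q_1)$, $0<k<N$. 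I expect this second class to be the main obstacle. For these iterates I would use that $Q_1$ is a small neighbourhood of $z_0$: the iterates $\varphi^k(Q_1)$ then stay close to $\varphi^k(z_0)$, which is the symmetric orbit of Theorem \ref{t:omoclina2}. By \eqref{e:dove}, the second coordinates $u(k)$ of that orbit satisfy $u(k)<u(0)$ for $k\neq 0$. I would therefore place the symbol-$1$ box at the peak $u(0)$ of this orbit and choose the neighbourhood small enough that a strict gap persists; backward and forward iterates near the origin are controlled by the monotone decay. With these choices, $\theta:=(a/b)^p>1$.

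For the weight, $w_A=\|Q\|_{\frac{d+1}{2}}^{\frac{d+1}{d-1}}\sum_i u_A(i)^p\geq \|Q\|_{\frac{d+1}{2}}^{\frac{d+1}{d-1}}|A|a^p$, so we may take $\eta:=\|Q\|_{\frac{d+1}{2}}^{\frac{d+1}{d-1}}a^p$.

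Finally, let $A'$ not be a translate of $A$, and suppose $\mu_{A'}=\tau^n_*\mu_A$. Then $u_{A'}=u_A(\cdot-n)$ by the equivariance in Proposition \ref{p:zachary}. But $A$ is recovered from $u_A$ as the set of indices where $u_A\geq a$, because all other values are at most $b<a$. Hence $A'=A+n$, a contradiction. Since $N\Z$ contains infinitely many pairwise non-translate finite sets, $\mathscr{G}(d,Q)/\Z$ is infinite.
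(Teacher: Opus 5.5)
Your overall route is the paper's. You code $A$ by $\omega_A$, take the corresponding point of $\Lambda$ from Theorem \ref{t:horseshoe} applied at the symmetric transverse homoclinic point $z_0$ of Theorem \ref{t:omoclina2}, and read off \eqref{e:favorisce}, the weight bound and distinctness from a gap between the values of $u_A$ on $A$ and off $A$. The weight and distinctness parts are fine.

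\textbf{The gap.} It lies in the step you flag as the main obstacle: the uniform bound $u_A(i)\le b$ for $i\notin A$. It also affects the positivity of the orbit, which you attribute to Theorem \ref{t:horseshoe}, although that theorem by itself only places orbit points in $Q_1\cup U_0\cup V$.

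Your mechanism (iterates $\varphi^k(Q_1)$ stay close to $\varphi^k(z_0)$ for $0<k<N$, and near the origin the monotone decay of $u$ takes over) is false.
\begin{itemize}
\item By surjectivity of the coding there are points coded by two consecutive $1$'s. So $\varphi^N(Q_1)\cap Q_1\neq\emptyset$, and since $z_0$ is not periodic, $N$ cannot stay bounded as $Q_1$ shrinks. Hence continuity gives no uniform control over $0<k<N$.
\item Concretely, $\varphi^{N-1}(Q_1)$ meets $\varphi^{-1}(Q_1)$, which is near $\varphi^{-1}(z_0)$, far from $\varphi^{N-1}(z_0)\approx 0$.
\item Your case split assigns the whole block after a visit to $Q_0$ to $K_+$. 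But if the next symbol is $1$, the last points of that block shadow $\varphi^{-m}(z_0)$ for small $m\ge 1$, away from the origin, and are not in $K_+$.
\item The decay of the single orbit of $z_0$ says nothing about nearby orbits passing the saddle.
\end{itemize}
What must be controlled is the orbit of points of $\Lambda$, which are forced back into $H_0\cup H_1$ every $N$ steps, not the sets $\varphi^k(Q_1)$.

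\textbf{The fix.} This control is exactly statement (iii) of Theorem \ref{t:horseshoe}, which holds for arbitrary prescribed $U_0$, $U_1$, $V$; the paper closes the argument this way. In your notation, choose $u(1)<b<a<u(0)$ and apply the theorem with:
\begin{itemize}
\item $U_1:=(0,+\infty)\times(a,+\infty)$, which contains $z_0$;
\item $V:=(0,+\infty)\times(0,b)$, which contains $\varphi^i(z_0)$ for every $i\neq 0$ by \eqref{e:dove};
\item $U_0:=K_+\cap([0,b)\times[0,b))$.
\end{itemize}
Then (ii) and (iv) give a homoclinic point $z$ coded by $\omega_A$. Statement (iii), together with $K_+\subset\{0\}\cup(0,+\infty)^2$ and $\varphi^i(z)\neq 0$, gives $\varphi^i(z)\in(0,+\infty)\times(0,b)$ for $i\notin A$. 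For $i\in A$ you have $\varphi^i(z)\in Q_1\subset(0,+\infty)\times(a,+\infty)$.

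This yields at once positivity of the orbit, $u_A>a$ on $A$ and $u_A<b$ off $A$, hence $\theta=(a/b)^p$, your $\eta$, and your distinctness argument. No enlarging of $N$ or shrinking of $K$ by hand is needed.
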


\begin{proof}
By Theorem \ref{t:omoclina2}, $W^s(0)$ and $W^u(0)$ have a topologically transverse intersection at a point $z_0\in (0,+\infty)^2$ such that, writing $\varphi^i(z_0) = (u(i-1),u(i))$, we have $u(-i)=u(i)$ for every $i\in \Z$ and the sequence $(u(i))_{i\geq 0}$ is positive and strictly decreasing. Therefore, we can find real numbers $a$ and $b$ such that
\[
\max_{i\in \Z\setminus \{0\}} u(i) = u(1)=u(-1) < a < b < u(0).
\]
We then have
\[
z_0\in U_1:= (0,+\infty) \times (b,+\infty), \qquad \varphi^i(z_0) \in V := (0,+\infty ) \times (0,a) \quad \forall i\in \Z\setminus \{0\}.
\]
Given the neighborhood of the origin $K$ from Theorem \ref{t:linearization} and denoting by $K_+\subset K$ the positive sector, the set
\[
U_0:= K_+ \cap ([0,a)\times [0,a)) \subset (0,+\infty)^2 \cup \{0\}
\]
is a relative neighborhood of the origin in $K_+$. By applying Theorem \ref{t:horseshoe} to the fixed point $0$ of the homeomorphism $\varphi$, the topologically transverse homoclinic point $z_0$, and the sets $U_0$, $U_1$, and $V$, we find a positive integer $N$ and disjoint compact sets 
\[
Q_0 \subset U_0 \subset [0,a) \times [0,a), \qquad Q_1 \subset U_1 = (0,+\infty) \times (b,+\infty),
\]
such that conclusions (i)-(iv) of Theorem \ref{t:horseshoe}
hold for the compact $\varphi^N$-invariant set
\[
\Lambda:= \bigcap_{j\in \Z} \varphi^{Nj} ( Q_0 \cup Q_1).
\]
Given any non-empty finite set $A\subset N\Z$, by (ii) we find $z\in \Lambda$ such that 
\begin{equation}
\label{e:cod1}
\varphi^{Nj}(z) \in Q_1\subset (0,+\infty)\times (b,+\infty) \mbox{ if } j \in \frac{1}{N} A, \qquad \varphi^{Nj}(z) \in Q_0 \mbox{ if } j \notin \frac{1}{N} A.
\end{equation}
By (iv), $z$ is a homoclinic point of $\varphi$. By (iii),
\begin{equation}
\label{e:cod2}
\varphi^i(z) \in (V \cup U_0) \setminus \{0\} \subset (0,+\infty) \times (0,a) \qquad \forall i\in \Z\setminus A.
\end{equation}
We have $\varphi^i(z) = (v(i-1),v(i))$ for every $i\in \Z$, where $v\in \ell^p(\Z)$ is a positive solution of \eqref{e:2ndrec}. By \eqref{e:cod1} and \eqref{e:cod2},
\begin{equation}
\label{e:v}
v(i) > b \quad \forall i\in A, \qquad v(i) < a \quad \forall i\in \Z\setminus A.
\end{equation}
Denoting by $\mu_A\in \mathscr{G}(d,Q)$ the Gibbs measure corresponding to $v$ via Proposition \ref{p:zachary} and by $\pi_A$ its single-site marginal, we have
\[
\pi_A(i) = \frac{v(i)^p}{\|v\|_p^p} \left\{ \begin{array}{ll} > \frac{b^p}{\|v\|_p^p} & \mbox{ if } i\in A, \\ < \frac{a^p}{\|v\|_p^p} & \mbox{ if } i\in \Z\setminus A. \end{array} \right.
\]
Therefore
\[
\frac{\pi_A(j)}{\pi_A(i)} > \left( \frac{b}{a} \right)^p \qquad \forall j\in A, \; \forall i\in \Z\setminus A,
\]
and \eqref{e:favorisce} holds with $\theta:= (\frac{b}{a})^p>1$. Moreover, the weight $w_A$ of $\mu_A$ satisfies
\[
w_A = \bigl(2+q^{\frac{d+1}{2}}\bigr)^{\frac{2}{d-1}} \|v\|_p^p \geq \bigl(2+q^{\frac{d+1}{2}}\bigr)^{\frac{2}{d-1}} |A| b^p,
\]
and the bound $w_A \geq \eta |A|$ holds with $\eta:= (2 + q^{\frac{d+1}{2}})^{\frac{2}{d-1}} b^p$. Finally, if $A'$ is not a translated copy of $A$, then by \eqref{e:v} the solutions $v_A$ and $v_{A'}$ of \eqref{e:2ndrec} corresponding to $\mu_A$ and $\mu_{A'}$ are not related by a translation, and hence $\mu_A$ and $\mu_{A'}$ determine distinct elements in $\mathscr{G}(d,Q)/\Z$.
\end{proof}

\section{Proof of the linearization theorem}

The aim of this section is to prove Theorem \ref{t:linearization}. We will deduce it from a global conjugacy result for a general class of homeomorphisms of the plane.

Given $\theta>0$, we denote by $\Psi_{\theta}$ the set of maps $\psi: \R^2 \rightarrow \R^2$ of the form
\begin{equation}
\label{e:lin1}
\psi(x,y) = \bigl( y, \tau(y) - x \bigr),
\end{equation}
where $\tau:\R \rightarrow \R$ is a homeomorphism whose inverse $\sigma: \R \rightarrow \R$ is $\theta$-Lipschitz continuous and such that the function $x \mapsto \sigma(x) - \theta x$ is bounded.

Note that by \eqref{e:lin1} the map $\psi$ is a homeomorphism with inverse
\begin{equation}
\label{e:lainve}
\psi^{-1}(x,y) = S \circ \psi \circ S(x,y) =   \bigl( \tau(x) - y,x \bigr).
\end{equation}
Here, $S$ denotes as before the involution $(x,y)\mapsto (y,x)$.
Note also that the second assumption on $\sigma=\tau^{-1}$ implies that the function $y\mapsto \theta \,\tau(y) - y$ is bounded.

\begin{thm}
\label{t:lin-gen}
Assume that $\psi_1,\psi_2\in \Psi_{\theta}$ with $0< \theta < \frac{1}{2}$. Then there exists a unique continuous map $h: \R^2 \rightarrow \R^2$ which is a bounded perturbation of the identity (that is, $h-\mathrm{id}$ is a bounded map) and satisfies
\begin{equation}
\label{e:lin2}
\psi_1 \circ h = h \circ \psi_2.
\end{equation}
Moreover, $h$ is an orientation preserving $S$-equivariant homeomorphism.
\end{thm}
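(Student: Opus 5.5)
The natural approach is to reduce \eqref{e:lin2} to a scalar second-order functional equation, and then solve it by a contraction argument, following the usual proof of the Hartman--Grobman theorem. The map $\psi_i(x,y)=(y,\tau_i(y)-x)$ is the first-order form of the recursion $u(n+1)+u(n-1)=\tau_i(u(n))$. So I look for $h$ of the form
\[
h(x,y)=\bigl(k(\psi_2^{-1}(x,y)),\,k(x,y)\bigr),
\]
where $k:\R^2\to\R$ is continuous. More precisely, I want $h(z)$ to be the pair $(U(0),U(1))$ of an orbit of $\psi_1$ that shadows the $\psi_2$-orbit of $z$. Writing $z_n:=\psi_2^n(z)$ and $z_n=(u_2(n-1),u_2(n))$, the conjugacy \eqref{e:lin2} becomes the following condition: the sequence $U(n):=k(z_n)$ satisfies
\[
U(n+1)+U(n-1)=\tau_1(U(n)) \quad \text{for all } n,
\]
and $U-u_2$ is bounded uniformly in $z$.

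To solve this I invert $\tau_1$. The equation is equivalent to
\[
U(n)=\sigma_1\bigl(U(n+1)+U(n-1)\bigr).
\]
I write $\sigma_1(t)=\theta t+\beta_1(t)$ with $\beta_1$ bounded. The unknown is $w=k-\pi_2$, where $\pi_2(x,y)=y$. For $u_2$ we have the analogous identity $u_2(n)=\sigma_2(u_2(n+1)+u_2(n-1))$, so
\[
\theta\bigl(u_2(n+1)+u_2(n-1)\bigr)=u_2(n)-\beta_2(\cdot).
\]
Substituting, the equation for $w$ takes the form
\[
w=T(w), \qquad T(w)(z):=\sigma_1\bigl(u_2(1)+u_2(-1)+w(\psi_2 z)+w(\psi_2^{-1}z)\bigr)-\pi_2(z).
\]
Here $T$ maps the Banach space $C_b(\R^2)$ to itself: using the boundedness of $\sigma_1-\theta\,\mathrm{id}$ and the identity for $u_2$ above, $T(w)$ differs from a bounded function by at most $\theta$ times the sup of $|w(\psi_2 z)+w(\psi_2^{-1}z)|$. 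Moreover $T$ is Lipschitz with constant $2\theta<1$, because $\sigma_1$ is $\theta$-Lipschitz. The Banach fixed point theorem then gives a unique bounded continuous $w$.

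I then set $k:=\pi_2+w$ and define $h$ as above. Conversely, any bounded perturbation $h$ of the identity solving \eqref{e:lin2} has $\pi_2\circ h - \pi_2$ equal to a fixed point of $T$; this is how uniqueness follows.

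For the homeomorphism property, I exchange the roles of $\psi_1$ and $\psi_2$. This gives $h'$ with $\psi_2\circ h'=h'\circ\psi_1$. Then $h\circ h'$ and $h'\circ h$ are bounded perturbations of the identity conjugating $\psi_1$ to itself and $\psi_2$ to itself, respectively. By the uniqueness statement applied with $\psi_1=\psi_2$, both are the identity.

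$S$-equivariance follows from uniqueness as well. By \eqref{e:lainve}, $S\circ h\circ S$ satisfies $\psi_1^{-1}\circ(ShS)=(ShS)\circ\psi_2^{-1}$, and hence also \eqref{e:lin2}; it is again a bounded perturbation of the identity, so it equals $h$.

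Orientation preservation holds because $h$ is a homeomorphism at bounded distance from the identity: on large circles, $h$ and $\mathrm{id}$ are homotopic through maps avoiding the origin, so $h$ has degree $1$.

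The step I expect to be the main obstacle is verifying that $T$ maps $C_b$ into $C_b$. This requires bounding the gap between $\sigma_1(u_2(1)+u_2(-1))$ and $u_2(0)=\sigma_2(u_2(1)+u_2(-1))$ by $\|\beta_1-\beta_2\|_\infty$, and handling correctly the term $\theta(w(\psi_2z)+w(\psi_2^{-1}z))$ produced by the splitting $\sigma_1=\theta\,\mathrm{id}+\beta_1$. The continuity of $T(w)$ follows directly from the continuity of $\psi_2^{\pm1}$.
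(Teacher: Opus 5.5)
Your proposal is correct and follows essentially the same route as the paper. The ansatz $h_1=h_2\circ\psi_2^{-1}$ and your map $T$ coincide with the paper's reduction and its map $F$, because $u_2(1)+u_2(-1)=\tau_2(y)$; in both, $T$ is a $2\theta$-contraction on $C_b(\R^2)$, and uniqueness then gives invertibility (by swapping $\psi_1,\psi_2$) and $S$-equivariance. Your degree argument for orientation preservation supplies a detail the paper only asserts.
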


\begin{proof}
For $j\in \{1,2\}$, we set
\[
\psi_j(x,y) = \bigl( y, \tau_j(y) - x \bigr), \qquad \sigma_j:= \tau_j^{-1}.
\]
Writing $h$ componentwise as $h=(h_1,h_2)$, \eqref{e:lin2} is equivalent to
\[
h_2 = h_1\circ \psi_2, \qquad \tau_1 \circ h_2 - h_1 = h_2 \circ \psi_2.
\]
and hence to
\begin{eqnarray}
\label{e:lin3}
h_1 &=& h_2\circ \psi_2^{-1}, \\
\label{e:lin4}
\tau_1 \circ h_2 &=& h_2 \circ \psi_2 + h_2 \circ \psi_2^{-1}.
\end{eqnarray}
Writing
\[
h_2(x,y) = y + k_2(x,y),
\]
where $k_2$ belongs to the space $C_b(\R^2)$ of continuous and bounded real valued functions on $\R^2$, \eqref{e:lin4} can be rewritten as
\[
\tau_1(y + k_2(x,y)) = \tau_2(y) - x + k_2 \circ \psi_2(x,y) + x + k_2 \circ \psi_2^{-1}(x,y),
\]
which is equivalent to
\[
k_2(x,y) = \sigma_1 \bigl( \tau_2(y)  + k_2 \circ \psi_2(x,y) + k_2 \circ \psi_2^{-1}(x,y) \bigr) - y.
\]
Therefore, \eqref{e:lin4} is equivalent to the assertion that $k_2$ is a fixed point of the map $F: C_b(\R^2) \rightarrow C_b(\R^2)$ mapping $k\in C_b(\R^2)$ to the function
\[
(x,y)\mapsto \sigma_1 \bigl( \tau_2(y)  + k \circ \psi_2(x,y) + k \circ \psi_2^{-1}(x,y) \bigr) - y.
\]
The map $F$ clearly preserves continuity, and it also preserves also boundedness because the bounds
\[
|\sigma_1(x) - \theta x | \leq c_1, \qquad |\theta \, \tau_2(y) - y | \leq c_2, \qquad \forall x,y\in \R,
\]
imply
\[
|F(k)(x,y)| \leq c_1 + |\theta\, \tau_2(y) + \theta \, k\circ \psi_2(x,y) + \theta \, k \circ \psi_2^{-1}(x,y) - y | \leq c_1+c_2+2\theta\|k\|_\infty,
\]
for every $(x,y)\in \R^2$. This proves that $F$ maps $C_b(\R^2)$ to $C_b(\R^2)$. Moreover, using the fact that $\sigma_1$ is $\theta$-Lipschitz continuous we find
\[
\|F(k) - F(k')\|_{\infty} \leq 2\theta \|k - k\|_{\infty} , \qquad \forall k,k'\in C_b(\R^2),
\]
and the assumption $\theta< \frac{1}{2}$ guarantees that $F$ is a contraction on the Banach space $(C_b(\R^2),\|\cdot\|_{\infty})$. Therefore, $F$ has a unique fixed point $k_2\in C_b(\R^2)$, and hence \eqref{e:lin4} is satisfied by a unique continuous function $h_2: \R^2 \rightarrow \R$ which is a bounded perturbation of the projection $(x,y) \mapsto y$. Together with \eqref{e:lainve}, this implies that \eqref{e:lin3} is uniquely solved by the continuous function
\[
h_1(x,y) = h_2( \tau_2(x)-y,x),
\]
which is a bounded perturbation of the projection $(x,y)\mapsto x$. We conclude that the conjugacy equation \eqref{e:lin2} has a unique solution $h: \R^2 \rightarrow \R^2$ which is continuous and is a bounded perturbation of the identity.

By inverting the roles of $\psi_1$ and $\psi_2$, we find a continuous map $\tilde{h}: \R^2 \rightarrow \R^2$ which is a bounded perturbation of the identity and satisfies
\begin{equation}
\label{e:lin5}
\psi_2 \circ \tilde{h} = \tilde{h} \circ \psi_1.
\end{equation}
From \eqref{e:lin2} and \eqref{e:lin5}, we deduce that the continuous map $h\circ \tilde{h}$, which is a bounded perturbation of the identity, satisfies
\[
\psi_1 \circ h \circ \tilde{h} = h \circ \tilde{h} \circ \psi_1.
\]
By the uniqueness part of the previous statement, we conclude that $h \circ \tilde{h}= \mathrm{id}$. Similarly, $\tilde{h}\circ h = \mathrm{id}$, and hence $h$ is a homeomorphism. Being a bounded perturbation of the identity, $h$ is orientation preserving.

Finally, \eqref{e:lin2} implies the identity $\psi_1^{-1} \circ h = h \circ \psi_2^{-1}$ which, together with \eqref{e:lainve} implies that the map $\widehat{h} := S \circ h \circ S$ satisfies
\[
\psi_1 \circ \widehat{h} = \psi_1 \circ S \circ h \circ S = S\circ \psi_1^{-1} \circ h \circ S = S \circ h \circ \psi_2^{-1} \circ S = S \circ h \circ S \circ \psi_2 = \widehat{h} \circ \psi_2.
\]
Since $\widehat{h}$ is a bounded perturbation of the identity, uniqueness implies that $\widehat{h}=h$ and hence $h\circ S = S \circ h$.
\end{proof}

\begin{rem}
{\rm We shall not need this fact here, but for the sake of completeness we observe that the homeomorphisms in the set $\Psi_{\theta}$ with $0< \theta < \frac{1}{2}$ are all topologically conjugate to the linear automorphism $(x,y) \mapsto (\frac{x}{2},2y)$ by a global conjugacy. Indeed, Theorem \ref{t:lin-gen} gives us a global conjugacy between $\psi\in \Psi_{\theta}$ and the linear automorphism $(x,y) \mapsto (y,\frac{y}{\theta} - x)$, which belongs to $\Psi_{\theta}$ as well. This automorphism is diagonalizable on the reals with eigenvalues $\lambda$ and $\frac{1}{\lambda}$, where $\lambda>1$, and hence it is linearly conjugate to the automorphism $(x,y) \mapsto ( \frac{x}{\lambda}, \lambda y)$. The latter automorphism is topologically conjugate to the  linear automorphism $(x,y) \mapsto (\frac{x}{2},2y)$ by the conjugacy
\[
(x,y) \mapsto (|x|^{\alpha-1} x, |y|^{\alpha-1} y),
\]
where $\alpha:= \log_2 \lambda$. All the conjugacies involved here are $S$-equivariant.
}
\end{rem}

We conclude this section by proving Theorem \ref{t:linearization}. The map $\varphi$ from \eqref{e:mapphi} has the form
\[
\varphi(x,y) = \bigl( y, \zeta(y) - x \bigr),
\]
where $\zeta: \R \rightarrow \R$ is the continuous odd function
\[
\zeta(y) := |y|^{\frac{1}{d}-1} y - q y.
\]
This function satisfies
\[
\zeta'(y) \geq \frac{5}{2} \qquad \forall y\in [-y_0,y_0], \qquad \zeta'(\pm y_0) = \frac{5}{2},
\]
where 
\[
y_0:= \left( d \Bigl( \frac{5}{2} + q \Bigr) \right)^{\frac{d}{1-d}} \in (0,c) \subset (0,r),
\]
see \eqref{c} and Theorem \ref{t:stable}, and we agree that $\zeta'(0)=+\infty> \frac{5}{2}$. Let $\tau: \R \rightarrow \R$ be the unique continuous odd function such that $\tau=\zeta$ on $[-y_0,y_0]$ and $\tau$ is affine linear with slope $\frac{5}{2}$ on $(-\infty,-y_0]$ and on $[y_0,+\infty)$. Then $\tau$ is a homeomorphism and its inverse $\sigma$ is $\frac{2}{5}$-Lipschitz continuous and such that the function $x\mapsto \sigma(x) - \frac{2}{5} x$ is bounded. Therefore, the homeomorphism
\[
\psi: \R^2 \rightarrow \R^2, \qquad \psi(x,y) := \bigl( y , \tau(y) - x \bigr),
\]
belongs to $\Psi_{\frac{2}{5}}$. By Theorem \ref{t:lin-gen}, there exists an orientation preserving homeomorphism $\tilde{h}: \R^2 \rightarrow \R^2$ which is a bounded perturbation of the identity such that
\[
\psi \circ \tilde{h} = \tilde{h} \circ L,
\]
where $L\in \Psi_{\frac{2}{5}}$ is the linear automorphism
\[
L(x,y) := \Bigl( y, \frac{5}{2} y - x \Bigr), \qquad \forall (x,y) \in \R^2.
\]
This linear automorphism of $\R^2$ has eigenvectors $v:= (2,1)$ and $w:=(1,2)$ with
\[
L(v) = \frac{1}{2} v, \qquad L(w) = 2 w.
\]
Therefore, the stable and unstable sets of $0\in \R^2$ with respect to $L$ are the lines $\R v$ and $\R w$, respectively. Let $f:[-r,r]\rightarrow \R$ be the function from Theorem \ref{t:stable}, whose graph generates, by the action of positive powers of $\varphi^{-1}$, the stable set of $0$ with respect to $\varphi$. Note that $0<y_0<r$ and the graph of $f|_{[-y_0,y_0]}$ is contained in $[-y_0,y_0]\times [-y_0,y_0]$. Since $\psi=\varphi$ on $\R\times [-y_0,y_0]$, we have
\[ 
W^s(0;\psi) = \bigcup_{n\geq 0} \psi^{-n} \Bigl( \Graph ( f|_{[-y_0,y_0]} ) \Bigr).
\]
Being a conjugacy, $\tilde{h}$ maps $\R v$ onto $W^s(0;\psi)$ and $(0,+\infty)v$ onto either
\[
W^s_+ (0;\psi) := \bigcup_{n\geq 0} \psi^{-n} \Bigl( \Graph ( f|_{(0,y_0]} ) \Bigr),
\]
or 
\[
W^s_- (0;\psi) := \bigcup_{n\geq 0} \psi^{-n} \Bigl( \Graph ( f|_{[-y_0,0)} ) \Bigr).
\]
We claim that
\begin{equation}
\label{e:lato-giusto}
\tilde{h}\bigl( (0,+\infty) v \bigr) = W^s_+ (0;\psi).
\end{equation}
Indeed, assume for contradiction that
\begin{equation}
\label{e:lato-sbagliato}
\tilde{h}\bigl( (0,+\infty) v \bigr) = W^s_- (0;\psi).
\end{equation}
Since $\tilde{h}$ is a bounded perturbation of the identity, \eqref{e:lato-sbagliato} and the fact that $v$ has positive components imply that $W^s_- (0;\psi)$ contains points in the first quadrant $(0,+\infty)^2$. Since $f<0$ on $[-y_0,0)$,  $W^s_- (0;\psi)$ contains also points in the third quadrant $(-\infty,0)^2$. Therefore, the negative branch $W^s_- (0;\psi)$ meets the line $y=-x$ at some point $(x_0,-x_0)$ with $x_0\neq 0$. Since $\psi$ is an odd map, we have $W^s_+(0;\psi) = - W^s_-(0;\psi)$ and hence $W^s(0;\psi)$ contains also the point $(-x_0,x_0)$. Since $\psi^{-1} = S \circ \psi \circ S$, we have $W^u(0;\psi) = S(W^s(0;\psi))$, and we deduce that the points $(x_0,-x_0)$ and $(-x_0,x_0)$ belong to the intersection $W^s(0;\psi) \cap W^u(0;\psi)$. This is a contradiction because, being the images of $\R v$ and $\R w$ by the homeomorphism $\tilde{h}$, $W^s(0;\psi)$ and $W^u(0;\psi)$ meet only at $0$. This contradiction proves \eqref{e:lato-giusto}.

Using the $S$-equivariance of $\tilde{h}$, we find
\[
\tilde{h}\bigl( (0,+\infty) w \bigr) = W^u_+ (0;\psi) := \bigcup_{n\geq 0} \psi^{n} \Bigl( S(\Graph ( f|_{(0,y_0]} )) \Bigr) = S(W^s_+ (0;\psi) ).
\]
The linear automorphism $G:\R^2 \rightarrow \R^2$ mapping the vector $(1,0)$ to $v$ and the vector $(0,1)$ to $w$ is orientation preserving, $S$-equivariant, and satisfies
\[
L \circ G (x,y) = G\Bigl( \frac{x}{2} , 2y \Bigr), \qquad \forall (x,y)\in \R^2.
\]
The orientation preserving and $S$-equivariant homeomorphism $\widehat{h}:= \tilde{h} \circ G: \R^2 \rightarrow \R^2$ satisfies
\begin{eqnarray}
\label{e:lin6}
\psi \circ \widehat{h} (x,y) = \widehat{h} \Bigl( \frac{x}{2} , 2y \Bigr), \qquad \forall (x,y)\in \R^2, \\
\label{e:lin7}
\widehat{h}((0,+\infty)\times \{0\}) = W^s_+(0;\psi), \\
\label{e:lin8}
\widehat{h}(\{0\} \times (0,+\infty)) = W^u_+(0;\psi).
\end{eqnarray}
Since $\psi=\varphi$ on $\R\times [-y_0,y_0]$,  \eqref{e:lin6} implies
\[
\varphi \circ \widehat{h} (x,y) = \widehat{h} \Bigl( \frac{x}{2} , 2y \Bigr),
\]
for every $(x,y)$ in the set $\widehat{h}^{-1}(\R \times  [-y_0,y_0] )$.
Being a neighborhood of $0$, the latter set contains a set of the form $[-\delta,\delta] \times [-\frac{\delta}{2}, \frac{\delta}{2} ]$, for $\delta>0$ small enough. Setting
\[
h: [-1,1]^2 \rightarrow \R^2, \qquad h(x,y) := \widehat{h}(\delta x, \delta y), \qquad K:= h([-1,1]^2) = \widehat{h}([-\delta,\delta]^2),
\]
\eqref{e:lin6} implies that the homeomorphism $h: [-1,1]^2 \rightarrow K$, which is
orientation preserving and $S$-equivariant,  satisfies \eqref{e:congiu}. 

Moreover, up to the choice of a smaller $\delta$, \eqref{e:more} follows from \eqref{e:lin7} and \eqref{e:lin8}, and we may assume that $K\subset (-y_0,y_0)^2$. There remains to prove that the connected set $D:= h((0,1]^2)$ is contained in $(0,+\infty)^2$. The graph of $f|_{[-y_0,y_0]}$ is contained in $W^s(0;\psi) = \widehat{h}(\R\times \{0\})$, and its image by $S$ is contained in $W^u(0;\psi) = \widehat{h}(\{0\}\times \R)$. Since $\widehat{h}$ is injective, $D$ is disjoint from these two sets, and hence the continuous functions $(x,y)\mapsto x-f(y)$ and $(x,y)\mapsto y-f(x)$ do not vanish on $D$. By \eqref{e:more}, the closure of $D$ contains points of the form $(x,f(x))$ with $x>0$, at which the first function takes the positive value $x-f(f(x))$, and points of the form $(f(y),y)$ with $y>0$, at which the second function takes the positive value $y-f(f(y))$. Being connected, $D$ is then contained in the set
\[
\{(x,y)\in (-y_0,y_0)^2 \mid x>f(y), \; y> f(x) \}.
\]
This set is contained in $(0,+\infty)^2$. Indeed, if one of its points $(x,y)$ satisfies $x\leq 0$, then $f(y)<x\leq 0$ implies $y<0$, and from $f(x)<y<0$ and $f(y)<x\leq 0$ we obtain $|y|<|f(x)|\leq |x|<|f(y)|\leq |y|$, which is impossible. Therefore $x>0$, and $y>0$ by symmetry. This concludes the proof of Theorem \ref{t:linearization}.

\section{Homoclinic connection}
\label{s:homoconn}

We now discuss the scenario
\begin{equation}
\label{e:homoconn}
W^u(0) = W^s(0),
\end{equation}
which is excluded in the assumptions of Theorem \ref{t:coding-main}/Theorem \ref{t:coding}. We wish to show that \eqref{e:homoconn} can hold for at most finitely many values of $q$. 

We use the notation of Sections \ref{s:ifss} and \ref{s:duesol}. Moreover, we denote by
\[
W^s_+(0) := \bigcup_{n\geq 0} \varphi^{-n} \bigl( \Graph f|_{[0,r]} \bigr), \qquad W^u_+(0) := \bigcup_{n\geq 0} \varphi^{n} \bigl( S (\Graph f|_{[0,r]}) \bigr),
\]
the positive branches of the stable and unstable sets. The next lemma is well known in the smooth setting.

\begin{lem}
\label{l:transverse-distinct}
Assume that $W^s_+(0)$ and $W^u_+(0)$ have a topologically transverse intersection at a point $z\neq 0$. Then $W^s(0)\neq W^u(0)$.
\end{lem}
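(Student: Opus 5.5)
We argue by contradiction: assume $W^s(0)=W^u(0)$ and that $W^s_+(0)$ and $W^u_+(0)$ cross topologically transversally at some $z\neq 0$. The idea is that the positive branches are then forced to coincide near $z$, so they cannot cross there.

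The first step is to show that, under $W^s(0)=W^u(0)$, the set $W^u_+(0)$ is contained in $W^s_+(0)$. By Theorem \ref{t:stable} (iii) and the oddness of $f$, the stable set is the disjoint union $W^s(0)=W^s_+(0)\cup W^s_-(0)\cup\{0\}$, with $W^s_-(0)=-W^s_+(0)$. The analogous decomposition holds for $W^u(0)$, and both branches are connected. The set $W^u_+(0)\setminus\{0\}$ is a connected subset of $W^s(0)\setminus\{0\}$. I plan to check that $W^s_+(0)$ and $W^s_-(0)$ are relatively open and closed in $W^s(0)\setminus\{0\}$. For this I will use the local description: near $0$ the local stable manifold is $\Graph f|_{[-\rho,\rho]}$, which separates the two sides. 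Every point of $W^s_\pm(0)$ has a forward iterate in $\Graph f|_{(0,\rho]}$, respectively in $\Graph f|_{[-\rho,0)}$, and continuity of $\varphi^n$ gives openness. Granting this, $W^u_+(0)\setminus\{0\}$ lies in a single stable branch. Points of $W^u_+(0)$ close to $0$ lie in $S(\Graph f|_{(0,r]})\subset(0,+\infty)^2$. They are therefore in the first quadrant, which meets $W^s_-(0)$ only away from $0$. Hence the branch must be $W^s_+(0)$, so $W^u_+(0)\subset W^s_+(0)$. Reversing the roles of $s$ and $u$ via $S$ gives the opposite inclusion, and so $W^u_+(0)=W^s_+(0)$.

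The second step uses the definition of topologically transverse intersection adopted after Theorem \ref{t:omoclina1}. Choose $n$ so large that $z$ lies in the interior of both compact arcs $\alpha=\varphi^{-n}(\Graph f|_{[0,r]})$ and $\beta=\varphi^{n}(S(\Graph f|_{[0,r]}))$. Both arcs are contained in the common set $W:=W^s_+(0)=W^u_+(0)$. Take the local chart $h\colon U\to(-1,1)^2$ in which $\alpha\cap U$ is the horizontal axis and $\beta\cap U$ is the vertical axis.

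The third step is to show that near $z$ the set $W$ is a single arc, so it cannot contain the cross formed by the two axes. To see this, write $W$ as the increasing union of compact arcs $\varphi^{-m}(\Graph f|_{[0,r]})$, each of which is the injective continuous image of an interval. Since $W$ is an injectively immersed line, I need to exclude that $\alpha$ and $\beta$ are different portions of this line accumulating at $z$. Both $\alpha$ and $\beta$ are compact subarcs of the single compact arc $\gamma=\varphi^{-m}(\Graph f|_{[0,r]})$ for $m$ large. This uses the fact that $\beta$ is compact in $W$ and hence contained in a finite stage, which I would verify from the exhaustion. A point in the interior of an arc $\gamma$ has a neighborhood in $\gamma$ homeomorphic to an interval. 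Small connected pieces of $\alpha$ and $\beta$ through $z$ are subarcs of $\gamma$ containing $z$ in their interior, so one of them contains the other locally. This contradicts the fact that in the chart $h$ they meet only at $z$.

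The main obstacle is the compactness claim in the third step, namely that every compact subarc of $W$ lies in some finite stage $\varphi^{-m}(\Graph f)$. This can fail for general injective immersions. I would handle it by noting that $\beta$ consists of points whose forward orbits converge to $0$ uniformly. The uniformity follows from the fact that $\beta$ is $\varphi^n$ of a compact arc $S(\Graph f)$ whose backward orbit converges uniformly, combined with $\beta\subset W^s(0)$. Using the local stable manifold of Theorem \ref{t:stable} (iii), I would aim to show that $\varphi^{M}(\beta)\subset[-\rho,\rho]^2$ for one fixed $M$, which gives $\beta\subset\varphi^{-M}(\Graph f)$. The uniformity, if it is not automatic, can be obtained by a compactness argument on $\beta$ applied to the open sets $\varphi^{-k}(\mathrm{int}\,\text{neighborhoods})$.
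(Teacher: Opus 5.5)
Your proposal has a genuine gap. It sits where you place the ``main obstacle'', but it already breaks Step 1.

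\textbf{Step 1.} Both steps need the set-theoretic inclusion $W^u_+(0)\subset W^s(0)$ to respect the arc structures, and neither the topology of $\R^2$ nor compactness provides this. Continuity of $\varphi^n$ only tells you that for $q\in W^s(0)$ near $p$ the point $\varphi^n(q)$ is near $\Graph f|_{(0,\rho]}$. It does not put $\varphi^n(q)$ on $W^s_{\mathrm{loc},\rho}(0)$, which would require its whole forward orbit to stay in $[-\rho,\rho]^2$. In fact, under the lemma's hypothesis the clopen claim is false. In the chart $h$ of Theorem \ref{t:linearization}, the stable arc through $\varphi^{-m}(z)=h(0,y_1)$ has points on both sides of the vertical axis, by the crossing. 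Its images under $\varphi^{-k}$, which in the chart act as $(x,y)\mapsto(2x,y/2)$, are pieces of $W^s_+(0)$ that accumulate on $h((0,1]\times\{0\})$ and on $h([-1,0)\times\{0\})\subset W^s_-(0)$. So the branches are not separated in $W^s(0)\setminus\{0\}$, and the connectedness argument cannot give $W^u_+(0)\subset W^s_+(0)$.

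\textbf{Step 3.} The claim that $\beta$ lies in a single stage $\varphi^{-M}(\Graph f|_{[0,r]})$ is unproved, and your proposed route fails on three counts:
\begin{enumerate}[(i)]
\item Uniform backward convergence on $S(\Graph f)$ says nothing about forward orbits of points of $\beta$.
\item $\varphi^M(\beta)\subset[-\rho,\rho]^2$ for a single $M$ does not place $\varphi^M(\beta)$ in $W^s_{\mathrm{loc},\rho}(0)$.
\item The sets $\beta\cap\varphi^{-m}(\Graph f)$ are closed, not open, in $\beta$, so compactness yields nothing; Baire only gives one of them with nonempty interior.
\end{enumerate}
Nor can the claim follow from the mere fact that $W^s_+(0)$ is an increasing union of arcs. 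An injective continuous image of $(0,+\infty)$ can contain a cross: take a curve that passes horizontally through $z$ and tends to $z$ vertically as $t\to 0$ and as $t\to+\infty$. As Step 1 shows, $W^s_+(0)$ really does accumulate on itself once there is a crossing. Ruling out the cross picture is exactly the content of the lemma.

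\textbf{The missing idea.} You need to exploit the crossing dynamically. The paper does this with Theorem \ref{t:horseshoe}, taking $U_1$ a neighborhood of $z$ with $0\notin\overline{U_1}$, $U_0=K_+$ and $V=\R^2$. The point $p\in\Lambda$ coded by $\omega_p(j)=0$ for $j\leq 0$ and $\omega_p(j)=1$ for $j\geq 1$ has backward orbit converging to $0$, as in the proof of (iv). Its forward orbit returns to $U_1$ infinitely often. Hence $p\in W^u(0)\setminus W^s(0)$ directly, with no contradiction argument needed.
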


\begin{proof}
We apply Theorem \ref{t:horseshoe} to the homoclinic point $z$ of $\varphi$, with a neighborhood $U_1$ of $z$ not having $0$ in its closure and with $U_0=K_+$, $V=\R^2$.
By (ii), there is $p\in \Lambda$ which is coded by the bi-infinite sequence $\omega_p(j)=0$ for $j\leq 0$, $\omega_p(j)=1$ for $j\geq 1$. The proof of (iv) shows that the backward orbit of $p$ converges to $0$. Since $\overline{U_1}$ is disjoint from $0$, the  forward orbit of $p$ does not converge to $0$. Hence $p\in W^u(0)\setminus W^s(0)$.
\end{proof}

In the next two lemmata and in the following proposition, we make the dependence on $q$ explicit by writing $F_q$, $f_q$, $g_q$, and so on. The convergence radius of the series 
\[
F_q(x) = \sum_{n\geq 0} a_n(q) x^n
\]
from Lemma \ref{l:formal} is denoted by $r(q)$. Since each $a_n$ is a non-decreasing function of $q$, the function $r$ is non-increasing.

\begin{lem}
\label{l:r-continuous}
The function $q\mapsto r(q)$ is continuous on $[0,+\infty)$.
\end{lem}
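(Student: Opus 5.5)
The plan is to exploit the structure of the coefficients $a_n(q)$ as functions of $q$. First I will check, by induction on the recursive determination of the coefficients of $F_q$ in Lemma \ref{l:formal}, that each $a_n(q)$ is a polynomial in $q$ with nonnegative coefficients. I will also bound its degree in $q$. Every occurrence of $q$ in $\mathscr{H}(F)=(x+qF+F\circ F)^d$ comes with a factor $F=O(x^d)$ replacing a factor $x$, so each power of $q$ raises the $x$-degree by at least $d-1$. This should give
\[
\deg_q a_n \le \frac{n}{d-1}\le n .
\]
Since $r$ is already known to be non-increasing, continuity reduces to excluding jumps, i.e.\ to a lower bound on $r(q')$ for $q'$ slightly larger than $q$.

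For $q>0$ and $\lambda\ge 1$, the nonnegativity of the coefficients of the polynomial $a_n$ and the degree bound give $a_n(\lambda q)\le \lambda^{n} a_n(q)$. Cauchy--Hadamard then yields
\[
\frac{r(q)}{\lambda}\le r(\lambda q)\le r(q).
\]
Letting $\lambda\downarrow 1$ gives right continuity at $q$. Applying the same inequality at the point $q/\lambda$ gives $r(q)\le r(q/\lambda)\le \lambda r(q)$, which is left continuity. Hence $r$ is continuous on $(0,+\infty)$.

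The main obstacle is right continuity at $q=0$, where the scaling trick gives nothing. Here I would use the comparison principle from the proof of Lemma \ref{l:formal}: if $G\in\mathfrak m$ has nonnegative coefficients and $\mathscr{H}_q(G)\le G$ coefficientwise, then the monotone iteration from $0$ stays below $G$, so $F_q\le G$ and $r(q)\ge$ the radius of convergence of $G$. My first attempt is a rescaled majorant $G=\lambda F_0(\mu x)$ with $\lambda,\mu$ close to $1$, chosen to depend on $q$. The inequality $F_0(sy)\le s^dF_0(y)$ for $s\le 1$ handles the composition term. The difficulty is the new term $q\lambda F_0(\mu x)$, which must be absorbed, and I expect this to require a slightly modified majorant. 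A candidate is to use $F_{0}$ for a perturbed parameter in the $x$-variable (say, replacing $x$ by $(1+\epsilon)x$ inside the bracket), taking $\epsilon\to0$ as $q\to 0$.

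If the majorant approach fails, I would fall back on the function side. For $x<r(0)$, the functions $f_q$ are the monotone limits of the polynomials $f_{q,n}$, and these depend continuously on $q$. I would try to show that the identity $f_q=(x+qf_q+f_q\circ f_q)^d$ together with $f_q(x)<x$ persists on $[0,r(0)-\delta]$ for small $q$. This would follow by a continuation argument analogous to Lemma \ref{l:u-glob}, and it would force $r(q)\ge r(0)-\delta$.
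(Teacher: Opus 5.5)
\textbf{Right continuity at $q=0$ is not established; the rest of your argument is correct.}

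Your treatment of $q>0$ is correct, and it takes a genuinely different route from the paper. The coefficients $a_n(q)$ are polynomials with nonnegative coefficients. Your degree bound also holds: it follows from $F=x^dG(x^{d-1})$, since in the equation for $G$ every $q$ comes with a factor $y$. Together these give the quantitative two-sided estimate $r(q)/\lambda\le r(\lambda q)\le r(q)$. The paper instead uses monotone convergence for left continuity, and the bound $\max g_q=r(q)$ from Addendum \ref{a:more} for right continuity.

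The gap is right continuity at $q=0$. The lemma asserts it, and Lemma \ref{l:analytic} uses it at $q_0=0$. None of your three sketches produces the upper bound on $F_q$ that is needed.

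\begin{itemize}
\item The coefficientwise majorant $\lambda F_0(\mu x)$ fails for every $\lambda,\mu$. By Lemma \ref{l:formal}, $F_0=x^dH(x^{d^2-1})$ lives in degrees $d+k(d^2-1)$, so it has no $x^{2d-1}$ term. But $\mathscr{H}_q(\lambda F_0(\mu x))=(x+q\lambda\mu^d x^d+\dots)^d$ has coefficient $dq\lambda\mu^d>0$ there.
\item The ``perturbed parameter'' variant (solutions of $\hat F=((1+\epsilon)x+\hat F\circ\hat F)^d$ and their rescalings) has the same sparse support, hence the same obstruction.
\item The fallback is not yet an argument. Continuity of $f_{q,n}$ in $q$ only makes $q\mapsto f_q(x)=\sup_n f_{q,n}(x)$ lower semicontinuous, which is the wrong direction. ``Persistence'' of the functional equation on $[0,r(0)-\delta]$ presupposes that $F_q$ converges there, which is exactly what must be proved. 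The continuation in Lemma \ref{l:u-glob} takes place inside an already known disc of convergence, so it is not analogous.
\end{itemize}

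The missing idea is to compare pointwise instead of coefficientwise. Suppose $g:[0,x_0)\to[0,x_0)$ satisfies $(x+qg(x)+g(g(x)))^d\le g(x)$. The $F_{q,n}$ are polynomials with nonnegative coefficients, hence nondecreasing on $[0,\infty)$. So induction gives $F_{q,n}(F_{q,n}(x))\le F_{q,n}(g(x))\le g(g(x))$, and therefore $F_{q,n}\le g$ on $[0,x_0)$ for every $n$. It follows that $F_q\le g<\infty$ there, and $r(q)\ge x_0$.

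Now take $\lambda:=(1+q)^{-\frac{d}{d-1}}$ and $g(x):=\lambda f_0(x/\lambda)$ on $[0,\lambda r(0))$; note that $g(x)\le x$. Set $y=x/\lambda$ and use $f_0(y)\le y$. This gives $x+qg(x)+g(g(x))\le \lambda(1+q)y+\lambda f_0(f_0(y))\le \lambda^{\frac1d}\bigl(y+f_0(f_0(y))\bigr)$, and the $d$-th power of the right-hand side is $\lambda f_0(y)=g(x)$.

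Hence $r(q)\ge (1+q)^{-\frac{d}{d-1}}r(0)$, which together with monotonicity gives right continuity at $0$. The step that coefficientwise comparison cannot perform is absorbing the term $q\lambda f_0(y)$ into the linear term, which is possible only pointwise.
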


\begin{proof}
Let $q_0>0$ and set
\begin{equation}
\label{e:r-}
r_-(q_0):=\lim_{q\uparrow q_0} r(q) = \inf_{q\in [0,q_0)} r(q)  \geq r(q_0). 
\end{equation}
For $q\in [0,q_0)$ and  $x\in (0,r_-(q_0))\subset (0,r(q))$ we have $F_q(x)<x$, so by monotone convergence $F_{q_0}(x) = \sup_{q<q_0} F_q(x) \leq x$. Therefore, $F_{q_0}$ converges on $[0,r_-(q_0))$ and hence $r_-(q_0)\leq r(q_0)$. Together with \eqref{e:r-}, this implies that $r_-(q_0)=r(q_0)$ and hence $r$ is left-continuous at $q_0$.

Now let $q_0\in [0,+\infty)$ and denote by $r_+(q_0)$ the right limit of $r$ at $q_0$, so that $r(q)\leq r_+(q_0)\leq r(q_0)$ for every $q>q_0$. Assume by contradiction that $r_+(q_0)<r(q_0)$. Since $g_{q_0}>\mathrm{id}$ on $(0,s(q_0)]$ and $g_{q_0}(s(q_0))=r(q_0)$, we can find $y_1\in [0,r_+(q_0))$ such that $g_{q_0}(y_1)>r_+(q_0)$. For $q>q_0$ close enough to $q_0$ we have $y_1<r(q)$, and since $0\leq a_k(q)\leq a_k(q')$ for $q_0\leq q\leq q'$, dominated convergence gives $F_q(y_1)\rightarrow F_{q_0}(y_1)$, and hence $g_q(y_1)\rightarrow g_{q_0}(y_1)$, for $q\downarrow q_0$. Therefore, $g_q(y_1)>r_+(q_0) \geq r(q)$ for some $q>q_0$, contradicting the fact that $r(q)$ is the maximum of $g_q$ on $[0,r(q)]$ (Addendum \ref{a:more}).
\end{proof}

Recall from the proof of Theorem \ref{t:omoclina1} that $g_q$ has a unique fixed point $x_*(q)$ in $(0,r(q))$.

\begin{lem}
\label{l:analytic}
The functions $q\mapsto x_*(q)$ and $q\mapsto g_q'(x_*(q))$ are analytic on $[0,+\infty)$, meaning that they extend to analytic functions on an open interval containing $[0,+\infty)$.
\end{lem}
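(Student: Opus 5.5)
The plan is to realize $x_*(q)$ as a zero of a real-analytic function of the pair $(x,q)$ and then apply the analytic implicit function theorem. The main difficulty is that $F_q$ is analytic only on $(-r(q),r(q))$, while $x_*(q)\in(\max\{s,c\},r)$ may lie beyond the range where $F_q$ converges well. I would therefore avoid evaluating $f_q$ at $x_*$ directly and instead use the dynamics to move the relevant point near the origin.

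\textbf{Step 1.} I would use the characterization of $x_*$ from the proof of Theorem \ref{t:omoclina1}. The symmetric homoclinic point is $z_0=(x_*,x_*)$ and satisfies $\varphi(z_0)=(x_*,f(x_*))$. Since $f(x_*)=g^{-1}$-type data, $f(x_*)\le s<r$, but $x_*$ itself may be close to $r$. To move away from $r$, I apply $\varphi$ repeatedly: $\varphi^{k}(z_0)=(f^{k-1}(x_*),f^k(x_*))$. Fix $k=2$, so that $y_1:=f(x_*)\in(0,s)$ lies strictly inside the domain of analyticity.

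Define the analytic map
\[
\Phi(y,q):=\varphi_q^{-1}\bigl(y,F_q(y)\bigr)=\bigl(y^{\frac1d}-qy-F_q(y),\,y\bigr)=(g_q(y),y)
\]
for $y>0$ and $|y|<r(q)$. Then $x_*(q)$ is characterized by the condition that $\varphi_q^{-1}(g_q(y),y)$ lies on the diagonal, with $y=f_q(x_*)$. Explicitly, this condition reads
\[
G(y,q):=g_q(y)^{\frac1d}-q\,g_q(y)-2y=0,
\]
and then $x_*=g_q(y)$.

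\textbf{Step 2.} Joint analyticity of $(y,q)\mapsto F_q(y)$ near $(y_1(q_0),q_0)$ requires that the coefficients $a_n(q)$ be polynomials in $q$. This follows from the recursion in Lemma \ref{l:formal}, since $\mathscr H$ is polynomial in $q$. It also requires a uniform bound $\sum |a_n(q)|\rho^n<\infty$ for complex $q$ near $q_0$ and some $\rho>y_1$. I would obtain this bound from $|a_n(q)|\le a_n(|q|)$ together with the continuity of $r$ from Lemma \ref{l:r-continuous}. Because $r(q_0+\epsilon)>y_1(q_0)+\epsilon$ for small $\epsilon$, the series converges for complex $q$ with $|q|\le q_0+\epsilon$ and $|y|<\rho$. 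This yields joint analyticity. The terms $g^{1/d}$ and $y^{1/d}$ are analytic because the arguments are positive.

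\textbf{Step 3.} The main obstacle is nondegeneracy, $\partial_yG\ne0$. For this I would use transversality: $g_q'(x_*)<0$ because $x_*>s$. Differentiating the identity $\psi(z_0)=z_0$ along the unstable arc shows that $\partial_y G$ vanishes exactly when the curve $y\mapsto\varphi^{-1}(g(y),y)$ is tangent to the diagonal. That curve is $S$ applied to $\mathrm{graph}\,g$, or equivalently a piece of the unstable arc through $z_0$, and by the proof of Theorem \ref{t:omoclina1} this arc is transverse to the diagonal. Hence $\partial_yG\neq0$. The analytic implicit function theorem then gives analytic $y_1(q)$ near $q_0$, and hence analytic $x_*(q)=g_q(y_1(q))$. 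At $q_0=0$ the same argument works for $q$ in a complex neighbourhood of $0$, which gives the extension to an open interval.

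\textbf{Step 4.} Finally, I would express $g_q'(x_*(q))$ through $f_q'(y_1(q))$. Since $g_q'(x_*)=(x_*^{1/d})'-q-f_q'(x_*)$, and $f_q'(x_*)=1/g_q'(f_q(x_*))$ by the inverse relation (valid where $g=f^{-1}$ away from $r$), one can alternatively use the identity obtained by differentiating $f(g(y))=y$ near $y_1$. Either way, $g_q'(x_*)$ is a composition of jointly analytic functions with $y_1(q)$, so it is analytic in $q$.
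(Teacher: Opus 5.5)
\textbf{The defining equation in Step 1 is wrong, so the argument fails as written.} Since
\[
\varphi_q^{-1}\bigl(g_q(y),y\bigr)=\bigl(g_q(y)^{\frac1d}-q\,g_q(y)-y,\;g_q(y)\bigr),
\]
this point lies on the diagonal exactly when $\tilde G(y,q):=g_q(y)^{\frac1d}-(q+1)\,g_q(y)-y=0$. Your $G$ instead sets the first component equal to $y$. That is, it encodes $\varphi_q^{-1}(w)=S(w)$ for $w=(g_q(y),y)$, which is the symmetry condition $\varphi(z)=S(z)$ of Theorem~\ref{t:omoclina2}, not the one defining $x_*$. Concretely, $x_*^{1/d}-qx_*=g_q(x_*)+f_q(x_*)=x_*+f_q(x_*)$, so $G(f_q(x_*),q)=x_*-f_q(x_*)>0$. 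Hence the implicit function theorem cannot even be started at your base point.

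With $\tilde G$ the nondegeneracy becomes explicit. Using $g_q'(y_1)=1/f_q'(x_*)$ and $\frac1d x_*^{\frac1d-1}-q=g_q'(x_*)+f_q'(x_*)$,
\[
\partial_y\tilde G(y_1,q)=\frac{g_q'(x_*)-1}{f_q'(x_*)}<0 .
\]
This is the correct form of your Step 3. Note that the curve $y\mapsto\varphi_q^{-1}(g_q(y),y)$ parametrizes $S(\mathrm{graph}\,g_q)\subset W^s(0)$, not an unstable arc, and the involution $\psi$ plays no role here.

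You also skip the identification step. To conclude $g_q(y(q))=x_*(q)$ for the implicit solution $y(q)$, you need $f_q(g_q(y(q)))=y(q)$. That identity holds only for $y\le s(q)$; just past $s(q)$, where $g_q$ attains its maximum, $f_q\circ g_q\neq\mathrm{id}$. You can get $y(q)<s(q)$ from $\partial_y g_q(y(q))>0$ (by joint continuity) together with the strict concavity of $g_q$. Uniqueness of the fixed point of $g_q$ in $(0,r(q)]$ then finishes.

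\textbf{Once repaired, your argument is a roundabout version of the paper's, and the detour is unnecessary.} Your worry that $F_q$ might not converge well at $x_*$ is unfounded. You note yourself that $x_*(q_0)$ lies in the open interval $(\max\{s,c\},r(q_0))$. The continuity of $r$, which you already use at $y_1$, then gives $r(q_0+\rho)>x_*(q_0)$ for small $\rho$.

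The same bound $|a_k(q)|\le a_k(q_0+\rho)$ for $|q-q_0|<\rho$ makes $(q,x)\mapsto g_q(x)-x$ analytic near $(q_0,x_*(q_0))$, with $x$-derivative $g_{q_0}'(x_*(q_0))-1<0$. That is exactly the paper's proof. There the implicit function theorem directly produces a fixed point of $g_q$ in $(0,r(q))$, which is $x_*(q)$ by uniqueness. Analyticity of $q\mapsto g_q'(x_*(q))$ is then immediate, with no need for the inverse-function manipulation of your Step 4. Passing to $y_1=f_q(x_*)$ gains nothing and costs you the corrected equation and the extra identification step.
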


\begin{proof}
Fix $q_0\geq 0$. For $q\in \C$ with $|q-q_0|<\rho$ we have $|a_k(q)|\leq a_k(|q|) \leq a_k(q_0+\rho)$, so $(q,x)\mapsto F_q(x)$ is analytic on $\{|q-q_0|<\rho\}\times \{|x|<r(q_0+\rho)\}$. By Lemma \ref{l:r-continuous}, $r(q_0+\rho)>x_*(q_0)$ for $\rho$ small enough. Since $x_*(q_0)>0$, the function $(q,x)\mapsto g_q(x)-x$ is then analytic in a neighborhood of $(q_0,x_*(q_0))$, and its partial derivative in $x$ at this point is $g_{q_0}'(x_*(q_0))-1<0$. By the implicit function theorem, the equation $g_q(x)=x$ has a unique solution $x$ near $x_*(q_0)$, depending analytically on $q$ near $q_0$. For $q\geq 0$ this solution belongs to $(0,r(q))$, by Lemma \ref{l:r-continuous}, and hence it is $x_*(q)$, the unique fixed point of $g_q$ in $(0,r(q)]$. The analyticity of $q\mapsto g_q'(x_*(q))$ follows.
\end{proof}

We can now show that \eqref{e:homoconn} can occur for at most finitely many values of $q$.

\begin{prop}
\label{p:homoconn}
For every $d\geq 2$ the (possibly empty) set of $q\in [0,+\infty)$ for which $W^u(0)=W^s(0)$ is finite and contained in the interval $[0,\frac{d+2}{d-1})$.
\end{prop}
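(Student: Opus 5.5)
The idea is to detect the coincidence $W^u(0)=W^s(0)$ through the symmetric homoclinic point $z_0=(x_*(q),x_*(q))$ of Theorem \ref{t:omoclina1}. At $z_0$ the analytic arc $\alpha_q=\Graph g_q|_{(0,r(q))}\subset W^u(0)$ meets its mirror image $\beta_q=S(\alpha_q)\subset W^s(0)$. The slope of $\alpha_q$ at $z_0$ is $g_q'(x_*(q))$, and the slope of $\beta_q$ is $1/g_q'(x_*(q))$.

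If $W^s(0)=W^u(0)$, then $\alpha_q$ and $\beta_q$ share a subarc through $z_0$, so the two tangent lines coincide. Since $g_q'(x_*(q))<0$, this forces
\[
g_q'(x_*(q))=-1.
\]
Conversely, if $g_q'(x_*(q))\neq -1$, the arcs meet transversally at $z_0$, which is a point of $W^s_+(0)\cap W^u_+(0)$. Lemma \ref{l:transverse-distinct} then gives $W^s(0)\neq W^u(0)$. So the exceptional set is contained in the zero set of the function
\[
\Phi(q):=g_q'(x_*(q))+1 .
\]
By Lemma \ref{l:analytic}, $\Phi$ is analytic on an open interval containing $[0,+\infty)$. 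Finiteness will therefore follow from two facts: $\Phi$ is not identically zero, and its zeros cannot accumulate at $+\infty$. Both follow if $\Phi(q)\neq 0$ for every $q\ge \frac{d+2}{d-1}$, because then the zeros of the nontrivial analytic function $\Phi$ lie in the compact interval $[0,\frac{d+2}{d-1}]$, hence are finite, and in fact lie in $[0,\frac{d+2}{d-1})$.

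The main step is to show $g_q'(x_*)<-1$ for $q\ge\frac{d+2}{d-1}$. Using $g_q(y)=y^{1/d}-qy-f_q(y)$, we have
\[
g_q'(x_*)=\tfrac1d x_*^{\frac1d-1}-q-f_q'(x_*).
\]
The fixed point satisfies $x_*^{1/d}=(q+1)x_*+f_q(x_*)$. Hence
\[
\tfrac1d x_*^{\frac1d-1}=\tfrac1d\Bigl(q+1+\tfrac{f_q(x_*)}{x_*}\Bigr),
\]
and this is at most $\frac{q+2}{d}$, since $f_q(x)<x$.

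For the term $f_q'(x_*)$, convexity and $f_q(0)=0$ give $f_q'(x_*)\ge f_q(x_*)/x_*\ge 0$. Putting these together,
\[
g_q'(x_*)\le \frac{q+2}{d}-q=\frac{2-(d-1)q}{d}.
\]
The right-hand side is $<-1$ exactly when $(d-1)q>d+2$, that is, $q>\frac{d+2}{d-1}$.

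The endpoint $q=\frac{d+2}{d-1}$ needs strictness. This comes from $f_q(x_*)<x_*$ being strict, which makes the bound on $\frac1d x_*^{\frac1d-1}$ strict, together with $f_q'(x_*)>0$. So $g_q'(x_*)<-1$ holds for all $q\ge\frac{d+2}{d-1}$, and hence $\Phi<0$ there.

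I expect the delicate part to be justifying the first step: that $W^s(0)=W^u(0)$ really forces $\alpha_q$ and $\beta_q$ to coincide near $z_0$, rather than $z_0$ lying on some other branch. I plan to use the analyticity argument from the proof of Theorem \ref{t:omoclina1}: there, a transverse intersection was shown to hold whenever the arcs have no common subarc. Equal sets near $z_0$ then give a common subarc, and hence equal tangents.
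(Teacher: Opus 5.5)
Your proposal is correct and follows the paper's proof. Lemma \ref{l:transverse-distinct} forces $g_q'(x_*(q))=-1$ whenever $W^s(0)=W^u(0)$. The bound $\frac1d x_*^{\frac1d-1}<\frac{q+2}{d}$, which you derive from the fixed-point identity and $f_q(x_*)<x_*$ (the paper uses the equivalent fact $x_*>c_q$), together with $f_q'\ge 0$ gives $g_q'(x_*)<-1$ for $q\ge\frac{d+2}{d-1}$, and Lemma \ref{l:analytic} then yields finiteness. The step you flag as delicate is not needed: your contrapositive argument via Lemma \ref{l:transverse-distinct} already shows that $W^s(0)=W^u(0)$ implies $g_q'(x_*(q))=-1$, exactly as in the paper.
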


\begin{proof}
By Lemma \ref{l:transverse-distinct}, a necessary condition for \eqref{e:homoconn} to hold is that the intersection of $W^s_+(0)$ and $W^u_+(0)= S(W^s_+(0))$ at $(x_*(q),x_*(q))$ is not transverse, meaning that $g_q'(x_*(q))=-1$, see the proof of Theorem \ref{t:omoclina1}. Since $x_*(q) > c_q = (q+2)^{-\frac{d}{d-1}}$ and $f'_q\geq 0$, we have the bound
\[
g'_q(x_*(q))= \frac{1}{d} x_*(q)^{\frac{1}{d}-1} - q - f'_q(x_*(q)) <  \frac{1}{d} c_q^{\frac{1}{d}-1} - q = \frac{1}{d} (q+2) - q.
\]
We deduce that if $q\geq \frac{d+2}{d-1}$ then $g'_q(x_*(q))<-1$, and hence the analytic function $q\mapsto g'_q(x_*(q))$ can take the value $-1$ only at finitely many points which are contained in the interval $[0,\frac{d+2}{d-1})$. The set of values $q$ for which \eqref{e:homoconn} holds is a subset of this finite set.
\end{proof}

The above proposition concludes the proof of Theorem \ref{t:omocline-main} from the Introduction.

\medskip

We do not know whether \eqref{e:homoconn} can actually hold for some values of $q$. We conclude this section by a brief discussion on this.

One can show that \eqref{e:homoconn} is equivalent to the fact that $g$ restricts to an involution on the interval $[s,r]$, that is, $g(r)=s$ and $g\circ g=\mathrm{id}$ on $[s,r]$. In this case, $W^s(0)\setminus \{0\} = W^u(0)\setminus \{0\}$ consists of two homoclinic connections, contained in $(0,+\infty)^2$ and $(-\infty,0)^2$. Proposition \ref{p:gibbs=homo} gives us a bijection between $\mathscr{G}(d,Q)$ and the first of these two homoclinic connections, which is homeomorphic to $\R$. Therefore, $\mathscr{G}(d,Q)/\Z$ is in bijection with a circle.

 For $d=2$, numerical investigations suggest that there are exactly four values of $q$ for which the intersection of $W^s(0)$ with $W^u(0)$ at $z_0=S(z_0)$ from
Theorem \ref{t:omoclina1} is not transverse ($q_1\simeq 0.105272120$, $q_2 \simeq 0.584414384$, $q_3 \simeq 1.178272946$, $q_4 \simeq 1.801948414$). There are also exactly four values of $q$ for which $g(r)=s$, corresponding to Case 2 in the proof of Theorem \ref{t:omoclina2} in which we have a non-transverse homoclinic intersection at $(s,r)$, and these four values are extremely close to the other ones (difference of order of $10^{-7}$). For such values of $q$, $g$ is very close to be an involution on $[s,r]$ (the supremum norm of $g\circ g - \mathrm{id}$ on that interval is of the order between $10^{-11}$ and $10^{-10}$). Even if the two quadruples of values of $q$ are very close, these numerics suggest that they do not coincide, thus excluding the possibility that \eqref{e:homoconn} can hold for some values of $q$. 

For higher values of $d$ there is a similar scenario, but the number of values of $q$ in which we see tangencies drops to three for $d=3$ and to two for all tested values of $d\geq 4$. The pairs of values of $q$ for which we have tangencies at the homoclinic points coming from either Theorem \ref{t:omoclina1} or \ref{t:omoclina2} separate as $d$ grows, making the case for the absence of homoclinic connections stronger for larger values of $d$.

Interval arithmetic is probably needed in order to confirm (or disprove) the above findings, and to decide whether the homoclinic connection scenario \eqref{e:homoconn} can ever occur.

\section{Interaction functions with bounded support}
\label{s:ifbs}

The equivalence between Gibbs measures in $\mathscr{G}(d,Q)$ and homoclinic points of a (somewhere non-smooth) symplectomorphism which we saw in Section \ref{s:ifss} for $Q$ supported in $\{-1,0,1\}$ generalizes to interaction functions $Q$ with arbitrary bounded support. In order to explain how this goes, we need to recall some facts from the theory of discrete Lagrangian systems. See \cite{ves91} and \cite{mawe01} for the proofs of the facts mentioned below and further references.

Given an integer $n\geq 1$, let $L:\R^{n+1} \rightarrow \R$, $L=L(x_0,x_1,\dots,x_n)$, be a smooth function, which we refer to as \textit{Lagrangian}. The  \textit{formal action} of a bi-infinite sequence $u: \Z \rightarrow \R$ is the expression
\[
\mathbb{A}(u) := \sum_{k\in \Z} L \bigl( u(k),u(k+1),\dots,u(k+n) \bigr).
\]
Although this is just a formal expression, because the above sum need not converge, the difference $\mathbb{A}(v)- \mathbb{A}(u)$ is a well-defined real number whenever $v-u$ has bounded support. A map $u: \Z \rightarrow \R$ is said to be \textit{stationary} for $\mathbb{A}$ if $0$ is a critical point of the real valued function
\[
s\mapsto \mathbb{A}(u+sw) - \mathbb{A}(u)
\]
for all $w: \Z \rightarrow \R$ with bounded support. The map $u$ is stationary for $\mathbb{A}$ if, and only if, it satisfies the following \textit{Euler--Lagrange equation}:
\begin{equation}
\label{e:EL}
\sum_{j=0}^n \partial_{x_j} L\bigl( u(k-j), \dots, u(k-j+n) \bigr) = 0 \qquad \forall k\in \Z.
\end{equation}
Assume that for every $(x_0,x_1,\dots,x_n)\in \R^{n+1}$ the functions
\begin{equation}
\label{e:twist}
t\mapsto \partial_{x_0} L (x_0,\dots,x_{n-1},t) \qquad \mbox{and} \qquad t\mapsto \partial_{x_n} L (t,x_1,\dots,x_n)
\end{equation}
are diffeomorphisms of $\R$. The assumption that the first function in \eqref{e:twist} is a diffeomorphism implies that \eqref{e:EL} can be rewritten as a scalar recursive equation of order $2n$ of the form
\begin{equation}
\label{e:rec}
u(k+n) = F\bigl( u(k-n) \dots, u(k+n-1) \bigr) \qquad \forall k\in \Z,
\end{equation}
for a suitable smooth function $F: \R^{2n} \rightarrow \R$. By the usual equivalence between scalar recursive equations of order $2n$ and vectorial recursive equations of order 1 in $\R^{2n}$, $u$ solves \eqref{e:rec} if, and only if, the function $w: \Z \rightarrow \R^{2n}$ defined as
\begin{equation}
\label{e:theorbit}
w(k) := \bigl( u(k), u(k+1), \dots, u(k+2n-1) \bigr)
\end{equation}
satisfies the recursion
\[
w(k+1) = \varphi(w(k)) \qquad \forall k\in \Z,
\]
where $\varphi: \R^{2n} \rightarrow \R^{2n}$ is the smooth map
\begin{equation}
\label{e:varphi}
\varphi(w_1,\dots,w_{2n}) := \bigl(w_2,\dots,w_{2n},F(w_1,\dots,w_{2n}) \bigr).
\end{equation}
The assumption that the second function in \eqref{e:twist} is a diffeomorphism implies that $\varphi$ is a diffeomorphism. Furthermore, 
the smooth two-form $\omega = \mathrm{d}\lambda$ which is obtained by differentiating the \textit{Poincar\'e--Cartan one-form}
\begin{equation}
\label{e:PC}
\lambda:= \sum_{i=1}^n p_i(w) \, \mathrm{d} w_{n+i}, \qquad p_i(w_1,\dots,w_{2n}) := \sum_{j=i}^{n} \partial_{x_{n+i-j}} L (w_j,\dots,w_{j+n}),
\end{equation}
can be shown to be a symplectic form on $\R^{2n}$, and $\varphi$ is a symplectomorphism on $(\R^{2n},\omega)$. Indeed, $\varphi$ satisfies
\[
\varphi^* \lambda - \lambda = \mathrm{d} \ell, \qquad \mbox{where} \quad \ell(w_1,\dots,w_{2n}) := L(w_{n+1},\dots,w_{2n},F(w_1,\dots,w_{2n})).
\]
We conclude that there is a one-to-one correspondence between solutions of the Euler-Lagrange equation \eqref{e:EL} up to translation and orbits of the symplectomorphism $\varphi$ of $(\R^{2n},\omega)$.

We now fix an interaction function $Q$ which is supported in $\{-n,\dots,n\}$, with $Q(n)=Q(-n)>0$, and
apply the above considerations to the Lagrangian
\begin{equation}
\label{e:lagrangian}
L(x_0,x_1,\dots,x_n) := \frac{1}{2} Q(0) x_0^2 + \sum_{i=1}^n Q(i) x_0 x_i - \frac{d}{d+1} |x_0|^{\frac{d+1}{d}}.
\end{equation}
This Lagrangian is smooth on $\R^{n+1}\setminus \{x_0=0\}$ but just $C^1$ on $\R^{n+1}$, and the above considerations require some adjustments.
Since
\begin{equation}
\label{e:derivatives}
\begin{split}
\partial_{x_0} L(x_0,\dots,x_n) &= \sum_{i=0}^n Q(i) x_i - |x_0|^{\frac{1}{d}-1} x_0, \\
\partial_{x_j} L(x_0,\dots,x_n) &= Q(j) x_0 \quad \mbox{for } 1\leq j \leq n,
\end{split}
\end{equation}
the Euler-Lagrange equation \eqref{e:EL} reads
\[
\sum_{i=0}^n Q(i) u (k+i) + \sum_{j=1}^n Q(j) u(k-j) -  |u(k)|^{\frac{1}{d}-1} u(k) = 0\qquad \forall k\in \Z,
\] 
which, thanks to the fact that $Q$ is even, can be rewritten as
\begin{equation}
\label{e:consign}
Q * u = |u|^{\frac{1}{d}-1} u.
\end{equation}
Therefore, the positive solutions of the Euler-Lagrange equation are precisely the positive solutions of the equation
\begin{equation}
\label{e:nosign}
u = (Q*u)^d.
\end{equation}
The formal action now takes the form
\[
\mathbb{A}(u) = \frac{1}{2} \sum_{k\in \Z} u(k) (Q*u)(k) - \frac{d}{d+1} \sum_{k\in \Z} |u(k)|^{\frac{d+1}{d}},
\]
and its restriction to $\ell^{\frac{d+1}{d}}(\Z)$ is minus the functional $\mathscr{E}$ we considered in Section \ref{s:ex2sol}. 

By \eqref{e:derivatives}, the functions $t\mapsto \partial_{x_0} L(x_0,\dots,x_{n-1},t)$ and $t\mapsto \partial_{x_n} L(t,x_1,\dots,x_n)$ are both affine with positive slope $Q(n)$, and in particular are diffeomorphisms of $\R$. The Euler-Lagrange equation can be rewritten in the form \eqref{e:rec}, where the function $F$ is given by
\[
F(w_1,\dots,w_{2n} ) = \frac{1}{Q(n)} \Bigl( |w_{n+1}|^{\frac{1}{d}-1} w_{n+1} - \sum_{j=1}^{2n} Q(n+1-j) w_j \Bigr).
\]
It is continuous on $\R^{2n}$ and smooth on the complement of the hyperplane $\{w_{n+1}=0\}$, on which it is not differentiable. The map $\varphi: \R^{2n} \rightarrow \R^{2n}$ given by \eqref{e:varphi} is now a homeomorphism of $\R^{2n}$. It restricts to a smooth diffeomorphism from $\{w_{n+1}\neq 0\}$ to $\{w_n\neq 0\}$, but is not differentiable at the points of the hyperplane $\{w_{n+1}=0\}$. 

Note that $\varphi$ fixes the origin and is reversible with respect to the involution
\[
S:\R^{2n} \rightarrow \R^{2n}, \qquad S(w_1,\dots,w_{2n}) := (w_{2n},\dots,w_1),
\]
meaning that $S\circ \varphi = \varphi^{-1}\circ S$.

The functions $p_i$ from \eqref{e:PC} are linear and given by
\[
p_i(w) = \sum_{j=i}^{n} Q(n+i-j)\,w_j, \qquad 1\leq i \leq n,
\]
so the differential of the Poincar\'e-Cartan one-form $\lambda = \sum_{i=1}^n p_i\, \mathrm{d} w_{n+i}$ is the constant symplectic two-form
\[
\omega = \sum_{i=1}^n\sum_{j=n+1}^{n+i}Q(j-i)\,\mathrm{d} w_i\wedge \mathrm{d}w_j.
\]
Note that $S^* \omega = - \omega$. The map $\varphi$ restricts to a symplectomorphism from $\{w_{n+1}\neq 0\}$ to $\{w_n\neq 0\}$ with respect to $\omega$. We conclude that there is a bijective correspondence between the solutions of \eqref{e:consign} up to translations and the orbits of the non-smooth $S$-reversible symplectomorphism $\varphi$. Together with Proposition \ref{p:zachary}, we then obtain the following bijective correspondence between Gibbs measures and homoclinic orbits, generalizing Proposition \ref{p:gibbs=homo}.

\begin{prop}
Assume that the irreducible interaction function $Q$ is supported in $\{-n,\dots,n\}$ and satisfies $Q(n)>0$. Then there is a one-to-one correspondence between $\mathscr{G}(d,Q)/\Z$ and the set of orbits of the non-smooth $S$-reversible symplectomorphism $\varphi: \R^{2n} \rightarrow \R^{2n}$ which are homoclinic to the fixed point at the origin and contained in $(0,+\infty)^{2n}$.
\end{prop}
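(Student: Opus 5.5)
The plan is to compose two bijections: Proposition \ref{p:zachary}, and a correspondence between positive solutions of \eqref{e:nosign} in $\ell^p(\Z)$ and homoclinic orbits of $\varphi$ in $(0,+\infty)^{2n}$. The first bijection gives $\mathscr{G}(d,Q)\cong\mathscr{U}(d,Q)$ equivariantly with respect to $\tau$. So it suffices to show that $u\mapsto (w(k))_{k\in\Z}$, with $w(k)$ as in \eqref{e:theorbit}, is a bijection from $\mathscr{U}(d,Q)$ onto the set of orbits of $\varphi$ that lie in $(0,+\infty)^{2n}$ and are homoclinic to $0$. This map intertwines the translation $\tau$ with the shift of the orbit parameter, so it descends to quotients. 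Hence $\mathscr{G}(d,Q)/\Z$ corresponds to the set of such orbits.

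For the forward direction, take $u\in\mathscr{U}(d,Q)$. Then $u$ is positive, by the irreducibility of $Q$. It solves \eqref{e:nosign}, hence \eqref{e:consign}, hence the Euler--Lagrange equation \eqref{e:EL}. The twist condition on $\partial_{x_0}L$ (affine in its last variable with slope $Q(n)>0$) turns this into the recursion \eqref{e:rec} with the explicit $F$ above. Therefore $w(k+1)=\varphi(w(k))$, and $w(k)\in(0,+\infty)^{2n}$. Since $u\in\ell^p(\Z)$, we have $u(k)\to 0$ as $|k|\to\infty$, so $w(k)\to0$ as $k\to\pm\infty$; the orbit is homoclinic.

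For the converse, let $(w(k))$ be a homoclinic orbit in $(0,+\infty)^{2n}$. Define $u(k)$ as the first component of $w(k)$. The form \eqref{e:varphi} of $\varphi$ shows that $w(k)$ is again given by \eqref{e:theorbit}, and that $u$ satisfies \eqref{e:rec}, hence \eqref{e:consign}. Because $u>0$, this gives $u=(Q*u)^d$. The one genuinely new point, and the main obstacle, is that homoclinicity only gives $u(k)\to0$, whereas membership in $\mathscr{U}(d,Q)$ requires $u\in\ell^p(\Z)$. I would close this gap without any stable-manifold theory, using the equation itself and the finite support of $Q$. Set $M_k:=\max_{|j-k|\le n}u(j)$. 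Then
\[
u(k)=(Q*u)(k)^d\le \|Q\|_1^d M_k^d .
\]
Choose $K$ with $\|Q\|_1^d M_k^{d-1}\le\frac12$ for all $|k|\ge K$. The tail suprema $T_m:=\sup_{|j|\ge m}u(j)$ then satisfy $T_{m}\le \|Q\|_1^d T_{m-n}^d$ for $m\ge K+n$. Since $d>1$ and $T_m\to0$, this forces doubly exponential decay of $T_m$, exactly as in the derivation of \eqref{e:order-d}. Hence $u\in\ell^p(\Z)$, and in fact $u\in\ell^q(\Z)$ for every $q$.

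Finally, the two constructions are mutually inverse. An orbit is determined by $u$ via \eqref{e:theorbit}, and $u$ is recovered from the orbit by taking first components. Translating $u$ by $\tau$ corresponds to shifting the orbit index. Composing with Proposition \ref{p:zachary} and passing to the quotient by $\Z$ yields the claimed one-to-one correspondence.
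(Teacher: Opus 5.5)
Your proof is correct and follows essentially the same route as the paper. You compose Proposition \ref{p:zachary} with the orbit map \eqref{e:theorbit}, and you close the only real gap (homoclinic implies $p$-summable) with a tail-supremum recursion of the form $T_m\le (\|Q\|_1\, T_{m-n})^d$, which forces doubly exponential decay. The differences are cosmetic: the paper uses one-sided suprema $m(k)=\sup_{j\ge k}u(j)$ and treats $-\N$ by analogy, whereas your two-sided $T_m$ handles both ends at once. Your preliminary choice of $K$ with $\|Q\|_1^d M_k^{d-1}\le \frac12$ is harmless but not needed for that recursion.
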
 

\begin{proof}
By Proposition \ref{p:zachary}, every Gibbs measure $\mu\in \mathscr{G}(d,Q)$ corresponds to a positive solution $u$ of \eqref{e:nosign} which is in $\ell^p(\Z)$, where $p=\frac{d+1}{d}$. Since $u(k)\rightarrow 0$ for $k\rightarrow \pm\infty$, the map $w: \Z \rightarrow \R^{2n}$ from \eqref{e:theorbit} defines an orbit of $\varphi$ in $(0,+\infty)^{2n}$ which is homoclinic to $0$.

In order to prove the converse, we must show that all positive solutions of \eqref{e:nosign} which are infinitesimal for $k\rightarrow \pm \infty$ are $p$-summable. The sequence $m(k) := \sup_{j\geq k} u(j)$ is non-increasing and infinitesimal for $k\rightarrow +\infty$. Setting $c:= \sum_{i\in \Z} Q(i)\in (0,+\infty)$, from \eqref{e:nosign} we find
\[
u(j) = \Bigl( \sum_{|i|\leq n} Q(i) u(j-i) \Bigr)^d \leq \bigl(c \, m(j-n)\bigr)^d \qquad \forall j\in \Z,
\]
and taking the supremum over $j\geq k$ we obtain
\[
m(k) \leq \sup_{j\geq k} \bigl(c \, m(j-n)\bigr)^d = \bigl(c \, m(k-n)\bigr)^d \qquad \forall k\in \Z.
\]
The above bound implies that the infinitesimal sequence $m'(k):= m(nk)$ satisfies
\[
m'(k) \leq  \bigl(c \, m'(k-1)\bigr)^d \qquad \forall k\in \Z.
\]
Therefore, $m'$ is infinitesimal for $k\rightarrow +\infty$ of order at least $d\geq 2$, which implies that there exist $C>0$ and $\theta\in (0,1)$ such that
\[
m'(k) \leq C \theta^{d^k} \qquad \forall k\geq 0.
\]
Then
\[
u(j) \leq m(j) \leq m'\bigl(\lfloor {\textstyle \frac{j}{n}} \rfloor \bigr) \leq C \theta^{d^{\lfloor \frac{j}{n} \rfloor}} \leq C \theta^{d^{\frac{j}{n}-1}} \qquad \forall j\geq n,
\]
and in particular $u$ is $p$-summable on $\N$. The argument for the summability on $-\N$ is analogous.
\end{proof}

\begin{rem}
\label{r:cano}
{\rm It is of course possible to identify $(\R^{2n},\omega)$ with $(T^* \R^n,\omega_{\mathrm{can}})$, where $\omega_{\mathrm{can}} = \sum_{i=1}^n \mathrm{d} y_i \wedge \mathrm{d} x_i$ is the canonical symplectic form. Indeed, the linear isomorphism 
\[
\Lambda : \R^{2n} \rightarrow T^* \R^n, \qquad \Lambda(w)=(x,y), \qquad x_i = w_{n+i}, \; y_i := p_i(w),
\]
satisfies $\Lambda^* \omega_{\mathrm{can}} = \omega$. In canonical variables, the symplectomorphism $\varphi$ and the anti-symplectic involution $S$ take the form
\[
\begin{split}
\Lambda \circ \varphi \circ \Lambda^{-1} (x,y) &= \Bigl( x_2,\dots,x_n, x_{n+1} , y_2 + Q(1) x_1, \dots, y_n + Q(n-1) x_1, Q(n) x_1 \Bigr), \\
& \mbox{where }x_{n+1}:= \frac{1}{Q(n)} \bigl( |x_1|^{\frac{1}{d}-1} x_1 - y_1 - \sum_{i=1}^n Q(i-1) x_i \bigr), \\
\Lambda \circ S \circ \Lambda^{-1} (x,y) &= (H^{-1} y, Hx), \qquad \mbox{where } H:= \bigl( Q(i+j-1) \bigr)_{1\leq i,j \leq n}.
\end{split}
\]
Working in canonical coordinates does not seem to bring particular advantages here, but it will be important in the next section.}
\end{rem}

\section{Homoclinic orbits of Lagrangian correspondences}
\label{s:holc}

In this section, we show that in the case of an irreducible interaction function $Q\in \ell^{\frac{d+1}{2}}(\Z)$ with arbitrary support, the elements of $\mathscr{G}(d,Q)$ are in bijection with the set of orbits of a Lagrangian correspondence on the cotangent bundle of a Banach space which are homoclinic to the origin in a suitable sense.

We first need to recall some terminology about the canonical symplectic structure of the cotangent bundle of a Banach space and its Lagrangian correspondences. Let $E$ be a reflexive real Banach space. Its \textit{cotangent bundle} $T^* E := E \times E^*$ carries the \textit{canonical symplectic form}
\[
\omega_{\mathrm{can}} \bigl( (x,y), (x',y') \bigr) := \langle y,x' \rangle - \langle y',x \rangle,
\]
where $\langle \cdot,\cdot \rangle$ denotes the duality pairing between covectors in $E^*$ and vectors in $E$. It is a \textit{strong symplectic form} on the Banach space $T^* E$, meaning that it is a continuous skew-symmetric bilinear form on $T^* E$ whose induced map $T^*E \rightarrow (T^* E)^*= E^* \times E^{**} \cong E^* \times E$, $z\mapsto \omega_{\mathrm{can}}(z,\cdot)$, is an isomorphism. This uses the assumption that $E$ is reflexive.

A \textit{Lagrangian correspondence} on $(T^* E,\omega_{\mathrm{can}})$ is a smooth submanifold $\mathcal{R}\subset T^* E \times T^* E$ which is Lagrangian with respect to the strong symplectic form $\Omega := (-\omega_{\mathrm{can}}) \oplus \omega_{\mathrm{can}}$ on $T^* E \times T^*E$. The latter fact means that for every $\zeta\in \mathcal{R}$ we have
\[
\bigl(T_{\zeta} \mathcal{R} \bigr)^{\perp_{\Omega}} := \{ \xi \in T^* E \times T^* E \mid \Omega (\xi,\eta)=0 \; \forall \eta\in T_{\zeta} \mathcal{R} \} = T_{\zeta} \mathcal{R}.
\]
Here, we are using the standard definition of submanifold $\mathcal{R}$ of a Banach space, ensuring in particular that tangent spaces are complemented. Symplectomorphisms of $(T^* E \times T^*E,\Omega)$ map Lagrangian correspondences to Lagrangian correspondences.

Lagrangian correspondences generalize \textit{symplectomorphisms}.  Indeed, a smooth diffeomorphism $\varphi: T^* E \rightarrow T^* E$  satisfies $\varphi^* \omega_{\mathrm{can}} = \omega_{\mathrm{can}}$ if, and only if, its graph is a Lagrangian submanifold of $(T^* E\times T^* E,\Omega)$, so a symplectomorphism can be identified with the Lagrangian correspondence given by its graph. 
An interesting class of linear Lagrangian correspondences is given by \textit{conormal bundles} of graphs of bounded linear operators: if $W: E \rightarrow E$ is a bounded linear operator, then the closed and complemented linear subspace
\[
\mathcal{R}_W := \Bigl\{ \bigl( (x,y), (x',y') \bigr) \in T^*E \times T^* E \mid x'=Wx, \; y = W^* y' \Bigr\}
\]
is readily seen to be a Lagrangian correspondence on $T^* E$. If $f: E \rightarrow \R$ is a smooth function, then the shear $\Phi(x,y) := (x,y-\di f(x))$ is a symplectomorphism of $(T^* E, \omega_{\mathrm{can}})$ and hence the set
\begin{equation}
\label{e:con+shear}
\scalebox{.93}{$ \mathcal{R}_{W,f} := \bigl\{ \bigl( (x,y), (x',y') \bigr) \in T^*E \times T^* E \mid x'=Wx, \; y = W^* y' - \di f(x) \bigr\} = (\Phi\times \mathrm{id}) (\mathcal{R}_W)$}
\end{equation}
is also a Lagrangian correspondence on $T^* E$.

A bi-infinite sequence $z: \Z\rightarrow T^* E$ is said to be an \textit{orbit} of the Lagrangian correspondence $\mathcal{R}$ if
\[
\bigl( z(k), z(k+1) \bigr)\in \mathcal{R} \qquad \forall k\in \Z.
\]
In the case $\mathcal{R}=\mathrm{graph}\, \varphi$, an orbit of $\mathcal{R}$ is the same thing as an orbit of $\varphi$.
 
Now let $Q\in \ell^{\frac{d+1}{2}}(\Z)$ be an interaction function. Setting as usual $p:= \frac{d+1}{d}$, we wish to identify the solutions $u\in \ell^p(\Z)$ of the equation
\[
Q* u = |u|^{p-2} u,
\]
with suitable orbits of a Lagrangian correspondence on the cotangent bundle of the Banach space
\[
E:= \ell^p( \Z_{\geq 0} ) ,
\]
which is
\[
T^* E = E \times E^* = \ell^p( \Z_{\geq 0} ) \times \ell^{p'}( \Z_{\geq 0} ),
\]
where $p':= d+1$ is the conjugate exponent of $p$. In this infinite dimensional setting, the Lagrangian from \eqref{e:lagrangian} becomes the following function on $E$
\[
L(x):= \frac{1}{2} Q(0) x(0)^2 + \sum_{i\geq 1} Q(i) x(0) x(i) - \frac{1}{p} |x(0)|^p,
\]
which is of class $C^1$. Denote by $\sigma$ the left shift $(\sigma x)(k) = x(k+1)$ on $E$ and by $\sigma^*$ its adjoint, which is the right shift on $E^*$.
As seen in \eqref{e:con+shear}, the shift operator $\sigma$ and the Lagrangian $L$ generate the following Lagrangian correspondence on $T^* E$:
\[
\mathcal{R} = \mathcal{R}_{\sigma,L} := \bigl\{ \bigl( (x,y), (x',y') \bigr) \in T^* E \times T^* E \mid x' = \sigma x, \; y = \sigma^* y' - \di L(x) \bigr\}.
\]
Here, we are using the term Lagrangian correspondence in an extended way, because the function $L$ is not smooth. In particular, $\mathcal{R}$ is only a topological submanifold of $T^*E \times T^* E$. In coordinates, $\mathcal{R}$ 
is the set of tuples $((x,y),(x',y'))$ in $\ell^p(\Z_{\geq 0}) \times \ell^{p'}(\Z_{\geq 0}) \times \ell^p(\Z_{\geq 0}) \times \ell^{p'}(\Z_{\geq 0})$ such that
\[
\begin{split}
x'(k) &= x(k+1) \qquad \forall k\geq 0, \\
y(0) &= - Q(0) x(0) - \sum_{i\geq 1} Q(i) x(i) + |x(0)|^{p-2} x(0), \\
y(k) &= y'(k-1) - Q(k) x(0) \qquad \forall k\geq 1.
\end{split}
\]
Note that the origin $0$ of $T^* E$ is a fixed point of $\mathcal{R}$, meaning that $(0,0)\in \mathcal{R}$.
In order to study the orbits of $\mathcal{R}$, it is useful to introduce the bounded linear operator
\[
\mathcal{Q} : E \rightarrow E^*, \qquad ( \mathcal{Q} x)(i) := \sum_{j\geq 0} Q(i-j) x(j) \quad \forall i\geq 0,
\]
and the $C^1$ action functional
\[
A : E \rightarrow \R, \qquad A(x) := \frac{1}{2} \langle \mathcal{Q} x, x \rangle - \frac{1}{p} \|x\|_p^p = \sum_{i\geq 0} L (\sigma^i x),
\]
which satisfies
\begin{equation}
\label{e:releAL}
A - A\circ \sigma = L.
\end{equation}

Denote by $\mathcal{U}$ the set of bi-infinite sequences $u: \Z \rightarrow \R$ such that $u|_{\N}$ is $p$-summable. Given $u\in \mathcal{U}$, we define a $T^* E$-valued bi-infinite sequence $(x_k,y_k)_{k\in \Z}$ by
\begin{equation}
\label{e:bije}
\begin{split} 
x_k(i) &:= u(k+i) \qquad \forall i \geq 0, \\
y_k &:= - \mathrm{d}A(x_k) = |x_k|^{p-2} x_k - \mathcal{Q} x_k.
\end{split}
\end{equation}
Note that the summability assumption on $u$ implies that
$x_k\rightarrow 0$ in $E$ for $k\rightarrow +\infty$ and, since $\mathrm{d}A$ is continuous with $\mathrm{d}A(0)=0$, $y_k \rightarrow 0$ in $E^*$ for $k\rightarrow +\infty$.

\begin{prop}
\begin{enumerate}[(i)]
\item The map $u\mapsto (x_k,y_k)_{k\in \Z}$ defined in \eqref{e:bije} is a bijection from $\mathcal{U}$ onto the set of orbits of $\mathcal{R}$.
\item This map restricts to a bijection from the set of solutions $u\in \ell^p(\Z)$ of the equation
\begin{equation}
\label{e:theeq}
Q*u = |u|^{p-2} u
\end{equation}
onto the set of orbits $(x_k,y_k)_{k\in \Z}$ of $\mathcal{R}$ such that
\begin{equation}
\label{e:asymptotic}
x_k \rightharpoonup 0 \mbox{ in } E \mbox{ and } y_k \rightarrow 0 \mbox{ in } E^* \mbox{ for } k\rightarrow -\infty,
\end{equation}
where $\rightharpoonup$ denotes weak convergence.
\end{enumerate}
\end{prop}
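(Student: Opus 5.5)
The proof rests on the identity $A - A\circ\sigma = L$ from \eqref{e:releAL}. Differentiating it gives
\[
\di L(x) = \di A(x) - \sigma^*\di A(\sigma x) \qquad \forall x\in E.
\]
For part (i), take $u\in\mathcal{U}$ and define $(x_k,y_k)$ by \eqref{e:bije}. Then $x_{k+1}=\sigma x_k$ holds by construction. Moreover,
\[
\sigma^* y_{k+1} - \di L(x_k) = -\sigma^*\di A(x_{k+1}) - \di A(x_k) + \sigma^*\di A(\sigma x_k) = -\di A(x_k) = y_k,
\]
so $(x_k,y_k)_{k\in\Z}$ is an orbit of $\mathcal{R}$. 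The map is injective because $u$ is recovered as $u(k)=x_k(0)$.

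For surjectivity, let $(x_k,y_k)$ be an orbit and set $u(k):=x_k(0)$. From $x_{k+1}=\sigma x_k$ we get $x_k(i)=u(k+i)$, so $u|_{\N}$ is $p$-summable and $u\in\mathcal{U}$. It remains to show $y_k=-\di A(x_k)$. Put $e_k:=y_k+\di A(x_k)\in E^*$. Subtracting the orbit relation from the identity above yields $e_k=\sigma^*e_{k+1}$. Iterating gives $e_k=(\sigma^*)^m e_{k+m}$ for every $m\geq 1$. Since $\sigma^*$ is the right shift, $(\sigma^*)^m$ kills the first $m$ coordinates, so $e_k(i)=0$ for all $i<m$. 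As $m$ is arbitrary, $e_k=0$. No asymptotic condition is needed here.

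For part (ii), the key computation, valid for any $u\in\ell^p(\Z)$, is
\[
(\mathcal{Q}x_k)(i) = (Q*u)(k+i) - \sum_{m<k} Q(k+i-m)u(m) = (Q*u)(k+i) - \bigl(Q*(u\mathbbm{1}_{(-\infty,k)})\bigr)(k+i).
\]
Writing $\rho:=|u|^{p-2}u - Q*u\in\ell^{p'}(\Z)$, this gives
\[
y_k(i) = \rho(k+i) + \bigl(Q*(u\mathbbm{1}_{(-\infty,k)})\bigr)(k+i).
\]
By Young's inequality \eqref{e:young}, the second term has $\ell^{p'}$ norm at most $\|Q\|_{\frac{d+1}{2}}\|u\mathbbm{1}_{(-\infty,k)}\|_p$. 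This tends to $0$ as $k\to-\infty$.

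Suppose first that $u\in\ell^p(\Z)$ solves \eqref{e:theeq}. Then $\rho=0$, so $y_k\to0$ in $E^*$. Also $\|x_k\|_p\leq\|u\|_p$, and each coordinate $x_k(i)=u(k+i)$ tends to $0$ as $k\to-\infty$. Since $E$ is reflexive with $p\in(1,\infty)$, a bounded sequence converging coordinatewise converges weakly, so $x_k\rightharpoonup0$.

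Conversely, let the orbit satisfy \eqref{e:asymptotic}. Weakly convergent sequences are bounded, so $\sup_k\|x_k\|_p<\infty$. Since $\|x_k\|_p^p=\sum_{m\geq k}|u(m)|^p$ increases as $k\downarrow-\infty$, we get $u\in\ell^p(\Z)$, and hence $\rho\in\ell^{p'}(\Z)$. The displayed formula then shows that
\[
\|\rho\mathbbm{1}_{[k,\infty)}\|_{p'} \leq \|y_k\|_{p'} + o(1) \longrightarrow 0 \qquad (k\to-\infty).
\]
But the left-hand side increases to $\|\rho\|_{p'}$ as $k\to-\infty$, so $\rho=0$, which is \eqref{e:theeq}.

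The only delicate point is the choice of topology in \eqref{e:asymptotic}. Strong convergence $x_k\to0$ fails, because $\|x_k\|_p\to\|u\|_p$. The argument therefore relies only on the boundedness given by weak convergence together with coordinatewise decay, and I expect this to be the step needing the most care.
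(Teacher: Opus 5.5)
Your proposal is correct and follows essentially the same route as the paper. You use the differentiated identity $\di L=\di A-\sigma^*\di A\circ\sigma$ for the orbit property, the iteration $e_k=(\sigma^*)^m e_{k+m}$ for surjectivity, and the same splitting of $y_k$ into the $\ell^{p'}$-tail of $|u|^{p-2}u-Q*u$ plus a Young-controlled remainder for part (ii). The only cosmetic difference is that you obtain $x_k\rightharpoonup 0$ from boundedness plus coordinatewise convergence, while the paper writes out the corresponding $\epsilon$-estimate explicitly. Note that what actually makes your step work is the density of finitely supported sequences in $\ell^{p'}$, not reflexivity.
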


\begin{proof}
We first check that the bi-infinite sequence $(x_k,y_k)_{k\in \Z}$ defined in \eqref{e:bije} is an orbit of $\mathcal{R}$, that is,
\begin{equation}
\label{e:DD}
x_{k+1} = \sigma x_k, \qquad y_k = \sigma^* y_{k+1} - \di L(x_k) \qquad \forall k\in \Z.
\end{equation}
The first identity in \eqref{e:DD} is clear. Differentiating \eqref{e:releAL} at $x_k$ we find
\begin{equation}
\label{e:diff}
\mathrm{d} A(x_k) - \sigma^* \mathrm{d} A(\sigma x_k) = \mathrm{d} L(x_k),
\end{equation}
and hence
\[
\sigma^* y_{k+1} = - \sigma^* \mathrm{d} A(x_{k+1}) = - \mathrm{d} A(x_k) + \mathrm{d} L(x_k) = y_k +  \mathrm{d} L(x_k),
\]
proving the second identity in \eqref{e:DD}. Now assume that $(x_k,y_k)_{k\in \Z}$ is a $T^* E$-valued bi-infinite sequence satisfying \eqref{e:DD}.
By the first identity in \eqref{e:DD}, there exists a unique $u\in \mathcal{U}$ such that
\[
u(k+i) = x_k(i) \qquad \forall k\in \Z, \; \forall i\geq 0.
\]
Together with \eqref{e:diff}, this implies that the bi-infinite sequence $(\tilde{x}_k,\tilde{y}_k)_{k\in \Z}$ defined by $u$ via \eqref{e:bije} satisfies $\tilde{x}_k=x_k$ and
\[
\tilde{y}_k = - \mathrm{d} A(x_k) = - \mathrm{d} L(x_k) - \sigma^* \mathrm{d} A(x_{k+1}) = - \mathrm{d} L(x_k) + \sigma^* \tilde{y}_{k+1}.
\]
By the second identity in \eqref{e:DD}, we obtain that the bi-infinite sequence $v_k := y_k - \tilde{y}_k$ satisfies $v_k = \sigma^* v_{k+1}$ for every $k\in \Z$. This implies $v_k = (\sigma^*)^j v_{k+j}$ for every $j\geq 1$ and since the image of $(\sigma^*)^j$ consists of the elements in $\ell^{p'}(\Z_{\geq 0})$ whose first $j$ entries vanish, we deduce that $v_k=0$ for every $k\in \Z$. This proves that $\tilde{y}_k = y_k$ and concludes the proof of (i).   

The proof of (ii) is based on the following two claims about the bi-infinite sequence $(x_k,y_k)_{k\in \Z}$ associated to $u\in \mathcal{U}$ via \eqref{e:bije}.

\medskip

\noindent \textit{Claim 1: $u$ belongs to $\ell^p(\Z)$ if, and only if, $x_k \rightharpoonup 0$ in $E$ for $k\rightarrow -\infty$.} Since the quantity
\[
\|x_k\|_p^p = \sum_{j\geq k} |u(j)|^p
\]
is non-increasing in $k$ and converges to $\|u\|_p^p$ for $k\rightarrow -\infty$, $u$ belongs to $\ell^p(\Z)$ if, and only if, $(x_k)$ is bounded. Weakly convergent sequences are bounded. Conversely, assume that $u\in \ell^p(\Z)$ and let $y\in \ell^{p'}(\Z_{\geq 0})$ and $n\geq 0$. Then
\[
|\langle y,x_k \rangle| \leq \left| \sum_{i=0}^n y(i) u(k+i) \right| + \left| \sum_{i>n} y(i) u(k+i) \right| \leq \|y\|_{p'} \| u \mathbbm{1}_{(-\infty,k+n]} \|_p + \|y \mathbbm{1}_{[n+1,+\infty)} \|_{p'} \|u\|_p.
\]
The first term tends to $0$ as $k\rightarrow -\infty$ and the second one is arbitrarily small for $n$ large, so $x_k  \rightharpoonup 0$ in $E$ for $k\rightarrow -\infty$.

\medskip

\noindent \textit{Claim 2: Assume that $u$ is in $\ell^p(\Z)$. Then $u$ solves \eqref{e:theeq} if, and only if, $y_k \rightarrow 0$ in $E^*$ for $k\rightarrow -\infty$.} By \eqref{e:bije}. for every  $i\geq 0$ we have
\[
y_k(i) = |u(k+i)|^{p-2} u(k+i) - \sum_{j\geq k} Q(k+i-j) u(j) = y_k'(i) + v(k+i),
\]
where
\[
y_k'(i) := \sum_{j\geq 1} Q(i+j) u(k-j), \quad \forall i\geq 0, \qquad
v := |u|^{p-2} u - Q* u \in \ell^{p'} (\Z).
\]
The term $y_k'$ is the restriction to $\Z_{\geq k}$, shifted by $k$, of the convolution of $Q$ with $u \mathbbm{1}_{(-\infty,k-1]}$. By Young's inequality
\[
\|y_k'\|_{p'} \leq \|Q\|_{\frac{d+1}{2}} \|  u \mathbbm{1}_{(-\infty,k-1]}\|_p \rightarrow 0
\]
for $k\rightarrow -\infty$. Therefore, $y_k\rightarrow 0$ in $E^*$ for $k\rightarrow -\infty$ if, and only if, $\|v \mathbbm{1}_{[k,+\infty)}\|_{p'}\rightarrow 0$ for $k\rightarrow -\infty$. Since this quantity is non-increasing, this is equivalent to $v=0$, and hence to \eqref{e:theeq}.

\medskip

We can now prove (ii). If $u\in \ell^p(\Z)$ solves \eqref{e:theeq}, then the corresponding orbit $(x_k,y_k)_{k\in \Z}$ satisfies \eqref{e:asymptotic} by Claims 1 and 2. Conversely, let $(x_k,y_k)_{k\in \Z}$ be an orbit of $\mathcal{R}$ satisfying \eqref{e:asymptotic}, and let $u\in \mathcal{U}$ be the corresponding real bi-infinite sequence. Then $u$ belongs to $\ell^p(\Z)$ by Claim 1 and solves \eqref{e:theeq} by Claim 2.
\end{proof}

Since the condition $(x_k,y_k) \rightarrow (0,0)$ in $T^* E$ for $k\rightarrow +\infty$ is automatic for the bi-infinite sequences defined by \eqref{e:bije}, the orbits $(x_k,y_k)_{k\in \Z}$ appearing in (ii) can be considered as homoclinic to the fixed point at the origin in $T^* E$, with the caveat that the convergence of $(x_k)$ to $0$ for $k\rightarrow -\infty$ is only weak (since $\|x_k\|_p \rightarrow \|u\|_p$ for $k\rightarrow -\infty$, convergence in norm holds only in the case of the trivial orbit $(x_k,y_k)=(0,0)$). Thanks to Proposition \ref{p:zachary}, we obtain the following characterization of Gibbs measures as homoclinic orbits of $\mathcal{R}$.

\begin{cor}
Let $Q$ be an irreducible and $\frac{d+1}{2}$-summable interaction function. Then there is a one-to-one correspondence between the elements of $\mathscr{G}(d,Q)$ and the set of orbits $(x_k,y_k)_{k\in \Z}$ of the Lagrangian correspondence $\mathcal{R}$ such that $x_k(i) >0$ for every $k\in \Z$ and $i\geq 0$, $x_k \rightharpoonup 0$ in $E$ and $y_k \rightarrow 0$ in $E^*$ for $k\rightarrow -\infty$.
\end{cor}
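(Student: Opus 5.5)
This corollary follows by composing two bijections already established. The first is Proposition \ref{p:zachary}, which matches $\mathscr{G}(d,Q)$ with $\mathscr{U}(d,Q)$, the set of positive solutions $u\in \ell^p(\Z)$ of $u=(Q*u)^d$. The second is part (ii) of the preceding proposition, which matches solutions $u\in\ell^p(\Z)$ of $Q*u=|u|^{p-2}u$ with orbits $(x_k,y_k)_{k\in\Z}$ of $\mathcal{R}$ satisfying the asymptotic condition \eqref{e:asymptotic}. Since $\frac{d+1}{2}\geq d$ fails in general, I first note that $\ell^{\frac{d+1}{2}}(\Z)\subset \ell^d(\Z)$, because $\frac{d+1}{2}\leq d$ for $d\geq 1$. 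Hence Proposition \ref{p:zachary} applies under the present hypotheses.

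The only thing to check is that, under the map \eqref{e:bije}, positive solutions correspond exactly to orbits with $x_k(i)>0$ for all $k\in\Z$ and $i\geq 0$.

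\begin{enumerate}[(a)]
\item Since $p-2=\frac{1}{d}-1$, a positive $u$ satisfies $|u|^{p-2}u=u^{1/d}$. For such $u$, the equation $Q*u=|u|^{p-2}u$ is therefore equivalent to $u=(Q*u)^d$. So $\mathscr{U}(d,Q)$ is precisely the set of positive solutions in $\ell^p(\Z)$ of \eqref{e:theeq}.
\item By \eqref{e:bije}, $x_k(i)=u(k+i)$. Every integer $j$ can be written as $k+i$ with $k\in\Z$ and $i\geq 0$. Hence positivity of all entries $x_k(i)$ is equivalent to positivity of $u$ on all of $\Z$.
\end{enumerate}

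Restricting the bijection of part (ii) to positive $u$ therefore yields a bijection between $\mathscr{U}(d,Q)$ and the orbits of $\mathcal{R}$ described in the statement. Composing with the inverse of the bijection in Proposition \ref{p:zachary} gives the claimed correspondence with $\mathscr{G}(d,Q)$.

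Nothing here is a real obstacle. The only points that need care are the inclusion of $\ell^{\frac{d+1}{2}}$ into $\ell^d$, needed to invoke Proposition \ref{p:zachary}, and the sign bookkeeping in (a) that identifies the two forms of the equation.
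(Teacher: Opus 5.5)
Your proposal is correct and matches the paper's argument: the corollary is obtained by composing the bijection of Proposition \ref{p:zachary} with part (ii) of the preceding proposition, restricted to positive $u$. Your three checks are exactly the details the paper leaves implicit: the inclusion $\ell^{\frac{d+1}{2}}(\Z)\subset\ell^d(\Z)$, the equivalence of $Q*u=|u|^{p-2}u$ with $u=(Q*u)^d$ for positive $u$, and the equivalence between positivity of $u$ and positivity of all entries $x_k(i)$.
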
 

\begin{rem}
{\rm In the case of interaction functions with bounded support, the symplectomorphism $\varphi$ is reversible with respect to an anti-symplectic involution $S$ (see Section \ref{s:ifbs}). In the infinite dimensional setting considered here, reversibility takes the following form. Consider the bounded operator
\[
\mathcal{H}: E \rightarrow E^*, \qquad (\mathcal{H}x)(i) := \sum_{j\geq 0} Q(i+j+1) x(j),
\]
which is the infinite-dimensional version of the matrix $H$ from Remark \ref{r:cano}, and consider the closed and complemented linear subspace
\[
\mathcal{S}:= \bigl\{ \bigl( (x,y), (x',y') \bigr) \in T^* E \times T^* E \mid  y=\mathcal{H}x', \; y' = \mathcal{H} x \bigr\}.
\]
Seen as a submanifold of $T^* E \times T^* E$, $\mathcal{S}$ is Lagrangian with respect to the symplectic form $\omega_{\mathrm{can}} \oplus \omega_{\mathrm{can}}$. This correspondence is involutive, meaning that $\mathcal{S}^{-1} = \mathcal{S}$, where the inverse of a correspondence on $T^* E$ is defined as its image via the involution $(z,z') \mapsto (z',z)$ on $T^* E \times T^* E$. Then $\mathcal{R}$ is $\mathcal{S}$-reversible, meaning that
\[
\mathcal{R} \circ \mathcal{S} = \mathcal{S} \circ \mathcal{R}^{-1},
\]
where the composition of correspondences is defined as usual by
\[
\mathcal{R} \circ \mathcal{S} := \bigl\{ (z,z'') \in T^* E \times T^*E \mid \exists z'\in T^* E \mbox{ such that } (z,z') \in \mathcal{S} \mbox{ and } (z',z'')\in \mathcal{R} \bigr\}.
\]
The verification of these facts is left to the reader.}
\end{rem}

\appendix

\section{Appendix: Proof of Proposition \ref{p:finite-partition-function}}
\label{a:gibbs-measures}

In this appendix, we prove that $d$-summability of the interaction function $Q$ guarantees that the partition function is finite, and that the exponent $d$ is optimal for this to hold. We first need a lemma, in which we use the convention that a product over the empty set has value 1.

\begin{lem}
\label{l:tree-sum}
Let $(\Lambda,E_{\Lambda})$ be a finite tree in which every vertex $x\in \Lambda$ has degree $k_x\leq d+1$, and set $m_x:= d+1-k_x$. For every $x\in \Lambda$ let $f_{x,1},\dots,f_{x,m_x}$ be nonnegative functions in $\ell^d(\Z)$ and set $a_x:= \prod_{h=1}^{m_x} f_{x,h}$. If $Q$ is in $\ell^d(\Z)$, then
\begin{equation}
\label{e:tree-sum}
\sum_{\xi\in \Z^{\Lambda}} \, \prod_{x\in \Lambda} a_x(\xi_x) \prod_{\{x,y\}\in E_{\Lambda}} Q(\xi_x-\xi_y) \leq \|Q\|_d^{|\Lambda|-1} \prod_{x\in \Lambda} \prod_{h=1}^{m_x} \|f_{x,h}\|_d.
\end{equation}
\end{lem}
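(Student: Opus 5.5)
I will prove \eqref{e:tree-sum} by induction on the number of vertices $|\Lambda|$, peeling off one leaf at a time and applying H\"older's inequality at that vertex.

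For the base case $|\Lambda|=1$, the single vertex $x$ has degree $0$, so $m_x=d+1$. The left-hand side is $\sum_{i} \prod_{h=1}^{d+1} f_{x,h}(i)$. By the generalized H\"older inequality with $d+1$ exponents all equal to $d+1$, this is at most $\prod_h \|f_{x,h}\|_{d+1}$. Since $\ell^d\subset \ell^{d+1}$ with $\|\cdot\|_{d+1}\le \|\cdot\|_d$, it is bounded by $\prod_h \|f_{x,h}\|_d$, which is the claim because $\|Q\|_d^0=1$.

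For the inductive step, take $|\Lambda|\ge 2$ and pick a leaf $x$ with unique neighbour $y$. Then $k_x=1$ and $m_x=d$. Fix all spins except $\xi_x$ and sum over $\xi_x=i$ first. The only factors involving $\xi_x$ are
\[
\sum_{i\in\Z} \prod_{h=1}^d f_{x,h}(i)\, Q(i-\xi_y).
\]
Apply H\"older with $d+1$ factors, each in $\ell^{d+1}$. This bounds the sum by
\[
\|Q\|_{d+1}\prod_h \|f_{x,h}\|_{d+1} \le \|Q\|_d \prod_h\|f_{x,h}\|_d,
\]
which is a constant independent of $\xi_y$. However, removing $x$ lowers the degree of $y$ by one, so in the smaller tree $\Lambda\setminus\{x\}$ the vertex $y$ needs one more function, $m_y+1$ in total. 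I supply it with the constant function... but constants are not in $\ell^d$, so this does not fit the induction hypothesis as stated. The main obstacle is this bookkeeping.

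To fix it, I keep the H\"older step as a convolution bound instead. Set
\[
g(j):=\sum_i \prod_h f_{x,h}(i)\,Q(i-j).
\]
By H\"older and Young, choosing exponents so that $\|g\|_d \le \|Q\|_d\prod_h\|f_{x,h}\|_d$, I then attach $g$ as the new function $f_{y,m_y+1}$ and apply the induction hypothesis to $\Lambda\setminus\{x\}$.

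The exponent check is as follows. The product $\prod_{h=1}^d f_{x,h}$ lies in $\ell^1$ with norm at most $\prod_h\|f_{x,h}\|_d$ by H\"older. Young's inequality for $\ell^1 * \ell^d$ then gives $\|g\|_d\le \|Q\|_d \prod_h\|f_{x,h}\|_d$. This closes the induction, and the powers of $\|Q\|_d$ count $|\Lambda|-1$ edges.

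Note that the base case uses only the $\ell^{d+1}$ bound, which is weaker than needed, so the argument is consistent.
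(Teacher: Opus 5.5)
Your proof is correct and is essentially the paper's own argument: induction on $|\Lambda|$, with the base case handled by H\"older with $d+1$ factors together with $\|\cdot\|_{d+1}\le\|\cdot\|_d$. In the inductive step you remove a leaf $x$ and attach $g=Q*a_x$ as an extra function at its neighbour, using $\|g\|_d\le\|Q\|_d\|a_x\|_1\le\|Q\|_d\prod_h\|f_{x,h}\|_d$ (H\"older, then Young); the abandoned first attempt, which bounds the leaf sum by a constant, can simply be dropped from the write-up.
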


\begin{proof}
We argue by induction on $|\Lambda|$. All the terms in the sums below are nonnegative, so these sums can be rearranged freely. If $\Lambda=\{x\}$, then $k_x=0$, $m_x=d+1$, and the H\"older inequality with $d+1$ factors, together with the inequality $\|\cdot\|_{d+1}\leq \|\cdot\|_d$, gives us
\[
\sum_{i\in \Z} \prod_{h=1}^{d+1} f_{x,h}(i) \leq \prod_{h=1}^{d+1} \|f_{x,h}\|_{d+1} \leq \prod_{h=1}^{d+1} \|f_{x,h}\|_{d},
\]
which is \eqref{e:tree-sum} in this case.

Assume now that $|\Lambda|\geq 2$ and that the lemma holds for all trees with $|\Lambda|-1$ vertices. Let $x\in \Lambda$ be a leaf of $(\Lambda,E_{\Lambda})$, let $y$ be its unique neighbour, so that $k_x=1$ and $m_x=d$, and let $(\Lambda',E_{\Lambda'})$ be the tree which is obtained by removing the vertex $x$ and the edge $\{x,y\}$. Writing $\xi\in \Z^{\Lambda}$ as $\xi=(\xi',\xi_x)$ with $\xi'\in \Z^{\Lambda'}$, the summands in the left-hand side of \eqref{e:tree-sum} factor as
\[
\Bigl( \prod_{z\in \Lambda'} a_z(\xi_z) \prod_{\{z,z'\}\in E_{\Lambda'}} Q(\xi_z-\xi_{z'}) \Bigr) \cdot a_x(\xi_x)\, Q(\xi_x-\xi_y),
\]
where only the last two factors depend on $\xi_x$. Since $Q$ is even, summing first over $\xi_x\in \Z$ gives
\[
\sum_{\xi_x\in \Z} a_x(\xi_x) Q(\xi_x-\xi_y) = (Q*a_x)(\xi_y) =: g(\xi_y),
\]
and hence the left-hand side of \eqref{e:tree-sum} equals
\begin{equation}
\label{e:reduced-sum}
\sum_{\xi'\in\Z^{\Lambda'}} \, \prod_{z\in \Lambda'} a_z'(\xi_z) \prod_{\{z,z'\}\in E_{\Lambda'}} Q(\xi_z-\xi_{z'}),
\end{equation}
where $a_y':= a_y\, g$ and $a_z':=a_z$ for every $z\in \Lambda'\setminus\{y\}$. This is a sum of the same form over the smaller tree $(\Lambda',E_{\Lambda'})$: there, the vertex $y$ has degree $k_y-1$, and $a_y'$ is the product of the $m_y+1=d+1-(k_y-1)$ functions $f_{y,1},\dots,f_{y,m_y},g$, whereas degrees and functions at the other vertices are unchanged.

In order to apply the inductive hypothesis, we must check that $g$ belongs to $\ell^d(\Z)$. By the H\"older inequality with $d$ factors, $a_x$ is in $\ell^1(\Z)$ with $\|a_x\|_1\leq \prod_{h=1}^d \|f_{x,h}\|_d$, and the Young inequality implies
\begin{equation}
\label{e:leaf}
\|g\|_d \leq \|Q\|_d \|a_x\|_1 \leq \|Q\|_d \prod_{h=1}^d \|f_{x,h}\|_d.
\end{equation}
By the inductive hypothesis, \eqref{e:reduced-sum} is bounded from above by
\[
\|Q\|_d^{|\Lambda|-2} \, \|g\|_d \prod_{z\in \Lambda'} \prod_{h=1}^{m_z} \|f_{z,h}\|_d,
\]
and \eqref{e:tree-sum} follows from \eqref{e:leaf}.
\end{proof}

\begin{proof}[Proof of Proposition \ref{p:finite-partition-function}]
Let $\Lambda_1,\dots,\Lambda_r$ be the sets of vertices of the connected components of the subgraph of $\Gamma_d$ which is induced by $\Lambda$. Each of these components is a finite tree $(\Lambda_s,E_{\Lambda_s})$, and every edge of $\Gamma_d$ with both end-points in $\Lambda$ belongs to exactly one of the sets $E_{\Lambda_s}$. For $x\in \Lambda$ we set
\[
a_x(i) := \prod_{\substack{y\in \partial \Lambda \\ \{x,y\}\in E}} Q(i-\eta_y), \qquad \forall i\in \Z.
\]
Then for every $\xi\in \Z^{\Lambda}$ we have
\[
w_{\Lambda}(\xi,\eta) = \prod_{s=1}^r w_s(\xi|_{\Lambda_s}), \qquad \mbox{where} \quad w_s(\zeta) := \prod_{x\in \Lambda_s} a_x(\zeta_x) \prod_{\{x,y\}\in E_{\Lambda_s}} Q(\zeta_x-\zeta_y), \quad \forall \zeta\in \Z^{\Lambda_s},
\]
and hence
\[
Z_{\Lambda}(\eta) = \prod_{s=1}^r \, \sum_{\zeta\in \Z^{\Lambda_s}} w_s(\zeta).
\]
Let $x\in \Lambda_s$ and denote by $k_x\leq d+1$ its degree in the tree $(\Lambda_s,E_{\Lambda_s})$. Since all the neighbours of $x$ in $\Gamma_d$ which belong to $\Lambda$ are in $\Lambda_s$, the remaining $m_x=d+1-k_x$ neighbours of $x$ belong to $\partial \Lambda$, and hence $a_x$ is the product of $m_x$ translates of $Q$, each of which has $\ell^d$-norm $\|Q\|_d$. By Lemma \ref{l:tree-sum}, we obtain
\[
\sum_{\zeta\in \Z^{\Lambda_s}} w_s(\zeta) \leq \|Q\|_d^{|\Lambda_s|-1 + \sum_{x\in \Lambda_s} m_x}.
\]
Here, $|\Lambda_s|-1$ is the number of edges in $E_{\Lambda_s}$ and $\sum_{x\in \Lambda_s} m_x$ is the number of edges of $\Gamma_d$ joining a vertex in $\Lambda_s$ to a vertex in $\partial \Lambda$. Since every edge of $\Gamma_d$ with at least one end-point in $\Lambda$ is counted exactly once when $s$ ranges from $1$ to $r$, the sum of these exponents over $s$ is $e(\Lambda)$, and the conclusion follows.
\end{proof}

\begin{rem}
{\rm The exponent $d$ in Proposition \ref{p:finite-partition-function} is optimal: if $q>d$ and $\frac{1}{q}<\alpha<\frac{1}{d}$, then the function $Q(i):=(1+|i|)^{-\alpha}$ belongs to $\ell^q(\Z)$, but $Z_{\Lambda}(\eta)=+\infty$ for every $\eta \in \Z^{\partial \Lambda}$ when $\Lambda$ is a connected set whose cardinality $n$ satisfies $\alpha(dn+1)<n$. Indeed, such a set satisfies $e(\Lambda)=dn+1$, so $w_{\Lambda}(\xi,\eta)$ is the product of $dn+1$ factors, each of which is at least $(1+2R)^{-\alpha}$ if $R\geq \max_{y\in \partial \Lambda} |\eta_y|$ and $|\xi_x|\leq R$ for every $x\in \Lambda$. Therefore,
\[
Z_{\Lambda}(\eta) \geq (2R+1)^{n} (1+2R)^{-\alpha (dn+1)} = (2R+1)^{n-\alpha(dn+1)},
\]
which diverges for $R\rightarrow +\infty$.}
\end{rem}

\section{Appendix: Topological horseshoes}
\label{a:th}

In this appendix, we discuss a general result about topological horseshoes for homeomorphisms of surfaces having a fixed point of saddle type whose stable and unstable sets have a topological crossing.

Let $M$ be a surface, $\psi: M \rightarrow M$ a homeomorphism, and $p_0$ a fixed point of $\psi$ of \textit{saddle type}, meaning that there exists a compact neighborhood $K\subset M$ of $p_0$ and a homeomorphism $h: [-1,1]\times [-1,1] \rightarrow K$ mapping $(0,0)$ to $p_0$ and such that
\begin{equation}
\label{e:norm-form}
\psi \circ h(x,y) = h \Bigl( \frac{x}{2}, 2y \Bigr), \qquad \forall (x,y) \in [-1,1] \times \Bigl[ - \frac{1}{2}, \frac{1}{2} \Bigr].
\end{equation}
In the above linear normal form, we chose $\frac{1}{2}$ and $2$ as contraction and expansion factors, but we could have chosen any factors $\lambda,\mu$ with $0<\lambda<1<\mu$, because the linear map $(x,y) \mapsto (\lambda x,\mu y)$ is topologically conjugate to $(x,y) \mapsto (\frac{x}{2},2 y)$. By \eqref{e:norm-form}, the stable and unstable sets of $p_0$ are countable increasing unions of compact arcs:
\[
\begin{split}
W^s(p_0)&:= \{p\in M \mid \psi^n(p) \rightarrow p_0 \mbox{ for } n\rightarrow +\infty\} = \bigcup_{n\geq 0} \psi^{-n} (h([-1,1]\times \{0\})), \\
W^u(p_0)&:= \{p\in M \mid \psi^n(p) \rightarrow p_0 \mbox{ for } n\rightarrow -\infty\} = \bigcup_{n\geq 0} \psi^{n} (h(\{0\} \times [-1,1])).
\end{split}
\]

Having fixed the conjugacy $h$ onto $K$ satisfying \eqref{e:norm-form}, we denote by $K_+\subset K$ the positive sector
\[
K_+ := h\bigl([0,1]\times [0,1] \bigr),
\]
and by
\[
W^s_+(p_0):= \bigcup_{n\geq 0} \psi^{-n} (h([0,1]\times \{0\})), \qquad
W^u_+(p_0):= \bigcup_{n\geq 0} \psi^{n} (h(\{0\} \times [0,1])),
\]
the positive branches of $W^s(p_0)$ and $W^u(p_0)$. 

Let $\alpha$ and $\beta$ be arcs in $M$ (see the beginning of Section \ref{s:duesol} for the terminology used here). We say that $\alpha$ has a \textit{topological crossing} at $q\in \mathrm{int}(\alpha) \cap \mathrm{int}(\beta)$ with $\beta$ if there exists a homeomorphism $f$ from $(-1,1) \times (-1,1)$ to an open subset of $M$ such that $f(0,0)=q$, $f((-1,1) \times \{0\}) \subset \beta$, and every subarc $\alpha'\subset \alpha$ such that $q\in \mathrm{int}(\alpha')$ has a subarc $\alpha''$ with $q\in \mathrm{int}(\alpha'')$ having end-points $q_+$ and $q_-$ with $f^{-1}(q_+) \in (-1,1) \times (0,1)$ and $f^{-1}(q_-) \in (-1,1) \times (-1,0)$. Note that this condition is weaker than the notion of topological transversality obtained in Theorems \ref{t:omoclina1} and \ref{t:omoclina2}, and  does not force the crossing point $q$ to be isolated. 

We say that $W^u_+(p_0)$ has a topological crossing with $W^s_+(p_0)$ at some $p_1\neq p_0$ if 
the compact arc $\psi^{n} (h(\{0\} \times [0,1]))\subset W^u_+(p_0)$ has a topological crossing with the compact arc $\psi^{-n} (h([0,1]\times \{0\})) \subset W^s_+(p_0)$ at $p_1$, where $n\geq 0$ is so large that $p_1$ belongs to the interior of these two compact arcs. Note that in this case $W^u_+(p_0)$ has a topological crossing with $W^s_+(p_0)$ at $\psi^k(p_1)$ for every $k\in \Z$.

The main result of this appendix is the following refinement of \cite[Theorem 2.1]{bw95}.

\begin{thm}
\label{t:horseshoe}
Let $\psi: M \rightarrow M$ be a homeomorphism of a surface $M$ having a fixed point of saddle type at $p_0\in M$. Assume that $W^u_+(p_0)$ has a topological crossing with $W^s_+(p_0)$ at some $p_1\neq p_0$. Let $U_0\subset K_+$ be a relative neighborhood of $p_0$ in $K_+$, $U_1\subset M$ an open neighborhood of $p_1$, and $V\subset M$ an open neighborhood of $\{\psi^i(p_1) \mid i\in \Z \setminus \{0\}\}$. Then there are disjoint compact sets $Q_0\subset U_0$, $Q_1\subset U_1$, and a natural number $N$ such that:
\begin{enumerate}[(i)] 
\item The $\psi^N$-invariant compact set $\Lambda:= \bigcap_{j\in \Z} \psi^{Nj}(Q_0 \cup Q_1)$ is non-empty.
\item The map $\Lambda \rightarrow \{0,1\}^{\Z}$ mapping $p\in \Lambda$ to the bi-infinite sequence $\omega_p \in  \{0,1\}^{\Z}$ such that $\psi^{Nj}(p) \in Q_{\omega_p(j)}$ for every $j\in \Z$ is continuous and surjective.
\item For every $p\in \Lambda$, the set 
\[
\{\psi^i(p) \mid i \in \Z, \mbox{ and if } i=Nj \mbox{ with } j \in \Z \mbox{ then } \omega_p(j)=0 \} 
\]
is contained in $U_0 \cup V$.
\item A point $p\in \Lambda$ belongs to $W^s_+(p_0)\cap W^u_+(p_0)$ if, and only if,  $\omega_p(j)=0$ for all but a finite number of $j\in \Z$.
\end{enumerate}
\end{thm}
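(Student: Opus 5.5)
We will follow the Burns--Weiss strategy of constructing two ``topological rectangles'' (in the linearizing chart, boxes of the form $h([0,a]\times[0,b])$) that are stretched across each other by a high iterate of $\psi$. The goal is a Conley--Moser type covering relation, proved with connectedness arguments in place of hyperbolic estimates. All of these arguments are purely planar.

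\textbf{Step 1 (local model).} Work in the chart $h$. For small $\delta>0$, set $Q_0:=h([0,\delta]\times[0,\delta])\subset U_0$. Using the topological crossing, choose $n$ so large that $p_1$ is interior to the arcs $\psi^{n}(h(\{0\}\times[0,1]))$ and $\psi^{-n}(h([0,1]\times\{0\}))$. Pick $k$ with $\psi^{-k}(p_1)\in h(\{0\}\times(0,\delta])$ and $\psi^{k}(p_1)\in h((0,\delta]\times\{0\})$. Define $Q_1$ as a thin compact neighbourhood of $p_1$ inside $U_1$: it is the image under $\psi^{k}$ of a thin ``vertical'' strip $h([0,\epsilon]\times[y_1-\eta,y_1+\eta])$ around $\psi^{-k}(p_1)=h(0,y_1)$, chosen so that $\psi^{2k}$ maps it near $\psi^{k}(p_1)$. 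The crossing hypothesis (every subarc through $p_1$ contains a subarc with endpoints on opposite sides of $W^s$) guarantees the following. The image under $\psi^{2k}$ of the horizontal sides of that strip lies on opposite sides of $h([0,1]\times\{0\})$. Hence $\psi^{2k}$ of the strip ``crosses'' $Q_0$ vertically in the $h$-coordinates.

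\textbf{Step 2 (covering relations).} Choose $N\ge 2k$ and note that the linear normal form $(x,y)\mapsto(x/2,2y)$ stretches vertically and contracts horizontally. We then verify four covering relations, using $\psi^N$ restricted to $Q_0\cap\psi^{-N}(Q_0\cup Q_1)$ and to $Q_1\cap\psi^{-N}(Q_0\cup Q_1)$. For each $a,b\in\{0,1\}$, the set $Q_a\cap\psi^{-N}(Q_b)$ contains a ``horizontal substrip'' of $Q_a$ whose image is a ``vertical substrip'' of $Q_b$. Here horizontal and vertical refer to the two boundary pairs, where the $s$-sides of a box lie along stable directions and the $u$-sides along unstable ones. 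This is done for $Q_0\to Q_0$ by the normal form directly. For the remaining three relations, $Q_0\to Q_1$, $Q_1\to Q_0$ and $Q_1\to Q_1$, we follow $p_1$ through the chart, filling the time with the normal form when $N>2k$. The residual orbit points $\psi^i(p_1)$, $i\neq0$, that lie outside the chart stay in $V$ provided the strips are thin enough, which gives (iii) together with $U_0\supset Q_0$. Disjointness of $Q_0$ and $Q_1$ is arranged by shrinking $\delta$ below the size of $p_1$'s neighbourhood.

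\textbf{Step 3 (symbolic dynamics).} From the covering relations, a standard topological argument gives nonemptiness of $\bigcap_{|j|\le m}\psi^{-Nj}(Q_{\omega(j)})$ for every finite word. The argument considers a continuum joining the two $u$-sides, the images of continua under covering, and the fact that any continuum crossing a box from top to bottom meets every horizontal substrip. This is essentially the Burns--Weiss lemma, adapted to $\omega$-prescribed itineraries. Compactness then gives points with any prescribed itinerary, so (i) and surjectivity in (ii) follow. Continuity of $p\mapsto\omega_p$ holds because $Q_0,Q_1$ are disjoint compact sets.

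\textbf{Step 4 (homoclinic characterization).} For (iv), if $\omega_p(j)=0$ for all $j\ge J$, then $\psi^{Nj}(p)\in Q_0$ for all such $j$. The local normal form then forces $\psi^{NJ}(p)\in h([0,\delta]\times\{0\})$: any point with $y>0$ escapes the box vertically, since $y$ doubles each step. Hence $p\in W^s_+(p_0)$, and symmetrically $p\in W^u_+(p_0)$. Conversely, if $\omega_p(j)=1$ infinitely often forward, the orbit returns to $Q_1$, whose closure avoids $p_0$, so $p\notin W^s$.

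\textbf{Main obstacle.} We expect Step 2 to be the hardest, specifically producing the crossing of the strip image with $Q_0$ from the merely topological crossing hypothesis, which is not transversality. We would first try choosing $Q_1$ as the $\psi^{k}$-image of a genuine box in the chart and use the defining subarcs $\alpha''$ to pick its $u$-sides. The Jordan curve theorem in the chart then supplies the stretching.
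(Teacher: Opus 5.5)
\textbf{The gap: a fixed square $Q_0$ breaks (iii) and (iv).}
Your choice $Q_0=h([0,\delta]\times[0,\delta])$ is too large for the symbol $0$ to control the orbit between consecutive visits. A point of this square at height $y>2^{-N}\delta$ leaves the linearizing chart before time $N$. Nothing then prevents it from following $W^u_+(p_0)$ around the homoclinic loop and re-entering $Q_0$ exactly at time $N$.

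Concretely, take your $k$, so that $\psi^{-k}(p_1)=h(0,y_1)$ with $y_1\in(0,\delta]$ and $\psi^k(p_1)\in h((0,\delta]\times\{0\})$, and take any $N\ge 2k$. Set $w:=\psi^{-(N-k)}(p_1)=h(0,2^{-(N-2k)}y_1)$.
\begin{itemize}
\item For $j\le 0$, $\psi^{Nj}(w)$ lies on $h(\{0\}\times(0,\delta])\subset Q_0$.
\item For $j\ge 1$, $\psi^{Nj}(w)$ lies on $h((0,\delta]\times\{0\})\subset Q_0$, since $\psi^N(w)=\psi^k(p_1)$.
\end{itemize}
So $w\in\Lambda$ with $\omega_w\equiv 0$. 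Yet $\psi^{N-k}(w)=p_1$ with $0<N-k<N$, and nothing forces $p_1$ into $U_0\cup V$. In the application in Theorem \ref{t:coding} it is not there, and that exclusion is exactly what \eqref{e:favorisce} relies on. Hence (iii) fails for your construction.

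The same $w$ refutes the key step of your Step 4. It has $\omega_w(j)=0$ for all $j$ but $y>0$, so \emph{$y$ doubles each step} does not force points of $Q_0\cap\psi^{-N}(Q_0)$ onto $h([0,\delta]\times\{0\})$. All-zero itineraries are compatible with repeated excursions, so your proof of the \emph{if} direction of (iv) collapses.

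A related symptom is that the last sentence of your Step 1 is false. $\psi^{2k}$ of your strip is a small neighbourhood of $\psi^k(p_1)$ that only straddles the lower edge of $Q_0$. To cross the square you need $N-k$ further stretching iterates inside the chart. With box-shaped substrips as you describe, this forces the images of the full-width sides into bands of height $O(2^{-(N-k)})$. Meanwhile the transition $Q_0\to Q_1$ forces $\epsilon\ge\delta 2^{-(N-k)}$. Together these demand a Lipschitz-type bound on $\psi^{2k}\circ h$ that a homeomorphism need not satisfy.

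\textbf{The missing idea.}
The $0$-rectangle must be thin in the unstable direction at the scale $2^{-N}$ of the return time, and all transitions should be tested against one fixed strip. The paper proceeds as follows.
\begin{itemize}
\item It fixes $R=h([0,1]\times[-1,1])$ and $S=h([0,1]\times[0,\rho])$.
\item It uses the topological crossing once, at fixed scale, to get a compact arc $\alpha\subset W^u_+(p_0)\cap\mathrm{int}(R)$ through $\psi^{n_0}(p_1)$, with one end below $S$ and one above.
\item It takes the full-width horizontal strips $H_0=h([0,1]\times[0,2^{-N}])$ and $H_1=h([0,1]\times[y_1,y_2])$, where $\psi^{-N}(\alpha)=h(\{0\}\times[y_1,y_2])$.
\item Then $\psi^N(H_0)=h([0,2^{-N}]\times[0,1])$. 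Also $\psi^{N-n_1}(H_1)$ has width $2^{-(N-n_1)}$ and only the fixed map $\psi^{n_1}$ follows, so $\psi^N(H_1)$ lies near $\alpha$.
\item Hence both images lie in $R$, with upper and lower sides in different components of $\overline{R\setminus S}$.
\end{itemize}
Since $H_0$ and $H_1$ span the full width of $S$, this single property gives all four transitions at once. The elementary arc-crossing argument of Lemma \ref{l:coding} then yields surjectivity, with no covering relations or Jordan curve theorem needed.

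Finally, $Q_i:=\psi^{N-n_0}(H_i)$, so $Q_0=h([0,2^{n_0-N}]\times[0,2^{-n_0}])$ is a thin vertical strip. A symbol $0$ then forces a whole block of $N$ consecutive iterates into $\psi^i(H_0)\subset K_+$, and this is what makes (iii) and (iv) true.
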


In (ii), $\{0,1\}$ is endowed with the discrete topology and $\{0,1\}^{\Z}$ with the corresponding product topology.
The semiconjugacy property (ii) tells us that the full shift on two elements is a \textit{horseshoe factor} of $\psi^N$ and implies that $\psi$ has positive topological entropy (at least $\frac{1}{N} \log 2$, see \cite[Lemma 1.3]{bw95}). Here, our focus is on the implications in terms of existence of homoclinic orbits, and the above formulation is tailored to the application to the map $\varphi$ in Section \ref{s:coding}, where we are interested in homoclinic orbits  in the positive quadrant $(0,+\infty)\times (0,+\infty)$.

The remaining part of this appendix is devoted to the proof of Theorem \ref{t:horseshoe}, building on ideas from \cite{bw95}. Up to the replacement of $K$ by $h([-\delta,\delta]^2)$ for some $\delta\in(0,1)$, $h$ by $(x,y)\mapsto h(\delta x,\delta y)$, and $U_0$ by a smaller set, we may assume that $U_0 = K_+$.
Set
\[
R:= h([0,1]\times [-1,1]) \subset K.
\]
Since $p_1\in W^s_+(p_0)\setminus \{p_0\}$, we can find $n_0\in \N$ such that $\psi^{n_0}(p_1) \in h((0,1)\times \{0\})\subset \mathrm{int}(R)$. The positive branch $W^u_+(p_0)$ has a topological crossing with $W^s_+(p_0)$ at $p_1$, and hence also at $\psi^{n_0}(p_1)$. Since  every sufficiently small compact arc in $W^s_+(p_0)$ containing  $\psi^{n_0}(p_1)$ is contained in $h((0,1)\times \{0\})$, we can find a compact arc $\alpha$ such that:
\begin{enumerate}[(a)]
\item $\alpha\subset W^u_+(p_0) \cap \mathrm{int}(R)$;
\item one end-point of $\alpha$ belongs to $h((0,1)\times (-1,0))$ and the other to $h((0,1) \times (\rho,1))$, for some $\rho\in (0,1)$.
\end{enumerate}
Since $\alpha$ is a compact arc in $W^u_+(p_0)$, there exists $n_1>n_0$ such that
\begin{enumerate}[(a)]
\setcounter{enumi}{2}
\item $\psi^{-n_1}(\alpha) = h(\{0\} \times [z_1,z_2])$ for some $0<z_1 < z_2 \leq 1$. 
\end{enumerate}
Since $U_1$ is a neighborhood of $p_1$ and $V$ is a neighborhood of $\{\psi^i(p_1) \mid i\in \Z \setminus \{0\}\} $, up to the replacement of $\alpha$ by a suitable subarc, and correspondingly to the choice of a smaller positive number $\rho$ in (b), we can also assume
\begin{enumerate}[(a)]
\setcounter{enumi}{3}
\item $\psi^{-n_0}(\alpha) \subset U_1$;
\item $\displaystyle{\bigcup_{i\in \{1,\dots,n_1-1\} \setminus \{n_0\}}} \psi^{-i} (\alpha)\subset V$.
\end{enumerate}

By (d), we can find a set $W\subset R$ which is a neighborhood of $\alpha$ such that 
\begin{equation}
\label{e:daric1}
\psi^{-n_0}(W)\subset U_1.
\end{equation}
We set
\[
S:= h \bigl( [0,1]\times [0,\rho] \bigr),
\]
where $\rho\in (0,1)$ is the number appearing in (b). By (b), the end-points of $\alpha$ belong to distinct components of $\mathrm{int}(R) \setminus S$.

\begin{lem}
\label{l:lemmulo}
There exists $N\geq  n_1$ such that:
\begin{enumerate}[(i)]
\item $\psi^{-N}(\alpha) = h(\{0\} \times [y_1,y_2])$ with $0 < y_1  < y_2 \leq \rho$. 
\item $\psi^N(h([0,1]\times [y_1,y_2])) \subset W$.
\item $\psi^N(h([0,1]\times \{y_1\}))$ and $\psi^N(h([0,1]\times \{y_2\}))$ are contained in distinct components of $\mathrm{int}(R) \setminus S$.
\item For every $i=0,\dots,N-1$, $i\neq N-n_0$, we have $\psi^i(h([0,1]\times [y_1,y_2])) \subset V \cup K_+$.
\end{enumerate}
\end{lem}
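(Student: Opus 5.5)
The plan is to take $N=n_1+k$ with $k\geq 0$ large, and to push everything through the normal form \eqref{e:norm-form}. By (c), $\psi^{-n_1}(\alpha)=h(\{0\}\times[z_1,z_2])$ with $0<z_1<z_2\leq 1$. Applying $\psi^{-k}$ and \eqref{e:norm-form} $k$ times inside $K$ (legitimate because the $y$-coordinate only decreases under $\psi^{-1}$ along $\{0\}\times[0,1]$) gives
\[
\psi^{-N}(\alpha)=h\bigl(\{0\}\times[2^{-k}z_1,2^{-k}z_2]\bigr).
\]
Setting $y_1:=2^{-k}z_1$ and $y_2:=2^{-k}z_2$, assertion (i) holds as soon as $2^{-k}\leq\rho$.

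For (ii) and (iii), I would use compactness and continuity. For $x\in[0,1]$ and $0\leq j\leq k$, \eqref{e:norm-form} gives
\[
\psi^{j}\bigl(h(x,2^{-k}y)\bigr)=h\bigl(2^{-j}x,\,2^{j-k}y\bigr)\qquad (y\in[z_1,z_2]).
\]
In particular
\[
\psi^{k}\bigl(h([0,1]\times[y_1,y_2])\bigr)=h\bigl([0,2^{-k}]\times[z_1,z_2]\bigr),
\]
which is a thin neighborhood of the compact arc $h(\{0\}\times[z_1,z_2])=\psi^{-n_1}(\alpha)$ in $K$. Since $\psi^{n_1}$ is a homeomorphism, $W$ is a neighborhood of $\alpha$, and the intermediate images $\psi^{-i}(\alpha)$, $i\in\{1,\dots,n_1-1\}\setminus\{n_0\}$, lie in the open set $V$ by (e), uniform continuity on compacts gives the following: for $k$ large, $\psi^{n_1}$ maps $h([0,2^{-k}]\times[z_1,z_2])$ into $W$, and for each such $i$ it maps $\psi^{n_1-i}$ of it into $V$. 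This yields (ii). The same argument, applied to the endpoints of $\alpha$, gives (iii): the two horizontal sides $h([0,1]\times\{y_\ell\})$ are mapped by $\psi^N$ into small neighborhoods of the two endpoints of $\alpha$. By (b), these endpoints lie in the two distinct open components $h((0,1)\times(-1,0))$ and $h((0,1)\times(\rho,1))$ of $\mathrm{int}(R)\setminus S$, so the images lie in distinct components. The one technical point is that the image of $\{0\}\times\{y_\ell\}$ is exactly an endpoint of $\alpha$, while nearby images must stay in the same open component; this follows by openness and connectedness of the segment.

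For (iv) I would split the times $0\leq i\leq N-1$ into two ranges. For $i\leq k$, the image is $h([0,2^{-i}]\times[2^{i-k}z_1,2^{i-k}z_2])\subset K_+$ because $2^{i-k}z_2\leq1$. For $k<i\leq N-1$ with $i\neq N-n_0$, write $i=k+(n_1-i')$ with $1\leq i'\leq n_1-1$ and $i'\neq n_0$; the image then lies in $V$ by the uniform-continuity step above. This is the step I expect to be the main obstacle. The indexing must match exactly the times excluded in (e), and $i'=n_1$ (that is, $i=k$) is covered by $K_+$. One also has to check that \eqref{e:norm-form} is only used where $2y\leq\frac12$ fails at the last step, which is handled by applying $\psi$ only while the second coordinate is at most $\frac12$. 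Finally, choose $k$ so large that all the finitely many smallness conditions hold simultaneously, and note that $N\geq n_1$.
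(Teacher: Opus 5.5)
Your proposal is correct and follows the paper's proof essentially step by step. It uses the same choice $N=n_1+k$, the same application of \eqref{e:norm-form} to get $y_\ell=2^{-k}z_\ell$ and $\psi^k(h([0,1]\times[y_1,y_2]))=h([0,2^{-k}]\times[z_1,z_2])$, and the same split of (iv) into $0\le i\le k$ (landing in $K_+$) and $k<i\le N-1$, $i\neq N-n_0$ (landing near $\psi^{-(N-i)}(\alpha)\subset V$). The only difference is cosmetic: the paper quantifies smallness via continuity moduli and diameters of the horizontal segments, whereas you use a compactness argument with the open sets $\psi^{-n_1}(\mathrm{int}(W))$ and $\psi^{-(n_1-i')}(V)$, which is equally valid.
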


\begin{proof} We will show that (i)-(iv) hold for $N\geq n_1$ large enough.
Write $n=n_1+k$ with $k\geq 0$. Then \eqref{e:norm-form} and (c) imply
\[
\psi^{-N}(\alpha) = h(\{0\} \times [2^{-k} z_1,2^{-k} z_2]),
\]
and hence (i) holds with
\[
y_1 := 2^{-k} z_1, \qquad y_2 := 2^{-k} z_2,
\]
provided that $k$ -- or equivalently $N$ -- is large enough. 

Fix a metric on $M$ which is compatible with its topology. If $y\in [y_1,y_2]$ then the diameter of the set
\[
\psi^k(h([0,1]\times \{y\})) = h([0,2^{-k}] \times \{2^k y\} )
\]
does not exceed $\omega(2^{-k})$, where $\omega$ is a continuity modulus for $h$. Therefore, the diameter of the set
\[
\psi^n(h([0,1]\times \{y\})) = \psi^{n_1} ( h([0,2^{-k}] \times \{2^k y\} ))
\]
does not exceed $\omega'\circ \omega(2^{-k})$, where $\omega'$ is a continuity modulus for the restriction of $\psi^{n_1}$ to the compact set $K_+$. Using also the fact that the above set contains the point $\psi^N(h(0,y))$ which belongs to $\alpha\subset \mathrm{int}(W)$, we conclude that (ii) holds if $k$ -- equivalently $N$ -- is so large that  the $\omega'\circ \omega(2^{-k})$-neighborhood of $\alpha$ is contained in $W$.

Similarly, since $\psi^n\circ h$ maps $\{0\} \times \{y_1,y_2\}$ to the end-points of $\alpha$, (iii) holds if $k$ -- equivalently $N$ -- is so large that  the $\omega'\circ \omega(2^{-k})$-neighborhood of these two end-points is contained in the open neighborhood $\mathrm{int}(R) \setminus S$ of the end-points of $\alpha$. This proves (iii).

If $0\leq i \leq k$ then
\[
\psi^i(h([0,1]\times [y_1,y_2])) = h([0, 2^{-i} ]\times [2^i y_1,2^i y_2]) = h([0, 2^{-i} ]\times [2^{i-k} z_1,2^{i-k} z_2]) \subset K_+,
\]
and hence (iv) holds for these indices $i$. Now assume that $k + 1 \leq  i \leq N-1$, or equivalently $1\leq  i-k \leq n_1-1$, and that $i\neq N-n_0$. By (e), we can find $\delta>0$ such that the closed $\delta$-neighborhood of the compact set 
\begin{equation}
\label{e:compactset}
\displaystyle{\bigcup_{j\in \{1,\dots,n_1-1\} \setminus \{n_0\}}} \psi^{-j} (\alpha)
\end{equation}
is contained in $V$. For every $y\in [y_1,y_2]$ the diameter of the set
\begin{equation}
\label{e:the-set}
\psi^i(h([0,1]\times \{y\})) = \psi^{i-k}(h([0,2^{-k}]\times \{2^k y\}))
\end{equation}
does not exceed $\omega''\circ \omega(2^{-k})$, where $\omega''$ is a common continuity modulus for the restrictions to $K_+$ of the maps $\psi, \psi^2,\dots,\psi^{n_1-1}$. The set \eqref{e:the-set} contains the point $\psi^i(h(0,y))$, which is contained in $\psi^{i-N}(\alpha)$ and hence in
the set \eqref{e:compactset} because $1\leq N-i \leq n_1 - 1$ and $N-i\neq n_0$. By our choice of $\delta$, if $k$ -- or equivalently $N$ -- is so large that $\omega''\circ \omega(2^{-k})\leq \delta$ then the set \eqref{e:the-set} is contained in $V$, concluding the proof of (iv).
\end{proof}

Let $n$, $y_1$, $y_2$ be as in the above lemma and set:
\[
H_0 := h\bigl([0,1]\times [0,2^{-N}]\bigr), \qquad H_1 := h \bigl( [0,1] \times [y_1,y_2] \bigr).
\]
Since the sets
\[
\psi^N(h(\{0\}\times [0,2^{-N}])) = h (\{0\} \times [0,1])\subset \partial R \mbox{ and } \psi^N(h(\{0\}\times [y_1,y_2])) = \alpha \subset \mathrm{int}(R)
\]
are disjoint, so are the intervals $[0,2^{-N}]$ and $[y_1,y_2]$, and hence $H_0$ and $H_1$. 

We define the disjoint compact sets $Q_0$ and $Q_1$ as
\[
\begin{split}
Q_0&:= \psi^{N-n_0} (H_0) = h \bigl( [0,2^{n_0-N}] \times [0,2^{-n_0}] \bigr) 
\subset K_+ = U_0, \\ Q_1 &:= \psi^{N-n_0}(H_1) \subset U_1,
\end{split}
\]
where the last inclusion follows from Lemma \ref{l:lemmulo} (ii) and \eqref{e:daric1}.

We must prove that the set
\[
\Lambda:= \bigcap_{j\in \Z} \psi^{Nj}(Q_0\cup Q_1)
\]
which is clearly $\psi^N$-invariant and compact, satisfies statements (i)-(iv) from Theorem \ref{t:horseshoe}. Statement (i) follows from the surjectivity of the coding map in (ii), and this crucial fact is proven below in Lemma \ref{l:coding}. The continuity of the coding map in (ii) follows immediately from the continuity of $\psi^N$ and the fact that $Q_0$ and $Q_1$ have disjoint neighborhoods.

In order to prove (iii), we must show that if $p\in \Lambda$ and $k\in \Z$ is not of the form $k=Nj$ with $\omega_p(j)=1$ then
\begin{equation}
\label{e:daric2}
\psi^k(p) \in U_0 \cup V.
\end{equation}
We write
 \begin{equation}
\label{e:daric3}
k-n_0 = jN + h
\end{equation}
with $j\in \Z$ and $h\in \{0,1,\dots,N-1\}$. If $h=N-n_0$ then $k=(j+1)N$ and hence the assumption on $k$ implies that $\omega_p(j+1)=0$. Therefore,
\[
\psi^k(p) = \psi^{(j+1)N} (p) \in Q_0 \subset U_0,
\]
and claim \eqref{e:daric2} holds. We can then assume that $h\neq N-n_0$. Since $p$ belongs to
\[
\Lambda = \psi^{-n_0} \Bigl( \bigcap_{i\in \Z} \psi^{Ni}(H_0 \cup H_1) \Bigr),
\]
we have
\[
\psi^{Ni+n_0}(p) \in H_0 \cup H_1 \qquad \forall i\in \Z,
\]
and in particular, by \eqref{e:daric3}, 
\[
\psi^{k-h}(p) = \psi^{Nj+n_0}(p) \in H_0 \cup H_1.
\]
If $\psi^{k-h}(p)\in H_0$ then 
\[
\psi^k(p) \in \psi^h(H_0) = h ([0,2^{-h}] \times [0,2^{h-N}]) \subset K_+ = U_0,
\]
and \eqref{e:daric2} holds in this case. If $\psi^{k-h}(p)\in H_1$ then
\[
\psi^k(p) \in \psi^h(H_1) \subset K_+ \cup V = U_0 \cup V,
\]
by Lemma \ref{l:lemmulo} (iv) and the fact that $h\neq N-n_0$. This concludes the proof of (iii). 

We now prove (iv). Assume that the coding $\omega_p\in \{0,1\}^{\Z}$ of a point $p\in \Lambda$ satisfies $\omega_p(j)=0$ for all but a finite number of $j\in \Z$. Then $\psi^{Nj}(p)$ belongs to $Q_0= \psi^{N-n_0}(H_0)$ for $|j|$ large enough. Equivalently, $\psi^{Nj+n_0}(p)\in H_0$ for $|j|$ large enough. Together with the inclusion
\[
\psi^i(H_0) = h\bigl([0,2^{-i}] \times [0,2^{i-N}] \bigr) \subset h\bigl([0,1]\times [0,\tfrac{1}{2}]\bigr) \qquad \forall i\in \{0,1,\dots,N-1\},
\]
this implies that $\psi^k(p) = h(x_k,y_k)$ with $(x_k,y_k)\in [0,1]\times [0,\frac{1}{2}]$ for $|k|$ large enough. By \eqref{e:norm-form}, $x_{k+1} = x_k/2$ and $y_{k+1} = 2 y_k$ for $|k|$ large, and hence $y_k=0$ for $k$ large and $x_k = 0$ for $-k$ large. This implies
\[
\psi^k(p) \in h([0,1]\times \{0\}) \subset W^s_+(p_0), \qquad \psi^{-k}(p) \in h(\{0\} \times [0,1]) \subset W^u_+(p_0)
\]
for $k$ large enough, and hence $p\in W^s_+(p_0)\cap W^u_+(p_0)$. 

Conversely, let $p\in \Lambda \cap W^s_+(p_0)\cap W^u_+(p_0)$. By the definition of $W^s_+(p_0)$ and $W^u_+(p_0)$, there is $i\geq 0$ such that $\psi^i(p) = h(x,0)$ and $\psi^{-i}(p) = h(0,y)$ for some $x,y\in [0,1]$. Then \eqref{e:norm-form} gives
\[
\psi^{k}(p) = h(2^{i-k} x,0), \qquad \psi^{-k}(p) = h(0,2^{i-k} y) \qquad \forall k\geq i,
\]
and hence $\psi^k(p)$ belongs to $h([0,2^{n_0-N}]\times \{0\}) \cup h(\{0\}\times [0,2^{-n_0}]) \subset Q_0$ for $|k|$ large enough. In particular, the coding $\omega_p$ satisfies $\omega_p(j)=0$ for all but a finite number of $j\in \Z$.

There remains to prove the surjectivity of the coding map in (ii), that is, the assertion that for every $\omega\in \{0,1\}^{\Z}$ the set
\[
\bigcap_{j\in \Z} \psi^{-Nj}(Q_{\omega(j)})
\]
is non-empty. Since
\[
\psi^{-Nj} (Q_{\omega(j)}) = \psi^{-n_0}(\psi^{-N(j-1)}(H_{\omega(j)})),
\]
the previous set equals
\[
\psi^{-n_0} \left( \bigcap_{j\in \Z} \psi^{-Nj}(H_{\omega(j+1)}) \right),
\]
and it is enough to show that the set 
\[
\bigcap_{j\in \Z} \psi^{-Nj}(H_{\omega(j)})
\]
is non-empty for every $\omega\in \{0,1\}^{\Z}$. This follows from the next lemma in which, given a rectangular region of the form $H = h([0,1]\times [a,b])$ with $-1\leq a < b \leq 1$, we are using the notation
\[
\partial^+ H := h([0,1]\times \{b\}), \qquad \partial^- H := h([0,1]\times \{a\}),
\]
to denote its upper and lower side.

\begin{lem}
\label{l:coding}
Assume that a homeomorphism $\phi : M \rightarrow M$ satisfies:
\begin{enumerate}[(a')]
\item $\phi(H_0)\subset R$, $\phi(H_1)\subset R$.
\item $\phi(\partial^+ H_0)$ and $\phi(\partial^- H_0)$ are contained in distinct components of
\[
\overline{R\setminus S} = h([0,1] \times [-1,0]) \cup h([0,1]\times [\rho,1]),
\]
and the same holds for $\phi(\partial^+ H_1)$ and $\phi(\partial^- H_1)$.
\end{enumerate}
Then for every $\omega\in \{0,1\}^{\Z}$ the set
\[
\bigcap_{j\in \Z} \phi^{-j}(H_{\omega(j)})
\]
is non-empty.
\end{lem}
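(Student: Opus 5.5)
The approach is the standard topological crossing argument of Burns and Weiss, based on nested "vertical strips". The first step is to establish a key property, which I would call (P). Suppose $\gamma\subset R$ is a compact arc that joins the two components of $\overline{R\setminus S}$, that is, it has one end-point in $h([0,1]\times[-1,0])$ and the other in $h([0,1]\times[\rho,1])$. Then for each $i\in\{0,1\}$ the set $\phi^{-1}(\gamma)\cap H_i$ contains a compact arc $\gamma_i$ that joins $\partial^-H_i$ to $\partial^+H_i$. To prove (P), I would pass to coordinates via $h^{-1}$, so that $H_i=[0,1]\times[a_i,b_i]$. By (a'), $\phi(H_i)\subset R$. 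By (b'), $\phi(H_i)$ is a topological rectangle whose top and bottom sides lie in distinct components of $\overline{R\setminus S}$. Hence $\phi(\partial^\pm H_i)$ separate $R$ in the appropriate sense, and $\gamma$, which connects the two components, must cross $\phi(H_i)$ from its $\phi(\partial^- H_i)$-side to its $\phi(\partial^+H_i)$-side.

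The plan for proving (P) rigorously is to use a Jordan-curve or planar separation argument inside the disc $R$. The images $\phi(\partial^-H_i)$ and $\phi(\partial^+H_i)$ are arcs in $R$, with endpoints joined by $\phi(h(\{0\}\times[a_i,b_i]))$ and $\phi(h(\{1\}\times[a_i,b_i]))$. A subarc of $\gamma$ meets both $\phi(\partial^-H_i)$ and $\phi(\partial^+H_i)$, and I would take a minimal subarc $\gamma'$ between them. It is not automatic that $\gamma'$ stays inside $\phi(H_i)$ rather than escaping through the vertical sides. This is the main obstacle. Burns and Weiss circumvent it in a cleaner way: one uses the connected-set version, namely that any connected compact set $C\subset R$ meeting both components of $\overline{R\setminus S}$ has the property that $\phi^{-1}(C)\cap H_i$ contains a connected compact set meeting both $\partial^\pm H_i$. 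Working with continua instead of arcs, I would prove this by contradiction. If no component of $\phi^{-1}(C)\cap H_i$ meets both sides, then a standard continuum-theory lemma (in the square, if a compact set contains no component meeting top and bottom, there is a path from left side to right side avoiding it) gives a curve $\delta$ in $H_i$ from the left side to the right side, disjoint from $\phi^{-1}(C)$. This $\delta$ separates top from bottom in $H_i$. Then $\phi(\delta)\cup\phi(\text{vertical sides})$ would have to separate the two components of $\overline{R\setminus S}$ inside $R$ while avoiding $C$, and this contradicts the fact that $C$ is connected and meets both. Making this last separation step precise, which also uses the side information in (b'), is where I expect the real work.

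Given (P) in continuum form, the conclusion follows by a compactness argument. Fix $\omega$ and $m\ge1$. I would start with a continuum $C_m\subset H_{\omega(m)}$ meeting $\partial^\pm H_{\omega(m)}$; for instance $h(\{1/2\}\times[a,b])$ works. Since $\phi(H_{\omega(m)})$ has its sides in distinct components by (b'), $\phi(C_m)$ joins the two components of $\overline{R\setminus S}$. Applying (P) repeatedly for backward steps gives continua $C_{m-1},\dots,C_{-m}$ with $C_{j}\subset H_{\omega(j)}$ and $\phi(C_j)\subset C_{j+1}$, each joining the two sides of its rectangle, so that its image again joins the components of $\overline{R\setminus S}$. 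Consequently $\phi^{m}(C_{-m})$ lies in $\bigcap_{|j|\le m}\phi^{-j}(H_{\omega(j)})$ up to the obvious reindexing, and this finite intersection is therefore nonempty. These are nested compact sets, so the full intersection over $j\in\Z$ is nonempty.
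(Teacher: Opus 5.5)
Your property (P) is false, and so is its continuum version, so the argument breaks at its first step and not only at the separation step you flagged. By (b') the image $\phi(H_i)$ crosses $S$ vertically, and a second vertical crossing $\gamma$ of $S$ need not meet it. Concretely, where the lemma is used, $\phi=\psi^N$ and $\phi(H_0)=h([0,2^{-N}]\times[0,1])$. The arc $\gamma=h(\{1\}\times[-1,1])$ lies in $R$ and joins $h([0,1]\times[-1,0])$ to $h([0,1]\times[\rho,1])$, yet it is disjoint from $\phi(H_0)$, so $\phi^{-1}(\gamma)\cap H_0=\emptyset$. In the same example, $\phi(H_0)$ contains $\phi(\delta)$ and the images of the vertical sides of $H_0$. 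It is a thin strip that does not separate the two components of $\overline{R\setminus S}$ in $R$, so the contradiction you hope for cannot arise.

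The bookkeeping of your last paragraph does not close either. Here $H_i=h([0,1]\times[a,b])$ with $0\le a<b\le\rho$, and moreover $a>0$ for $H_1$ and $b<\rho$ for $H_0$. So a continuum $C_j\subset H_{\omega(j)}$ never joins the two components of $\overline{R\setminus S}$, and (P) cannot be applied to it. Applying (P) to $\phi(C_j)$ instead only returns subsets of $C_j$.

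The missing idea is that the induction must run forward, and then it needs only the intermediate value theorem. Let $\gamma\subset R$ be an arc joining the two components of $\overline{R\setminus S}$. The second coordinate of $h^{-1}$ along $\gamma$ goes from a value $\le 0$ to a value $\ge\rho$. Hence $\gamma$ contains a subarc $\gamma'\subset H_i$ with one end-point on $\partial^-H_i$ and the other on $\partial^+H_i$; the first coordinate is automatically in $[0,1]$ because $\gamma\subset R$. By (a') and (b'), $\phi(\gamma')$ is again an arc in $R$ joining the two components.

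Start from any such arc and apply this successively with $H_{\omega(-m)},H_{\omega(-m+1)},\dots,H_{\omega(m)}$. Then every point $x$ of the last subarc, which lies in $H_{\omega(m)}$, satisfies $\phi^{-m}(x)\in\bigcap_{|j|\le m}\phi^{-j}(H_{\omega(j)})$. Your nested-compact-sets step then concludes. This is exactly the paper's proof, organized there as Claims 1 and 2.
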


Indeed, the homeomorphism $\phi=\psi^N$ satisfies (a') and (b') because of Lemma \ref{l:lemmulo} (ii) and (iii), together with the identities
\[
\begin{split}
\psi^N(H_0) &= h([0,2^{-N}] \times [0,1]), \\
\psi^N(\partial^+ H_0) &= h ([0,2^{-N}] \times \{1\} ), \\
\psi^N(\partial^- H_0) &=  h ([0,2^{-N}] \times \{0\} ).
\end{split}
\]

\begin{proof}
We say that a subset $A\subset R$ \textit{crosses} $S$ if it contains a compact arc with end-points in distinct components of $\overline{R\setminus S}$. 

\paragraph{Claim 1.} If $A\subset R$ crosses $S$, then the sets $\phi(H_0\cap A)$ and $\phi(H_1\cap A)$ are contained in $R$ and cross $S$.

\medskip

Fix $i\in \{0,1\}$. By (a'), $\phi(H_i \cap A)$ is contained in $R$. Let $\alpha$ be a compact arc in $A$ with end-points in distinct components of $\overline{R\setminus S}$. Since $H_i = h([0,1]\times[a,b])$ with $0\le a<b\le\rho$, the intermediate value theorem applied to the second component of $h^{-1}$ along $\alpha$ gives a compact subarc $\alpha'$ which is contained in $H_i \cap A$ and has one end-point in $\partial^+ H_i$ and the other in $\partial^- H_i$. By (b'), $\phi(\alpha')$, which is a compact arc in $\phi(H_i \cap A)$, has end-points in distinct components of $\overline{R\setminus S}$.

\paragraph{Claim 2.} For every $k\in \N$ and every function $\eta: \{1,\dots,k\} \rightarrow \{0,1\}$ the set
\[
\phi(H_{\eta(1)}) \cap \phi^2(H_{\eta(2)}) \cap \dots \cap \phi^k(H_{\eta(k)})
\]
is contained in $R$ and crosses $S$. In particular, it is non-empty.

\medskip

This is proven by induction on $k$. For $k=1$, the claim follows from (a') and (b'). Assume that the claim is true for some $k\in \N$ and choose any function $\eta: \{1,\dots,k+1\} \rightarrow \{0,1\}$. By the inductive assumption, the set
\[
A:= \phi(H_{\eta(2)}) \cap \phi^2(H_{\eta(3)}) \cap \dots \cap \phi^k(H_{\eta(k+1)})
\]
is contained in $R$ and crosses $S$. By Claim 1, the set
\[
\phi( H_{\eta(1)} \cap A ) = \phi(H_{\eta(1)} )\cap \phi(A) =  \phi(H_{\eta(1)}) \cap \phi^2(H_{\eta(2)}) \cap \dots \cap \phi^{k+1}(H_{\eta(k+1)})
\]
is contained in $R$ and crosses $S$. This proves the induction step and concludes the proof of Claim 2.

\medskip

We can now prove the lemma. Since
\[
\bigcap_{j\in \Z} \phi^{-j}(H_{\omega(j)}) = \bigcap_{k\in \N} \Lambda_k,
\]
where 
\[
\Lambda_k := \bigcap_{|j|\leq k} \phi^{-j} (H_{\omega(j)})
\]
is a decreasing sequence of compact sets, it is enough to show that each $\Lambda_k$ is non-empty. This set is indeed non-empty, because the identity
\[
\Lambda_k = \phi^{-k-1} \left( \bigcap_{|j|\leq k} \phi^{k+1-j}(H_{\omega(j)}) \right) = \phi^{-k-1}  \bigl( \phi(H_{\omega(k)}) \cap \phi^2 ( H_{\omega(k-1)} ) \cap\dots \cap \phi^{2k+1} (H_{\omega(-k)}) \bigr)
\]
shows that $\Lambda_k$ is the image by $\phi^{-k-1}$ of a set which by Claim 2 is non-empty.
\end{proof}


\providecommand{\bysame}{\leavevmode\hbox to3em{\hrulefill}\thinspace}
\providecommand{\MR}{\relax\ifhmode\unskip\space\fi MR }
\providecommand{\MRhref}[2]{%
  \href{http://www.ams.org/mathscinet-getitem?mr=#1}{#2}
}
\providecommand{\href}[2]{#2}

\end{document}